\tikzset{->-/.style={decoration={  markings,  mark=at position #1 with
    {\arrow{>}}},postaction={decorate}}}
\tikzset{-<-/.style={decoration={  markings,  mark=at position #1 with
    {\arrow{<}}},postaction={decorate}}}
\let\rightarrow\chemarrow
\let\to\chemarrow
\theoremstyle{plain}
\newtheorem{theorem}{Theorem}[section]
\newtheorem{lemma}[theorem]{Lemma}
\newtheorem{corollary}[theorem]{Corollary}
\newtheorem{proposition}[theorem]{Proposition}
\theoremstyle{definition}
\newtheorem{definition}[theorem]{Definition}
\newtheorem{remark}[theorem]{Remark}
\newtheorem{notations}[theorem]{Notations}
\numberwithin{equation}{section}
\def\hua{\mathcal}
\def\<{\langle}
\def\>{\rangle}
\def\Aut{\operatorname{Aut}}
\def\Autp{\operatorname{Aut}^\circ}
\def\Sim{\operatorname{Sim}}
\def\Hom{\operatorname{Hom}}
\def\Stap{\operatorname{Stab}^\circ}
\renewcommand{\k}{\mathbf{k}}
\def\numbers{\begin{enumerate}[label=\arabic*{$^\circ$}.]}
\def\ends{\end{enumerate}}
\newcommand{\EG}{\operatorname{EG}}       %exchange graph of heart (oriented)
\newcommand{\D}{\operatorname{\hua{D}}}
\newcommand\Sph{\operatorname{Sph}}%^\circ}
\def\zero{\hua{H}_\Gamma}
\newcommand{\Tri}{\bigtriangleup}
\def\arrow{red}
\def\surf{\mathbf{S}}                       %FST's surface
\def\ts{\Sigma_{\TT}}
\newcommand{\ST}{\operatorname{ST}}        %spherical twists
\newcommand{\BT}{\operatorname{BT}}        %braid twists
\newcommand{\MCG}{\operatorname{MCG}}
\newcommand{\Int}{\operatorname{Int}}
\def\TT{\mathbf{T}}
\def\M{\mathbf{M}}
\def\surfo{{\mathbf{S}}_\Tri}
\def\surfO{{\operatorname{S}}_\aleph}
\def\cA{\operatorname{CA}}
\newcommand\Bt[1]{\operatorname{B}_{#1}}
\newcommand{\Quad}{\operatorname{Quad}}
\newcommand{\FQuad}[2]{\operatorname{FQuad}^{#1}(#2)}
\newcommand{\SBG}{\operatorname{SBr}}
\newcommand\iv[1]{\underline{#1}}
\newcommand\Br{\operatorname{Br}}
\newcommand\Co{\operatorname{Co}}
\newcommand\SCo{\operatorname{SCo}}
\newcommand\Tr{\operatorname{Tr}}
\def\tx{\widetilde{x}}
\def\ty{\widetilde{y}}
\def\tt{\widetilde{\tau}}
\def\ts{\widetilde{\sigma}}
\title[Finite presentations for spherical/braid twist groups]
{Finite presentations for spherical/braid twist groups from decorated marked surfaces}
\author{Yu Qiu}
\address{YQ:
Yau Mathematical Sciences Center,
Tsinghua University,
Beijing, China}
\email{yu.qiu@bath.edu}
\author{Yu Zhou}
\address{YZ:
Yau Mathematical Sciences Center,
Tsinghua University,
Beijing, China}
\email{yuzhoumath@gmail.com}
\date{\today}
\begin{document}
%=========================================================
%=========================================================

%=========================================================
\begin{abstract}
%We give a finite presentation for the braid twist group $\BT(\surfO)$ of a decorated surface $\surfO$. If the surface $\surfO$ is obtained from a triangulation $\TT$ of a marked surface, we obtain a finite presentation for the spherical twist group of the 3-Calabi-Yau category $\D_{fd}(\Gamma_\TT)$ associated to $\TT$. In the sequel, the result will be used to prove that the (principal component of) space of stability conditions on $\D_{fd}(\Gamma_\TT)$ is simply connected.

We give a finite presentation for the braid twist group
%a generalization of braid group
of a decorated surface.
If the decorated surface arises from a triangulated marked surface without punctures, we obtain a finite presentation for the spherical twist group of the associated 3-Calabi-Yau triangulated category.
The motivation/application is that the result will be used to show that the (principal component of) space of stability conditions on the 3-Calabi-Yau category is simply connected
in the sequel \cite{KQ1}.
\end{abstract}

\keywords{braid twist group, mapping class group, spherical twist, quiver with potential}

\maketitle
\setcounter{tocdepth}{1}
\tableofcontents\addtocontents{toc}{\setcounter{tocdepth}{1}}

%=========================================================

\def\dgen{2\mathbb{N}_{\leq g}-1}
\def\dgene{2\mathbb{N}_{\leq g}}

%=========================================================
\section{Introduction}
%=========================================================
Artin's braid group is a classical object in low dimensional topology,
which links to many areas in mathematics: e. g. knot theory, representation theory of algebras, monodromy invariants of algebraic geometry, cf. the survey \cite{BB} and the textbook \cite{KT}.
We are interested in a generalization of braid group, the braid twist groups of decorated surfaces,
which fits into the theory of cluster algebras and stability conditions.

%=========================================================
\subsection{Motivation}\label{subsec:mot}
%=========================================================
In Bridgeland-Smith's seminal work \cite{BS},
they established a connection between Teichm\"{u}ller theory and theory of stability conditions on triangulated categories.
More precisely, let $\surf$ be a marked surface (i.e. an oriented compact surface with or without non-empty boundary, equipped with a collection of marked points in the sense of Fomin-Shapiro-Thurston \cite{FST}). There is an associated 3-Calabi-Yau triangulated category $\D(\surf)$  (see Section~2.3 for the construction). The authors proved that \cite[Theorem~1.2 and Theorem~1.3]{BS} there is an isomorphism of complex orbifolds
%for a marked surface $\surf$, there is an isomorphism of complex orbifolds
\begin{gather}\label{eq:motivation}
    \Stap\D(\surf)/\Autp\D(\surf)\cong\Quad(\surf),
\end{gather}
where $\Quad(\surf)$ is the moduli space of quadratic differentials
on $\surf$,
%$\D(\surf)$ the 3-Calabi-Yau triangulated category associated to $\surf$ (see Section~2.3 for the definition),
$\Stap\D(\surf)$ the principal component of space of stability conditions on $\D(\surf)$ and $\Autp\D(\surf)$ the group of auto-equivalences of $\D(\surf)$ which preserve this component.
Their motivations are coming from string theory in physics,
Donaldson-Thomas theory and homological mirror symmetry (cf. \cite{GMN}, \cite{S} and \cite{QQ}).

The main aim of this series of works (\cite{QQ, QZ2, KQ1})
is to show that $\Stap\D(\surf)$ is simply connected.
Thus we are interested in the symmetry groups in the formula \eqref{eq:motivation},
in particular, the spherical twist group (see Section~2.3 for the definition)
$\ST\D(\surf)\subset\Autp\D(\surf)$ that sits in the short exact sequence of groups  \cite[Theorem~9.9]{BS}
\[
    1\to\ST\D(\surf)\to\Autp\D(\surf)\to\MCG(\surf)\to1,
\]
where $\MCG(\surf)$ is the mapping class group of $\surf$, which consists of the isotopy classes of orientation-preserving homeomorphisms of $\surf$ that fix the set of marked points.
Such spherical twist groups were first studied by Khovanov, Seidel and Thomas \cite{KS,ST} from the two sides of the homological mirror symmetry
in the case when $\surf$ is a disk.

In the prequel \cite{QQ},
we introduced the decorated marked surface $\surfo$ (for $\surf$ unpunctured, i.e. all marked points are on the boundary of $\surf$), which is obtained from $\surf$ by decorating a certain set $\Tri$ of points in the interior of $\surf$ (see Definition~2.7). Let $\MCG(\surfo)$ be the mapping class group of $\surfo$, i.e. the group consisting of the isotopy classes of orientation-preserving homeomorphisms of $\surfo$ that fix the boundary of $\surfo$ pointwise and preserve the decoration set $\Tri$ setwise. We showed that (\cite[Theorem~1]{QQ}) $\ST\D(\surf)$ is isomorphic to a subgroup
%of the mapping class group $\MCG(\surfo)$,
\[\BT(\surfo)\subset\MCG(\surfo),\]
%the braid twist group
the braid twist group of $\surfo$ (see Definition~\ref{def:bt} and Section~\ref{sec:MS}).
%\[\BT(\surfo)\subset\MCG(\surfo),\]
%where $\MCG(\surfo)$ is the mapping class group of $\surfo$ consisting of
In this paper, we give finite presentations for these twist groups,
which will play a key role in achieving the aim,
i.e. proving the simply connectedness of $\Stap\D(\surf)$
(in the sequel \cite[Theorem~4.16]{KQ1}).

%=========================================================
\subsection{Braid groups via quivers with potential}
%=========================================================
The key ingredient in the construction of the 3-Calabi-Yau triangulated category $\D(\surf)$ associated to a marked surface $\surf$ is the so-called quiver with potential in the cluster theory.
%A quiver with potential is a pair $(Q,W)$ of a direct graph $Q$ and a linear combination $W$ of cycles in $Q$.
This notion was introduced by Derksen, Weyman and Zelevinsky \cite{DWZ} in the general case and was developed by Fomin, Shapiro and Thurston \cite{FST} and Labardini-Fragoso \cite{LF} for the surface case. Given a triangulation $\TT$ of a marked surface $\surf$, there is an associated quiver with potential $(Q_\TT,W_\TT)$ whose vertices are indexed by the arcs in $\TT$, whose arrows are indexed by the oriented angles of triangles of $\TT$ and whose potential $W_\TT$ is the sum of 3-cycles arising from triangles of $\TT$. The category $\D(\surf)$ is defined to be the finite dimensional derived category of the Ginzburg dg algebra associated to $(Q_\TT,W_\TT)$. See Section~\ref{sec:MS} for more details.
%(see Section~\ref{sec:MS} for more details).
%And $\D(\surf)\simeq\D(\Gamma_\TT)$ is
% and in our presentations for $\BT(\surfo)\cong\ST\D(\surf)$
%is quivers with potential in the cluster theory.

There is a preferable set of generators for the corresponding spherical twist group
$\ST\D(\surf)$, which is indexed by the vertices of the quiver $Q_\TT$ (see Section~\ref{sec:CY}).
%Correspondingly, the braid twists along dual (closed) arcs of $\TT$ in $\surfo$ form a preferable set of generators for $\BT(\surfo)$.
It is natural to try to find a set of (generating) relations for this set of generators.
Then our work fits into a larger program:
to introduce (algebraic) braid group $\Br(Q,W)$ associated to a quiver with potential $(Q,W)$.

For the (classical) braid group on $\aleph$ strands, we have the following well-known
%In the classical case (i.e. the quiver is of type $A$ with zero potential), the following
Artin presentation
%is well-known
\cite{A}:
%%\begin{itemize}
%\item The braid group $B_{\aleph}$ admits a presentation,
%whose
\begin{itemize}
	\item Generators: $\sigma_{i}$, $1\leq i\leq \aleph-1$.
	\item Relations: (cf. the notation in Section~\ref{sec:notation}).
	
	$\begin{array}{lll}
	&\Co(\sigma_i,\sigma_j)&  \text{if $|i-j|\neq1$;}\\
	&\Br(\sigma_i,\sigma_{j})&\text{if  $|i-j|=1$.}
	\end{array}$

\end{itemize}
%and
%whose
%\end{itemize}
This naturally generalizes to the Artin group (or generalized braid group) associated to a Dynkin diagram $\nabla$:
\begin{itemize}
	\item Generators: $\sigma_i$, $i$ vertices of $\nabla$.
	\item Relations:
	
	$\begin{array}{lll}
	&\Co(\sigma_i,\sigma_j)&  \text{if there is no edge between $i$ and $j$ in $\nabla$;}\\
	&\Br(\sigma_i,\sigma_{j})&\text{if there is exactly one edge between $i$ and $j$ in $\nabla$.}
	\end{array}$
	%$\Co(\sigma_i,\sigma_j)$ when there is no edge between $i$ and $j$ in $\nabla$ and $\Br(\sigma_i,\sigma_j)$ when there is exactly one edge between $i$ and $j$ in $\nabla$.
\end{itemize}% (cf. the notation in Section~\ref{sec:notation}).
Recently, Grant and Marsh \cite{GM} generalized this to the quivers with potential %In the case when a quiver with potential $(Q,W)$
which are mutation equivalent to a Dynkin quiver.
% (i.e. a Dynkin diagram endowed with an orientation). %$\nabla$ with zero potential,
%Grant-Marsh \cite{GM} has calculated the corresponding presentation for the braid group $\Br(Q,W)\cong\Br(\nabla)$,
See also \cite[Proposition~10.3]{QQ}.
Qiu and Woolf \cite{QW} showed that the generalized braid group $\Br(Q,W)$ in this case is indeed isomorphic to the corresponding spherical twist group.

In this paper, we will introduce the braid groups associated to the
quivers with potential which comes from marked surfaces
and show that they are isomorphic to the corresponding braid/spherical twist groups.
Note that the main difficulty lies in showing the predicted relations
in \cite{QQ} are enough (at least in some good cases, see Proposition~\ref{prop:simple}).
The result is in the same line of the faithfulness of spherical twist actions in \cite{KS,ST,QW}, which all imply that the corresponding spaces of stability conditions
are simply connected (cf. the survey \cite{Qs});
so does ours in \cite{KQ1}.

%=========================================================
\subsection{Relation with surface braid groups}
%=========================================================

The (classical) braid group $B_{\aleph}$ on $\aleph$ strands can be realized
(topologically) as $\MCG(\mathbf{D}_\aleph)$,
where $\mathbf{D}_\aleph$ is a disk $\mathbf{D}$ with $\aleph$ decorating points.
One can also realize $B_{\aleph}$ as collections of $\aleph$ strands on
%the disk
$\mathbf{D}_{\aleph}$ %with $\aleph$ points
(or as the fundamental group of the corresponding configuration space, cf. Figure~\ref{fig:braids}).
\begin{figure}[ht]
	\begin{tikzpicture}[xscale=.7,yscale=.7,rotate=90]
	\draw[dashed, fill=gray!11] (3,0) ellipse (2.3 and .8);
	\draw[dashed, fill=gray!11] (3,-9.5) ellipse (2.3 and .8);
	\braid[thick, style strands={1}{ForestGreen},
	style strands={2}{blue},
	style strands={3}{violet},
	style strands={4}{orange},
	style strands={5}{green}]
	s_1 s_3 s_4 s_2^{-1} s_1 s_3 s_2^{-1} s_1 s_2^{-1};
	\foreach \j in {3,4,5,2,1}
	{\draw (\j,0)node[white]{$\bullet$}node[red]{$\circ$}
		(\j,-9.5)node[white]{$\bullet$}node[red]{$\circ$};}
	\end{tikzpicture}\caption{Classical braids}\label{fig:braids}
\end{figure}
This realization leads to the following natural generalization of braid group.
Given a decorated surface $\surfO$, which is an oriented compact surface $\text{S}$
with non-empty boundary and a finite set of $\aleph$ decorating points in its interior,
the surface braid group
$\SBG(\surfO)$ consists of the collections of strands on $\surfO$ (cf. \cite[Section~2.1]{GJP}). The surface braid group $\SBG(\surfO)$ can be realized as a subgroup of the mapping class group $\MCG(\surfO)$ (cf. Definition~2.5) and sits in the following short exact sequence of groups
%the subgroup of the mapping class group $\MCG(\surfO)$ of $\surfO$
%that sits in the short exact sequence
\[1\to\SBG(\surfO)\to\MCG(\surfO)\to\MCG(\text{S})\to1,\]
cf. \cite[\S~2.4 (5)]{GJP}.

%There are various other generalizations of braid groups, cf. the survey \cite{Qs}.
%In particular,

For our purpose, we also want to consider the marking $\M$ on the surface $\text{S}$,
which is a set of marked points on the boundary of $\text{S}$.
Denote by $\surf$ the pair $($\text{S}$,\M)$, known as a marked surface,
and by $\surfo$ the associated decorated marked surface as in Section~\ref{subsec:mot},
where $\Tri$ is the decoration set of $\aleph$ points.
Our braid twist group $\BT(\surfo)$ is in fact a subgroup of
the surface braid group $\SBG(\surfo)$.
% for $\surfO=\surfo$.
%More precisely,
%which is a decorated surface with extra marked points on its boundary.
We denote by $\FQuad{}{\surf}$ the moduli space of $\surf$-framed quadratic differentials
and by $\FQuad{}{\surfo}$ the moduli space of $\surfo$-framed quadratic differentials.
Then we have the following commutative diagram of coverings
\begin{equation}\label{eq:quad1}
\begin{tikzpicture}[xscale=1.8,yscale=1.2]
\draw (-1,1.5) node (s) {$\FQuad{}{\surfo}$}
  (1,1.5) node (s1) {$\FQuad{}{\surf}$}
  (0,0) node (s2) {$\Quad(\surf)$};
\draw [->, font=\scriptsize]
  (s1) edge[->,>=stealth] node [right] (m1) {$\;\MCG(\surf)$} (s2);

\draw[font=\scriptsize]
     (s)edge[->,>=stealth] node[above] {$\SBG(\surfo)$} (s1)
     (s)edge[->,>=stealth] node[below left] {$\MCG(\surfo)$} (s2);
\end{tikzpicture}
\end{equation}
The space $\FQuad{}{\surfo}$ is not connected in general (unless $\surf$ is a disk).
For any connected component $\FQuad{\circ}{\surfo}$ of $\FQuad{}{\surfo}$,
the isomorphism \eqref{eq:motivation} can be upgraded to (\cite[Theorem~4.14]{KQ1})
\begin{gather}\label{eq:stab=fquad}
    \kappa^\circ\colon\Stap\D(\surf)\cong\FQuad{\circ}{\surfo},
\end{gather}
and the covering group of the covering
%we are interested in the covering group $\BT(\surfo)$ of
\[
    \FQuad{\circ}{\surfo}\rightarrow{} \FQuad{}{\surf},
\]
is the braid twist group $\BT(\surfo)$, which hence is a subgroup of $\SBG(\surfo)$.

Note that finite presentations of the mapping class group $\MCG(\surfO)$ of a general decorated surface $\surfO$ have been heavily studied (cf. \cite{FM})
and finite presentations of the surface braid group $\SBG(\surfO)$ have
been calculated by Bellingeri \cite{Be}. Our main strategy is to use the well-studied presentations of $\SBG(\surfo)$
(by many works \cite{Be, BG, GJP})
to find finite presentations of $\BT(\surfo)$ that fit into our motivation.
%In particular, in the sequel, we apply our result to show that the spaces in \eqref{eq:stab=fquad}
%are indeed simply connected (\cite[Theorem~4.16]{KQ1}).

%=========================================================
\subsection{Context and notations}\label{sec:notation}
%=========================================================
The paper is organized as follows.
In Section~\ref{sec:bg}, we review the background of braid/spherical twist groups
and surface braid groups.
In Section~\ref{app:pf sbg}, we give an alternative presentation of surface braid group.
In Section~\ref{sec:bt}, we find a first finite presentation of the braid twist group
of a general decorated surface.
In Section~\ref{sec:st}, we calculate finite presentations for the braid/spherical twist group
of a decorated marked surface via quivers with potential. The main results/presentations are
\begin{description}
\item[Theorem~\ref{thm:pre}] A finite presentation for the braid twist group of a decorating surface.
% $\BT(\surfO)$.
\item[Theorem~\ref{thm:1}] Finite presentations for the braid twist group of a decorated marked surface via quivers with potential.
%$\BT(\surfo)$ with respect to a given triangulation $\TT$.
\item[Corollary~\ref{Cor:1}] Finite presentations for the spherical twist group of the 3-Calabi-Yau triangulated category from a marked surface via quivers with potential.
%$\ST(\Gamma_\TT)$
%with respect to the quiver with potential $(Q_\TT,W_\TT)$.
\item[Theorem~\ref{thm:2}] (Infinite) presentations for braid/spherical twist groups %$\BT(\surfo)$ and $\ST(\Gamma_\TT)$
with only conjugation relations.
%commutation and braid relations.
\end{description}

In mapping class groups, autoequivalence groups or their subgroups, we have the following conventions.
\begin{itemize}
\item \textbf{Multiplication:} the multiplication $ab$ stands for the composition $a\circ b$, that is, first $b$ then $a$.% Our convention of composition is from left to right (as product).
%So for two (spherical) functors, we will write $\phi_1\phi_2$ for $\phi_2\circ\phi_1$.
%Note that this is different from the composition convention in \cite{QQ}.
\item \textbf{Inverse} and \textbf{Conjugation:} for simplifying notation in calculations, we will use $\iv{s}$ to denote the inverse $s^{-1}$ of an element $s$ and use $a^b$ to denote the conjugation $\iv{b}ab$ of $a$ by $b$. The easy formula $\iv{(a^b)}=\iv{a}^b$ will be used frequently.
\item \textbf{Relation:} we will use the following notation for relations throughout this paper.
\[\begin{array}{llll}
\text{Commutation relation}&\Co(a,b)&\colon& ab=ba\\
\text{Braid relation}&\Br(a,b)&\colon& aba=bab\\
\text{Skewed commutation relation}&\SCo(x;a,b)&\colon& xaxbx=bxa\\
\text{Triangle relation}&\Tr(a,b,c)&\colon& abca=bcab=cabc.
%&\SCoi(x;a,b)=
%&\Co(a^{\iv{x}},b)\colon& xa\iv{x}b=bxa\iv{x}\\
%&\Ss(a;b,c)\colon& (abc)^2=(bca)^2\\
%&\Nr(x,y;a,b)\colon& xaybxa=aybxay   \\
%&\Nrr(x,y;a,b)=\Nr(y,x;b,a)\colon& ybxayb=bxaybx   \\
%&\Nl(x,y;a,b)=\Co(a,xyabxy)\colon& axyabxy=xyabxya \\
%&\Nll(x,y;a,b)=\Co(b,xyabxy)\colon& bxyabxy=xyabxyb \\
\end{array}\]
\end{itemize}
The following two subsets of the set $\mathbb{N}$ of natural numbers will be used in the paper:
%\[\mathbb{N}_{\leq g}=\{h\in\mathbb{Z}\mid 1\leq h\leq g \};\]
\[\dgene=\{2h\mid h\in\mathbb{N}\text{ with }1\leq h\leq g \};\]
\[\dgen=\{2h-1\mid h\in\mathbb{N}\text{ with }1\leq h\leq g \}.\]
%\[\dgen=\{2h-1\mid 1\leq h\leq g \};\]
%and $\dgene=\{2h\mid 1\leq h\leq g \}$.

%\todo[inline,color=green!40]{ever $\Nl(x,y;a,b)\colon axyabxyb=yabxyabx$, and this with now implies $xabxyab=abxyabx$.}
%$\Ss(a,b,c)\xLongleftrightarrow[\Br(a,b)]{\Br(a,c)}\Br(a^b,c)$
%=========================================================
\subsection*{Acknowledgments}
%=========================================================
This work is supported by National Natural Science Foundation of China (Grants No.11801297), Beijing Natural Science Foundation (Z180003), Tsinghua University Initiative Scientific Research Program (2019Z07L01006), Hong Kong RGC 14300817 and Direct Grant 4053293 (from Chinese University of Hong Kong).

%=========================================================
\section{Preliminaries}\label{sec:bg}
%=========================================================
\subsection{Decorated surfaces and two braid groups}\label{sec:DS}
%=========================================================

A \emph{decorated surface} $\surfO$ is a compact connected oriented surface $\mathrm{S}$ with non-empty boundary $\partial\mathrm{S}$, endowed with a set $\Delta=\{Z_1,\cdots,Z_\aleph\}$ of $\aleph$ points in the interior of $\mathrm{S}$. The points $Z_i$, $1\leq i\leq \aleph$, are called \emph{decorating points} in $\surfO$.
%the surface obtained from $\mathrm{S}$ by decorating $\aleph$ points in the interior of $\mathrm{S}$.
%Let $\mathrm{S}$ be . Up to homeomorphism, $\mathrm{S}$ is determined by the following data
Denote by
\begin{itemize}
\item $g$ the genus of $\mathrm{S}$, and
\item $b$ the number of connected components of $\partial\mathrm{S}$.
\end{itemize}

%We denote by  the set of decorating points in $\surfO$.

The \emph{mapping class group} $\MCG(\surfO)$ of $\surfO$ is the group of isotopy classes of
orientation-preserving homeomorphisms of $\mathrm{S}$,
where all homeomorphisms and isotopies are required to:
i) fix $\partial\mathrm{S}$ pointwise;
ii) fix the decoration set $\Tri$ setwise.
%We are interested in two types of twists in $\MCG(\surfO)$.
%Note that the mapping class group $\MCG(\surf)$ of $\surf$ require only the first condition and thus there is a canonical map $F_*\colon\MCG(\surfo)\twoheadrightarrow\MCG(\surf)$ induced by the forgetful map $F$.

A \emph{closed arc} in $\surfO$ is (the homotopy class of) a continuous function $\eta:[0,1]\to\mathrm{S}$ such that
\begin{enumerate}
	\item both $\eta(0)$ and $\eta(1)$ are in $\Tri$ with $\eta(0)\neq\eta(1)$, and
	\item for any $0<t<1$, $\eta(t)\notin \Tri$.
\end{enumerate}
A closed arc $\eta$ is called \emph{simple} if $\eta(t_1)\neq\eta(t_2)$ for any $t_1\neq t_2$. Denote by $\cA(\surfO)$ the set of simple closed arcs in $\surfO$.
%whose interior lies in $\mathrm{S}-\Tri$ and whose endpoints are different decorating points in $\Tri$.

\begin{definition}[Braid twists]\label{def:bt}
For any simple closed arc $\eta\in\cA(\surfO)$, the \emph{braid twist}
$\Bt{\eta}\in\MCG(\surfO)$ along $\eta$ is shown in Figure~\ref{fig:Braid twist}.
%Throughout the paper, we will also use $\eta$ to denote its negative braid twist $\iv{\Bt{\eta}}$ when there is no confusion.
\begin{figure}[ht]\centering
\begin{tikzpicture}[scale=.2]
  \draw[very thick,NavyBlue](0,0)circle(6)node[above,black]{$_\eta$};
  %\draw(-120:5)node{+};
  \draw(-2,0)edge[red, very thick](2,0)  edge[cyan,very thick, dashed](-6,0);
  \draw(2,0)edge[cyan,very thick,dashed](6,0);
  \draw(-2,0)node[white] {$\bullet$} node[Emerald] {$\circ$};
  \draw(2,0)node[white] {$\bullet$} node[Emerald] {$\circ$};
  \draw(0:7.5)edge[very thick,->,>=latex](0:11);\draw(0:9)node[above]{$\Bt{\eta}$};
\end{tikzpicture}\;
%=======================================================
\begin{tikzpicture}[scale=.2]
  \draw[very thick, NavyBlue](0,0)circle(6)node[above,black]{$_\eta$};
  \draw[red, very thick](-2,0)to(2,0);
  \draw[cyan,very thick, dashed](2,0).. controls +(0:2) and +(0:2) ..(0,-2.5)
    .. controls +(180:1.5) and +(0:1.5) ..(-6,0);
  \draw[cyan,very thick,dashed](-2,0).. controls +(180:2) and +(180:2) ..(0,2.5)
    .. controls +(0:1.5) and +(180:1.5) ..(6,0);
  \draw(-2,0)node[white] {$\bullet$} node[Emerald] {$\circ$};
  \draw(2,0)node[white] {$\bullet$} node[Emerald] {$\circ$};
\end{tikzpicture}
\caption{The braid twist along a simple closed arc $\eta$}
\label{fig:Braid twist}
\end{figure}
The \emph{braid twist group} $\BT(\surfO)$ of a decorated surface $\surfO$ is the subgroup of $\MCG(\surfO)$ generated by the braid twists.
\end{definition}

Note that the braid twist $B_\eta$ does not depend on the orientation of $\eta$ in the sense that if we define a closed arc $\eta'$ by $\eta'(t)=\eta(1-t)$ for $t\in[0,1]$ then we have $B_\eta=B_{\eta'}$.
We have the following easy observation for the action of $\MCG(\surfO)$ on $\BT(\surfO)$ by conjugation (cf. \cite[Equation~(3.3)]{QQ}).

\begin{lemma}[Conjugation]\label{rmk:conj}
	For an element $\Psi$ in $\MCG(\surfO)$ and a closed arc $\eta$ in $\cA(\surfO)$, we have
	%the following relation in $\MCG(\surfO)$:
	%denote by $\eta^\Psi$ the action of $\Psi$ on $\eta$, for some arc $\eta$.
	%This is compatible with the conjugation notation in the sense that
	%the braid twist $\eta^\Psi$ equals the conjugation:
	\begin{gather}\label{eq:formulaB}
	B_{\Psi(\eta)}=(B_\eta)^{\iv{\Psi}}.
	\end{gather}
	This implies that the group $\BT(\surfO)$ is a normal subgroup of $\MCG(\surfO)$.
\end{lemma}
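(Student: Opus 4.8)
The plan is to establish the conjugation formula \eqref{eq:formulaB} by the standard \emph{change of coordinates} principle from mapping class group theory, transported to the setting of braid twists (half-twists) along simple closed arcs, and then to read off normality as a formal consequence.

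First I would make explicit that the braid twist $\Bt{\eta}$ is defined through a \emph{local model}: one fixes a homeomorphism $h\colon\mathbf{D}\to N_\eta$ from a standard disk $\mathbf{D}$ carrying two decorating points and a standard horizontal arc onto a closed regular neighborhood $N_\eta$ of $\eta$ in $\surfO$, with $h$ sending the standard arc to $\eta$; then $\Bt{\eta}$ is the homeomorphism equal to $h\circ\beta\circ\iv{h}$ on $N_\eta$ and to the identity outside, where $\beta$ is the standard half-twist depicted in Figure~\ref{fig:Braid twist}. The point to record here is that $\Bt{\eta}$ depends only on the isotopy class of $\eta$: any two admissible choices of $N_\eta$ and $h$ yield isotopic homeomorphisms, so $\Bt{\eta}$ is a well-defined element of $\MCG(\surfO)$.

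Next, given $\Psi\in\MCG(\surfO)$, I would observe that $\Psi$ carries $N_\eta$ homeomorphically onto a regular neighborhood of $\Psi(\eta)$, so the composite $\Psi\circ h$ is an admissible parametrizing homeomorphism for $\Psi(\eta)$, sending the standard arc to $\Psi(\eta)$. Feeding this into the local-model definition gives
\[
\Bt{\Psi(\eta)} = (\Psi\circ h)\circ\beta\circ(\Psi\circ h)^{-1}
= \Psi\circ\bigl(h\circ\beta\circ\iv{h}\bigr)\circ\iv{\Psi}
= \Psi\circ\Bt{\eta}\circ\iv{\Psi},
\]
where the outer maps are understood as the identity away from the relevant neighborhoods. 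With the paper's conventions $ab=a\circ b$ and $a^b=\iv{b}ab$, the right-hand side is precisely $(\Bt{\eta})^{\iv{\Psi}}$, which is \eqref{eq:formulaB}. It is essential that $\Psi$ is orientation-preserving: this guarantees that $\Psi\circ h$ induces the same orientation on $\mathbf{D}$ as $h$, so the transported half-twist is the \emph{positive} braid twist along $\Psi(\eta)$ rather than its inverse. Tracking this orientation bookkeeping is the only genuinely delicate point, and I expect it to be the main obstacle to a fully rigorous argument; everything else is routine functoriality of the local model.

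Finally, normality follows formally. Since $\Psi(\eta)$ is again a simple closed arc whenever $\eta$ is, formula \eqref{eq:formulaB} shows that conjugation by any $\Psi\in\MCG(\surfO)$ sends each generating braid twist of $\BT(\surfO)$ to another braid twist, whence $\Psi\,\BT(\surfO)\,\iv{\Psi}\subseteq\BT(\surfO)$; applying the same to $\iv{\Psi}$ yields the reverse inclusion, so $\BT(\surfO)$ is a normal subgroup of $\MCG(\surfO)$.
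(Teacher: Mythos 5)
Your proof is correct: the change-of-coordinates argument via the local model of the half-twist, together with the observation that orientation-preservation of $\Psi$ keeps the sign of the twist, is exactly the standard justification, and the deduction of normality is formally fine. The paper itself offers no proof (it records the lemma as an easy observation, citing \cite[Equation~(3.3)]{QQ}), so your write-up simply supplies the argument the paper implicitly relies on.
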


%For two simple closed arcs $\alpha,\beta$ in $\cA(\surfo)$, their intersection number is a half integer in $\tfrac{1}{2}\ZZ$ and defined as follows (following \cite{KS}):
%\[
%\Int(\alpha,\beta)=\tfrac{1}{2}\Int_{\Tri}(\alpha,\beta)+\Int_{\surf-\Tri}(\alpha,\beta),
%\]
%where
%\begin{gather*}
%\Int_{\surf-\Tri}(\alpha,\beta)=\min\{ |\alpha'\cap\beta'\cap (\surf-\Tri)| \ff \alpha'\sim\alpha,\beta'\sim\beta \},\\
%\Int_{\Tri}(\alpha,\beta)=\sum_{Z\in\Tri} |\{t\mid\alpha(t)=Z\}|\cdot|\{r\mid\beta(r)=Z\}|.
%\end{gather*}
%It is well-known that braid twists satisfy the following three types of relations.

It is easy to check the following relations on braid twists. %(see \cite{??}).

\begin{lemma}\label{lem:btrel}
For any closed arcs $a,b,c$ in $\cA(\surfO)$, the following hold in $\BT(\surfO)$:
\[\begin{array}{cl}
\Co(B_a,B_b)&\text{if $a$ and $b$ are disjoint;}\\
\Br(B_a,B_b)&\text{if $a$ and $b$ are disjoint except
	sharing a common endpoint;}\\
\Tr(B_a,B_b,B_c)&\text{if $a$, $b$ and $c$ are disjoint except sharing a common endpoint, and they}\\
&\text{are in clockwise order at that point (see the left picture of Figure~\ref{fig:ex0}).}
\end{array}\]
\end{lemma}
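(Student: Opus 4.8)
The plan is to reduce each of the three relations to an identity about how braid twists act on simple closed arcs, and then to verify those arc identities inside a small disk neighborhood of the union of the arcs involved. The only geometric input needed is that, by Definition~\ref{def:bt}, the braid twist $B_a$ is a half-twist supported in an arbitrarily small disk neighborhood $N_a$ of $a$ that interchanges the two endpoints of $a$; in particular $B_a(\eta)=\eta$ whenever $\eta$ is disjoint from $a$, and otherwise $B_a(\eta)$ is read off from a neighborhood of $a\cup\eta$. The algebraic engine is the conjugation formula of Lemma~\ref{rmk:conj}: since $B_{\Psi(\eta)}=(B_\eta)^{\iv{\Psi}}$, we have the two forms $\Psi B_\eta \iv{\Psi}=B_{\Psi(\eta)}$ and $(B_\eta)^{\Psi}=B_{\iv{\Psi}(\eta)}$, so that any conjugate of a braid twist is again the braid twist along the image arc.

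For the commutation relation, suppose $a$ and $b$ are disjoint. Then $B_a(b)=b$, and $B_aB_b\iv{B_a}=B_{B_a(b)}=B_b$ gives exactly $\Co(B_a,B_b)$; equivalently, $N_a$ and $N_b$ can be chosen disjoint, so $B_a$ and $B_b$ have disjoint supports. For the braid relation, suppose $a$ and $b$ share exactly one endpoint. The identity $B_aB_bB_a=B_bB_aB_b$ is equivalent to $B_aB_b\iv{B_a}=\iv{B_b}B_aB_b$, which by the conjugation formula reads $B_{B_a(b)}=B_{\iv{B_b}(a)}$. Hence it suffices to establish the single arc identity $B_a(b)=\iv{B_b}(a)$. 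Passing to a disk neighborhood of $a\cup b$, which contains exactly three decorating points, one checks from the local picture that both $B_a(b)$ and $\iv{B_b}(a)$ are the third arc joining the two unshared endpoints, so they agree up to isotopy.

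The triangle relation is the main point. Using the conjugation formula, the first equality $B_aB_bB_cB_a=B_bB_cB_aB_b$ rewrites as $(B_a)^{B_bB_cB_a}=B_b$, i.e. as the arc identity $B_bB_cB_a(b)=a$; the second equality $B_bB_cB_aB_b=B_cB_aB_bB_c$ is the cyclic image of the first under the rotation $r$ of the disk neighborhood $N$ of $a\cup b\cup c$ that sends $a\mapsto b\mapsto c\mapsto a$ (so that $rB_\eta\iv{r}=B_{r(\eta)}$), and therefore follows once the first is known. Thus everything comes down to verifying $B_bB_cB_a(b)=a$ inside the four-punctured disk $N$ carrying the shared endpoint and the three other endpoints: I would track the successive images $B_a(b)$, then $B_cB_a(b)$, then $B_bB_cB_a(b)$, and confirm the last is isotopic to $a$. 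I expect this bookkeeping to be the only delicate step, and it is genuinely delicate because the identity depends on the \emph{clockwise} cyclic order of $a,b,c$ at the shared endpoint: it fails for the opposite order, so the triangle relation cannot be deduced formally from the pairwise braid relations $\Br(B_a,B_b)$, $\Br(B_b,B_c)$, $\Br(B_a,B_c)$ and must use the geometric configuration. A single figure of $N$, with $a,b,c$ drawn as radii in clockwise order, makes the three arc computations transparent.
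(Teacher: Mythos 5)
Your reduction is correct and, since the paper offers no proof of this lemma beyond the remark that the relations are ``easy to check,'' your write-up is actually more detailed than the source. The commutation and braid cases are handled exactly as one would expect: disjoint supports give $\Co$, and the conjugation formula \eqref{eq:formulaB} turns $\Br(B_a,B_b)$ into the arc identity $B_a(b)=\iv{B_b}(a)$, which is precisely the identity the paper records in the caption of Figure~\ref{fig:ex0} ($d=a^b=b^{\iv{a}}$). For the triangle relation your route --- rewriting the first equality as $B_bB_cB_a(b)=a$ and tracking three successive images of $b$ in the four-punctured disk --- is valid but more laborious than necessary; the paper's own Proposition~\ref{prop:simple} shows that, granted $\Br(B_a,B_b)$, the relation $\Tr(B_a,B_b,B_c)$ is \emph{equivalent} to $\Co((B_a)^{B_b},B_c)=\Co(B_{\iv{B_b}(a)},B_c)$, so one only needs the single observation that the arc $d=\iv{B_b}(a)$ of Figure~\ref{fig:ex0} is disjoint from $c$ in the clockwise configuration, and the commutation case already proved finishes the job. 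That shortcut also makes your (correct) remark transparent that the clockwise order is essential: for the opposite cyclic order $d$ necessarily crosses $c$. The only caveat is that your decisive step --- the actual isotopy check of $B_bB_cB_a(b)=a$ --- is announced rather than executed; either carry out that three-step computation in a figure or replace it with the one-step disjointness check of $d$ and $c$.
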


An \emph{L-arc} $\delta$ in $\surfO$ is (the homotopy class of) a continuous function $\delta:[0,1] \to\mathrm{S}$ such that
\begin{enumerate}
	\item $\delta(0)=\delta(1)$ is in $\Tri$,
	\item for any $0<t<1$, $\delta(t)\notin \Tri$, and
	\item $\delta$ is not homotopic to $\delta(0)$.
\end{enumerate}
An L-arc $\delta$ is called \emph{simple} if $\delta(t_1)\neq\delta(t_2)$ for any $t_1\neq t_2\in(0,1)$.
%Denote by $\cA(\surfO)$ the set of simple closed arcs in $\surfO$.

%whose interior lies in $\mathrm{S}-\Tri$, whose endpoints coincide at a decorating point $Z$ in $\Tri$ and which is not isotopic to $Z$.

\begin{definition}
For any simple L-arc $\delta$, the \emph{twist} $t_\delta$ along $\delta$ is an element in $\MCG(\surfO)$,
moving the point $Z=\delta(0)$ along $\delta$ to $\delta(1)$ as shown in Figure~\ref{fig:Tt}.
\begin{figure}[ht]\centering
	\begin{tikzpicture}[scale=.3]
	\draw[very thick, NavyBlue](0,0)circle(6);
	\draw[cyan,very thick, dashed](-6,0)to(-.7,0);
	
	\draw[very thick, NavyBlue] (0,0) circle (.7);
	\draw[red] (0,0) circle (3);
	\draw[red,->,>=stealth](3,0.1)to(3,-0.01);
	\draw(-3,0)node[white] {$\bullet$} node{$\circ$} node[below]{$_Z\quad$}
	(3,0)node[right,red]{$\delta$};
	\draw(0:7.5)edge[very thick,->,>=latex](0:11);\draw(0:9)node[above]{$t_{\delta}$};
	\end{tikzpicture}\;
	%=======================================================
	\begin{tikzpicture}[scale=.3]
	\draw[very thick, NavyBlue](0,0)circle(6);
	\draw[cyan,very thick, dashed](-6,0).. controls +(60:9) and +(90:3) ..(5,0)
	.. controls +(-90:5) and +(-120:7) ..(-3,0)
	.. controls +(-60:4) and +(-90:2) ..(2,0)
	.. controls +(90:2) and +(120:3) ..(-.7,0);
	
	\draw[very thick, NavyBlue] (0,0) circle (.7);
	\draw[red] (0,0) circle (3);
	\draw[red,->,>=stealth](3,0.1)to(3,-0.01);
	\draw(-3,0)node[white] {$\bullet$} node{$\circ$} node[]{$_Z\quad$}
	(3,0)node[right,red]{$\delta$};
	\end{tikzpicture}
	\caption{The twist along a simple L-arc $\delta$}
	\label{fig:Tt}
\end{figure}
\end{definition}

Note that the twist $t_\delta$ along an L-arc $\delta$ depends on the orientation of $\delta$ in the way that $t_{\delta}=\underline{t_{\delta'}}$ where $\delta'$ is the L-arc given by $\delta'(t)=\delta(1-t)$ for $t\in[0,1]$.

\begin{notations}\label{nts:arc}
	To simplify the notation, we will denote by $\eta$ the braid twist $B_\eta$ and denote by $\delta$ the twist $t_\delta$.
\end{notations}

There are several equivalent definitions of
surface braid group (the most common one is via configuration space, cf. e.g. \cite{GJP}).
We will take the following one as it suits our purpose better.

\begin{definition}[Surface braid groups]
The surface braid group $\SBG(\surfO)$ of a decorated surface $\surfO$ is the subgroup of $\MCG(\surfO)$ generated by the twists of simple L-arcs and the braid twists.
\end{definition}

It follows directly from the definitions that the braid twist group $\BT(\surfO)$ is the subgroup of $\SBG(\surfO)$ generated by the braid twists. We shall use the following known presentation of $\SBG(\surfO)$ to obtain a presentation of $\BT(\surfO)$ later.
\begin{proposition}[\cite{BG}]\label{prop:sbg}
The group $\SBG(\surfO)$ admits the following presentation.
\begin{itemize}
	\item Generators: $\sigma_1,\cdots,\sigma_{\aleph-1},\delta_1,\cdots,\delta_{2g+b-1}$ (see Figure~\ref{fig:B's}).
	
	\item Relations: for $1\leq i,j\leq \aleph-1$ and $1\leq r,s\leq 2g+b-1$,
	
	$\begin{array}{lll}
	&\Co(\sigma_i,\sigma_j)&  \text{if $|i-j|>1$;}\\
	&\Br(\sigma_i,\sigma_j)&\text{if $|i-j|=1$;}\\
	& \Co(\sigma_i,\delta_r)& \text{if $i\neq 1$;}\\
	&\Co(\delta_r,\sigma_1\delta_r\sigma_1)&\\
	&\Co(\delta_r^{\iv{\sigma_1}},\delta_s)&\text{if $s<r$, with $s\notin\dgen$ or with $s\neq r-1$;}\\
	&\SCo(\sigma_1;\delta_{s+1},\delta_{s})
	&\text{if $s\in\dgen$.}
	\end{array}$
\end{itemize}
\end{proposition}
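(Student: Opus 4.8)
The plan is to establish the presentation by induction on the number $\aleph$ of decorating points, using the Fadell--Neuwirth fibration of configuration spaces together with the resulting (split) Birman exact sequence. Forgetting the last decorating point realizes $\Conf_\aleph(\mathrm{S})$ as the total space of a fibration over $\Conf_{\aleph-1}(\mathrm{S})$ whose fiber is $\mathrm{S}$ with $\aleph-1$ punctures; since $\partial\mathrm{S}\neq\emptyset$ this fiber is homotopy equivalent to a wedge of circles and the fibration is split by a section, so on fundamental groups one obtains a split short exact sequence
\[
1\to F\to\SBG(\surfO)\to\SBG(\operatorname{S}_{\aleph-1})\to1
\]
with $F$ free. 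I would take $\sigma_i$ to be the half-twist exchanging the $i$-th and $(i{+}1)$-st points and $\delta_r$ to be the twist pushing the first point once around the $r$-th standard loop of $\pi_1(\mathrm{S})$, with $\delta_{2h-1},\delta_{2h}$ the two loops $a_h,b_h$ of the $h$-th handle for $h\le g$ and $\delta_{2g+1},\dots,\delta_{2g+b-1}$ loops around the boundary components. For the base case $\aleph=1$ the group $\SBG(\operatorname{S}_1)$ is exactly $\pi_1(\mathrm{S})$, which for $\partial\mathrm{S}\neq\emptyset$ is free of rank $2g+b-1$ on the $\delta_r$; this anchors the induction and fixes the $\delta$'s. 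At each step the kernel $F$ is the free group $\pi_1$ of the $(\aleph-1)$-punctured surface, generated by the images of the $\delta_r$ together with small loops around the punctures.

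Next I would check that every listed relation holds in $\MCG(\surfO)$ by direct isotopy. The pure braid relations $\Co(\sigma_i,\sigma_j)$ for $|i-j|>1$ and $\Br(\sigma_i,\sigma_j)$ for $|i-j|=1$ are immediate from Lemma~\ref{lem:btrel}, since the defining arcs of $\sigma_i,\sigma_j$ are disjoint in the first case and share exactly the endpoint $i+1$ in the second. The relations $\Co(\sigma_i,\delta_r)$ for $i\neq1$ hold because the $\delta_r$ are supported near the first strand, which is disjoint from the support of $\sigma_i$. The remaining relations $\Co(\delta_r,\sigma_1\delta_r\sigma_1)$, $\Co(\delta_r^{\iv{\sigma_1}},\delta_s)$ and $\SCo(\sigma_1;\delta_{s+1},\delta_s)$ record how the first two strands wind around one loop and how loops around distinct generators interact; the single-handle relation $\SCo(\sigma_1;\delta_{s+1},\delta_s)$ is confined to $s\in\dgen$ precisely because it encodes the nonzero intersection of the paired handle loops $a_h=\delta_s$ and $b_h=\delta_{s+1}$, which is exactly the case singled out from $\Co(\delta_r^{\iv{\sigma_1}},\delta_s)$ by the proviso $s\neq r-1$ when $s\in\dgen$.

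The hard part is completeness, i.e.\ showing these relations suffice. I would run Reidemeister--Schreier on the split sequence above (equivalently, apply the standard presentation-combining lemma for a split extension): start from the inductively given presentation of the quotient $\SBG(\operatorname{S}_{\aleph-1})$, a free presentation of the kernel $F$, and adjoin the conjugation relations describing the action of each quotient generator on $F$. The crux is to verify that this action is exactly the Artin/Magnus-type action encoded by the stated relations---only $\sigma_1$ moves the first strand, and pushing it past $\sigma_1$ must transform the handle and boundary loops precisely as $\Co(\delta_r^{\iv{\sigma_1}},\delta_s)$ and $\SCo(\sigma_1;\delta_{s+1},\delta_s)$ dictate---and then to eliminate the auxiliary puncture-loop generators of $F$ by Tietze moves so as to recover exactly the generating set $\{\sigma_i,\delta_r\}$. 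Carefully matching the handle case (the odd/even bookkeeping governed by $\dgen$ and $\dgene$) against the boundary case, and checking that no further relations survive these eliminations, is where the genuine work lies.
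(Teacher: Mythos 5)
First, note that the paper does not prove this proposition at all: it is imported verbatim from Bellingeri--Godelle \cite{BG}, so there is no internal argument to compare yours against. Your outline is in the spirit of what that literature actually does, but as written it contains one structural error and otherwise defers the entire content of the proof.

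The structural error is the exact sequence you build the induction on. The Fadell--Neuwirth fibration ``forget the last point'' is a fibration of \emph{ordered} configuration spaces, and the resulting short exact sequence
$1\to F\to \pi_1(\Conf_\aleph(\mathrm{S}))\to\pi_1(\Conf_{\aleph-1}(\mathrm{S}))\to 1$
is a statement about \emph{pure} surface braid groups. The group $\SBG(\surfO)$ in this paper is the full surface braid group (the decoration set is only preserved setwise), and there is no homomorphism $\SBG(\surfO)\to\SBG(\operatorname{S}_{\aleph-1})$ given by forgetting a strand: the generator $\sigma_{\aleph-1}$, which interchanges the last two points, has no well-defined image. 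What the fibration actually yields is
$1\to\pi_1(\mathrm{S}\smallsetminus\{\aleph-1\ \text{pts}\})\to B_{\aleph-1,1}(\mathrm{S})\to B_{\aleph-1}(\mathrm{S})\to 1$,
where $B_{\aleph-1,1}(\mathrm{S})$ is the index-$\aleph$ subgroup of braids whose permutation fixes the distinguished strand; passing from this mixed braid group up to the full group is a separate (and nontrivial) step that your induction as stated skips entirely. A related bookkeeping slip: the kernel of the ``forget the last point'' map consists of loops of $Z_\aleph$, whereas the $\delta_r$ are loops of $Z_1$, so the kernel is not generated by ``the images of the $\delta_r$'' but by their conjugates under a product of the $\sigma_i$.

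Beyond this, even granting the corrected inductive framework, your proposal explicitly postpones the two steps that constitute the whole proof of a presentation theorem: the computation showing that conjugation by $\sigma_1$ acts on the kernel generators exactly as prescribed by $\Co(\delta_r^{\iv{\sigma_1}},\delta_s)$ and $\SCo(\sigma_1;\delta_{s+1},\delta_s)$ (including the $\dgen$ versus $\dgene$ case split), and the Tietze eliminations showing no further relations survive. Checking that the listed relations \emph{hold} is the easy half; the proposal as written does not carry out the completeness half, so it is a plan rather than a proof.
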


\begin{figure}[ht]\centering
	\begin{tikzpicture}[xscale=.3,yscale=.3]
	\draw[red](10,-7)to(14,-9)(24,-9)to(20,-9);\draw[dashed,red](14,-9)to(24,-9);
	\draw[red]plot [smooth,tension=.5] coordinates {(10,-7)(-.2,0)(1,1)(10,-7)};
	\draw[ultra thin,red,->-=1,>=stealth](-.2-.02,0+.02)to(-.2,0);
    \draw[ultra thin,red,->-=1,>=stealth](5-.1,.31)to(5-.1,0.3);
    \draw[ultra thin,red,->-=1,>=stealth](10-.27,0+.02)to(10-.27,0);
    \draw[ultra thin,red,-<-=1,>=stealth](16-.3+.02,0+.02)to(16-.3,0);
    \draw[ultra thin,red,-<-=1,>=stealth](30-.5+.015,0+.02)to(30-.5,0);

    \foreach \j in {28}{
    \draw[ultra thin,red,-<-=0,>=stealth](\j-.2-.15,0+.02)to(\j-.2-.15,0);}

	\draw[red]plot [smooth,tension=.5] coordinates {(10,-7)(5,0)(6.6,1)(10,-7)};
	\draw[red]plot [smooth,tension=.5] coordinates {(10,-7)(15,.5)(10,1.5)(10,-7)};
	\draw[red]plot [smooth,tension=.5] coordinates {(10,-7)(27,-1)(24,1.5)(10,-7)};
	\draw[red](10,-7)to(8+2+4-.5,0);
	\draw[red,dashed](8+2+4-.5,0).. controls +(60:1) and +(180:1) ..(17,5);
	\draw[red](10,-7).. controls +(45:5) and +(0:3) ..(17,5);
	\draw[red](10,-7).. controls +(35:5) and +(-100:2) ..(20+2+4-.3,0.3);
	\draw[red,dashed](20+2+4-.3,0.3).. controls +(80:1) and +(180:1) ..(28,5);
	\draw[red](10,-7).. controls +(8:27) and +(0:2) ..(28,5);
	
	\foreach \j in {1,2.5} {
		\draw[thick](8*\j-2+4+.5,0).. controls +(45:1) and +(135:1) ..(8*\j+2+4-.5,0);
		\draw[very thick](8*\j-2+4+.3,0.3).. controls +(-60:1) and +(-120:1) ..(8*\j+2+4-.3,0.3);
	}
	\draw[very thick]
	(0,5)to(32,5)(32,-11)(0,-11)to(32,-11)
	(0,5)to[bend right=90](0,-11);
	\draw[very thick,fill=gray!14] (32,-3) ellipse (1 and 8) 	node{\tiny{$\partial_{1}$}};
	\draw[very thick,fill=gray!14](1,0)circle(.5) node{\tiny{$\partial_{b}$}};
	\draw[very thick,fill=gray!14](6,0)circle(.5) node{\tiny{$\partial_{2}$}};
	\node at(3.5,0) {$\cdots$};
	\node at(18,0) {$\cdots$};
	\foreach \x/\y in {14/-9,10/-7,24/-9,20/-9}
	{\draw(\x,\y)node[white]{{$\bullet$}}node{{$\circ$}};}
	\draw (10,-7)node[below]{\small{$Z_{1}$}}
	(14,-9)node[above]{\small{$Z_2$}}
	(20,-9)node[above]{\small{$Z_{\aleph-1}$}}
	(24,-9)node[above]{\small{$Z_{\aleph}$}};
	\draw[red](-.5,2.2)node{$\delta_{2g+b-1}$} (6,2.2)node{$\delta_{2g+1}$}
	(12,2.8)node{$\delta_{2g}$} (19,2.8)node{$\delta_{2g-1}$}
	(23.5,2.4)node{$\delta_2$} (28,-4)node{$\delta_{1}$}
	(11.5,-8.5)node{$\sigma_1$} (22,-10)node{$\sigma_{\aleph-1}$};
	%\draw[very thick,fill=gray!14] (38,-3) ellipse (1 and 8)
	%	node{\tiny{$\partial_{1}$}};
	\end{tikzpicture}
	\caption{Generators for $\SBG(\surfO)$}
	\label{fig:B's}
\end{figure}

%=========================================================
%\subsection{Quivers with potential and mutations}
%=========================================================

%=========================================================
%\subsection{Spherical twists on 3-Calabi-Yau categories}
%=========================================================

%Denote by $S_i$ the simple $\Gamma$-module associated to a vertex $i$ of $\overline{Q}$.

% is the graded quiver whose arrows of degree 0 are the arrows of $Q$, whose arrows of degree -1 are the

%The Jacobian algebra $\mathcal{P}(Q,W)$ is defined to be the quotient of the completion path algebra by the closure of the ideal generated by the partial of $W$.

%Let $\Gamma=\Gamma(Q,W)$ be the corresponding \emph{Ginzburg dg algebra (of degree 3)} \cite{G}. Then the \emph{finite dimensional derived category} $\D_{fd}(\Gamma)$ of $\Gamma$, which is the full subcategory of the derived category of $\Gamma$ consisting of the finite dimensional total homology dg modules, is 3-Calabi-Yau \cite{Keller}. The $0$-th cohomology $H^0(\Gamma)$ of $\Gamma$ is called the \emph{Jacobian algebra} of the quiver with potential $(Q,W)$. Its module category consists of representations of the quiver $Q$ satisfying certain conditions and can embed into the heart of the canonical $t$-structure of $\D_{fd}(\Gamma)$.

%=========================================================
\subsection{Decorated marked surfaces and quivers with potential}\label{sec:MS}
%=========================================================
A \emph{marked surface} $\surf$ without punctures in the sense of \cite{FST} is a pair $(\mathrm{S},\mathbf{M})$ of a compact connected oriented surface $\mathrm{S}$ with non-empty boundary and a finite set $\mathbf{M}$ of marked points on the boundary $\partial\surf$
satisfying that each connected component of $\partial\mathrm{S}$ contains at least one marked point.
%As mentioned in the previous subsection, $\mathrm{S}$ is determined by its genus $g$ and the number $b$ of its boundary components.
%Up to homeomorphism, $\surf$ is determined by the following data
%\begin{itemize}
%\item the genus $g$;
%\item the number $|\partial\surf|$ of boundary components;
%\item the integer partition of $|\M|$ into $|\partial\surf|$ parts describing the number of marked points on each boundary component.
%\end{itemize}

An (open) \emph{arc} in $\surf$ is a curve (up to isotopy) on $\mathrm{S}$ whose interior lies in $\mathrm{S}-\partial\mathrm{S}$, whose endpoints are marked points in $\mathbf{M}$, and which is neither homotopic to a boundary segment nor to a point. A triangulation of $\surf$
is a maximal collection of simple arcs in $\surf$ which do not cross each other in the interior of $\surf$.
Any triangulation of $\surf$ consists of $n=6g+3b+|\mathbf{M}|-6$ arcs and divides $\surf$ into
$\frac{2n+|\mathbf{M}|}{3}$ triangles (cf. e.g.  \cite[Proposition~2.10]{FST} and \cite[Equation~(2.9)]{QQ}).

Putting the notion of decorated surface and marked surface together,
we have the following.
\begin{definition}[{\cite[Definition~3.1]{QQ}}]
A \emph{decorated marked surface} $\surfo$ is a marked surface $\surf$
with a set $\Tri$ of $\aleph=\frac{2n+|\mathbf{M}|}{3}$ decorating points in
the interior of $\surf$.
A triangulation $\TT$ of $\surfo$ is induced by a triangulation of $\surf$
such that each triangle contains exactly one decorating point.
\end{definition}

For a decorated marked surface $\surfo=(\mathrm{S},\M,\Tri)$, when forgetting the marked points,
it becomes a decorated surface $\surfO$ (where $\aleph=\frac{2n+|\mathbf{M}|}{3}$).
In this case, let
\[\BT(\surfo)\colon=\BT(\surfO) \ \text{and }\MCG(\surfo)\colon=\MCG(\surfO).\]
On the other hand,
one can turn a decorated surface $\surfO$ into a decorated marked surface $\surfo$,
by adding certain number of marked points, when
%This is because $\surfO$ is a decorated marked surface if and only if $\aleph=4g+2b+|\M|-4$ for some $|\M|\geq b$. The latter is equivalent to that
$\aleph\geq 4g+3b-4$.
%This is because the number of decorating points of $\surfO$ might be too small.

Let $\TT$ be a triangulation of $\surf$. For any arc $\gamma\in\TT$, the dual $\gamma^\ast=\gamma^\ast_{\TT}$ of $\gamma$ with respect to $\TT$ is the unique closed arc in $\cA(\surfo)$ which intersects $\gamma$ once and does not intersect any other arcs in $\TT$. Let $\TT^\ast$ be the dual of $\TT$, that is, $\TT^\ast$ consists of the duals of the arcs in $\TT$. Let $\BT(\TT)$ be the subgroup of $\BT(\surfo)$ generated by the braid twists $B_{\gamma^\ast}$, $\gamma^\ast\in\TT^\ast$.

%Then the vertices of $Q_\TT$ are also indexed by the closed arcs in $\TT^\ast$.

\begin{lemma}[{\cite[Proposition~4.13]{QQ},\cite[Proposition~2.3]{QZ2}}]\label{lem:4.2}
	When specifying a triangulation $\TT$ of $\surfo$,
	the group $\BT(\TT)$ equals $\BT(\surfo)$.
\end{lemma}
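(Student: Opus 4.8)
The inclusion $\BT(\TT)\subseteq\BT(\surfo)$ holds by definition, so the content is the reverse one; since $\BT(\surfo)$ is generated by the braid twists $B_\eta$ with $\eta\in\cA(\surfo)$, it suffices to prove that $B_\eta\in\BT(\TT)$ for every simple closed arc $\eta$. The plan is to put $S=\{\eta\in\cA(\surfo)\mid B_\eta\in\BT(\TT)\}$, observe that $\TT^*\subseteq S$, and show $S=\cA(\surfo)$. The first, purely formal, point is that $S$ is stable under the braid twists it contains: if $a,\eta\in S$, then the conjugation formula~\eqref{eq:formulaB} gives $B_{B_a^{\pm1}(\eta)}=(B_\eta)^{B_a^{\mp1}}$, which lies in $\BT(\TT)$ because $B_a,B_\eta\in\BT(\TT)$; hence $B_a^{\pm1}(\eta)\in S$, and the group $\langle B_\eta\mid\eta\in S\rangle=\BT(\TT)$ permutes $S$.

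The geometric heart of the argument is the interaction between braid twists and flips. I would use the fact that the flip of $\TT$ at an arc $\gamma$ is realised, up to isotopy, by the braid twist along the dual arc $\gamma^*$: on the level of underlying arc systems one has $\TT'=B_{\gamma^*}^{\pm1}(\TT)$ for the flipped triangulation $\TT'$, the braid twist dragging the relevant decorating point across $\gamma$ inside the quadrilateral formed by the two triangles adjacent to $\gamma$. As dualisation commutes with homeomorphisms, this yields $(\TT')^*=B_{\gamma^*}^{\pm1}(\TT^*)$, so every $\beta'\in(\TT')^*$ equals $B_{\gamma^*}^{\pm1}(\beta)$ for some $\beta\in\TT^*$, whence $B_{\beta'}=(B_\beta)^{B_{\gamma^*}^{\mp1}}\in\BT(\TT)$. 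Thus $(\TT')^*\subseteq S$ and $\BT(\TT')\subseteq\BT(\TT)$; since flips are invertible this gives $\BT(\TT')=\BT(\TT)$ for a single flip, and hence for any flip-equivalent triangulation.

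To conclude I would invoke two standard topological facts about $\surfo$: the flip graph of its triangulations is connected, and every simple closed arc $\eta\in\cA(\surfo)$ occurs as the dual $\gamma^*$ of some arc $\gamma$ in some triangulation $\TT_\eta$ (take an arc crossing $\eta$ exactly once and complete it to a triangulation whose two triangles adjacent to that arc carry the two endpoints of $\eta$). Connectivity together with the previous paragraph gives $\BT(\TT)=\BT(\TT_\eta)\ni B_{\gamma^*}=B_\eta$, so $\eta\in S$; as $\eta$ is arbitrary, $S=\cA(\surfo)$ and $\BT(\TT)=\BT(\surfo)$. I expect the main obstacle to lie in the second paragraph and in the realisation statement: verifying in the local quadrilateral model that a flip is genuinely implemented by $B_{\gamma^*}$ --- keeping careful track of the decorating points, since the defining condition that each triangle contains exactly one of them must be preserved throughout --- and checking that an arbitrary simple closed arc completes to the dual of a triangulation. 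Once these inputs are secured, the orbit-closure step and the appeal to flip connectivity are routine.
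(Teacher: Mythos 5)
This lemma is quoted in the paper from \cite[Proposition~4.13]{QQ} and \cite[Proposition~2.3]{QZ2} rather than reproved; the argument there (mirrored in the proof of Lemma~\ref{lem:gen} of this paper) is an induction on the intersection number $\Int(\eta,\TT)$, writing a closed arc $\eta$ with $\Int(\eta,\TT)\geq 2$ as $\beta^{\iv{\alpha}}$ for closed arcs $\alpha,\beta$ of strictly smaller intersection number, so that neither flips nor connectivity of any exchange graph enter. Your route is genuinely different and has two gaps. The first is the claim that a flip is realised by the braid twist $\Bt{\gamma^*}$. Every element of $\MCG(\surfo)$ fixes $\partial\surf$ pointwise, so $\Bt{\gamma^*}(\gamma)$ has the same endpoints as $\gamma$, while $\gamma^\flat$ has its endpoints moved to the adjacent marked points; hence $\TT'\neq\Bt{\gamma^*}^{\pm1}(\TT)$, and likewise $(\TT')^*\neq\Bt{\gamma^*}^{\pm1}(\TT^*)$, since $\Bt{\gamma^*}^{\pm1}(\gamma^*)=\gamma^*$ but $\gamma^*$ is not the dual of $\gamma^\flat$. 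The correct local statement is the one recorded in Figure~\ref{fig:ex0}: the new dual is $d=a^b=b^{\iv{a}}$, a conjugate of an old dual by an adjacent old dual, the remaining duals being unchanged. This still gives $\BT(\mu_\gamma^\flat(\TT))=\BT(\TT)$, so this step is repairable.

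The second gap is not: the exchange graph $\EG(\surfo)$ of triangulations of the \emph{decorated} surface is not connected (which is precisely why the proof of Theorem~\ref{thm:1} must appeal to \cite[Remark~3.10]{QQ} that all of its components are identical). Flip-invariance therefore only yields $\BT(\TT)=\BT(\TT')$ for $\TT'$ in the same component as $\TT$, and you have no control over which component contains a triangulation $\TT_\eta$ realising a given $\eta$ as a dual arc; the existence of such a $\TT_\eta$ also needs an argument (it is essentially \cite[Proposition~2.3]{QZ2}), but even granting it, your chain $\BT(\TT)=\BT(\TT_\eta)\ni\Bt{\eta}$ breaks at the first equality. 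Replacing the last two paragraphs of your proposal by the intersection-number induction of Lemma~\ref{lem:gen} avoids both issues.
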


We recall the notion of quiver with potential from \cite{DWZ}. A \emph{quiver} is a quadruple $Q=(Q_0,Q_1,s,t)$, where $Q_0$ is the set of \emph{vertices} of $Q$, $Q_1$ is the set of \emph{arrows} of $Q$, and $s,t:Q_1\to Q_0$ send an arrow $a$ of $Q$ to its starting vertex $s(a)$ and its ending vertex $t(a)$, respectively. The notation $a:i\to j$ denotes that $a$ is an arrow of $Q$ with $i=s(a)$ and $j=t(a)$. A \emph{path of length $d$} is a sequence $a_1a_2\cdots a_d$ with $t(a_i)=s(a_{i+1})$. A path $a_1a_2\cdots a_d$ is called a \emph{cycle} if $t(a_d)=s(a_1)$. A cycle of length $d$ is called an $d$-cycle.
%Let $Q$ be a finite quiver without loops and
Let $\k$ an algebraically closed field. %For convenience, we always assume that $Q$ does not have loops or 2-cycles and assume that $\k$ is algebraically closed.
%without loops or 2-cycles and $\k$ an algebraically closed field.
%A cycle in $Q$ is a path which starts and ends at the same vertex.
A \emph{potential} $W$ is a linear combination of finite cycles in $Q$ up to cyclic permutation. We call the pair $(Q,W)$ a quiver with potential. % (see \cite{DWZ}).
%A quiver with potential $(Q,W)$ is called \emph{trivial} if $W$ is a linear combination of 2-cycles in $Q$.
%A right equivalence from a quiver with potential $(Q,W)$ to another $(Q',W')$ is

Let $i$ be a vertex of $Q$ such that there are no 2-cycles through it. The \emph{pre-mutation} $\widetilde{\mu}_i(Q,W)$ of $(Q,W)$ at $i$ is a new quiver with potential $(Q',W')$ constructed as follows.
\begin{itemize}
	\item The new quiver $Q'$ is obtained from $Q$ by
	\begin{enumerate}
		\item[Step 1] For any pair of arrows $a:j\to i$ and $b:i\to l$, add a new arrow $[ab]:j\to l$.
		\item[Step 2] Reverse each arrow $a$ starting or ending at $i$, i.e. replace $a$ with a new arrow $a^\star$ with $s(a^\star)=t(a)$ and $t(a^\star)=s(a)$.
	\end{enumerate}
	\item The new potential $W'=[W]+\sum b^\star a^\star [ab]$, where $[W]$ is obtained from $W$ by replacing each composition $ab$ of arrows $a$ and $b$ with $ab$ going through $i$ by $[ab]$, and the sum runs over all composition $ab$ going through $i$.
\end{itemize}
%Note that there might be 2-cycles in the expression of $W'$.
We assume that any arrow in a 2-cycle in $W'$ does not occur in any other item in $W'$. Then the \emph{mutation} of $(Q,W)$ at the vertex $i$, denoted by $\mu_i(Q,W)$, is obtained from $(Q',W')$ by removing all 2-cycles from $W'$ and removing all arrows in these 2-cycles from $Q'$. We remark that in the general case (i.e. without the above assumption) the mutation of quiver with potential need to be defined via the notion of right equivalence. However, in the marked surface with punctures case, this assumption always holds (cf. Case 1 in the proof of \cite[Theorem~30]{LF}), which makes the definition simpler. %The above assumption does not hold in general. However it holds in our setting.

Let $\TT$ be a triangulation of $\surfo$.
There is an associated quiver with potential $(Q_\TT,W_\TT)$ \cite{FST,LF}, constructed as follows.
%(see, e.g. \cite{FST} and \cite{LF} for the precise definition):
\begin{itemize}
\item The vertices of $Q_\TT$ are (indexed by) the arcs in $\TT$.
\item There is an arrow from $i$ to $j$ whenever there is a triangle in $\TT$ having $i$ and $j$ as edges with $j$ following $i$ in the clockwise orientation (which is induced by the orientation of $\surf$). For instance, the quiver for a triangle is shown in
%for each triangle in $\TT$, there are three arrows between the corresponding vertices as shown in
Figure~\ref{fig:quiver}.
\item Each triangle in $\TT$ yields a unique 3-cycle up to cyclic permutation. The potential $W_\TT$ is the sum of all such 3-cycles.
\end{itemize}
\begin{figure}[h]\centering
  \begin{tikzpicture}[scale=.7]
  \foreach \j in {1,...,3}  { \draw (120*\j-30:2) coordinate (v\j);}
    \path (v1)--(v2) coordinate[pos=0.5] (x3)
              --(v3) coordinate[pos=0.5] (x1)
              --(v1) coordinate[pos=0.5] (x2);
    \foreach \j in {1,...,3}{\draw (x\j) node[red] (x\j){$\bullet$};}
    \draw[->,>=stealth,red,thick] (x1) to (x3);
    \draw[->,>=stealth,red,thick] (x3) to (x2);
    \draw[->,>=stealth,red,thick] (x2) to (x1);
    \draw[gray,thin] (v1)--(v2)--(v3)--cycle;
  \end{tikzpicture}
\caption{The (sub-)quiver (with potential) associated to a triangle}
\label{fig:quiver}
\end{figure}

\begin{definition}
Let $\gamma$ be an arc in a triangulation $\TT$ of $\surfo$. The arc $\gamma^\flat=\gamma^{\flat}(\TT)$ is obtained from $\gamma$ by clockwise moving its endpoints along the quadrilateral in $\TT$ whose diagonal is $\gamma$ (cf. Figure~\ref{fig:ex0}), to the next marked points. The \emph{backward flip} of a triangulation $\TT$ of $\surfo$ at $\gamma\in\TT$ is the triangulation $\mu_\gamma^\flat(\TT)$ obtained from $\TT$ by replacing the arc $\gamma$ with $\gamma^\flat$.
Similarly, we have the notion of forward flip,
which is the inverse of backward flip, i.e. $\TT=\mu_{\gamma^\flat}^\sharp\left(\mu_\gamma^\flat(\TT)\right)$.
\end{definition}

\begin{figure}[ht]\centering
	\begin{tikzpicture}[scale=.35,rotate=-120]
	\foreach \j in {1,...,6}{\draw[very thick](60*\j+30:6)to(60*\j-30:6);}
	\foreach \j in {1,...,3}{\draw[NavyBlue,thin](120*\j-30:6)node{$\bullet$}to(120*\j+90:6)
		(120*\j-90:6)node{$\bullet$};}
	\foreach \j in {1,...,3}{  \draw[red,thick](30+120*\j:4)to(0,0);}
	\foreach \j in {1,...,3}{  \draw(30-120*\j:4)node[white]{$\bullet$}node[Emerald]{$\circ$};}
	\draw(30-120*1+14:2)node[red]{\text{\footnotesize{$a$}}};
	\draw(30-120*2+14:2)node[red]{\text{\footnotesize{$b$}}};
	\draw(30-120*3+14:2)node[red]{\text{\footnotesize{$c$}}};
	\draw(0,0)node[white]{$\bullet$}node[Emerald]{$\circ$};
	\end{tikzpicture}
	\qquad
	\begin{tikzpicture}[xscale=-.35,yscale=.35]
	\foreach \j in {1,...,6}{\draw[very thick](60*\j+30:6)to(60*\j-30:6);}
	\foreach \j in {1,...,2}{\draw[NavyBlue,thin](120*\j-30:6)node{$\bullet$}
		to(120*\j+90:6)(120*\j-90:6)node{$\bullet$};}
	\foreach \j in {2,3}{  \draw[red,thick](30+120*\j:4)to(0,0);}
	\draw[red,thick](30+120:4)to[bend left](30+120*3:4);
	\foreach \j in {1,3}{  \draw(30-120*\j:4)node[white]{$\bullet$}node[Emerald]{$\circ$};}
	\draw[](30-120+14:2)node[red]{\text{\footnotesize{$c$}}};
	\draw[](90:3.5)node[red]{\text{\footnotesize{$d$}}};
	\draw[](30-120*3-14:1.5)node[red]{\text{\footnotesize{$a$}}};
	\draw[NavyBlue,thin](30:6).. controls +(-120:2) and +(-60:2) ..(30:2)
	.. controls +(120:3) and +(40:2) ..(-150:6);
	\foreach \j in {1,...,3}{  \draw(30+120*\j:4)node[white]{$\bullet$}node[Emerald]{$\circ$};}
	\draw(0,0)node[white]{$\bullet$}node[red]{$\circ$};
	\end{tikzpicture}
	\caption{A backward flip (where $d=a^b=b^{\iv{a}}$)}
	\label{fig:ex0}
\end{figure}

Flip of triangulations is compatible with mutation of quivers with potential in the following sense.

\begin{proposition}[{\cite[Theorem~30]{LF}}]\label{prop:LF}
Let $\gamma$ be an arc in a triangulation $\TT$ of $\surfo$. Then
the quivers with potential $(Q_{\mu_\gamma^\flat(\TT)},W_{\mu_\gamma^\flat(\TT)})$ and $(Q_{\mu_\gamma^\sharp(\TT)},W_{\mu_\gamma^\sharp(\TT)})$ coincide with the quiver with potential $\mu_\gamma(Q_\TT,W_\TT)$.
\end{proposition}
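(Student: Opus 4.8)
Although this is \cite[Theorem~30]{LF}, let me indicate the approach in the unpunctured setting at hand. First I would note that the quiver with potential $(Q_\TT,W_\TT)$ depends only on the triangulation of the underlying marked surface $\surf$: the decorating points of $\Tri$ sit one per triangle and play no part in the construction of $Q_\TT$ or $W_\TT$. Forgetting the decoration, the backward flip $\mu_\gamma^\flat$ and the forward flip $\mu_\gamma^\sharp$ yield the same flipped triangulation of $\surf$, namely the one exchanging $\gamma$ for the opposite diagonal of the quadrilateral it bisects; hence $Q_{\mu_\gamma^\flat(\TT)}=Q_{\mu_\gamma^\sharp(\TT)}$ and $W_{\mu_\gamma^\flat(\TT)}=W_{\mu_\gamma^\sharp(\TT)}$, so it suffices to treat a single flip. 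Since $\surf$ has no punctures there are no self-folded triangles, so $\gamma$ is the diagonal of an honest (possibly degenerate) quadrilateral bounded by two \emph{distinct} triangles; in particular no $2$-cycle of $Q_\TT$ passes through $\gamma$, and $\mu_\gamma(Q_\TT,W_\TT)$ is defined.

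Next I would exploit that both operations are local: the flip changes only the two triangles meeting $\gamma$, while $\mu_\gamma$ changes only arrows incident to $\gamma$ or running between its neighbours, together with the cubic terms of $W_\TT$ supported there. So the plan is to fix the generic local model and match the two outputs directly. Label the quadrilateral $v_1v_2v_3v_4$ clockwise, with diagonal $\gamma=v_1v_3$ and sides $\alpha=v_1v_2$, $\beta=v_2v_3$, $\rho=v_3v_4$, $\sigma=v_4v_1$. The two triangles contribute arrows $a\colon\beta\to\gamma$, $b\colon\gamma\to\alpha$, $c\colon\alpha\to\beta$ and $a'\colon\sigma\to\gamma$, $b'\colon\gamma\to\rho$, $c'\colon\rho\to\sigma$, and the two cubic summands $cab$, $c'a'b'$ of $W_\TT$. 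Running $\mu_\gamma$: pre-mutation reverses $a,b,a',b'$, adjoins the four composites $[ab],[ab'],[a'b],[a'b']$, turns the two cubic terms into the $2$-cycles $c[ab]$, $c'[a'b']$, and adds $b^\star a^\star[ab]+b'^\star a^\star[ab']+b^\star a'^\star[a'b]+b'^\star a'^\star[a'b']$. As $c$ (resp.\ $c'$) occurs only in its quadratic term, the change of arrows $c\mapsto c-b^\star a^\star$ (resp.\ $c'\mapsto c'-b'^\star a'^\star$) cancels $b^\star a^\star[ab]$ against $c[ab]$ (resp.\ $b'^\star a'^\star[a'b']$ against $c'[a'b']$); deleting these $2$-cycles leaves the reduced potential $b'^\star a^\star[ab']+b^\star a'^\star[a'b]$ plus the untouched $3$-cycles of the other triangles.

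I would then compute the flip and check agreement. Replacing $\gamma$ by $\gamma^\flat=v_2v_4$ creates triangles $\{\alpha,\sigma,\gamma^\flat\}$ and $\{\beta,\rho,\gamma^\flat\}$, whose $3$-cycles are exactly $b^\star a'^\star[a'b]$ and $b'^\star a^\star[ab']$, with $\gamma^\flat$ in the role of the reversed $\gamma$: the newly adjacent pairs $(\sigma,\alpha)$ and $(\beta,\rho)$ account for the surviving composites $[a'b]$ and $[ab']$, while the pairs $(\alpha,\beta)$ and $(\rho,\sigma)$, no longer in a common triangle, lose their arrows $c$ and $c'$. This matches $\mu_\gamma(Q_\TT,W_\TT)$ term by term.

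The main obstacle is the degenerate configurations, where $\alpha,\beta,\rho,\sigma$ are not four distinct arcs. Here I would organise a finite case analysis according to how many of these sides are boundary segments (such a side is not a vertex and kills the arrows through it) and how many are identified in pairs as arcs of $\surf$ forced by the global topology. The delicate point---the real content of \cite[Theorem~30]{LF}---is to verify that in each such case the elementary definition of $\mu_\gamma$ by removal of $2$-cycles still applies, i.e.\ that no arrow lying in a $2$-cycle of the pre-mutated potential occurs in another of its terms, and that the additional cancellations forced by the identifications reproduce precisely the extra arrows of the flipped triangulation. The absence of self-folded triangles in the unpunctured case keeps this analysis finite and rules out the worst degenerations, but it is nonetheless the step I expect to require the most care.
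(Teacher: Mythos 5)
The paper does not prove this statement: it is quoted verbatim from Labardini-Fragoso as \cite[Theorem~30]{LF}, so there is no internal proof to compare against. Your sketch is a faithful reconstruction of the standard argument in the unpunctured case, and the parts you work out are correct: forgetting the decoration set $\Tri$ does identify $\mu_\gamma^\flat(\TT)$ and $\mu_\gamma^\sharp(\TT)$ as triangulations of $\surf$ (both replace $\gamma$ by the opposite diagonal, and $(Q_\TT,W_\TT)$ sees only the underlying triangulation), and your generic local computation matches: the pre-mutation produces the four composites $[ab],[ab'],[a'b],[a'b']$, the two $2$-cycles $c[ab]$ and $c'[a'b']$ are removed together with $c,[ab],c',[a'b']$, and the surviving cubics $b'^\star a^\star[ab']$ and $b^\star a'^\star[a'b]$ are exactly the $3$-cycles of the two new triangles $\{\alpha,\sigma,\gamma^\flat\}$ and $\{\beta,\rho,\gamma^\flat\}$. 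One small remark: given the simplified definition of mutation adopted in Section~2.2 (delete $2$-cycles and their arrows outright, under the hypothesis that no arrow of a $2$-cycle occurs elsewhere in $W'$), the right-equivalence $c\mapsto c-b^\star a^\star$ you invoke is not needed in the generic case; it only becomes relevant when verifying that hypothesis, or in the degenerate configurations. You correctly identify those configurations (sides of the quadrilateral being boundary segments or globally identified arcs) as the genuine content of the cited theorem; a complete proof would have to carry out that finite case analysis, which your sketch defers, as does the paper by citing \cite{LF}.
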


%Due to \cite{FST,LF},  in the sense of \cite{DWZ}.

%For simplicity, we define mutation of quivers with potential via (backward) flip of triangulations, which is compatible with the original one.

%\begin{definition}
%Let $(Q_\TT,W_\TT)$ be the quiver with potential associated to a triangulation $\TT$ of $\surfo$ and $\gamma\in\TT$ a vertex of $Q_\TT$. The mutation $\mu_\gamma(Q_\TT,W_\TT)$ is defined to be $(Q_{\TT'},W_{\TT'}$, where $\TT'$ is the backward flip of $\TT$ w.r.t. $\gamma$ (cf. \cite{FST, LF}).
%\end{definition}

%=========================================================
\subsection{Spherical twists on 3-Calabi-Yau categories}\label{sec:CY}
%=========================================================

Let $(Q,W)$ be a quiver with potential. For an arrow $a$ of $Q$ and a cycle $a_1a_2\cdots a_s$ in $Q$, define $\partial_a(a_1a_2\cdots a_s)=\sum_{a_i=a}a_{i+1}\cdots a_sa_1\cdots a_{i-1}$. This extends linearly to $\partial_a W$.

%The graded quiver $\overline{Q}$ associated to $Q$ has the same vertex set as $Q$ and

\begin{definition}
The \emph{complete Ginzburg dg algebra} $\Gamma=\Gamma(Q,W)$ is constructed as follows \cite{G}.
Let $\widetilde{Q}$ be the graded quiver with the same vertices as $Q$ and whose arrows are
\begin{itemize}
	\item the arrows of $Q$ (with degree 0),
	\item an arrow $a^\ast: j\to i$ of degree -1 for each arrow $a:i\to j$ of $Q$,
	\item a loop $t_i: i\to i$ of degree -2 for each vertex $i$ of $Q$.
\end{itemize}
The underlying graded algebra of $\Gamma$ is the completion of the graded path algebra $\k\widetilde{Q}$ and the differential $d$ of $\Gamma$
% where $\overline{Q}=Q\cup\{a^\ast:j\to i\mid a:i\to j\text{ in }Q\}\cup\{t_i:i\to i \mid i\text{ a vertex in }Q\}$ with the arrows in $Q$ in degree 0, the arrows $a^\ast$ in degree -1 and the arrows $t_i$ in degree -2. The differential
is linearly determined by the formulas $d(a^\ast)=\partial_a W$ and $d(\sum_{i:\text{ a vertex of }Q} t_i)=\sum_{a:\text{ an arrow of }Q} (aa^\ast-a^\ast a)$.
\end{definition}

Denote by $\D(\Gamma)$ the derived category of $\Gamma$. The \emph{finite dimensional derived category} $\D_{fd}(\Gamma)$ is the full subcategory of $\D(\Gamma)$ consisting of those dg $\Gamma$-modules whose homology is of finite total dimension.
%Let $(Q',W')$ be the mutation of $(Q,W)$ at a vertex $i$. Then by the main result in \cite{KY}, there is an equivalence
The category $\D_{fd}(\Gamma)$ is a 3-Calabi-Yau triangulated category \cite{Ke} in the sense that for any objects $X,Y$ of $\D_{fd}(\Gamma)$, there is a functorial isomorphism
\[\Hom(X,Y)\cong D\Hom(Y,X[3])\]
where $D=\Hom_\k(-,\k)$.

\begin{definition}[spherical twists \cite{ST}]
An object $S$ of $\D_{fd}(\Gamma)$ is called \emph{(3-)spherical} provided that
\[\Hom_{\D_{fd}(\Gamma)}(S,S[r])\cong\begin{cases}
\k&\text{if $r=0$ or $3$;}\\0&\text{otherwise.}
\end{cases} \]
For a spherical object $S$, there is an induced auto-equivalence $\phi_S$,
called spherical twist,
 of $\D_{fd}(\Gamma)$ defined by the triangle \cite{ST}:
%Recall from \cite{ST} that any spherical object $S$ defines a \emph{twist functor} $\phi_S\in\Aut(\D_{fd}(\Gamma))$, such that
\[
    \phi_S(X)[-1]\to S\otimes\Hom^\bullet(S,X)\to X\to \phi_S(X)
\]
for $X\in\D_{fd}(\Gamma)$.
% is an  induced by a spherical object $S$ such that
%with inverse
%\[
%\phi_S^{-1}(X)=\Cone\left(X\to S\otimes\Hom^\bullet(X,S)^\vee \right)[-1]\]
\end{definition}

For any vertex $i$ of $Q$, the corresponding simple b$\Gamma$-module $S_i$ is a spherical object in $\D_{fd}(\Gamma)$ (cf. \cite[Lemma~2.15]{KY}). Denote by $\Sim\zero$ the set of simple $\Gamma$-modules. The spherical twist group $\ST(\Gamma)$ of $\D_{fd}(\Gamma)$ is defined to be the subgroup of the auto-equivalence group $\Aut\D_{fd}(\Gamma)$ generated by $\phi_S$, $S\in \Sim\zero$.   Set $\Sph(\Gamma)=\ST(\Gamma)\cdot \Sim\zero$. Then we have that $\ST(\Gamma)$ is also generated by all $\phi_X$, $X\in\Sph(\Gamma)$.

%Fix an algebraically closed field $\k$. Denote by $\Gamma=\Gamma(Q,W)$ the \emph{Ginzburg dg $\k$-algebra (of degree 3)} \cite{G} associated to a quiver with potential $(Q,W)$ and $\D_{fd}(\Gamma)$ the finite-dimensional derived category of $\Gamma$.
%Then $\D_{fd}(\Gamma)$ is a 3-Calabi-Yau, Hom-finite, Krull-Schmidt, $\k$-linear triangulated category.
%We also know that $\D_{fd}(\Gamma)$ admits a canonical heart $\zero$ generated by the simple $\Gamma$-modules $S_e$ corresponding to the vertices $e$ of $Q$. Each $S_e$ is

%Known results.

For the quiver with potential $(Q_\TT,W_\TT)$ associated to a triangulation $\TT$ of $\surfo$, denote the corresponding Ginzburg dg algebra by $\Gamma_\TT$.

\begin{theorem}[{\cite[Theorem~1]{QQ}}]\label{thmQQ}
	There is an isomorphism
	\begin{gather}\label{1}
	\iota\colon\BT(\TT)\to\ST(\Gamma_\TT),
	\end{gather}
	sending the standard generators (i.e. the braid twists of closed arcs in $\TT^*$)
	to the standard generators (i.e. the spherical twists of the simple $\Gamma_\TT$-modules).
\end{theorem}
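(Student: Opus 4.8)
The plan is not to define $\iota$ through generators and relations: no presentation of $\BT(\surfo)$ is available a priori, since producing one is the goal of the present paper rather than the prequel. Instead I would build an equivariant bijection between closed arcs and reachable spherical objects and extract the group isomorphism from it. The underlying principle is that the braid twist $B_\eta$ should act on arcs exactly as the spherical twist of the corresponding object acts on $\Sph(\Gamma_\TT)$, so that the whole correspondence is an intertwiner of two group actions.

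First I would set up the dictionary. Define a map $F\colon\cA(\surfo)\to\Sph(\Gamma_\TT)$ with $F(\gamma^\ast)=S_\gamma$ for every $\gamma\in\TT$, and propagate it across the flip graph using Proposition~\ref{prop:LF}: a backward flip $\mu_\gamma^\flat$ on the surface matches the mutation $\mu_\gamma(Q_\TT,W_\TT)$ on the categorical side. The key local computation is to compare, arc by arc in the flip picture of Figure~\ref{fig:ex0}, how the two sides transform: the new dual arcs are obtained from the old ones by braid twists, governed by $B_{\Psi(\eta)}=(B_\eta)^{\iv\Psi}$ of Lemma~\ref{rmk:conj}, while the new simple modules are obtained from the old ones by the spherical twists $\phi_{S_\gamma}^{\pm1}$ accompanying a mutation of the standard heart of $\D_{fd}(\Gamma_\TT)$. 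Matching these yields the intertwining relation $F(B_\eta(\zeta))\cong\phi_{F(\eta)}(F(\zeta))$, with the elementary relations $\Co$, $\Br$, $\Tr$ of Lemma~\ref{lem:btrel} going to their categorical counterparts. That $F$ is a bijection would follow from an ``intersection number $=$ total dimension of $\Hom^\bullet$'' formula between arcs and objects.

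With the dictionary in place I would set $\iota(B_{\gamma^\ast})=\phi_{S_\gamma}$ and deduce surjectivity and well-definedness. By Lemma~\ref{lem:4.2} the group $\BT(\TT)$ equals $\BT(\surfo)$ and is independent of $\TT$, while $\ST(\Gamma_\TT)$ is generated by $\phi_X$ over all $X\in\Sph(\Gamma_\TT)$; the intertwining property forces $\iota(\Psi)$ to act on $\Sph(\Gamma_\TT)$ through $F$ exactly as $\Psi$ acts on $\cA(\surfo)$. Hence $\iota$ sends generators to generators and is surjective, and for well-definedness: if a word $\Psi$ equals $\id$ in $\BT(\surfo)$, then $\iota(\Psi)$ fixes every spherical object and so equals $\id$.

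The genuinely hard part is the faithfulness needed to close both the well-definedness and the injectivity. Injectivity reads: if $\iota(\Psi)=\id$ then $\Psi$ fixes every closed arc in $\cA(\surfo)$, whence $\Psi=\id$ in $\MCG(\surfo)$ by an Alexander-method argument. Both directions therefore reduce to the statements that an element of $\ST(\Gamma_\TT)$ fixing all reachable spherical objects is trivial, and symmetrically that an element of $\BT(\surfo)$ fixing all closed arcs is trivial. This is the analogue of the Khovanov--Seidel--Thomas faithfulness theorem in the present $3$-Calabi--Yau, surface setting, and ruling out such hidden kernels --- forcing the topology of $\surfo$ and the Calabi--Yau structure to cooperate --- is where I expect the real work to lie.
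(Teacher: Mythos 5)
First, note that the paper you are working from does not prove this statement at all: Theorem~\ref{thmQQ} is quoted verbatim from the prequel \cite[Theorem~1]{QQ}, and no argument for it appears anywhere in the present text (it is used later as a black box to pass from $\BT(\surfo)$ to $\ST(\Gamma_\TT)$ in Corollary~\ref{Cor:1}). So there is no in-paper proof to measure your proposal against; the relevant comparison is with the proof in \cite{QQ}. Your outline does capture the backbone of that argument --- a dictionary $F$ between simple closed arcs and reachable spherical objects, propagated along the flip/mutation graph via Proposition~\ref{prop:LF}, together with the intertwining relation $F(B_\eta(\zeta))\cong\phi_{F(\eta)}(F(\zeta))$ --- and your observation that $\iota$ must then send standard generators to standard generators is correct.

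However, as written the proposal is a reduction rather than a proof. Everything hinges on the two faithfulness statements you defer to the end: that an element of $\ST(\Gamma_\TT)$ fixing every reachable spherical object (up to isomorphism) is trivial in the autoequivalence group, and that an element of $\BT(\surfo)$ fixing every closed arc is trivial. The second is indeed an Alexander-method argument, but the first is essentially the hard content of the cited theorem, and you give no indication of how to establish it; note in particular that fixing all objects up to isomorphism does not formally force a functor to be isomorphic to the identity, so some additional structure is unavoidable (in \cite{QQ} this is handled through the action on the exchange graph of hearts and its covering relationship with the exchange graph of triangulations, building on \cite{KQ}). In addition, the ``intersection number $=$ dimension of $\Hom^\bullet$'' formula you invoke to get bijectivity of $F$ is itself a substantial theorem --- it is the main result of the sequel \cite{QZ2} --- so it cannot be treated as an available input to a self-contained proof of the prequel's theorem. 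Without these two ingredients neither the well-definedness nor the injectivity of $\iota$ is established.
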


Let $\TT'$ be a (forward or backward) flip of $\TT$. Since $(Q_{\TT'},W_{\TT'})$ is the mutation of $(Q_\TT,W_\TT)$ at some vertex (Proposition~\ref{prop:LF}), by the main theorem of \cite{KY}, there is a triangle equivalence between $\D(\Gamma_{\TT'})$ and $\D(\Gamma_\TT)$, which restricts to a triangle equivalence $\D_{fd}(\Gamma_{\TT'})$ and $\D_{fd}(\Gamma_{\TT})$. Then %by \cite[Proposition]{FST},
%Due to \cite{LF,KY},
the category $\D_{fd}(\Gamma_\TT)$ is independent of the chosen triangulation $\TT$ up to triangle equivalence. Hence one can use $\D(\surf)$ to denote $\D_{fd}(\Gamma_\TT)$. %See \cite{QQ} and the references there for more details.

%=========================================================
\section{An alternative presentation of surface braid group}\label{app:pf sbg}
%=========================================================

This section devotes to give an alternative (positive) presentation of $\SBG(\surfO)$,
which is derived from the presentation in Proposition~\ref{prop:sbg} and will be used in the next section.
Define $\varepsilon_r$, $1\leq r\leq 2g+b-1$, recursively by
%$\varepsilon_{1}=\delta_{1}$, $\varepsilon_{2}=\delta_{2}$ and for $r\geq 3$,
\[\varepsilon_r=\begin{cases}
\delta_r\varepsilon_{r-1}&\text{if $r\notin \dgene$,}\\
\delta_r\varepsilon_{r-2}&\text{if $r\in \dgene$,}
\end{cases} \]
where for convenience, $\varepsilon_0$ is taken to be the identity. Conversely, we have %$\delta_1=\varepsilon_1$, $\delta_2=\varepsilon_2$ and for $r\geq3$,
\[\delta_r=\begin{cases}
\varepsilon_r\iv{\varepsilon_{r-1}}&\text{if $r\notin \dgene$;}\\
\varepsilon_r\iv{\varepsilon_{r-2}}&\text{if $r\in \dgene$.}
\end{cases} \]
Clearly, $\sigma_i$, $1\leq i\leq \aleph-1$, and $\varepsilon_r$, $1\leq r\leq 2g+b-1$, form new generators for $\SBG(\surfO)$, which are illustrated in Figure~\ref{fig:QZ's}, where, to make it reader friendly, we use the following notation:
$$\xi_{-r}=\varepsilon_{2r-1},\,
\xi_r=\varepsilon_{2r},\,
\zeta_l=\varepsilon_{2g+l},\quad
\text{for $1\leq r\leq g$ and $1\leq l\leq b-1$}$$

\begin{figure}[ht]\centering
	\begin{tikzpicture}[xscale=.28,yscale=.28]
	% sigma
	\draw[purple](10,-7)to(14,-9)(24,-9)to(20,-9);\draw[dashed,purple](14,-9)to(24,-9);
	\draw[purple](12,-8.8)node{\tiny{$\sigma_1$}} (22,-9.8)node{\tiny{$\sigma_{\aleph-1}$}};
	%\zeta
	\draw[ForestGreen]plot [smooth,tension=.8] coordinates {(10,-7)(-.2,2)(30,3)(28,-4)(10,-7)};
	\draw[ForestGreen]plot [smooth,tension=.8] coordinates {(10,-7)(5.5,1.5)(29,2.5)(26,-3)(10,-7)};
	\draw[ForestGreen](3.1,1.8)node{\tiny{$\zeta_{b-1}\cdots\zeta_1$}};
	%xi plus
	\draw[red]plot [smooth,tension=.8] coordinates {(10,-7)(24,-3)(29,1.5)(11,1.5)(10,-7)};
	\draw[red]plot [smooth,tension=.5] coordinates {(10,-7)(27,-1)(24,1.5)(10,-7)};
	\draw[red](9.6,-2)node{\tiny{$\xi_g$}} (20.5,-1)node{\tiny{$\xi_1$}};
	%xi minus
	\draw[red](10,-7)to(8+2+4-.5,0);
	\draw[red,dashed](8+2+4-.5,0).. controls +(60:2) and +(180:3) ..(20,5);
	\draw[red](10,-7).. controls +(-5:25) and +(0:25) ..(20,5);
	\draw[red](10,-7).. controls +(35:5) and +(-100:2) ..(20+2+4-.3,0.3);
	\draw[red,dashed](20+2+4-.3,0.3).. controls +(80:3) and +(180:1) ..(32,5);
	\draw[red](10,-7).. controls +(-10:30) and +(0:7) ..(32,5);
	\draw[red](11.5,-2)node{\tiny{$\xi_{-g}$}} (23,-1)node{\tiny{$\xi_{-1}$}};
	% surface
	\foreach \j in {1,2.5} {
		\draw[thick](8*\j-2+4+.5,0).. controls +(45:1) and +(135:1) ..(8*\j+2+4-.5,0);
		\draw[very thick](8*\j-2+4+.3,0.3).. controls +(-60:1) and +(-120:1) ..(8*\j+2+4-.3,0.3);
	}
	\draw[very thick] (0,5)to(38,5)(38,-11)(0,-11)to(38,-11) (0,5)to[bend right=90](0,-11);
	\draw[very thick,fill=gray!14] (38,-3) ellipse (1 and 8)
	node{\tiny{$\partial_{1}$}};
	\draw[very thick,fill=gray!14](6,0)circle(.7)node{\tiny{$\partial_2$}};
	\draw[very thick,fill=gray!14](1,0)circle(.7)node{\tiny{$\partial_{b}$}};
	\node at(3.5,0) {$\cdots$};
	\node at(18,0) {$\cdots$};
	\foreach \x/\y in {14/-9,10/-7,24/-9,20/-9}
	{\draw(\x,\y)node[white]{{$\bullet$}}node{$\circ$};}
	\draw (10,-7)node[below]{$Z_{1}$}
	(14,-9)node[below]{$Z_2$}
	(24,-9)node[below]{$Z_{\aleph}$};

    \foreach \j in {34.45,35.9}{
    \draw[ultra thin,red,->-=1,>=stealth](\j,0)to(\j,0.001);}

    \draw[ultra thin,red,->-=1,>=stealth](27.7,0.1)to(27.7,0.11);
    \draw[ultra thin,red,->-=1,>=stealth](29.77,0.1)to(29.77+.01,0.11);
    \draw[ultra thin,ForestGreen,->-=1,>=stealth](30.64,0)to(30.64+.001,0.001);
    \draw[ultra thin,ForestGreen,->-=1,>=stealth](33+.11,0)to(33+.11+.001,0.002);

	\end{tikzpicture}
	\caption{Alternative generators for $\SBG(\surfO)$}
	\label{fig:QZ's}
\end{figure}

\begin{proposition}\label{lem:sbg}
The group $\SBG(\surfO)$ admits the following presentation.
\begin{itemize}
	\item Generators: $\sigma_i$, $1\leq i\leq \aleph-1$, $\varepsilon_r$, $1\leq r\leq 2g+b-1$.
	\item Relations: for $1\leq i,j\leq \aleph-2$ and $1\leq r,s\leq 2g+b-1$,
    \begin{align}
	&\Co(\sigma_i,\sigma_j)\label{ap:01}&&  \text{if $|i-j|>1$;}\\
	&\Br(\sigma_i,\sigma_{j})\label{ap:02}&&\text{if $|i-j|=1$}.\\
	&\Co(\sigma_i,\varepsilon_r)\label{ap:03}&& \text{if $i\neq 1$;}\\
	&\Co(\varepsilon_r,\sigma_1\varepsilon_r\sigma_1)\label{ap:04}&&\\
	&\Co(\varepsilon_s,\sigma_1\varepsilon_r\sigma_1)\label{ap:05}&&\text{if $s<r$ with $s\notin \dgen$;}\\
	&\SCo(\sigma_1;\varepsilon_r,\varepsilon_{s})\label{ap:06}
	&&\text{if $s<r$ with $s\in\dgen$.}
	\end{align}
\end{itemize}
\end{proposition}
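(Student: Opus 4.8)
The plan is to prove Proposition~\ref{lem:sbg} by a Tietze transformation applied to the presentation of Proposition~\ref{prop:sbg}. First I would record that the recursion defining the $\varepsilon_r$ and its displayed inverse are \emph{exact} word identities: substituting $\delta_r=\varepsilon_r\iv{\varepsilon_{r-1}}$ (resp.\ $\varepsilon_r\iv{\varepsilon_{r-2}}$) into $\varepsilon_r=\delta_r\varepsilon_{r-1}$ (resp.\ $\delta_r\varepsilon_{r-2}$) returns $\varepsilon_r$ after free reduction, and symmetrically the other way round; a one-line induction on $r$ checks both. Hence the assignment $\sigma_i\mapsto\sigma_i$, $\delta_r\mapsto\varepsilon_r\iv{\varepsilon_{r-1}}$ (resp.\ $\varepsilon_r\iv{\varepsilon_{r-2}}$) is an isomorphism of free groups with the inverse given by the recursion. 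By Tietze's theorem it therefore suffices to show that, after rewriting every $\delta_r$ as a word in the $\varepsilon$'s, the relation set of Proposition~\ref{prop:sbg} and the new set \eqref{ap:01}--\eqref{ap:06} generate the same normal subgroup, i.e.\ that each family is a consequence of the other.

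The relations \eqref{ap:01} and \eqref{ap:02} are literally the old $\sigma$-relations, so nothing is needed there. For \eqref{ap:03} I would use that each $\varepsilon_r$ is a positive word in $\delta_1,\dots,\delta_r$, while each $\delta_r$ is a word in the $\varepsilon$'s; since the old relation $\Co(\sigma_i,\delta_k)$ with $i\neq1$ lets $\sigma_i$ commute past every letter, $\Co(\sigma_i,\varepsilon_r)$ and $\Co(\sigma_i,\delta_r)$ are immediately interderivable.

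The core is the equivalence of the remaining old relations --- $\Co(\delta_r,\sigma_1\delta_r\sigma_1)$, $\Co(\delta_r^{\iv{\sigma_1}},\delta_s)$ for the stated indices, and $\SCo(\sigma_1;\delta_{s+1},\delta_s)$ for $s\in\dgen$ --- with the three new families \eqref{ap:04}, \eqref{ap:05}, \eqref{ap:06}. I would treat \eqref{ap:04} and \eqref{ap:05} uniformly as $\Co(\varepsilon_s,\sigma_1\varepsilon_r\sigma_1)$ for $s\le r$ with $s=r$ or $s\notin\dgen$, and \eqref{ap:06} as the skew case $\SCo(\sigma_1;\varepsilon_r,\varepsilon_s)$ for $s<r$, $s\in\dgen$; together these exhaust all pairs $s\le r$. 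The derivation runs by simultaneous induction on $r$ following the recursion. The engine is the exact identity $\sigma_1\varepsilon_r\sigma_1=\delta_r^{\iv{\sigma_1}}\,\sigma_1\varepsilon_{r-1}\sigma_1$ (resp.\ with $\varepsilon_{r-2}$), obtained by inserting $\iv{\sigma_1}\sigma_1$ after $\delta_r$; note that the factor $\delta_r^{\iv{\sigma_1}}=\sigma_1\delta_r\iv{\sigma_1}$ is precisely the element occurring in the old relations, which is why they are stated in that form. The base case is clean: \eqref{ap:04} for $r=1$ is exactly $\Co(\delta_1,\sigma_1\delta_1\sigma_1)$, and since $\varepsilon_1=\delta_1$ and $\varepsilon_2=\delta_2$, the relation \eqref{ap:06} for $(r,s)=(2,1)$ is exactly the old $\SCo(\sigma_1;\delta_2,\delta_1)$. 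In the inductive step I would split $\sigma_1\varepsilon_r\sigma_1$ by the identity above and then move $\sigma_1$ across the remaining factors using $\Co(\delta_r^{\iv{\sigma_1}},\delta_s)$, $\SCo(\sigma_1;\delta_{s+1},\delta_s)$ and the inductive hypotheses for smaller $r$.

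The main obstacle will be this bookkeeping: the skew-commutation relation $\SCo$ does not concatenate the way a plain commutation does, so when $\varepsilon_r$ is built recursively one must track carefully how $\sigma_1$ passes through each new factor, and the case split $r\in\dgene$ versus $r\notin\dgene$ interacts with whether the partner index $s$ lies in $\dgen$. This interaction is exactly where the three families feed into one another, which forces the induction to be run simultaneously rather than one relation at a time. Once the new relations have been derived from the old, the converse derivation is entirely analogous --- running the same manipulations backwards, using that the two generator substitutions are mutually inverse --- so both normal closures coincide and Proposition~\ref{lem:sbg} follows.
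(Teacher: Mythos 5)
Your plan is essentially the paper's own proof: a Tietze transformation between the two generating sets, the observation that the $\sigma$-relations and $\Co(\sigma_i,\varepsilon_r)$ transfer immediately, and a simultaneous induction on $r$ driven by the identity $\varepsilon_r=\delta_r\varepsilon_{r-x}$, with the auxiliary relation $\Co(\delta_r^{\iv{\sigma_1}},\varepsilon_s)$ (your ``moving $\sigma_1$ across the factors'' via the old commutations) playing exactly the role of the paper's ``useful relation''. The only substance you defer is the case-by-case bookkeeping in both directions --- including the converse derivations of $\Co(\delta_r,\sigma_1\delta_r\sigma_1)$ and $\SCo(\sigma_1;\delta_{s+1},\delta_s)$, which are not literally the forward computations run backwards but are of the same kind --- and this is precisely what the paper's displayed calculations carry out.
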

%\todo[inline,color=green!40]{A remark: there are several alternative relations for $\SCoi(\sigma_1;\delta_r,\delta_s)$: $\Co(\delta_r,\sigma_1\delta_r\delta_s\sigma_1)$,  $\Co(\delta_s,\sigma_1\delta_r\delta_s\sigma_1)$, $\Co(\delta_s,\sigma_1\delta_r\iv{\sigma_1})$ and $\Co(\delta_r,\iv{\sigma_1}\delta_s\sigma_1)$}
\begin{proof}
We need to prove that the relations in Proposition~\ref{prop:sbg} are equivalent to those
in Proposition~\ref{lem:sbg}.
The relations common to both presentations are
\[\begin{array}{lll}
\Co(\sigma_i,\sigma_j)&  \text{if $|i-j|>1$;}\\
\Br(\sigma_i,\sigma_{j})&\text{if $|i-j|=1$}.
\end{array}\]
By the construction of $\varepsilon_r$, it is easy to see the equivalence between the relations $\Co(\sigma_i,\delta_r)$ and $\Co(\sigma_i,\varepsilon_r)$ for any $i\neq 1$.% and vice versa.

Now we prove that the relations in Proposition~\ref{prop:sbg} imply the other relations in Proposition~\ref{lem:sbg}. First, we show a useful relation
\[\begin{array}{ll}
\Co(\delta_r^{\iv{\sigma_1}},\varepsilon_s)& \text{if $s<r$ with $s\notin \dgen$ or with $s\neq r-1$,}
\end{array}\]
which will be used for many times. Indeed, by construction, we have $\varepsilon_s=\delta_s\delta_{r_1}\cdots\delta_{r_m}$ with $s> r_1> r_2>\cdots > r_m$. So each $r_i<r-1$. Then we have the following relations from Proposition~\ref{prop:sbg}: $\Co(\delta_r^{\iv{\sigma_1}},\delta_s)$ and $\Co(\delta_r^{\iv{\sigma_1}},\delta_{r_i})$, $1\leq i\leq m$, which imply the required relation.

%for , which follows inductively from $\Co(\delta_r^{\iv{\sigma_1}},\delta_s)$.

To show $\Co(\varepsilon_r,\sigma_1\varepsilon_r\sigma_1)$, use induction on $r$, starting with the trivial case $r=0$.
%$r=1$ and $r=2$, where $\varepsilon_r=\delta_r$.
Assume $\Co(\varepsilon_r,\sigma_1\varepsilon_r\sigma_1)$ holds for any $r<t$ with some $t>0$. Consider the case $r=t$. By construction, $\varepsilon_t=\delta_t\varepsilon_{t-x}$, where $x=1$ if $t\notin\dgene$ or $x=2$ if $t\in\dgene$. So $t-x\notin\dgen$, which implies that we have the useful relation  $\Co(\delta_t^{\iv{\sigma_1}},\delta_{t-x})$. Hence we have
\[\begin{array}{rl}
\varepsilon_t\sigma_1\varepsilon_t\sigma_1
=&\delta_t\varepsilon_{t-x}\sigma_1\delta_t\varepsilon_{t-x}\sigma_1\\
=
&\delta_t\sigma_1\delta_t\iv{\sigma_1}\varepsilon_{t-x}\sigma_1\varepsilon_{t-x}\sigma_1\\
=
&\delta_t\sigma_1\delta_t\varepsilon_{t-x}\sigma_1\varepsilon_{t-x}\\
=
&\sigma_1\delta_t\sigma_1\delta_t\iv{\sigma_1}\varepsilon_{t-x}\sigma_1\varepsilon_{t-x}\\
=
&\sigma_1\delta_t\varepsilon_{t-x}\sigma_1\delta_t\varepsilon_{t-x}\\
=&\sigma_1\varepsilon_t\sigma_1\varepsilon_t
\end{array}\]
where the first and the last equalities are due to $\varepsilon_t=\delta_t\varepsilon_{t-x}$, the second and the fifth equalities use the relation $\Co(\delta_t^{\iv{\sigma_1}},\varepsilon_{t-x})$, the third equality uses the inductive assumption $\Co(\varepsilon_{t-x},\sigma_1\varepsilon_{t-x}\sigma_1)$ and the fourth equality uses the relation $\Co(\delta_t,\sigma_1\delta_t\sigma_1)$ from Proposition~\ref{prop:sbg}.

To show $\Co(\varepsilon_s,\sigma_1\varepsilon_r\sigma_1)$ for $s<r$ with $s\notin \dgen$, fix $s$ and use induction on $r$, starting with the extreme case $r=s$ which was proved above. Assume $\Co(\varepsilon_s,\sigma_1\varepsilon_r\sigma_1)$ holds for any $r<t$ with some $t>s$. Consider the case $r=t$. By construction, $\varepsilon_t=\delta_t\varepsilon_{t-x}$, where $x=1$ if $t\notin\dgene$ or $x=2$ otherwise. We claim $t-x\geq s$.%, which implies$\Co(\varepsilon_s,\sigma_1\varepsilon_{t-x}\sigma_1)$ by the assumption.
Indeed, if $t=s+1$ then $t\notin\dgene$. So $x=1$ and $t-x=s$; if $t\geq s+2$ then $t-x\geq t-2\geq s$. %In both cases, we have  $t-x\geq s$.
Then we have
\[\begin{array}{rl}
\varepsilon_s\sigma_1\varepsilon_t\sigma_1
=&\varepsilon_s\sigma_1\delta_t\varepsilon_{t-x}\sigma_1\\
=&\sigma_1\delta_t\iv{\sigma_1}\varepsilon_s\sigma_1\varepsilon_{t-x}\sigma_1\\
=&\sigma_1\delta_t\varepsilon_{t-x}\sigma_1\varepsilon_s\\
=&\sigma_1\varepsilon_t\sigma_1\varepsilon_s
\end{array}\]
where the first and the last equalities are due to $\varepsilon_t=\delta_t\varepsilon_{t-x}$, the second equality uses the useful relation $\Co(\delta_t^{\iv{\sigma_1}},\varepsilon_s)$ and the third equality uses the inductive assumption $\Co(\varepsilon_s,\sigma_1\varepsilon_{t-x}\sigma_1)$ by $t-x\geq s$.

To show $\SCo(\sigma_1;\varepsilon_r,\varepsilon_s)$ for $s<r$ with $s\in\dgen$, fix $s$ and use induction on $r$. Write $s=2i-1$ for some $1\leq i\leq g$. We first use induction on $i$ to prove the case $r=s+1$, starting with the case $i=1$, where by construction, $\varepsilon_s=\delta_1$ and $\varepsilon_r=\delta_2$, so $\SCo(\sigma_1;\varepsilon_r,\varepsilon_s)$ becomes $\SCo(\sigma_1;\delta_2,\delta_1)$ from Proposition~\ref{prop:sbg}. Assume  $\SCo(\sigma_1;\varepsilon_{2i},\varepsilon_{2i-1})$ holds for any $i<j$ with some $1<j\leq g$. Consider the case $i=j$. By construction, $\varepsilon_{2j-1}=\delta_{2j-1}\varepsilon_{2j-2}$ and $\varepsilon_{2j}=\delta_{2j}\varepsilon_{2j-2}$.  Then we have
\[\begin{array}{rl}
\varepsilon_{2j-1}\sigma_1\varepsilon_{2j}
=&\delta_{2j-1}\varepsilon_{2j-2}\sigma_1\delta_{2j}\varepsilon_{2j-2}\\
=&\delta_{2j-1}\sigma_1\delta_{2j}\iv{\sigma_1}\varepsilon_{2j-2}\sigma_1\varepsilon_{2j-2}\\
=&\sigma_1\delta_{2j}\sigma_1\delta_{2j-1}\varepsilon_{2j-2}\sigma_1\varepsilon_{2j-2}\\
=&\sigma_1\delta_{2j}\sigma_1\delta_{2j-1}\iv{\sigma_1}\varepsilon_{2j-2}\sigma_1\varepsilon_{2j-2}\sigma_1\\
=&\sigma_1\delta_{2j}\varepsilon_{2j-2}\sigma_1\delta_{2j-1}\varepsilon_{2j-2}\sigma_1\\
=&\sigma_1\varepsilon_{2j}\sigma_1\varepsilon_{2j-1}\sigma_1
\end{array}\]
where the first and the last equalities are due to both $\varepsilon_{2j-1}=\delta_{2j-1}\varepsilon_{2j-2}$ and $\varepsilon_{2j}=\delta_{2j}\varepsilon_{2j-2}$, the second equality uses the useful relation $\Co(\delta_{2j}^{\iv{\sigma_1}},\varepsilon_{2j-2})$, the third equality uses the relation $\SCo(\sigma_1;\delta_{2j},\delta_{2j-1})$ from Proposition~\ref{prop:sbg}, the fourth equality uses the relation $\Co(\varepsilon_{2j-2},\sigma_1\varepsilon_{2j-2}\sigma_1)$ proved above and the fifth equality uses the useful relation $\Co(\delta_{2j-1}^{\iv{\sigma_1}},\varepsilon_{2j-2})$.
Thus, the proof for the case $r=s+1$ is complete. Assume now $\SCo(\sigma_1;\varepsilon_r,\varepsilon_s)$ holds for any $r<t$ with some $t>s+1$. Consider the case $r=t$. By construction, $\varepsilon_t=\delta_t\varepsilon_{t-x}$, where $x=1$ if $t\notin\dgene$ or $x=2$ otherwise. If $t=s+2$ then $t\notin\dgene$, which implies $x=1$ and $t-x>s$; if $t>s+2$ then $t-x\geq t-2>s$. Hence we always have $t-x>s$. Then we have
\[\begin{array}{rl}
\varepsilon_{s}\sigma_1\varepsilon_{t}
=&\varepsilon_{s}\sigma_1\delta_t\varepsilon_{t-x}\\
=&\sigma_1\delta_t\iv{\sigma_1}\varepsilon_s\sigma_1\varepsilon_{t-x}\\
=&\sigma_1\delta_t\varepsilon_{t-x}\sigma_1\varepsilon_s\sigma_1\\
=&\sigma_1\varepsilon_t\sigma_1\varepsilon_s\sigma_1
\end{array}\]
where the first and the last equalities are due to $\varepsilon_t=\delta_t\varepsilon_{t-x}$, the second equality uses the useful relation $\Co(\delta_t^{\iv{\sigma_1}},\varepsilon_s)$ by $t>s+1$ and the third equality uses the inductive assumption $\SCo(\sigma_1;\varepsilon_{t-x},\varepsilon_s)$ by $t-x>s$.

Conversely, we prove that the relations in Proposition~\ref{lem:sbg} imply the other relations in Proposition~\ref{prop:sbg}. To show
$\Co(\delta_r,\sigma_1\delta_r\sigma_1)$, by construction, $\delta_r=\varepsilon_r\iv{\varepsilon_{r-x}}$, where $x=1$ if $r\notin\dgene$ or $x=2$ if $r\in\dgene$. In any case, we always have $r-x\notin\dgen$, which implies $\Co(\varepsilon_{r-x},\sigma_1\varepsilon_r\sigma_1)$ from \eqref{ap:05}. From the relation \eqref{ap:04}, we have $\Co(\varepsilon_{r-x},\sigma_1\varepsilon_{r-x}\sigma_1)$ and $\Co(\varepsilon_{r},\sigma_1\varepsilon_r\sigma_1)$, which imply $\iv{\varepsilon_{r-x}\sigma_1\varepsilon_{r-x}}\sigma_1=\sigma_1\iv{\varepsilon_{r-x}\sigma_1\varepsilon_{r-x}}$ and $\varepsilon_r\sigma_1\varepsilon_r\sigma_1=\sigma_1\varepsilon_r\sigma_1\varepsilon_r$, respectively. Combining these two equalities, we have \[\varepsilon_r\sigma_1\varepsilon_r\sigma_1\iv{\varepsilon_{r-x}\sigma_1\varepsilon_{r-x}}\sigma_1=\sigma_1\varepsilon_r\sigma_1\varepsilon_r\sigma_1\iv{\varepsilon_{r-x}\sigma_1\varepsilon_{r-x}}.\] Now using $\Co(\varepsilon_{r-x},\sigma_1\varepsilon_r\sigma_1)$, we have $\varepsilon_r\iv{\varepsilon_{r-x}}\sigma_1\varepsilon_r\iv{\varepsilon_{r-x}}\sigma_1=\sigma_1\varepsilon_r\iv{\varepsilon_{r-x}}\sigma_1\varepsilon_r\iv{\varepsilon_{r-x}}$. Then due to $\delta_r=\varepsilon_r\iv{\varepsilon_{r-x}}$, we have $\delta_r\sigma_1\delta_r\sigma_1=\sigma_1\delta_r\sigma_1\delta_r$, which is $\Co(\delta_r,\sigma_1\delta_r\sigma_1)$.

%Hence we have
%\[\begin{array}{rl}
%&\Co(\delta_r,\sigma_1\delta_r\sigma_1)\\
%\xLongleftrightarrow{}&\delta_r\sigma_1\delta_r\sigma_1=\sigma_1\delta_r\sigma_1\delta_r\\
%\xLongleftrightarrow{}&\varepsilon_r\iv{\varepsilon_{r-x}}\sigma_1\varepsilon_r\iv{\varepsilon_{r-x}}\sigma_1=\sigma_1\varepsilon_r\iv{\varepsilon_{r-x}}\sigma_1\varepsilon_r\iv{\varepsilon_{r-x}}\\
%\xLongleftrightarrow{}&\varepsilon_r\sigma_1\varepsilon_r\sigma_1\iv{\varepsilon_{r-x}\sigma_1\varepsilon_{r-x}}\sigma_1=\sigma_1\varepsilon_r\sigma_1\varepsilon_r\sigma_1\iv{\varepsilon_{r-x}\sigma_1\varepsilon_{r-x}}\ (\text{by }\Co(\varepsilon_{r-x},\sigma_1\varepsilon_r\sigma_1))\\
%\xLongleftrightarrow{}&\iv{\varepsilon_{r-x}\sigma_1\varepsilon_{r-x}}\sigma_1=\sigma_1\iv{\varepsilon_{r-x}\sigma_1\varepsilon_{r-x}}\ (\text{by }\Co(\varepsilon_{r},\sigma_1\varepsilon_r\sigma_1))\\
%\xLongleftrightarrow&\Co(\varepsilon_{r-x},\sigma_1\varepsilon_{r-x}\sigma_1)
%\end{array}\]

We also need to prove the useful relation
\[\begin{array}{ll}
\Co(\delta_r^{\iv{\sigma_1}},\varepsilon_s) & \text{if $s<r$ with $s\notin \dgen$ or with $s\neq r-1$}.
\end{array}\]
By construction, $\delta_r=\varepsilon_r\iv{\varepsilon_{r-x}}$, where $x=1$ if $r\notin\dgene$ or $x=2$ if $r\in\dgene$. There are two cases depending on whether $s$ is in $\dgen$. If $s\notin\dgen$, it is easy to see that $r-x\geq s$. Then we have
\[\begin{array}{rl}
\sigma_1\delta_r\iv{\sigma_1}\varepsilon_s\sigma_1
\xlongequal&\sigma_1\varepsilon_r\iv{\varepsilon_{r-x}}\iv{\sigma_1}\varepsilon_s\sigma_1\\
\xlongequal{}&\sigma_1\varepsilon_r\sigma_1\varepsilon_s\iv{\sigma_1}\iv{\varepsilon_{r-x}}\\
\xlongequal{}&\varepsilon_s\sigma_1\varepsilon_r\iv{\varepsilon_{r-x}}\\
=&\varepsilon_s\sigma_1\delta_r
\end{array}\]
where the first and the last equalities are due to $\delta_r=\varepsilon_r\iv{\varepsilon_{r-x}}$, the second equality uses $\Co(\varepsilon_s,\sigma_1\varepsilon_{r-x}\sigma_1)$ from \eqref{ap:04} if $r-x=s$ or from \eqref{ap:05} if $r-x>s$, and the third equality uses $\Co(\varepsilon_s,\sigma_1\varepsilon_{r}\sigma_1)$ from \eqref{ap:05}.
If $s\in\dgen$, then $s\neq r-1$. In this case, it is easy to see that $r-x>s$. So we have both $\SCo(\sigma_1;\varepsilon_{r-x},\varepsilon_s)$ and $\SCo(\sigma_1;\varepsilon_{r},\varepsilon_s)$ from \eqref{ap:06}. Then we have
\[\begin{array}{rl}
\sigma_1\delta_r\iv{\sigma_1}\varepsilon_s\sigma_1
\xlongequal&\sigma_1\varepsilon_r\iv{\varepsilon_{r-x}}\iv{\sigma_1}\varepsilon_s\sigma_1\\
\xlongequal{}&\sigma_1\varepsilon_r\sigma_1\varepsilon_s\sigma_1\iv{\varepsilon_{r-x}}\\
\xlongequal{}&\varepsilon_s\sigma_1\varepsilon_r\iv{\varepsilon_{r-x}}\\
=&\varepsilon_s\sigma_1\delta_r
\end{array}\]
where the first and the last equalities are due to $\delta_r=\varepsilon_r\iv{\varepsilon_{r-x}}$, the second equality and the third equality use $\SCo(\sigma_1;\varepsilon_{r-x},\varepsilon_s)$ and $\SCo(\sigma_1;\varepsilon_{r},\varepsilon_s)$, respectively.

To show $\Co(\delta_r^{\iv{\sigma_1}},\delta_s)$ for $s<r$, with $s\notin\dgen$ or with $s\neq r-1$, by construction, we have $\delta_s=\varepsilon_s\iv{\varepsilon_{s-x}}$, where $x=1$ or $2$. It follows that $s-x< r-1$. Hence we have $\Co(\delta_r^{\iv{\sigma_1}},\varepsilon_s)$ and $\Co(\delta_r^{\iv{\sigma_1}},\varepsilon_{s-x})$ , which imply $\Co(\delta_r^{\iv{\sigma_1}},\delta_s)$.

To show $\SCo(\sigma_1;\delta_{s+1},\delta_s)$ for $s\in\dgen$, by construction, we have $\delta_s=\varepsilon_s\iv{\varepsilon_{s-1}}$ and $\delta_{s+1}=\varepsilon_{s+1}\iv{\varepsilon_{s-1}}$. Hence we have
\[\begin{array}{rl}
\sigma_1\delta_{s+1}\sigma_1\delta_s\sigma_1
\xlongequal{}&\sigma_1\varepsilon_{s+1}\iv{\varepsilon_{s-1}}\sigma_1\varepsilon_s\iv{\varepsilon_{s-1}}\sigma_1\\
\xlongequal{}&\sigma_1\varepsilon_{s+1}\sigma_1\varepsilon_s\sigma_1\iv{\varepsilon_{s-1}}\iv{\sigma_1}\iv{\varepsilon_{s-1}}\sigma_1\\
\xlongequal{}&\varepsilon_s\sigma_1\varepsilon_{s+1}\iv{\varepsilon_{s-1}}\iv{\sigma_1}\iv{\varepsilon_{s-1}}\sigma_1\\
\xlongequal{}&\varepsilon_s\sigma_1\varepsilon_{s+1}\sigma_1\iv{\varepsilon_{s-1}}\iv{\sigma_1}\iv{\varepsilon_{s-1}}\\
\xlongequal{}&\varepsilon_s\iv{\varepsilon_{s-1}}\sigma_1\varepsilon_{s+1}\iv{\varepsilon_{s-1}}\\
=&\delta_s\sigma_1\delta_{s+1}
\end{array}\]
where the first and the last equalities are due to both $\delta_s=\varepsilon_s\iv{\varepsilon_{s-1}}$ and $\delta_{s+1}=\varepsilon_{s+1}\iv{\varepsilon_{s-1}}$, the second equality uses $\Co(\varepsilon_{s-1},\sigma_1\varepsilon_s\sigma_1)$ from \eqref{ap:05}, the third equality uses $\SCo(\sigma_1;\varepsilon_{s+1},\varepsilon_s)$ from \eqref{ap:06}, the fourth equality uses $\Co(\varepsilon_{s-1},\sigma_1\varepsilon_{s-1}\sigma_1)$ from \eqref{ap:04}, and the fifth equality uses $\Co(\varepsilon_{s-1},\sigma_1\varepsilon_{s+1}\sigma_1)$ from \eqref{ap:05}.
\end{proof}

%=========================================================
\section{A finite presentation for braid twist group}\label{sec:bt}
%=========================================================

Recall that the braid twist group $\BT(\surfO)$ of a decorated surface $\surfO$ is the subgroup of the surface braid group $\SBG(\surfO)$ generated by the braid twists along closed arcs in $\cA(\surfO)$. In the presentation of $\SBG(\surfO)$ in Proposition~\ref{lem:sbg}, the generators $\sigma_i$, $1\leq i\leq \aleph-1$, are braid twists, while the generators $\varepsilon_r$, $1\leq r\leq 2g+b-1$, are not. So we introduce the following elements in $\BT(\surfO)$:
\begin{align}
\tau_r=\sigma_1^{\iv{\varepsilon_r}}\label{tau},\quad 1\leq r\leq 2g+b-1
\end{align}
which, by Lemma~\ref{rmk:conj}, are the braid twists along the closed arcs $t_{\varepsilon_r}(\sigma_1)$, $1\leq r\leq 2g+b-1$, respectively. So as in Notation~\ref{nts:arc}, we denote $t_{\varepsilon_r}(\sigma_1)$ also by $\tau_r$.
We illustrate these elements in Figure~\ref{fig:BT}, where similarly as in Figure~\ref{fig:QZ's}, we use the notation
\[\omega_{-j}=\tau_{2j-1},\,
\omega_j=\tau_{2j},\,
\nu_k=\tau_{2g+k},\
\text{for $1\leq j\leq g$ and $1\leq k\leq b-1$}.\]
\begin{figure}[ht]\centering
	\begin{tikzpicture}[xscale=.28,yscale=.28]
	% sigma
	\draw[purple](10,-7)to(14,-9)(24,-9)to(20,-9);\draw[dashed,purple](14,-9)to(24,-9);
	\draw[purple](12,-8.8)node{\tiny{$\sigma_1$}} (22,-9.8)node{\tiny{$\sigma_{\aleph-1}$}};
	%\zeta
	\draw[ForestGreen]plot [smooth,tension=.8] coordinates {(10,-7)(-.2,2)(30,3)(29,-4)(14,-9)};
	\draw[ForestGreen]plot [smooth,tension=.8] coordinates {(10,-7)(5.5,1.5)(29,2.5)(27,-3)(14,-9)};
	\draw[ForestGreen](3.1,1.8)node{\tiny{$\nu_{b-1}\cdots\nu_1$}};
	%xi plus
	\draw[red]plot [smooth,tension=.8] coordinates {(14,-9)(24,-4)(29,1.5)(11,1.5)(10,-7)};
	\draw[red]plot [smooth,tension=.5] coordinates {(14,-9)(26,-1)(24,1.5)(10,-7)};
	\draw[red](9.6,-2)node{\tiny{$\omega_g$}} (20.5,-1)node{\tiny{$\omega_1$}};
	%xi minus
	\draw[red](10,-7)to(8+2+4-.5,0);
	\draw[red,dashed](8+2+4-.5,0).. controls +(60:2) and +(180:3) ..(20,5);
	\draw[red](14,-9).. controls +(10:25) and +(0:25) ..(20,5);
	\draw[red](10,-7).. controls +(35:5) and +(-100:2) ..(20+2+4-.3,0.3);
	\draw[red,dashed](20+2+4-.3,0.3).. controls +(80:3) and +(180:1) ..(32,5);
	\draw[red](14,-9).. controls +(5:30) and +(0:6) ..(32,5);
	\draw[red](11.5,-2)node{\tiny{$\omega_{-g}$}} (23,-1)node{\tiny{$\omega_{-1}$}};
	% surface
	\foreach \j in {1,2.5} {
		\draw[thick](8*\j-2+4+.5,0).. controls +(45:1) and +(135:1) ..(8*\j+2+4-.5,0);
		\draw[very thick](8*\j-2+4+.3,0.3).. controls +(-60:1) and +(-120:1) ..(8*\j+2+4-.3,0.3);
	}
	\draw[very thick] (0,5)to(38,5)(38,-11)(0,-11)to(38,-11) (0,5)to[bend right=90](0,-11);
	\draw[very thick,fill=gray!14] (38,-3) ellipse (1 and 8)
	node{\tiny{$\partial_{1}$}};
	\draw[very thick,fill=gray!14](6,0)circle(.7)node{\tiny{$\partial_2$}};
	\draw[very thick,fill=gray!14](1,0)circle(.7)node{\tiny{$\partial_{b}$}};
	\node at(3.5,0) {$\cdots$};
	\node at(18,0) {$\cdots$};
	\foreach \x/\y in {14/-9,10/-7,24/-9,20/-9}
	{\draw(\x,\y)node[white]{{$\bullet$}}node{$\circ$};}
	\draw (10,-7)node[below]{$Z_{1}$}
	(14,-9)node[below]{$Z_2$}
	(24,-9)node[below]{$Z_{\aleph}$};
	\end{tikzpicture}
	\caption{Generators for $\BT(\surfO)$}
	\label{fig:BT}
\end{figure}

The main result in this section is the following presentation of $\BT(\surfO)$.

\begin{theorem}\label{thm:pre}
	Suppose that either $\aleph\geq 5$, or $\aleph=4$ and $2g+b-1\leq 2$. The braid twist group $\BT(\surfO)$ has the following finite presentation.
	\begin{itemize}
		\item Generators: $\sigma_i$, $1\leq i\leq \aleph-1$, $\tau_r$, $1\leq r\leq 2g+b-1$.
		\item Relations: for $1\leq i,j\leq \aleph-1$ and $1\leq r,s\leq 2g+b-1$,
		\begin{align}
		&\Co(\sigma_i,\sigma_j)\label{nr:01}&&  \text{if $|i-j|>1$}\\
		&\Br(\sigma_i,\sigma_{j})\label{nr:02}&&\text{if $|i-j|=1$}\\
		&\Co(\tau_r,\sigma_i)\label{nr:03}&&\text{if $i>2$}\\
		&\Br(\tau_r,x)\label{nr:04}&&\\
		&\Br(\tau_r,y)\label{nr:05}&&\\
		&\Co({\tau_r}^y,{\tau_s}^x)\label{nr:06}&&\text{if $s<r$ and $s\notin\dgen$}\\
		&\Co({\tau_r}^{\iv{y}},{\tau_s}^x)\label{nr:07}&&\text{if $s<r$ and $s\in\dgen$}
		\end{align}
		where
		\begin{align}
		x:=\sigma_1^{\iv{\sigma_2}}=\sigma_2^{\sigma_1}\label{x},
		\end{align}
		\begin{align}
		y:=\sigma_3^{\iv{\sigma_2}}=\sigma_2^{\sigma_3}\label{y}.
		\end{align}
	\end{itemize}
\end{theorem}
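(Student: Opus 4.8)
The plan is to exhibit $\BT(\surfO)$ as the kernel of a surjection from $\SBG(\surfO)$ onto a free abelian group, and to deduce its presentation from that of $\SBG(\surfO)$ in Proposition~\ref{lem:sbg}. First I record the relevant extension. Since $\BT(\surfO)$ is normal in $\MCG(\surfO)$ by Lemma~\ref{rmk:conj}, it is normal in $\SBG(\surfO)$, and I identify the quotient by reading the relations of Proposition~\ref{lem:sbg} modulo $\BT(\surfO)$: the images of all $\sigma_i$ are trivial, so \eqref{ap:01}--\eqref{ap:04} become vacuous while \eqref{ap:05} and \eqref{ap:06} degenerate to the statement that the images $\bar\varepsilon_r$ pairwise commute (with no torsion imposed). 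Hence
\[ 1\longrightarrow \BT(\surfO)\longrightarrow\SBG(\surfO)\xrightarrow{\ \pi\ }\ZZ^{2g+b-1}\longrightarrow 1,\qquad \pi(\sigma_i)=0,\ \pi(\varepsilon_r)=e_r, \]
with $\ZZ^{2g+b-1}$ free abelian on $\bar\varepsilon_1,\dots,\bar\varepsilon_{2g+b-1}$, and $\BT(\surfO)=\ker\pi$ by construction.

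Next I build a surjection. Let $G$ denote the group defined by the presentation in Theorem~\ref{thm:pre}. Because $\tau_r=\sigma_1^{\iv{\varepsilon_r}}$ and the elements $x=\sigma_2^{\sigma_1}$, $y=\sigma_2^{\sigma_3}$ of \eqref{x},\eqref{y} are all braid twists along explicit simple closed arcs, I verify that \eqref{nr:01}--\eqref{nr:07} hold in $\BT(\surfO)$: \eqref{nr:01},\eqref{nr:02} are Artin's relations, \eqref{nr:03}--\eqref{nr:05} follow from Lemma~\ref{lem:btrel} after identifying the supporting arcs as disjoint, respectively endpoint-sharing, and \eqref{nr:06},\eqref{nr:07} are checked by direct computation inside $\SBG(\surfO)$ using Proposition~\ref{lem:sbg} and the definition of $\tau_r$. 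That $\{\sigma_i,\tau_r\}$ genuinely generate $\ker\pi$ comes from the Reidemeister--Schreier generating set together with the reductions supplied by \eqref{nr:04}--\eqref{nr:07}. This gives a surjection $\rho\colon G\twoheadrightarrow\BT(\surfO)$.

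The heart of the argument is injectivity of $\rho$, which I approach by reconstructing $\SBG(\surfO)$ from $G$ and the quotient. Keeping the $\varepsilon_r$ as auxiliary symbols, form the group on generators $\sigma_i,\tau_r,\varepsilon_r$ with the relations of $G$, the defining relations $\tau_r=\sigma_1^{\iv{\varepsilon_r}}$, and the $\varepsilon$-relations \eqref{ap:03}--\eqref{ap:06}; by the previous paragraph these extra relations are consequences of Proposition~\ref{lem:sbg}, so Tietze transformations identify this group with $\SBG(\surfO)$ and identify $\langle\sigma_i,\tau_r\rangle$ with $\BT(\surfO)$. Injectivity of $\rho$ is then equivalent to the statement that, under Reidemeister--Schreier applied to the extension above with transversal $\{\varepsilon_1^{m_1}\cdots\varepsilon_{2g+b-1}^{m_{2g+b-1}}\}$, every Schreier generator $\sigma_1^{w}$ $(w\in\langle\varepsilon_r\rangle)$ rewrites, via \eqref{nr:03}--\eqref{nr:07}, as a word in $\sigma_i,\tau_r$, and every rewritten image of a relation of Proposition~\ref{lem:sbg} is a consequence of \eqref{nr:01}--\eqref{nr:07}. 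Establishing these two rewriting claims closes the proof, since they show $\ker\rho$ is trivial.

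The main obstacle is finiteness. The transversal $\ZZ^{2g+b-1}$ is infinite, so Reidemeister--Schreier a priori produces infinitely many generators and infinitely many lifted relations; the whole content is that this data collapses onto the finite list of Theorem~\ref{thm:pre}. The collapsing mechanism is precisely the availability of the auxiliary braid twists $x=\sigma_2^{\sigma_1}$ and $y=\sigma_2^{\sigma_3}$, whose braid relations \eqref{nr:04},\eqref{nr:05} and commutations \eqref{nr:06},\eqref{nr:07} drive every reduction; since $x$ needs $\sigma_1,\sigma_2$ and $y$ needs $\sigma_2,\sigma_3$ with sufficiently disjoint supporting arcs, this forces the hypothesis $\aleph\geq5$, and explains why the boundary case $\aleph=4$ survives only under the extra constraint $2g+b-1\leq2$, where the finitely many reductions can be verified by hand. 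Carrying out the bookkeeping of these rewritings uniformly, split along the parity cases $s\in\dgen$ versus $s\notin\dgen$ exactly as in the proof of Proposition~\ref{lem:sbg}, is the technical core that I expect to be the most delicate part.
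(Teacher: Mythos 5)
Your structural framing is sound: $\BT(\surfO)$ is indeed the kernel of the map $\SBG(\surfO)\to\ZZ^{2g+b-1}$ killing the $\sigma_i$, and the overall plan (use the presentation of $\SBG(\surfO)$ from Proposition~\ref{lem:sbg}, rewrite it relative to this normal subgroup, and show the rewritten data collapses onto the finite list) is essentially the same skeleton the paper uses, phrased in Reidemeister--Schreier language rather than via a semidirect product. But the proposal stops exactly where the proof begins. The two claims you isolate at the end of your third paragraph --- that every Schreier generator rewrites into $\{\sigma_i,\tau_r\}$ and that every lifted relation is a consequence of \eqref{nr:01}--\eqref{nr:07} --- are not "bookkeeping"; they are the entire content of the theorem, and you defer them. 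Concretely, three things are missing. First, with the infinite transversal $\ZZ^{2g+b-1}$ you must reduce infinitely many lifted relations to finitely many checks; the paper's device for this is to define, on the abstractly presented group $\widetilde{N}$, explicit maps $\rho(\varepsilon_t^{\pm1})$ on generators (Lemma~\ref{lem:relforbt} supplies the formulas, themselves obtained by nontrivial computation in $\SBG(\surfO)$) and to observe that it suffices to verify that conjugation by a \emph{single} generator $\varepsilon_t^{\pm1}$ preserves each defining relation, since $H=\langle\varepsilon_r\rangle$ is free. You never identify this reduction. Second, even the single-generator verification (relations \eqref{ac:01}--\eqref{iac:07}) occupies the whole of Appendix~A and requires a genuinely non-obvious idea: the relations $\Co(v_{t,r}^{\tt_t},v_{t,s}^{\ts_3})$ involving three distinct indices $t,s,r$ cannot be derived by direct algebraic manipulation from \eqref{nr:01}--\eqref{nr:07}; the paper imports them via the homomorphisms $\psi_{\{s,r\}}\colon\BT(\surfO^{0,3})\to\widetilde{N}$ or $\BT(\surfO^{1,1})\to\widetilde{N}$ of Lemma~\ref{lem:emb}, which transfer topologically evident relations from small model surfaces into $\widetilde{N}$. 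This is also the actual source of the hypothesis $\aleph\geq5$ (the computations need $\sigma_4$), not the disjointness of the supports of $x$ and $y$, which only requires $\aleph\geq4$. Third, your Schreier generating set also contains elements of the form $\varepsilon^m\varepsilon_r\varepsilon^{-(m+e_r)}$ coming from the non-commuting $\varepsilon$'s, which you omit; the paper handles these by showing $\ker\Phi$ is generated by the elements $n_{s,t}[\varepsilon_s,\varepsilon_r]$ and that these meet $\widetilde{N}$ trivially.

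Separately, the generation statement should not be waved through: the paper proves it as Lemma~\ref{lem:gen} by a topological induction on intersection numbers with a chosen arc system, and deriving it instead from Reidemeister--Schreier would already require the conjugation formulas of Lemma~\ref{lem:relforbt}, which you have not established. As written, the proposal is a correct outline of the strategy but not a proof.
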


%\begin{proof}
%	This follows directly from Lemma~\ref{lem:key},
%	where the assumptions hold due to Lemma~\ref{lem:b2}, Lemma~\ref{lem:b3}, Lemma~\ref{lem:b4}, Lemma~\ref{lem:2cases} and Lemma~\ref{lem:last4} in Appendix~\ref{app:lemmas}.
%\end{proof}

Apply this result to the case of decorated marked surface, we have the following.

\begin{corollary}\label{rmk:n=4}
	Let $\surfo$ be a decorated marked surface with $|\Tri|\geq 4$. Then $\BT(\surfo)$ has the above presentation.
\end{corollary}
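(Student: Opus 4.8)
The plan is to deduce the corollary directly from Theorem~\ref{thm:pre} by checking that, for a decorated marked surface, the numerical hypothesis ``$\aleph\geq 5$, or $\aleph=4$ and $2g+b-1\leq 2$'' is automatically implied by the single condition $|\Tri|\geq 4$. Recall that when we forget the marked points, $\surfo$ becomes a decorated surface $\surfO$ with $\aleph=|\Tri|$, and $\BT(\surfo)=\BT(\surfO)$ by definition; so Theorem~\ref{thm:pre} will yield exactly the asserted presentation once the hypothesis is verified.

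First I would rewrite $\aleph$ purely in terms of $g$, $b$, and $|\M|$. Combining $n=6g+3b+|\M|-6$ with $\aleph=\frac{2n+|\M|}{3}$ gives $\aleph=4g+2b+|\M|-4$. Since $|\Tri|\geq 4$ means $\aleph\geq 4$, the only case not already covered by the clause ``$\aleph\geq 5$'' of Theorem~\ref{thm:pre} is $\aleph=4$, which I would treat by hand.

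In the case $\aleph=4$, the identity above reads $4g+2b+|\M|=8$. Using the standing requirement that every boundary component carries at least one marked point, i.e. $|\M|\geq b$, I obtain $4g+3b\leq 4g+2b+|\M|=8$. Since $g\geq 0$ and $b\geq 1$ are integers, this inequality leaves only the possibilities $(g,b)\in\{(0,1),(0,2),(1,1)\}$ (as $g\geq 2$ forces $4g+3b\geq 11$, while $g=1$ forces $b=1$ and $g=0$ forces $b\leq 2$). In each of these, $2g+b-1\in\{0,1,2\}$, so $2g+b-1\leq 2$ holds and the hypothesis of Theorem~\ref{thm:pre} is satisfied.

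Thus the hypotheses of Theorem~\ref{thm:pre} hold whenever $|\Tri|\geq 4$, and the presentation of that theorem applies to $\BT(\surfo)$ with the corresponding values of $\aleph$, $g$, $b$. The argument involves no real obstacle: it is a finite arithmetic check, and the only point requiring care is correctly translating the combinatorial data (the arc count $n$, the triangle count $\frac{2n+|\M|}{3}$, and $|\M|$) into the inequality $4g+3b\leq 8$; once that is in place, the enumeration is immediate.
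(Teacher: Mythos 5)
Your proof is correct and follows essentially the same route as the paper: both reduce to the case $\aleph=4$, use the identity $\aleph=4g+2b+|\M|-4$ together with $|\M|\geq b>0$ to restrict to $(g,b)\in\{(0,1),(0,2),(1,1)\}$, and check $2g+b-1\leq 2$ in each case. The only difference is that you spell out the derivation of $\aleph=4g+2b+|\M|-4$ from the arc and triangle counts, which the paper takes as given.
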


\begin{proof}
	We only need to show that $2g+b-1\leq 2$ when $|\Tri|=4$. For a decorated marked surface $\surfo$, if $|\Tri|=4$ then $4g+2b+|M|-4=4$. Note that $|M|\geq b>0$. So either $g=1$ and $b=1$, or $g=0$ and $b\leq 2$. In each case, $2g+b-1\leq 2$ holds.
\end{proof}

%This section devotes to give a finite presentation of the braid twist group $\BT(\surfO)$ of a decorated surface $\surfO$.

%=========================================================
%\subsection{Generators}
%=========================================================

%Through this subsection, we assume $\aleph\geq 3$. Set

%We

The remaining of the section devotes to prove Theorem~\ref{thm:pre}. First we show $\sigma_i$ and $\tau_r$ form a set of generators for $\BT(\surfO)$.

\begin{lemma}\label{lem:gen}
	The braid twist group $\BT(\surfO)$ is generated by $\sigma_i$, $1\leq i\leq \aleph-1$, and $\tau_r$, $1\leq r\leq 2g+b-1$.
\end{lemma}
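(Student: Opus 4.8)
The plan is to realize $\BT(\surfO)$ as the kernel of an explicit ``winding'' homomorphism on $\SBG(\surfO)$ and to identify that kernel with $G:=\langle\sigma_i,\tau_r\rangle$. First I would record the easy inclusion $G\subseteq\BT(\surfO)$: each $\sigma_i$ is by definition a braid twist, and by \eqref{tau} together with Lemma~\ref{rmk:conj} each $\tau_r=\sigma_1^{\iv{\varepsilon_r}}$ is the braid twist along $t_{\varepsilon_r}(\sigma_1)$, so both families lie in $\BT(\surfO)$.

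Next I would introduce the homomorphism $\Phi\colon\SBG(\surfO)\to H_1(\mathrm S;\ZZ)\cong\ZZ^{2g+b-1}$ sending a surface braid to the homology class of the $1$-cycle traced out by its strands. Topologically $\Phi$ is manifestly well defined, and on generators $\Phi(\sigma_i)=0$ (the two strands of a braid twist bound a disc, hence are null-homologous) while $\Phi(\varepsilon_r)=e_r$, the $\varepsilon_r$ being arranged so that the loops they sweep out form a basis of $H_1(\mathrm S)$. In particular every braid twist is null-homologous, so $\BT(\surfO)\subseteq\ker\Phi$, and also $G\subseteq\ker\Phi$ since $\Phi(\tau_r)=\Phi(\sigma_1)=0$. (Equivalently, one checks directly that $\sigma_i\mapsto 0$, $\varepsilon_r\mapsto e_r$ respects the relations of Proposition~\ref{lem:sbg}.)

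The core of the argument is to prove that $G$ is normal in $\SBG(\surfO)$ and that the images $\overline{\varepsilon_r}$ commute in $\SBG(\surfO)/G$. Normality amounts to checking that each generator of $\SBG(\surfO)$ conjugates each generator of $G$ back into $G$. The easy cases follow from Proposition~\ref{lem:sbg}: $\sigma_j^{\sigma_i^{\pm}}\in\langle\sigma_i\rangle\subseteq G$, while $\sigma_j^{\varepsilon_r^{\pm}}=\sigma_j$ and $\tau_r^{\sigma_i^{\pm}}=\tau_r$ whenever the relevant indices commute via $\Co(\sigma_i,\varepsilon_r)$ or $\Co(\sigma_i,\sigma_j)$. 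The substantial cases are $\sigma_1^{\varepsilon_r}$, $\tau_r^{\sigma_1}$ and $\tau_r^{\varepsilon_s}$; for instance \eqref{ap:04} rearranges to $\sigma_1^{\varepsilon_r}=\sigma_1\tau_r\iv{\sigma_1}\in G$, and $\tau_r^{\sigma_2}=\tau_r\sigma_2\iv{\tau_r}\in G$ follows from the braid relation for $\sigma_1,\sigma_2$ together with $\Co(\sigma_2,\varepsilon_r)$. The remaining conjugates $\tau_r^{\sigma_1}$ and $\tau_r^{\varepsilon_s}$ (the latter split according to whether $s\in\dgen$) are expressed in $G$ using \eqref{ap:04}, \eqref{ap:05} and \eqref{ap:06}, and the commutators $[\varepsilon_r,\varepsilon_s]$ are handled by the same relations. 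These last computations, which are essentially the geometric content behind the relations \eqref{nr:04}--\eqref{nr:07} of Theorem~\ref{thm:pre}, are where I expect the main difficulty to lie: the cases $\tau_r^{\varepsilon_s}\in G$, i.e.\ that the subgroup $\langle\varepsilon_r\rangle$ normalizes $G$, are the delicate ones.

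Finally I would assemble the pieces. Once $G\trianglelefteq\SBG(\surfO)$ and the $\overline{\varepsilon_r}$ commute, the quotient $\SBG(\surfO)/G$ is abelian and generated by $\overline{\varepsilon_1},\dots,\overline{\varepsilon_{2g+b-1}}$ (the $\overline{\sigma_i}$ being trivial), hence is a quotient of $\ZZ^{2g+b-1}$. Since $G\subseteq\ker\Phi$, the map $\Phi$ descends to $\overline{\Phi}\colon\SBG(\surfO)/G\to\ZZ^{2g+b-1}$ with $\overline{\Phi}(\overline{\varepsilon_r})=e_r$; a surjection from a quotient of $\ZZ^{2g+b-1}$ onto $\ZZ^{2g+b-1}$ carrying generators to a basis is an isomorphism, so $\overline{\Phi}$ is an isomorphism and $G=\ker\Phi$. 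Combining with the second step yields the sandwich $G\subseteq\BT(\surfO)\subseteq\ker\Phi=G$, whence $\BT(\surfO)=G=\langle\sigma_i,\tau_r\rangle$, as claimed.
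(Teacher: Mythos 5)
Your argument is correct, but it takes a genuinely different route from the paper. The paper's proof is purely topological: it cuts $\surfO$ along a system $\TT$ of arcs into two polygons, notes that $\{\sigma_1,\omega_{\pm j},\nu_k\}$ is a dual family to $\TT$, and proves $B_\eta\in N$ for every $\eta\in\cA(\surfO)$ by induction on $\Int(\eta,\TT)$, splitting $\eta$ into two closed arcs $\alpha,\beta$ of smaller intersection number with $\eta=\beta^{\iv{\alpha}}$ (the base case being absorbed into the copy of $B_{\aleph-1}$ generated by $\sigma_2,\dots,\sigma_{\aleph-1}$); it uses no presentation of $\SBG(\surfO)$ at all. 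Your route is algebraic: you realize $\BT(\surfO)$ inside $\ker\Phi$ for the total-homology map $\Phi\colon\SBG(\surfO)\to H_1(\mathrm{S};\ZZ)$ and identify $\ker\Phi$ with $G=\langle\sigma_i,\tau_r\rangle$ via the abelian quotient $\SBG(\surfO)/G$. The logic of the sandwich $G\subseteq\BT(\surfO)\subseteq\ker\Phi=G$ is sound, and the two nontrivial inputs you flag are indeed available: the only genuinely substantial normality checks are $\sigma_1^{\varepsilon_t^{\pm1}}\in G$ (which you correctly reduce to \eqref{ap:04}) and $\tau_r^{\varepsilon_t^{\pm1}}\in G$ — conjugating one $G$-generator by another is automatic, so $\tau_r^{\sigma_1}$ and $\tau_r^{\sigma_2}$ are not really separate cases, and your formula $\tau_r^{\sigma_2}=\tau_r\sigma_2\iv{\tau_r}$ should read $\iv{\sigma_2}\tau_r\sigma_2$. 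The formulas for $\tau_r^{\iv{\varepsilon_t}}$ are exactly the content of the paper's Lemma~\ref{lem:relforbt}, proved from \eqref{ap:02}--\eqref{ap:06}, and combined with \eqref{ap:05}, \eqref{ap:06} they also yield $[\varepsilon_s,\varepsilon_r]\in G$ (e.g.\ $[\varepsilon_s,\varepsilon_r]=\iv{\tau_s}\,\sigma_1\,\tau_r\,(\iv{\tau_s})^{\iv{\varepsilon_r}}$ when $s\notin\dgen$). So your proof front-loads computations that the paper only needs later, for the injectivity half of Theorem~\ref{thm:pre}; in exchange it delivers more than the paper's lemma does, namely the identification $\BT(\surfO)=\ker\bigl(\SBG(\surfO)\to H_1(\mathrm{S};\ZZ)\bigr)$ and hence $\SBG(\surfO)/\BT(\surfO)\cong\ZZ^{2g+b-1}$, which the paper's short exact sequence only asserts with $\MCG(\mathrm{S})$-type quotients elsewhere. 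Both proofs are complete; the paper's is shorter and independent of Proposition~\ref{lem:sbg}, yours is structurally cleaner if one is willing to import Lemma~\ref{lem:relforbt}.
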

\begin{proof}
Let $N$ be the subgroup of $\BT(\surfO)$ generated by $\sigma_i$, $1\leq i\leq \aleph-1$, and $\tau_r$, $1\leq r\leq 2g+b-1$. To complete the proof, we only need to show that the braid twist $\eta$ is in $N$
for any $\eta\in\cA(\surfO)$. Without loss of generality, suppose that $Z_1$ is an endpoint of $\eta$.
As in Figure~\ref{fig:BT}, we use the notation
\[\omega_{-j}=\tau_{2j-1},\,
\omega_j=\tau_{2j},\,
\nu_k=\tau_{2g+k},\
\text{for $1\leq j\leq g$ and $1\leq k\leq b-1$}.\]

Add a marked point on each boundary component of $\surfO$ and take the set $\TT$ of (blue/cyan) open arcs in the upper picture in Figure~\ref{fig:poly} on $\surfO$ that divide $\surfO$ into two polygons: one (denoted by $\mathbf{L}$) contains the decorating point $Z_1$ and the other (denoted by $\mathbf{R}$) contains other decorating points.
\begin{figure}[t]
\begin{tikzpicture}[xscale=.3,yscale=.3]
% surface
\draw[very thick,fill=gray!14] (0,-14).. controls +(45:4) and +(135:4) ..(0,-14);
\draw[very thick,fill=gray!14] (-120:14).. controls +(15:4) and +(105:4) ..(-120:14);
\draw[very thick,fill=gray!14] (-150:14).. controls +(-15:4) and +(75:4) ..(-150:14);
%omega plus
\draw[cyan](75:15.5)node{$\overrightarrow{A_1}$};
\draw[blue](15:15.5)node{$\overrightarrow{A_g}$};
\draw[blue](-135:15.5)node{$\overleftarrow{A_g}$};
\draw[cyan](135:15.5)node{$\overleftarrow{A_1}$};
% omega minus
\draw[cyan] (-45:15.5) node {$\overrightarrow{B_g}$};
\draw[cyan] (-75:15.5) node {$\overrightarrow{B_1}$};
\draw[cyan] (165:15.5) node {$\overleftarrow{B_g}$};
\draw[cyan] (105:15.5) node {$\overleftarrow{B_1}$};

\draw[cyan, very thick](0,0)circle(14) (0,14).. controls +(-90:5) and +(165:6) ..(0,-14);
\draw[ultra thick, blue] (-90:14) arc(-90:-180:14);
\draw[ultra thick, blue] (-30:14) arc(-30:60:14);
		
\draw (-5,1) coordinate (t1)
(5,1) coordinate (t2) node[gray!11]{\Huge$\mathbf{R}$}
(0,-5) coordinate (t3);
\draw[purple](t1)to(t2);
\draw[purple,dashed](t3)to(t2);
\foreach \j in {105,135,165,-165} {\draw[red](t1)to(\j:14);}
\foreach \j in {165,-165}
{\draw[red](t1)to(\j:14);}
\foreach \j in {-142.5,-105}
{\draw[ForestGreen](t1)to[bend left=15](\j:14);}
\foreach \j in {45,75,-45,-75} {\draw[red](t2)to(\j:14);}
\foreach \j in {22.5,-15} {\draw[ForestGreen](t2)to(\j:14);}
		
\foreach \j in {1,2,3}
{\draw(t\j)node[white]{$\bullet$}node{$\circ$};}
\draw(t3)node[right]{\tiny{$Z_{\aleph}$}};
\draw(t1)node[above right]{\tiny{$Z_{1}$}};
\draw(t2)node[above left]{\tiny{$Z_{2}$}};

\foreach \j in {1,...,12}
{\draw[very thick](30*\j:14)node{$\bullet$};}
\draw[white,fill=white](-135:14)circle(.7)node[blue,rotate=-45]{{$\cdots$}};
\draw[white,fill=white](15:14)circle(.7)
node[blue,rotate=105]{{$\cdots$}};
\foreach \j in {-1,2,-5} {\draw[white,fill=gray!0] (90+30*\j:14) circle (1);
\draw[cyan] (90+30*\j:14)node[rotate=30*\j]{\Large{$\cdots$}};}
		
\draw(-90:12.5)node{\small{$\partial_1$}};
\draw(-120:12.5)node{\small{$\partial_{\mathrm{b}}$}};
\draw(-150:12.5)node{\small{$\partial_2$}};
\draw[ForestGreen](-127:11)node[rotate=-35]{{$\cdots$}} (-127:12)node{{\tiny{$\nu_k$}}} (6:12)node{{\tiny{$\nu_k$}}};
\draw[ForestGreen](6:11)node[rotate=95]{{$\cdots$}}
(185:11)node[red]{{\tiny{$\omega_g$}}}
(170:11)node[red]{{\tiny{$\omega_{-g}$}}}
(135:11)node[red]{{\tiny{$\omega_1$}}}
(118:11)node[red]{{\tiny{$\omega_{-1}$}}}
(75:11)node[red]{{\tiny{$\omega_{1}$}}}
(35:11)node[red]{{\tiny{$\omega_{g}$}}}
(-33:11)node[red]{{\tiny{$\omega_{-g}$}}}
(-75:11)node[red]{{\tiny{$\omega_{-1}$}}}
(90:1.5)node[purple]{{\tiny{$\sigma_1$}}}
(-45:2.5)node[purple]{{\tiny{$\sigma_i$}}};
\foreach \j in {-1.2,2,-4.8} {\draw[red](90+30*\j:11)node[rotate=30*\j]{\Large{$\cdots$}};}
		
\end{tikzpicture}
%=================================================================
		
\begin{tikzpicture}[xscale=.27,yscale=.27]
		\draw[blue,very thick](6,.7)to[bend left=7](10.5,0);
		\draw[blue, very thick,dashed](10.5,0)to(38,-13);
		
		\draw[purple](10,-7)to(14,-9);\draw[dashed,purple](14,-9)to(20,-10);
		\draw[ForestGreen]plot [smooth,tension=.8] coordinates {(10,-7)(-.2,2)(30,3)(28.5,-4)(14,-9)};
		\draw[ForestGreen,->-=.5,>=stealth](-.2,2)to(-.2+.01,2+.005);
		\draw[ForestGreen]plot [smooth,tension=.8] coordinates {(10,-7)(5.5,1.5)(30,2)(26,-4)(14,-9)};
		\draw[red]plot [smooth,tension=.8] coordinates {(14,-9)(28,0)(11,1)(10,-7)};
		
		\draw[red](10,-7)to(8+2+4-.5,0);
		\draw[red,dashed](8+2+4-.5,0).. controls +(60:2) and +(180:3) ..(20,5);
		\draw[red](14,-9).. controls +(5:20) and +(0:25) ..(20,5);
		%  \draw[red,thick](10,-7).. controls +(35:5) and +(-100:2) ..(20+2+4-.3,0.3);
		%  \draw[red,thick,dashed](20+2+4-.3,0.3).. controls +(80:3) and +(180:1) ..(32,5);
		%  \draw[red,thick](10,-7).. controls +(-10:30) and +(0:7) ..(32,5);
		
		\foreach \j in {1,2.5} {
			\draw[thick](8*\j-2+4+.5,0).. controls +(45:1) and +(135:1) ..(8*\j+2+4-.5,0);
			\draw[very thick](8*\j-2+4+.3,0.3).. controls +(-60:1) and +(-120:1) ..(8*\j+2+4-.3,0.3);
		}
		\draw[very thick]
		(0,5)to(38,5)(38,-13)(0,-13)to(38,-13)
		(0,5)to[bend right=90](0,-13);
		\draw[very thick,fill=gray!14] (38,-4) ellipse (1 and 9)
		node{\tiny{$\partial_{1}$}};
		\draw[thick,fill=gray!14](6,0)circle(.7)node{\tiny{$\partial_2$}};;
		\draw[thick,fill=gray!14](1,0)circle(.7)node{\tiny{$\partial_{b}$}};
		
		\draw[blue, ultra thick] (6,.7)to(1,.7) .. controls +(180:10) and +(178:40) ..(38,-13);
		
		\draw (6,.7)node{$\bullet$} (1,.7)node{$\bullet$} (38,-13)node{$\bullet$};
		
		\node at(3.5,0) {$\cdots$};
		\node at(18,0) {$\cdots$};
		\foreach \x/\y in {14/-9,10/-7,20/-10}
		{\draw(\x,\y)node[white]{{$\bullet$}}node{{$\circ$}};}
		
		\draw[ForestGreen](3.1,2)node{\tiny{$\nu_k$}}node[below]{\tiny{$\cdots$}};
		\draw[red](10,-2)node{\tiny{$\omega_g\qquad\omega_{-g}$}};
		%  (20.5,-1)node{\tiny{$\xi_1$}} (22.5,-1)node{\tiny{$\xi_{-1}$}}
		\draw[purple](11.5,-8.5)node{\small{$\sigma_1$}};
		\end{tikzpicture}
		\caption{Dividing $\surfO$ into two polygons}
		\label{fig:poly}
	\end{figure}
	Then the closed arcs (cf. the upper picture in Figure~\ref{fig:poly})
	\[\{ \sigma_1, \omega_{\pm j}, \nu_k\mid 1\leq j\leq g, 1\leq k\leq b-1\}\]
	form a `dual' set $\TT^*$ of $\TT$ in the sense
	that each of which intersect exactly one of the open arcs in $\TT$.
	Note that $\overrightarrow{A_g}$ consists of $b$ open arcs (where $b$ is the number of boundary components of $\surfO$),
	which are dual to $\nu_k$, $1\leq k\leq b-1$, and $\omega_g$, respectively (cf. the lower picture in Figure~\ref{fig:poly}).
	
	Use induction on $\Int(\eta, \TT)$. The initial case is when $\Int(\eta,\TT)=1$.
	Without loss of generality, suppose that the intersection is on $\overrightarrow{A_1}$.
	Then the braid twist $\eta$ and $\omega_1$ (the dual of $\overrightarrow{A_1}$) only differ by an element of
	\[\MCG(\mathbf{R}_{\aleph-1})=\<\sigma_i\mid 2\leq i\leq \aleph-1\>(\cong B_{\aleph-1}),\]
	where $\mathbf{R}_{\aleph-1}$ is the right polygon $\mathbf{R}$
	with decorating points $Z_i, {2\leq i\leq\aleph}$.
	Therefore, $\eta\in N$.
	Now consider the case when $\Int(\eta,\TT)=m\geq2$.
	Let $Z_q$ be the endpoint of $\eta$ other than $Z_1$.
	The intersections divide $\eta$ into $m+1$ segments
	$L_0,\cdots,L_m$ in order (so $L_0\subset\mathbf{L}$ and $L_m\subset \mathbf{R}$).
	Note that except $L_0$ and $L_m$, $L_t$ has both endpoints on arcs in $\TT$.
	Choose a decorating point $Z\in\mathbf{R}$ that is not $Z_q$
	and connect a line $l$ from $Z$ in $\mathbf{R}-L_m$ (which is still a disk)
	to one of $L_t, 1\leq t\leq m-1$
	so that it does not interest $\eta$ except at the endpoint $Y\in L_t$
	(cf. Figure~\ref{fig:alphabeta}).
	Then we can decompose $\eta$ into $\alpha$ and $\beta$,
	where $\alpha$ is isotopy to $l\cup YZ_q$ and
	$\beta$ is isotopy to $l\cup YZ_1$ such that:
	\begin{itemize}
		\item $\alpha,\beta\in\cA(\surfO)$ whose intersection numbers with $\TT$
		are less than $m$.
		\item As braid twists, $\eta=\beta^{\iv{\alpha}}$.
	\end{itemize}
	\begin{figure}[ht]\centering
		\begin{tikzpicture}[scale=1]
		\draw[orange,thick](0,0).. controls +(90:1) and +(-90:1) ..(3,2);
		\draw[orange,thick](6,0).. controls +(180:1) and +(0:1) ..(3,2);
		
		\draw[orange,thick](0,0).. controls +(90:.7) and +(210:1) ..(3,1)
		node[below,black]{$^Y$}node[black]{\tiny{$\bullet$}}
		.. controls +(30:1) and +(180:1) ..(6,0);
		\draw[bend right=15,thick](3,1)to(3,2) (3.1,1.5)node[right]{$l$};
		
		\draw(0,0)node[white] {$\bullet$} node[orange]{$\circ$}node[left]{$Z_1$}
		(3,2)node[white] {$\bullet$} node[red]{$\circ$}node[above]{$Z$}
		(6,0)node[white] {$\bullet$} node[red]{$\circ$}node[right]{$Z_q$}
		(4.8,1)node {$\alpha$}(1.5,1.3)node {$\beta$}(3.7,0.9)node[below] {$\eta$};
		\end{tikzpicture}
		\caption{Decomposing $\eta$}
		\label{fig:alphabeta}
	\end{figure}
	By inductive assumption, we have $\alpha, \beta\in N$
	and hence $\eta\in N$ as required.
\end{proof}

%\begin{lemma}\label{lem:pre}
%	The following relations hold in $\BT(\surfO)$:
%\end{lemma}

\begin{figure}
	\begin{tikzpicture}[scale=.6]
	\draw[white]plot [smooth,tension=1] coordinates
	{(2,0)(3.5,3)(-3,3)(-2.5,-1.8)(2,-1.5)};
	\draw[white]plot [smooth,tension=1] coordinates
	{(-2,-1.5)(-2.5,1)(2.5,-1.2)(2,-3)};
	\draw[red,thick]plot [smooth,tension=5] coordinates {(-2,0)(0,3)(2,0)};
	\draw[red,thick]plot [smooth,tension=1] coordinates {(-2,0)(0,1.5)(2,0)};
	\draw[red,thick](-2,0)to(-2,-1.5)(2,0)to(2,-1.5);
	\draw[dashed,red,thin](-2,0)to(2,0)to(-2,-1.5)to(2,-1.5)to(2,-3);
	\draw[red,thick](3.8,1.2)node[above]{$\tau_r$}(0,1.4)node[above]{$\tau_s$};
	\foreach \j/\k in {2/0,-2/0,2/-1.5,-2/-1.5,2/-3}
	{\draw(\j,\k)node[white]{$\bullet$}node[red]{$\circ$};}
	\draw[red](-2,-.75)node[left]{$x$}(2,-.75)node[right]{$y$}
	(2,-2.25)node[right]{$\sigma_4$}
	(0,0)node[above]{$\sigma_1$}
	(0,-1.5)node[below]{$\sigma_3$}
	(0,-.85)node[right]{$\sigma_2$};
	\end{tikzpicture}
	\begin{tikzpicture}[scale=.6]
	\draw[red,thick]plot [smooth,tension=3] coordinates {(-2,0)(2,3)(2,0)};
	\draw[white, fill=white](0,2.6)circle(.2);
	\draw[red,thick]plot [smooth,tension=3] coordinates {(-2,0)(-2,3)(2,0)};
	\draw[red,thick](-2,0)to(-2,-1.5)(2,0)to(2,-1.5);
	\draw[dashed,red,thin](-2,0)to(2,0)to(-2,-1.5)to(2,-1.5)to(2,-3);
	\draw[red,thick](-4,1.2)node[above]{$\tau_r$}(4,1.2)node[above]{$\tau_s$};
	\foreach \j/\k in {2/0,-2/0,2/-1.5,-2/-1.5,2/-3}
	{\draw(\j,\k)node[white]{$\bullet$}node[red]{$\circ$};}
	\draw[red](-2,-.75)node[left]{$x$}(2,-.75)node[right]{$y$}
	(0,0)node[above]{$\sigma_1$}
	(2,-2.25)node[right]{$\sigma_4$}
	(0,-1.5)node[below]{$\sigma_3$}
	(0,-.85)node[right]{$\sigma_2$};
	\end{tikzpicture}
	\caption{$s<r$ and $s\notin\dgen$ on the left, while $s\in\dgen$ on the right}\label{fig:xy}
\end{figure}

By Lemma~\ref{lem:btrel} and Lemma~\ref{rmk:conj}, it is easy to see that all relations in Theorem~\ref{thm:pre} hold. Indeed, note that $\sigma_i$ and $\sigma_j$ are disjoint if $|i-j|>1$ or are disjoint except sharing a common endpoint if $|i-j|=1$, and note that $\tau_r$ and $\sigma_i$ are disjoint if $i>2$ (see Figure~\ref{fig:BT}), so we have the relations \eqref{nr:01}, \eqref{nr:02} and \eqref{nr:03}. By Lemma~\ref{rmk:conj}, the elements $x=\sigma_1^{\iv{\sigma_2}}$ and $y=\sigma_3^{\iv{\sigma_2}}$ are the braid twists along the closed arcs $B_{\sigma_2}(\sigma_1)$ and  $B_{\sigma_2}(\sigma_3)$, respectively. As in Notation~\ref{nts:arc}, we denote ${\sigma_2}(\sigma_1)$ and  ${\sigma_2}(\sigma_3)$ also by $x$ and $y$, respectively, see Figure~\ref{fig:xy}. Then $x$ (resp. $y$) and $\tau_r$ are disjoint except sharing a common endpoint. So by Lemma~\ref{lem:btrel}, we have the relations \eqref{nr:04} and \eqref{nr:05}. For $s<r$ and $s\notin\dgen$, the elements ${\tau_r}^y$ and ${\tau_s}^x$ are the braid twists along the closed arcs $\iv{B_{y}}(\tau_r)$ and $\iv{B_{x}}(\tau_s)$, respectively, which are disjoint (see the left picture in Figure~\ref{fig:xy}), so we have the relation \eqref{nr:06}. Similarly, for $s<r$ and $s\notin\dgen$, the elements ${\tau_r}^{\iv{y}}$ and ${\tau_s}^x$ are the braid twists along the closed arcs ${B_{y}}(\tau_r)$ and $\iv{B_{x}}(\tau_s)$, respectively, which are disjoint (see the right picture in Figure~\ref{fig:xy}), so we have the relation \eqref{nr:07}.

% (see Figure~\ref{fig:BT} and Figure~\ref{fig:xy})

To prove that these relations are sufficient, we need to show that a group $\widetilde{N}$ with the following presentation:
\begin{itemize}
	\item Generators: $\ts_i$, $1\leq i\leq \aleph-1$, $\tt_r$, $1\leq r\leq 2g+b-1$.
	\item Relations: for $1\leq i,j\leq \aleph-1$ and $1\leq r,s\leq 2g+b-1$,
	\begin{align}
	&\Co(\ts_i,\ts_j)\label{def:01}&& \text{if $|i-j|>1$}\\
	&\Br(\ts_i,\ts_{j})\label{def:02}&&\text{if $|i-j|=1$}\\
	&\Co(\tt_r,\ts_i)\label{def:03}&&\text{if $i>2$}\\
	&\Br(\tt_r,\tx)\label{def:04}&&\\
	&\Br(\tt_r,\ty)\label{def:05}&&\\
	&\Co({\tt_r}^{\ty},{\tt_s}^{\tx})\label{def:06}&&\text{if $s<r$ and $s\notin\dgen$}\\
	&\Co({\tt_r}^{\iv{\ty}},{\tt_s}^{\tx})\label{def:07}&&\text{if $s<r$ and $s\in\dgen$}
	\end{align}
	where
	\begin{gather}\label{eq:tx}
	\tx:=\ts_1^{\iv{\ts_2}}={\ts_2}^{\ts_1}\end{gather}
	and
	\begin{gather}\label{eq:tx ty}
	\ty:=\ts_3^{\iv{\ts_2}}={\ts_2}^{\ts_3}\end{gather}
\end{itemize}
is isomorphic to $\BT(\surfO)$ by sending $\widetilde{\sigma}_i$ to $\sigma_i$ and sending $\widetilde{\tau}_r$ to $\tau_r$.

The following relations in $\widetilde{N}$ derived from the above relations will be useful later.

\begin{lemma}
	The following hold in $\widetilde{N}$.
	\begin{align}
	%&{\blue \Br(\widetilde{\tau}_r,\widetilde{\sigma}_2)}\label{c1:01}&&\\
	&\Co(\widetilde{x},\widetilde{y})\label{c1:02}&&\\
	&\Br(\widetilde{x},\widetilde{\sigma}_3)\label{c1:03}&&\\
	&\Br(\widetilde{y},\widetilde{\sigma}_3)\label{c1:04}&&\\
	&\Br(\tt_r,{\tt_s}^{\tx})\label{c1:05}&&\text{if $s<r$}\\
	&\Br(\tt_r^{\ty},{\tt_s})\label{c1:06}&&\text{if $s<r$ and $s\notin \dgen$}\\
	&\Br(\tt_r^{\iv{\ty}},{\tt_s})\label{c1:07}&&\text{if $s<r$ and $s\in \dgen$}
	\end{align}
\end{lemma}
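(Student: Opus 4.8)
The plan is to split the six relations into the three ``conjugation'' relations \eqref{c1:02}--\eqref{c1:04}, which I would get essentially for free by transporting a defining relation along an inner automorphism, and the three ``braid'' relations \eqref{c1:05}--\eqref{c1:07}, which I would all deduce from a single auxiliary group-theoretic claim.

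For \eqref{c1:02}--\eqref{c1:04} the point is simply that $\tx$ and $\ty$ are conjugates of generators. From \eqref{eq:tx} we have $\tx=\ts_2\ts_1\iv{\ts_2}=\iv{\ts_1}\ts_2\ts_1$, and likewise $\ty=\ts_2\ts_3\iv{\ts_2}=\iv{\ts_3}\ts_2\ts_3$. Since conjugation by a fixed element is an automorphism and hence carries relations to relations, I would obtain \eqref{c1:02} by conjugating $\Co(\ts_1,\ts_3)$ (an instance of \eqref{def:01}) by $\ts_2$; \eqref{c1:03} by conjugating $\Br(\ts_2,\ts_3)$ [\eqref{def:02}] by $\ts_1$, using $\ts_3^{\ts_1}=\ts_3$, which is exactly $\Co(\ts_1,\ts_3)$; and \eqref{c1:04} by conjugating the same braid relation by $\ts_3$, using the trivial $\ts_3^{\ts_3}=\ts_3$. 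I would establish \eqref{c1:02} first, as it is used constantly below to slide $\tx$ past $\ty$.

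The heart of the matter is the observation that \eqref{c1:05}--\eqref{c1:07} are all instances of one claim in an arbitrary group: if $\Br(a,c)$, $\Br(b,c)$ and $\Co(a^{c^{\varepsilon}},b)$ hold for some $\varepsilon\in\{1,-1\}$, then $\Br(a,b)$ holds. (Geometrically this is the content of Lemma~\ref{lem:btrel}: three closed arcs through one common endpoint braid pairwise, and $\Co(a^{c^{\varepsilon}},b)$ records that dragging the shared endpoint of $a$ off along $c$ yields an arc disjoint from $b$.) To prove the claim for $\varepsilon=1$ I would write the hypothesis $\Co(a^c,b)$ in two ways: rewriting $a^c=aca^{-1}$ via $\Br(a,c)$ turns it into $aca^{-1}ba=bac$, while rewriting with $cbc^{-1}=\iv{b}cb$ from $\Br(b,c)$ turns it into $bacb=cbac$; substituting the first for the common factor $bac$ in the second and applying the single move $cac=aca$ collapses, after cancelling a leading $ac$, to $aba=bab$. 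The case $\varepsilon=-1$ runs in exactly the same way, using $a^{c^{-1}}=\iv{a}ca$ in place of $a^c=aca^{-1}$.

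Finally I would feed each relation into the claim: \eqref{c1:05} is the case $(a,b,c)=(\tt_r,\tt_s^{\tx},\ty)$, with $\varepsilon=1$ supplied by \eqref{def:06} when $s\notin\dgen$ and $\varepsilon=-1$ supplied by \eqref{def:07} when $s\in\dgen$; \eqref{c1:06} is $(a,b,c)=(\tt_r^{\ty},\tt_s,\tx)$ and \eqref{c1:07} is $(a,b,c)=(\tt_r^{\iv\ty},\tt_s,\tx)$. The main obstacle, and the only place needing genuine care, is verifying that the three hypotheses of the claim are available in each case. One braid hypothesis is always a defining relation outright (such as $\Br(\tt_r,\ty)=$\eqref{def:05}, or $\Br(\tt_s,\tx)$ from \eqref{def:04}); the other braid hypothesis is the image of a defining relation under conjugation by $\tx$ or $\ty$, which is legitimate precisely because $\Co(\tx,\ty)$ lets the conjugating twist pass the other one unchanged (e.g. $\Br(\tt_s^{\tx},\ty)$ is the conjugate of $\Br(\tt_s,\ty)$ by $\tx$); and the commutation hypothesis is either \eqref{def:06}/\eqref{def:07} directly (for \eqref{c1:05}) or their conjugate by $\tx$ (for \eqref{c1:06}, \eqref{c1:07}). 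Keeping the sign $\varepsilon$, dictated by whether $s\in\dgen$, and the direction of each conjugation consistent is the delicate bookkeeping; once that is done, the auxiliary claim discharges all three relations uniformly.
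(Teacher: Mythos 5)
Your proposal is correct and follows essentially the same route as the paper: the relations \eqref{c1:02}--\eqref{c1:04} come from conjugating defining relations, and each of \eqref{c1:05}--\eqref{c1:07} is deduced from the defining braid relation $\Br(\tt_r,\ty)$ (resp.\ $\Br(\cdot,\tx)$), a second braid relation obtained by conjugating a defining one past $\Co(\tx,\ty)$, and the twisted commutation \eqref{def:06}/\eqref{def:07}. The only organizational difference is that you isolate the final step as a standalone claim ($\Br(a,c)$, $\Br(b,c)$, $\Co(a^{c^{\varepsilon}},b)\Rightarrow\Br(a,b)$, which checks out for both signs) and apply it uniformly, whereas the paper performs the same deduction inline by rewriting $\ty$ as $\tt_r^{\iv{\tt_r}^{\ty}}$ and conjugating, leaving \eqref{c1:06} and \eqref{c1:07} as ``similar''.
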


\begin{proof}
	The relations \eqref{c1:02}, \eqref{c1:03} and \eqref{c1:04} follows directly from \eqref{eq:tx}, \eqref{eq:tx ty}, \eqref{def:01} and \eqref{def:02}.
	
	To show \eqref{c1:05}, by \eqref{def:05}, we have $\Br(\ty,\tt_s)$. Conjugated by ${\tx}$, we get $\Br(\ty^{\tx},{\tt_s}^{\tx})$. Since by \eqref{c1:02}, $\ty^{\tx}=\ty$, we deduce that $\Br(\ty,{\tt_s}^{\tx})$ holds. It follows from $\Br(\tt_r,\ty)$ \eqref{def:05} that both $\Br({\tt_r}^{\iv{\tt_r}^{\ty}},{\tt_s}^{\tx})$ and $\Br({\tt_r}^{{\tt_r}^{\iv{\ty}}},{\tt_s}^{\tx})$ hold. For the case $s\notin\dgen$, conjugating $\Br({\tt_r}^{\iv{\tt_r}^{\ty}},{\tt_s}^{\tx})$ by ${\tt_r^{\ty}}$ and using $\Co({\tt_r}^{\ty},{\tt_s}^{\tx})$ \eqref{def:06}, we get $\Br({\tt_r},{\tt_s}^{\tx})$; for the case $s\in\dgen$, conjugating $\Br({\tt_r}^{{\tt_r}^{\iv{\ty}}},{\tt_s}^{\tx})$ by ${\iv{\tt_r}^{\iv{\ty}}}$ and using $\Co(\iv{\tt_r}^{\iv{\ty}},{\tt_s}^{\tx})$ from \eqref{def:07}, we also get $\Br({\tt_r},{\tt_s}^{\tx})$.
	
	%for $s\notin\dgen$, we have
	%\[
	%\eqref{def:05}\xLongrightarrow{?^{\tx}}\Br(\ty^{\tx},{\tt_s}^{\tx})
	%\xLongrightarrow{\eqref{c1:02}}\Br(\ty,{\tt_s}^{\tx})
	%\xLongrightarrow{\eqref{def:05}}
	%\Br({\tt_r}^{\iv{\tt_r}^{\ty}},{\tt_s}^{\tx})
	%\xLongrightarrow[\eqref{def:06}]{{?^{\tt_r^{\ty}}}}\Br({\tt_r},{\tt_s}^{\tx});
	%\]
	%for $s\in\dgen$, we have
	%\[
	%\eqref{def:05}\xLongrightarrow{?^{\tx}}\Br(\ty^{\tx},{\tt_s}^{\tx})
	%\xLongrightarrow{\eqref{c1:02}}\Br(\ty,{\tt_s}^{\tx})
	%\xLongrightarrow{\eqref{def:05}}
	%\Br({\tt_r}^{{\tt_r}^{\iv{\ty}}},{\tt_s}^{\tx})
	%\xLongrightarrow[\eqref{def:07}]{{?^{\iv{\tt_r}^{\iv{\ty}}}}}\Br({\tt_r},{\tt_s}^{\tx}).
	%\]
	
	The relations \eqref{c1:06} and \eqref{c1:07} can be proved similarly.
	
	%To show \eqref{c1:06}, we have
	%\[
	%\eqref{def:04}\xLongrightarrow{?^{\ty}}\Br({\tt_r}^{\ty},\tx^{\ty})\xLongrightarrow{\eqref{c1:02}}\Br({\tt_r}^{\ty},\tx)\xLongrightarrow{\eqref{def:04}}\Br({\tt_r}^{\ty},{\tt_s}^{{\iv{\tt_s}}^{\tx}})\xLongrightarrow[\eqref{def:06}]{{?^{\tt_s^{\tx}}}}\Br({\tt_r}^{\ty},{\tt_s});
	%\]
	
	%To show \eqref{c1:07}, we have
	%\[
	%\eqref{def:04}\xLongrightarrow{?^{\iv{\ty}}}\Br({\tt_r}^{\iv{\ty}},\tx^{\iv{\ty}})\xLongrightarrow{\eqref{c1:02}}\Br({\tt_r}^{\iv{\ty}},\tx)\xLongrightarrow{\eqref{def:04}}\Br({\tt_r}^{\iv{\ty}},{\tt_s}^{{\iv{\tt_s}}^{\tx}})\xLongrightarrow[\eqref{def:06}]{{?^{\tt_s^{\tx}}}}\Br({\tt_r}^{\iv{\ty}},{\tt_s})
	%\]
\end{proof}

To compare $\widetilde{N}$ with $\BT(\surfO)$, we shall consider the following group action. Let $H$ be the subgroup of  $\SBG(\surfO)$ generated by $\varepsilon_r$, $1\leq r\leq 2g+b-1$ (which are the generators in the presentation for $\SBG(\surfO)$ in Proposition~\ref{lem:sbg}, that are not in $\BT(\surfO)$). The group $H$ can be regarded as the fundamental group
$\pi_1(\mathrm{S},Z_1)$ of the underlying surface $\mathrm{S}$ at the base point $Z_1$.
%where $\surfO-\{Z_1\}$ is the surface obtained from $\surfO$ by forgetting $Z_1$ as a decorating point.
Thus $H$ is freely generated by $\varepsilon_r$, $1\leq r\leq 2g+b-1$.
%We see that $H$ is a free group, regarding $H$
Since $\BT(\surfO)$ is a normal subgroup of $\SBG(\surfO)$ (by Lemma~\ref{rmk:conj}), there is an action of $H$ on $\BT(\surfO)$ by conjugation.

\begin{lemma}\label{lem:relforbt}
The action of $H$ on $\BT(\surfO)$ by conjugation is given explicitly as follows.
\[\begin{array}{ccl}
	%\varepsilon_t \sigma_1\iv{\varepsilon_t}&=&\ \ \  \tau_t\\
	\sigma_i^{\iv{\varepsilon_t}}&=&\begin{cases}
	\tau_t&\text{if $i=1$}\\
	\sigma_i&\text{if $i\neq 1$}
	\end{cases}\\
	\tau_r^{\iv{\varepsilon_t}}&=&\begin{cases}
	x^{\sigma_2\tau_t}&\text{if $r=t$;}\\
	\tau_r^{x\tau_t\sigma_2\tau_t}&\text{if $r<t$ and $r\notin\dgen$;}\\
	\tau_r^{x\iv{\tau_t}\sigma_2\tau_t}&\text{if $r<t$ and $r\in\dgen$;}\\
	\tau_r^{\iv{x}\iv{\tau_t}\sigma_2\tau_t}&\text{if $r>t$ and $t\notin\dgen$;}\\
	\tau_r^{\iv{x}\tau_t\sigma_2\tau_t}&\text{if $r>t$ and $t\in\dgen$.}
	\end{cases}
	\end{array}
	\]
where $1\leq i\leq \aleph-1$ and $1\leq r,t\leq 2g+b-1$.
\end{lemma}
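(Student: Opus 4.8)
The plan is to carry out every computation inside $\SBG(\surfO)$, using the presentation of Proposition~\ref{lem:sbg} together with the identity $\tau_r=\varepsilon_r\sigma_1\iv{\varepsilon_r}$ coming from \eqref{tau}, and only at the very end to repackage the resulting words into the braid-twist generators $\sigma_i,\tau_r$ of $\BT(\surfO)$. Conjugation by $\varepsilon_t\in H$ already preserves $\BT(\surfO)$ by the normality established in Lemma~\ref{rmk:conj}, so each $\sigma_i^{\iv{\varepsilon_t}}$ and $\tau_r^{\iv{\varepsilon_t}}$ is again a word in the generators; the content of the lemma is purely to identify these words explicitly.

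First I would dispose of the action on the $\sigma_i$. For $i=1$ the value $\sigma_1^{\iv{\varepsilon_t}}=\tau_t$ is nothing but the definition \eqref{tau}. For $i\neq1$ the commutation relation \eqref{ap:03}, namely $\Co(\sigma_i,\varepsilon_r)$ for every $r$, shows that $\sigma_i$ commutes with all of $H$ and is therefore fixed. The one auxiliary identity I would record before the main computation is $x^{\sigma_2}=\sigma_1$, which is immediate from $x=\sigma_1^{\iv{\sigma_2}}$ in \eqref{x} (the second equality $\sigma_1^{\iv{\sigma_2}}=\sigma_2^{\sigma_1}$ being the braid relation \eqref{ap:02}). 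This is precisely what links the $\BT$-conjugators appearing on the right-hand sides, which are built from $x^{\pm1}$, $\sigma_2$ and the $\tau$'s, back to the element $\sigma_1$ on which all the $\varepsilon$-relations act.

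The heart of the proof is the five-case evaluation of $\tau_r^{\iv{\varepsilon_t}}=\varepsilon_t\varepsilon_r\sigma_1\iv{\varepsilon_r}\iv{\varepsilon_t}$. In each case I would expand the claimed conjugate $\tau_r^{\,w}$, rewrite every $\tau$ via $\tau=\varepsilon\sigma_1\iv{\varepsilon}$ and every $x$ via $x^{\sigma_2}=\sigma_1$, and then reduce the asserted equality to a single $\varepsilon$-relation of Proposition~\ref{lem:sbg}. The five cases match these relations exactly: $r=t$ uses \eqref{ap:04}, i.e.\ $\Co(\varepsilon_t,\sigma_1\varepsilon_t\sigma_1)$; the two subcases of $r<t$ use, according to whether $r\in\dgen$, the relations $\Co(\varepsilon_r,\sigma_1\varepsilon_t\sigma_1)$ from \eqref{ap:05} and $\SCo(\sigma_1;\varepsilon_t,\varepsilon_r)$ from \eqref{ap:06}; and the two subcases of $r>t$ use symmetrically $\Co(\varepsilon_t,\sigma_1\varepsilon_r\sigma_1)$ and $\SCo(\sigma_1;\varepsilon_r,\varepsilon_t)$. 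As a sanity check, for $r=t$ one finds $x^{\sigma_2\tau_t}=\sigma_1^{\tau_t}$, and the required identity $\sigma_1^{\tau_t}=\varepsilon_t^{2}\sigma_1\varepsilon_t^{-2}$ is exactly \eqref{ap:04} after cancelling the outer $\varepsilon_t$'s.

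The main obstacle is the bookkeeping in the mixed cases $r\neq t$: the conjugators $x\tau_t\sigma_2\tau_t$, $x\iv{\tau_t}\sigma_2\tau_t$, $\iv{x}\iv{\tau_t}\sigma_2\tau_t$, and so on, are long words, and one must choose the order in which to apply $x^{\sigma_2}=\sigma_1$, the braid relations among $\sigma_1,\sigma_2$, and the single $\varepsilon$-relation so that everything collapses. The guiding principle is that the smaller of the two indices controls which relation applies, mirroring the hypothesis $s<r$ in \eqref{ap:05}--\eqref{ap:06}; this is why the case split is on $r\in\dgen$ when $r<t$ but on $t\in\dgen$ when $r>t$, and why the conjugators carry $x$ versus $\iv{x}$ in the two regimes. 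A geometric cross-check, dragging the closed arc $\tau_r$ around the loop $\varepsilon_t$ and reading off the resulting intersection pattern through Lemma~\ref{lem:btrel}, can be used to confirm each formula independently of the algebra.
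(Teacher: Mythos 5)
Your plan coincides with the paper's own proof: the $\sigma_i$ case is handled exactly as you describe via \eqref{tau} and \eqref{ap:03}, and for $\tau_r^{\iv{\varepsilon_t}}$ the paper likewise expands the claimed conjugate using $\tau=\varepsilon\sigma_1\iv{\varepsilon}$ and $x^{\sigma_2}=\sigma_1$, reducing each of the five cases to the single matching relation \eqref{ap:04}, \eqref{ap:05} or \eqref{ap:06} of Proposition~\ref{lem:sbg} (it writes out only the cases $r=t$ and $r<t$, $r\notin\dgen$, leaving the rest as "similar"). Your identification of which relation governs which case, and the $r=t$ sanity check, are both correct, so this is essentially the same argument.
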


\begin{proof}
The first equality follows from \eqref{tau} and \eqref{ap:03}. To show the second equality, for $r=t$, we have
\[\tau_t^{\iv{\varepsilon_t}}
\xlongequal{\eqref{tau}}\sigma_1^{\iv{\varepsilon_t}\iv{\varepsilon_t}}
\xlongequal{\eqref{ap:04}}
\sigma_1^{\varepsilon_t\sigma_1\iv{\varepsilon_t}}
\xlongequal{\eqref{tau}}\sigma_1^{\tau_t}
\xlongequal{\eqref{x}}x^{\sigma_2\tau_t}
\]
(where each equality is labeled by the relation which is used there);
if $r<t$ and $r\notin\dgen$, we have
\[\begin{split}
&\tau_r^{x\tau_t\sigma_2\tau_t}
\xlongequal{\eqref{x}}\tau_r^{\sigma_2\sigma_1\iv{\sigma_2}\tau_t\sigma_2\tau_t}
\xlongequal{\eqref{tau}}
\sigma_1^{\iv{\varepsilon_r}\sigma_2\sigma_1\iv{\sigma_2}\varepsilon_t\sigma_1\iv{\varepsilon_t}\sigma_2\varepsilon_t\sigma_1\iv{\varepsilon_t}}
\xlongequal{\eqref{ap:03}}
\sigma_1^{\sigma_2\iv{\varepsilon_r}\sigma_1\varepsilon_t\iv{\sigma_2}\sigma_1\sigma_2\sigma_1\iv{\varepsilon_t}}\\
&\xlongequal{\eqref{ap:02}}
\sigma_2^{\iv{\sigma_1}\iv{\varepsilon_r}\sigma_1\varepsilon_t\sigma_1\sigma_2\iv{\varepsilon_t}}
\xlongequal{\eqref{ap:05}}
\sigma_2^{\varepsilon_t\sigma_1\iv{\varepsilon_r}\sigma_2\iv{\varepsilon_t}}
\xlongequal{\eqref{ap:03}}
\sigma_2^{\sigma_1\sigma_2\iv{\varepsilon_r}\iv{\varepsilon_t}}
\xlongequal{\eqref{ap:02}}
\sigma_1^{\iv{\varepsilon_r}\iv{\varepsilon_t}}
\xlongequal{\eqref{tau}}
\tau_r^{\iv{\varepsilon_t}};
\end{split}\]
the other three cases can be proved similarly.

\end{proof}

%=========================================================
%\subsection{Relations}
%=========================================================

%Through this subsection, we assume $\aleph\geq4$. Set

On the other hand, we are going to construct an action of $H$ on $\widetilde{N}$. Denote by $\Lambda$ the set $\{\ts_i,\ 1\leq i\leq\aleph-1;\ \tt_r,\ 1\leq r\leq 2g+b-1 \}$
of generators of $\widetilde{N}$. To any generator $\varepsilon_t$ of $H$, we associate a map
\[\begin{array}{rccc}
\rho(\varepsilon_t):& \Lambda&\to&\widetilde{N}\\
&\lambda&\mapsto&\rho(\varepsilon_t).\lambda
\end{array}\]
where (comparing with the formulas in Lemma~\ref{lem:relforbt}).
\begin{align}
%\rho(\varepsilon_t).\ts_1\label{eq:ss01}&=\tt_t&&\\
\rho(\varepsilon_t).\ts_i\label{eq:ss02}&=\begin{cases}
\tt_t&\text{\qquad\ if $i=1$}\\
\ts_i&\text{\qquad\ if $i\neq 1$}
\end{cases}\\
\rho(\varepsilon_t).\tt_r\label{eq:ss03}&=\begin{cases}
\tx^{\ts_2\tt_t}&\text{if $r=t$}\\
{\tt_r}^{\tx\tt_t\ts_2\tt_t}&\text{if $r<t$ and $r\notin\dgen$}\\
{\tt_r}^{\tx\iv{\tt_t}\ts_2\tt_t}&\text{if $r<t$ and $r\in\dgen$}\\
{\tt_r}^{\iv{\tx}\iv{\tt_t}\ts_2\tt_t}&\text{if $r>t$ and $t\notin\dgen$}\\
{\tt_r}^{\iv{\tx}\tt_t\ts_2\tt_t}&\text{if $r>t$ and $t\in\dgen$}
\end{cases}
\end{align}
%in which
To the inverse $\iv{\varepsilon_t}$, we also associate a map
\[\begin{array}{rccc}
\rho(\iv{\varepsilon_t}):& \Lambda&\to&\widetilde{N}\\
&\lambda&\mapsto&\rho(\iv{\varepsilon_t}).\lambda
\end{array}\]
where
\begin{align}
%\rho(\iv{\varepsilon_t}).\ts_1&={\tt_t}^{\iv{\ts_1}}\label{eq:ss04}&&\\
\rho(\iv{\varepsilon_t}).\ts_i\label{eq:ss05}&=\begin{cases}
{\tt_t}^{\iv{\ts_1}}&\qquad\ \text{if $i=1$}\\
\ts_i&\qquad\ \text{if $i\neq 1$}
\end{cases}\\
\rho(\iv{\varepsilon_t}).\tt_r&=\label{eq:ss06}\begin{cases}
\ts_1&\text{if $r=t$}\\
{\tt_r}^{\iv{\ts_2}\iv{\tt_t}\iv{\ts_1}\iv{\ts_2}}&\text{if $r<t$ and $r\notin\dgen$}\\
{\tt_r}^{\ts_2\iv{\tt_t}\iv{\ts_1}\iv{\ts_2}}&\text{if $r<t$ and $r\in\dgen$}\\
{\tt_r}^{\ts_2\tt_t\iv{\ts_1}\iv{\ts_2}}&\text{if $r>t$ and $t\notin\dgen$}\\
{\tt_r}^{\iv{\ts_2}\tt_t\iv{\ts_1}\iv{\ts_2}}&\text{if $r>t$ and $t\in\dgen$}
\end{cases}
\end{align}

%The following alternative description of $\rho(\iv{\varepsilon_t}).\tt_r$ will be useful later.

%\begin{lemma}\label{lem:ac}
%We have the following.
%\[\]
%\end{lemma}

Since $H$ is freely generated by $\varepsilon_1,\cdots,\varepsilon_{2g+b-1}$, we have that $\rho$ induces an action of $H$ on $\widetilde{N}$ if and only if each $\rho(\varepsilon_r)$ induces an automorphism of $\widetilde{N}$. In this case, one can identify $\widetilde{N}$ with $\BT(\surfO)$ as follows.

\begin{lemma}\label{lem:key}
If both $\rho(\varepsilon_t)$ and $\rho(\iv{\varepsilon_t})$ induce endomorphisms of the group $\widetilde{N}$ for any $1\leq t\leq 2g+b-1$, then there is a group isomorphism from $\widetilde{N}$ to $\BT(\surfO)$, sending $\ts_i$ to $\sigma_i$ and sending $\tt_r$ to $\tau_r$.
\end{lemma}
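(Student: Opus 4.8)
The plan is to exhibit $\SBG(\surfO)$ as a semidirect product $\widetilde N\rtimes_\rho H$ and then obtain the desired isomorphism by restricting to the normal factor. To begin, note that the assignment $\ts_i\mapsto\sigma_i$, $\tt_r\mapsto\tau_r$ extends to a surjective homomorphism $\pi\colon\widetilde N\to\BT(\surfO)$: it is well defined because the relations \eqref{def:01}--\eqref{def:07} have already been verified to hold among the $\sigma_i,\tau_r$ (these are exactly the relations of Theorem~\ref{thm:pre}, checked via Lemma~\ref{lem:btrel} and Lemma~\ref{rmk:conj}), and it is onto by Lemma~\ref{lem:gen}.

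Next I would upgrade the endomorphisms of the hypothesis to an action. From the hypothesis each $\rho(\varepsilon_t)$ and $\rho(\iv{\varepsilon_t})$ is an endomorphism of $\widetilde N$; to see they are mutually inverse automorphisms it suffices to check the two composites equal the identity on the generating set $\Lambda$. Using Lemma~\ref{lem:ac} and the defining formulas \eqref{eq:ss02}--\eqref{eq:ss06} together with the relations of $\widetilde N$, this reduces to short conjugation identities, for instance $\rho(\varepsilon_t)\bigl(\rho(\iv{\varepsilon_t}).\ts_1\bigr)=\tx^{\ts_2}=\ts_1$, where one uses that $\rho(\varepsilon_t)$ is an endomorphism to distribute over the conjugation and then \eqref{eq:tx}. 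Because $H$ is free on the $\varepsilon_r$, this promotes $\rho$ to an action $H\to\Aut(\widetilde N)$, exactly as noted just before the statement, and lets me form $G:=\widetilde N\rtimes_\rho H$ in which $\varepsilon_t\lambda\varepsilon_t^{-1}=\rho(\varepsilon_t).\lambda$ for $\lambda\in\Lambda$.

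The core of the argument is to produce mutually inverse homomorphisms between $G$ and $\SBG(\surfO)$. In one direction, $\Psi\colon G\to\SBG(\surfO)$ comes from the pair consisting of $\pi$ and the inclusion $\mathrm{incl}\colon H\hookrightarrow\SBG(\surfO)$; by the universal property of the semidirect product it is well defined precisely because $\pi$ intertwines $\rho$ with conjugation, i.e. $\pi(\rho(\varepsilon_t).\lambda)=\pi(\lambda)^{\iv{\varepsilon_t}}$, which is the content of Lemma~\ref{lem:relforbt} since \eqref{eq:ss02}--\eqref{eq:ss03} copy its formulas. In the other direction $\Phi\colon\SBG(\surfO)\to G$ sends $\sigma_i\mapsto\ts_i$ and $\varepsilon_r\mapsto\varepsilon_r$, and here one must verify that the presentation of $\SBG(\surfO)$ in Proposition~\ref{lem:sbg} is respected in $G$: relations \eqref{ap:01}--\eqref{ap:03} are immediate (the first two from \eqref{def:01}--\eqref{def:02}, the third from $\varepsilon_t\ts_i\varepsilon_t^{-1}=\rho(\varepsilon_t).\ts_i=\ts_i$ for $i\neq1$), while \eqref{ap:04}--\eqref{ap:06} must be checked by pushing the $\varepsilon_t$'s across $\ts_1$ via the semidirect relation and simplifying with the relations of $\widetilde N$, for which the derived relations \eqref{c1:02}--\eqref{c1:07} are the natural tools. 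Comparing the two maps on generators yields $\Psi\Phi=\id$ and $\Phi\Psi=\id$, the only nonobvious check being $\Phi(\tau_r)=\varepsilon_r\ts_1\varepsilon_r^{-1}=\rho(\varepsilon_r).\ts_1=\tt_r$, so $\Psi$ is an isomorphism.

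Finally I would conclude: $\widetilde N=\widetilde N\times\{1\}$ is normal in $G$ with $\Psi(\widetilde N)=\langle\sigma_i,\tau_r\rangle=\BT(\surfO)$ by Lemma~\ref{lem:gen}, and $\Psi$ is injective, so the restriction $\Psi|_{\widetilde N}\colon\widetilde N\to\BT(\surfO)$ is the sought isomorphism, sending $\ts_i\mapsto\sigma_i$ and $\tt_r\mapsto\tau_r$. I expect the main obstacle to be the well-definedness of $\Phi$, that is, showing that the twisted-commutation relations \eqref{ap:04}--\eqref{ap:06} of $\SBG(\surfO)$ are forced in $G$ once the conjugation action is encoded by $\rho$; this is the computational heart of the proof and amounts to running the calculations of Lemma~\ref{lem:relforbt} in reverse.
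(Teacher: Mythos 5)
Your setup (the surjection $\pi\colon\widetilde N\to\BT(\surfO)$, upgrading $\rho$ to an action of the free group $H$, forming $\widetilde G=\widetilde N\rtimes_\rho H$, and the homomorphism $\Psi\colon\widetilde G\to\SBG(\surfO)$ induced by $\pi$ and the inclusion of $H$) matches the paper. The gap is in the core step: the inverse map $\Phi\colon\SBG(\surfO)\to\widetilde G$ you propose does not exist, because the relations \eqref{ap:05} and \eqref{ap:06} of Proposition~\ref{lem:sbg} fail in $\widetilde G$. Concretely, $\Co(\varepsilon_s,\ts_1\varepsilon_r\ts_1)$ would force the two sides to have equal $H$-components in the normal form $n\cdot h$ of the semidirect product; the left side has $H$-component $\varepsilon_s\varepsilon_r$ and the right side $\varepsilon_r\varepsilon_s$, and these are distinct since $H$ is free on the $\varepsilon_t$. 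So whenever $2g+b-1\geq 2$ the map $\Phi$ is not well defined, $\Psi$ is not injective, and $\SBG(\surfO)$ is a proper quotient of $\widetilde G$ rather than isomorphic to it. (The paper's remark immediately after the proof records exactly this: $\SBG(\surfO)$ is not a semidirect product of $\BT(\surfO)$ and $H$ except for the disk and annulus.) Your concluding sentence, which reads off the isomorphism from injectivity of $\Psi$, therefore does not go through.

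The repair, which is what the paper actually does, is to keep only the one-directional map $\Psi$ (called $\Phi$ there), identify its kernel as the normal closure of the relator words coming from Proposition~\ref{lem:sbg}, and compute those relators inside $\widetilde G$. The relators from \eqref{ap:01}--\eqref{ap:04} all reduce to the identity, but each relator from \eqref{ap:05} or \eqref{ap:06} reduces to a product $n_{s,r}\cdot\varepsilon_s\varepsilon_r\iv{\varepsilon_s}\iv{\varepsilon_r}$ with $n_{s,r}\in\widetilde N$ and a nontrivial commutator in the free group $H$. One then argues that the normal closure of such elements meets $\widetilde N$ trivially, so that $\Psi$ restricted to $\widetilde N$ is injective; together with surjectivity onto $\BT(\surfO)$ (your $\pi$) this yields the isomorphism. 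Your proposal is missing this kernel analysis entirely, and no amount of ``pushing the $\varepsilon_t$'s across $\ts_1$'' will verify \eqref{ap:05}--\eqref{ap:06} in $\widetilde G$, since they are genuinely false there.
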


\begin{proof}
We use the same notation $\rho(\varepsilon_t)$ and $\rho(\iv{\varepsilon_t})$ to denote their induced endomorphisms of $\widetilde{N}$, respectively. We claim that $\rho(\varepsilon_t)$ is an automorphism of $\widetilde{N}$. Since it is easy to see that the elements $\rho(\varepsilon_t).\ts_i$, $1\leq i\leq \aleph-1$, and $\rho(\varepsilon_t).\tt_r$, $1\leq r\leq 2g+b-1$, generate $\widetilde{N}$, the map $\rho(\varepsilon_t)$ is surjective. To show it is injective, we only need to show that $\rho(\iv{\varepsilon_t})\circ\rho(\varepsilon_t)$ is the identity on the set $\Lambda$ of generators of $\widetilde{N}$. It is straightforward to see $\left(\rho(\iv{\varepsilon_t})\circ\rho(\varepsilon_t)\right).\ts_i=\ts_i$ for any $1\leq i\leq \aleph-1$; to prove $\left(\rho(\iv{\varepsilon_t})\circ\rho(\varepsilon_t)\right).\tt_r=\tt_r$,
%\[\left(\rho(\iv{\varepsilon_t})\circ\rho(\varepsilon_t)\right).\ts_1\xlongequal{\eqref{eq:ss02}}\rho(\iv{\varepsilon_t}).(\tt_t)\xlongequal{\text{\eqref{eq:ss06}}}\ts_1.\]
%For $i>1$, we have
%\[
%\left(\rho(\iv{\varepsilon_t})\circ\rho(\varepsilon_t)\right).\ts_i\xlongequal{\eqref{eq:ss02}}\rho(\iv{\varepsilon_t}).\ts_i\xlongequal{\eqref{eq:ss05}}\ts_i.\]
for $r=t$, we have $\rho(\varepsilon_t).\tt_r=\tx^{\ts_2\tt_t}$ \eqref{eq:ss03}, which, by $\tx={\ts_1}^{\iv{\ts_2}}$ \eqref{eq:tx}, equals ${\ts_1}^{\tt_t}$, so by $\rho(\iv{\varepsilon_t}).\ts_1={\tt_t}^{\ts_1}$ \eqref{eq:ss05} and $\rho(\iv{\varepsilon_t}).\tt_t=\ts_1$ \eqref{eq:ss06}, we have $\rho(\iv{\varepsilon_t}).\left(\rho(\varepsilon_t).\tt_r\right)=\rho(\iv{\varepsilon_t}).\left({\ts_1}^{\tt_t}\right)=\left({\tt_t}^{\iv{\ts_1}}\right)^{\ts_1}=\tt_t$;
%\[\left(\rho(\iv{\varepsilon_t})\circ\rho(\varepsilon_t)\right).\tt_r\xlongequal{\eqref{eq:ss03}}\rho(\iv{\varepsilon_t}).\left(\tx^{\ts_2\tt_t}\right)\xlongequal{\eqref{eq:tx}}\rho(\iv{\varepsilon_t}).\left(\ts_1^{\tt_t}\right)\xlongequal[\text{Lemma~\ref{lem:ac}}]{\eqref{eq:ss05}}\left({\tt_t}^{\iv{\ts_1}}\right)^{\ts_1}=\tt_t;\]
for $r<t$ and $r\notin\dgen$, we have $\rho(\varepsilon_t).\tt_r={\tt_r}^{\tx\tt_t\ts_2\tt_t}$ \eqref{eq:ss03}, so using \eqref{eq:ss05} and \eqref{eq:ss06}, we have $\rho(\iv{\varepsilon_t}).\left(\rho(\varepsilon_t).\tt_r\right)=\left({\tt_r}^{\iv{\ts_2}\iv{\tt_t}\iv{\ts_1}\iv{\ts_2}}\right)^{{\tt_t}^{\iv{\ts_1}\iv{\ts_2}}\ts_1\ts_2\ts_1}={\tt_r}^{\iv{\ts_2}\iv{\tt_t}\iv{\ts_1}\iv{\ts_2}\ts_2\ts_1{\tt_t}{\iv{\ts_1}\iv{\ts_2}}\ts_1\ts_2\ts_1}={\tt_r}^{\iv{\ts_2}{\iv{\ts_1}\iv{\ts_2}}\ts_1\ts_2\ts_1}=\tt_r$ where the last equality uses the braid relation $\Br(\ts_1,\ts_2)$ \eqref{def:02};
%$\rho(\iv{\varepsilon_t}).\tt_r={\tt_r}^{\iv{\ts_2}\iv{\tt_t}\iv{\ts_1}\iv{\ts_2}}$ \eqref{eq:ss06},
%\[
%\left(\rho(\iv{\varepsilon_t})\circ\rho(\varepsilon_t)\right).\tt_r\xlongequal{\eqref{eq:ss03}}\rho(\iv{\varepsilon_t}).\left({\tt_r}^{\tx\tt_t\ts_2\tt_t}\right)\xlongequal[\eqref{eq:ss05}]{\text{Lemma~\ref{lem:ac}}}\left({\tt_r}^{\iv{\ts_2}\iv{\tt_t}\iv{\ts_1}\iv{\ts_2}}\right)^{{\tt_t}^{\iv{\ts_1}\iv{\ts_2}}\ts_1\ts_2\ts_1}\xlongequal{\eqref{def:02}}\tt_r;
%\]
the other cases can be proved similarly.
%For $r<t$ and $r\in\dgen$, we have
%\[
%\left(\rho(\iv{\varepsilon_t})\circ\rho(\varepsilon_t)\right).\tt_r\xlongequal{\eqref{eq:ss03}}\rho(\iv{\varepsilon_t}).\left({\tt_r}^{\tx\iv{\tt_t}\ts_2\tt_t}\right)\xlongequal[\eqref{eq:ss05}]{\text{Lemma~\ref{lem:ac}}}\left({\tt_r}^{\ts_2\iv{\tt_t}\iv{\ts_1}\iv{\ts_2}}\right)^{{\tt_t}^{\iv{\ts_1}\iv{\ts_2}}\iv{\ts_1}\ts_2\ts_1}\xlongequal{\eqref{def:02}}\tt_r.
%\]
%For $r>t$ and $r\notin\dgen$, we have
%\[
%\left(\rho(\iv{\varepsilon_t})\circ\rho(\varepsilon_t)\right).\tt_r\xlongequal{\eqref{eq:ss03}}\rho(\iv{\varepsilon_t}).\left({\tt_r}^{\iv{\tx}\iv{\tt_t}\ts_2\tt_t}\right)\xlongequal[\eqref{eq:ss05}]{\text{Lemma~\ref{lem:ac}}}\left({\tt_r}^{\ts_2\tt_t\iv{\ts_1}\iv{\ts_2}}\right)^{{\iv{\tt_t}}^{\iv{\ts_1}\iv{\ts_2}}\iv{\ts_1}\ts_2\ts_1}\xlongequal{\eqref{def:02}}\tt_r.
%\]
%For $r>t$ and $r\in\dgen$, we have
%\[
%\left(\rho(\iv{\varepsilon_t})\circ\rho(\varepsilon_t)\right).\tt_r\xlongequal{\eqref{eq:ss03}}\rho(\iv{\varepsilon_t}).\left({\tt_r}^{\iv{\tx}\tt_t\ts_2\tt_t}\right)\xlongequal[\eqref{eq:ss05}]{\text{Lemma~\ref{lem:ac}}}\left({\tt_r}^{\iv{\ts_2}\tt_t\iv{\ts_1}\iv{\ts_2}}\right)^{{\iv{\tt_t}}^{\iv{\ts_1}\iv{\ts_2}}\ts_1\ts_2\ts_1}\xlongequal{\eqref{def:02}}\tt_r.
%\]
Hence $\rho(\iv{\varepsilon_t})\circ\rho(\varepsilon_t)$ is the identity on $\Lambda$. Thus, we complete the proof of the claim that $\rho(\varepsilon_t)$ is an isomorphism.

Since $H$ is generated freely by $\varepsilon_t$, it follows that $\rho$ induces an action of $H$ on $\widetilde{N}$. Let $\widetilde{G}=\widetilde{N}\rtimes_{\rho} H$ be the outer semidirect product of $\widetilde{N}$ and $H$ with respect to $\rho$. Then $\widetilde{G}$ has a presentation whose generators are the union of the generators of $\widetilde{N}$ and the generators of $H$, and whose relations are the union of the relations of $\widetilde{N}$ and the relations $\lambda^{\iv{\varepsilon_t}}=\rho(\varepsilon_t).\lambda$ for all $\lambda\in \Lambda$.
%\begin{itemize}
%	\item Generators: $\ts_i$, $1\leq i\leq \aleph-1$, $\tt_r$, $1\leq r\leq 2g+b-1$, $\varepsilon_r$, $1\leq r\leq 2g+b-1$.
%	\item Relations:
%	\[\begin{array}{ll}
%	\Co(\ts_i,\ts_j)&  \text{if $|i-j|>1$}\\
%	\Br(\ts_i,\ts_{i+1})&\text{if $i\neq \aleph-1$}\\
%	\Co(\tt_r,\ts_i)&\text{if $i>2$}\\
%	\Br(\tt_r,\tx)&\\
%	\Br(\tt_r,\widetilde{y})&\\
%	\Co({\tt_r}^{\ty},{\tt_s}^{\tx})&\text{if $s<r$ and $s\notin\dgen$}\\
%	\Co({\tt_r}^{\iv{\ty}},{\tt_s}^{\tx})&\text{if $s<r$ and $s\in\dgen$}\\
%	\ts_1^{\iv{\varepsilon_t}}=\tt_t&\\
	%\ts_i^{\iv{\varepsilon_t}}=\ts_i&\text{if $i>1$}\\
%	\tt_r^{\iv{\varepsilon_t}}=\ts_1^{\tt_t}&\text{if $r=t$}\\
%	\tt_r^{\iv{\varepsilon_t}}=\tt_r^{\tx \tt_t\ts_2\tt_t}&\text{if $r<t$ and $r\notin\dgen$}\\
%	\tt_r^{\iv{\varepsilon_t}}=\tt_r^{\tx\iv{\tt_t}\ts_2\tt_t}&\text{if $r<t$ and $r\in\dgen$}\\
%	\tt_r^{\iv{\varepsilon_t}}=\tt_r^{\iv{\tx}\iv{\tt_t}\ts_2\tt_t}&\text{if $r>t$ and $t\notin\dgen$}\\
%	\tt_r^{\iv{\varepsilon_t}}=\tt_r^{\iv{\tx}\tt_t\ts_2\tt_t}&\text{if $r>t$ and $t\in\dgen$}
%	\end{array}\]
%	where $1\leq i,j\leq \aleph-1$ and $1\leq r,s,t\leq 2g+b-1$.
%\end{itemize}
%Due to the relations \eqref{nr:01}-\eqref{nr:07}By Lemma~\ref{lem:pre} and Lemma~\ref{lem:relforbt}, i

Note that we have proved the relations in Theorem~\ref{thm:pre} and gave the explicit formulas for the action of $H$ on $\Br(\surfO)$ in Lemma~\ref{lem:relforbt}. Comparing these with the presentation of $\widetilde{N}$ and the action of $H$ on $\widetilde{N}$, it follows that there is a homomorphism $\Phi$ from $\widetilde{G}$ to $\SBG(\surfO)$ sending $\ts_i$ to $\sigma$, sending $\tt_r$ to $\tau_r$ and sending $\varepsilon_r$ to $\varepsilon_r$. By the relations in Proposition~\ref{lem:sbg}, $\ker\Phi$ is generated by $\ts_i\ts_j\iv{\ts_i}\iv{\ts_j}$ for $|i-j|>1$, $\ts_i\ts_j\ts_i\iv{\ts_j}\iv{\ts_i}\iv{\ts_j}$ for $|i-j|=1$, $\ts_i\varepsilon_r\iv{\ts_i}\iv{\varepsilon_r}$ for $i\neq 1$, $\iv{\varepsilon_r}\iv{\ts_1}\varepsilon_r\ts_1\varepsilon_r\ts_1\iv{\varepsilon_r}\iv{\ts_1}$ for all $r$, $\varepsilon_s\ts_1\varepsilon_r\ts_1\iv{\varepsilon_s}\iv{\ts_1}\iv{\varepsilon_r}\iv{\ts_1}$ for $s<r$ with $s\notin\dgen$, and $\varepsilon_s\ts_1\varepsilon_r\iv{\ts_1}\iv{\varepsilon_s}\iv{\ts_1}\iv{\varepsilon_r}\iv{\ts_1}$ for $s<r$ with $s\in\dgen$.  We calculate these elements in the following.
By \eqref{def:01} and \eqref{def:02}, $\ts_i\ts_j\iv{\ts_i}\iv{\ts_j}$ (for $|i-j|>1$) and $\ts_i\ts_j\ts_i\iv{\ts_j}\iv{\ts_i}\iv{\ts_j}$ (for $|i-j|=1$) are the identity. For $i\neq 1$, we have \[\ts_i\varepsilon_r\iv{\ts_i}\iv{\varepsilon_r}=\ts_i\left(\rho(\varepsilon_r).\iv{\ts_i}\right)=\ts_i\iv{\ts_i}=1.\]
For all $r$, we have \[\iv{\varepsilon_r}\iv{\ts_1}\varepsilon_r\ts_1\varepsilon_r\ts_1\iv{\varepsilon_r}\iv{\ts_1}=\left(\rho(\iv{\varepsilon_r}).\iv{\ts_1}\right)\ts_1\left(\rho(\varepsilon_r).\ts_1\right)\iv{\ts_1}=\iv{\tt_t}^{\iv{\ts_1}}\ts_1\tt_t\iv{\ts_1}=1.\]
For $s<r$ with $s\notin\dgen$, we have  \[\varepsilon_s\ts_1\varepsilon_r\ts_1\iv{\varepsilon_s}\iv{\ts_1}\iv{\varepsilon_r}\iv{\ts_1}=\left(\rho(\varepsilon_s).\ts_1\right)\left(\rho(\varepsilon_s\varepsilon_r).\ts_1\right)\left(\rho(\varepsilon_s\varepsilon_r\iv{\varepsilon_s}).\iv{\ts_1}\right)\left(\rho(\varepsilon_s\varepsilon_r\iv{\varepsilon_s}\iv{\varepsilon_r}).\iv{\ts_1}\right)\varepsilon_s\varepsilon_r\iv{\varepsilon_s}\iv{\varepsilon_r}.\]
For $s<r$ with $s\in\dgen$, we have \[\varepsilon_s\ts_1\varepsilon_r\iv{\ts_1}\iv{\varepsilon_s}\iv{\ts_1}\iv{\varepsilon_r}\iv{\ts_1}=\left(\rho(\varepsilon_s).\ts_1\right)\left(\rho(\varepsilon_s\varepsilon_r).\iv{\ts_1}\right)\left(\rho(\varepsilon_s\varepsilon_r\iv{\varepsilon_s}).\iv{\ts_1}\right)\left(\rho(\varepsilon_s\varepsilon_r\iv{\varepsilon_s}\iv{\varepsilon_r}).\iv{\ts_1}\right)\varepsilon_s\varepsilon_r\iv{\varepsilon_s}\iv{\varepsilon_r}.\]
For  $1\leq s<r\leq 2g+b-1$, we set
\[n_{s,t}=\begin{cases}
\left(\rho(\varepsilon_s).\ts_1\right)\left(\rho(\varepsilon_s\varepsilon_r).\ts_1\right)\left(\rho(\varepsilon_s\varepsilon_r\iv{\varepsilon_s}).\iv{\ts_1}\right)\left(\rho(\varepsilon_s\varepsilon_r\iv{\varepsilon_s}\iv{\varepsilon_r}).\iv{\ts_1}\right) & \text{if $s\notin\dgen$;}\\
\left(\rho(\varepsilon_s).\ts_1\right)\left(\rho(\varepsilon_s\varepsilon_r).\iv{\ts_1}\right)\left(\rho(\varepsilon_s\varepsilon_r\iv{\varepsilon_s}).\iv{\ts_1}\right)\left(\rho(\varepsilon_s\varepsilon_r\iv{\varepsilon_s}\iv{\varepsilon_r}).\iv{\ts_1}\right) & \text{if $s\in\dgen$.}
\end{cases}\]
Then we have that $\ker\Phi$ is generated by $n_{s,t}\varepsilon_s\varepsilon_r\iv{\varepsilon_s}\iv{\varepsilon_r}$ for $1\leq s<r\leq 2g+b-1$. Note that $n_{s,t}$ is in $\widetilde{N}$ and $\varepsilon_s\varepsilon_r\iv{\varepsilon_s}\iv{\varepsilon_r}$, $1\leq s<r\leq 2g+b-1$, freely generate a subgroup of $H$. %$\langle\varepsilon_s\varepsilon_r\iv{\varepsilon_s}\iv{\varepsilon_r}\mid 1\leq s<r\leq 2g+b-1\rangle$ of $H$ is freely generated by its generators, we have that
It follows that $\ker\Phi\cap\widetilde{N}=\emptyset$. Hence the homomorphism $\Phi$ restricting on $\widetilde{N}$ is injective. Therefore we get the required isomorphism.
\end{proof}

\begin{remark}
By the above proof, the surface braid group $\SBG(\surfO)$ is not the (inner) semidirect product of $\BT(\surfO)$ and $H$, except that $\surfO$ is a decorating disk or a decorating annulus. For a decorating disk, $H$ is trivial; for a decorating annulus, this is the case in \cite{KP}.
\end{remark}

By Lemma~\ref{lem:key}, to complete the proof of Theorem~\ref{thm:pre}, we only need to show that $\rho(\varepsilon_t)$ and $\rho(\iv{\varepsilon_t})$ induce endomorphisms of $\widetilde{N}$ for every $1\leq t\leq 2g+b-1$, that is, to show that the images of generating relations \eqref{def:01}--\eqref{def:07} for $\widetilde{N}$ under the action of $\rho(\varepsilon_t)$ and $\rho(\iv{\varepsilon_t})$ respectively still hold in $\widetilde{N}$. Explicitly, we need to show that the following hold: for $1\leq i,j\leq \aleph-1$ and $1\leq r,s\leq 2g+b-1$,
%To show that $\varphi(\varepsilon_t)$ induces an endomorphism of $\widetilde{N}$, we shall prove the following relations.
\begin{align}
&\Co(\rho(\varepsilon_t).\ts_i,\rho(\varepsilon_t).\ts_j)\label{ac:01}
&&\text{if $|i-j|>1$}\\
&\Br(\rho(\varepsilon_t).\ts_i,\rho(\varepsilon_t).\ts_{j})\label{ac:02}
&&\text{if $|i-j|=1$}\\
&\Co(\rho(\varepsilon_t).\tt_r,\rho(\varepsilon_t).\ts_i)\label{ac:03}
&&\text{if $i>2$}\\
&\Br(\rho(\varepsilon_t).\tt_r,\rho(\varepsilon_t).\tx)\label{ac:04}&&\\
&\Br(\rho(\varepsilon_t).\tt_r,\rho(\varepsilon_t).\ty)\label{ac:05}&&\\
&\Co({(\rho(\varepsilon_t).\tt_r)}^{(\rho(\varepsilon_t).\ty)},{(\rho(\varepsilon_t).\tt_s)}^{(\rho(\varepsilon_t).\tx)})\label{ac:06}
&&\text{if $s<r$ and $s\notin\dgen$}\\
&\Co({(\rho(\varepsilon_t).\tt_r)}^{\iv{(\rho(\varepsilon_t).\ty)}},{(\rho(\varepsilon_t).\tt_s)}^{(\rho(\varepsilon_t).\tx)})\label{ac:07}
&&\text{if $s<r$ and $s\in\dgen$}
\end{align}
%where $1\leq i,j\leq \aleph-1$ and $1\leq r,s\leq 2g+b-1$.
and
%Similarly, to show that $\varphi(\iv{\varepsilon_t})$ induces an endomorphism of $\widetilde{N}$, we shall prove the following relations.
\begin{align}
&\Co(\rho(\iv{\varepsilon_t}).\ts_i,\rho(\iv{\varepsilon_t}).\ts_j)\label{iac:01}
&&\text{if $|i-j|>1$}\\
&\Br(\rho(\iv{\varepsilon_t}).\ts_i,\rho(\iv{\varepsilon_t}).\ts_{j})\label{iac:02}
&&\text{if $|i-j|=1$}\\
&\Co(\rho(\iv{\varepsilon_t}).\tt_r,\rho(\iv{\varepsilon_t}).\ts_i)\label{iac:03}
&&\text{if $i>2$}\\
&\Br(\rho(\iv{\varepsilon_t}).\tt_r,\rho(\iv{\varepsilon_t}).\tx)\label{iac:04}&&\\
&\Br(\rho(\iv{\varepsilon_t}).\tt_r,\rho(\iv{\varepsilon_t}).\ty)\label{iac:05}&&\\
&\Co({(\rho(\iv{\varepsilon_t}).\tt_r)}^{(\rho(\iv{\varepsilon_t}).\ty)},{(\rho(\iv{\varepsilon_t}).\tt_s)}^{(\rho(\iv{\varepsilon_t}).\tx)})\label{iac:06}
&&\text{if $s<r$ and $s\notin\dgen$}\\
&\Co({(\rho(\iv{\varepsilon_t}).\tt_r)}^{\iv{(\rho(\iv{\varepsilon_t}).\ty)}},{(\rho(\iv{\varepsilon_t}).\tt_s)}^{(\rho(\iv{\varepsilon_t}).\tx)})\label{iac:07}
&&\text{if $s<r$ and $s\in\dgen$}
\end{align}
where
\begin{align}
&\rho(\varepsilon_t).\tx:=\left(\rho(\varepsilon_t).\ts_2\right)^{\left(\rho(\varepsilon_t).\ts_1\right)}=\ts_2^{\tt_t},\label{atx}\\
&\rho(\varepsilon_t).\ty:=\left(\rho(\varepsilon_t).\ts_2\right)^{\left(\rho(\varepsilon_t).\ts_3\right)}=\ts_2^{\ts_3}=\ty,\label{aty}\\
&\rho(\iv{\varepsilon_t}).\tx:=\left(\rho(\iv{\varepsilon_t}).\ts_2\right)^{\left(\rho(\iv{\varepsilon_t}).\ts_1\right)}=\ts_2^{\left(\tt_t^{\iv{\ts_1}}\right)}=\tt_t^{\iv{\ts_1}\iv{\ts_2}},\label{atx2}\\
&\rho(\iv{\varepsilon_t}).\ty:=\left(\rho(\iv{\varepsilon_t}).\ts_2\right)^{\left(\rho(\iv{\varepsilon_t}).\ts_3\right)}=\ts_2^{\ts_3}=\ty,\label{aty2}
\end{align}

We show these relations in Appendix~\ref{app:lemmas}. Thus we finish the proof.

%A presentation of a group is called positive if all relations are of the form $u=v$, where $u$ and $v$ are nonempty positive words of the generators. Although the presentation in the above theorem is not positive, it is easy to deduce a positive presentation from that.

%\begin{remark}
%Easy calculations show that the above theorem implies a positive finite presentation of $\BT(\surfO)$ as follows.
%\begin{itemize}
%	\item Generators: $\sigma_i$, $1\leq i\leq \aleph-1$, $\tau_r$, $1\leq r\leq 2g+b-1$.
%	\item Relations:
%	\[\begin{array}{ll}
%	\Co(\sigma_i,\sigma_j)&  \text{if $|i-j|>1$}\\
%	\Br(\sigma_i,\sigma_{i+1})&\text{if $i\neq \aleph-1$}\\
%	\Co(\tau_r,\sigma_i)&\text{if $i>2$}\\
%	\sigma_1\tau_r\sigma_2\sigma_1\tau_r\sigma_2=\sigma_2\sigma_1\tau_r\sigma_2\sigma_1\tau_r&\\
%	\Br(\tau_r,\sigma_2)&\\
%	\Co({\tau_r}^y,{\tau_s}^x)&\text{if $s<r$ and $s\notin\dgen$}\\
%	\Co({\tau_r}^{\iv{y}},{\tau_s}^x)&\text{if $s<r$ and $s\in\dgen$}
%	\end{array}\]
%	where $1\leq i,j\leq \aleph-1$ and $1\leq r,s\leq 2g+b-1$.
%\end{itemize}
%\end{remark}

%=========================================================
\section{Finite presentations for spherical twist groups}\label{sec:st}
%=========================================================

In this section, we introduce the braid group associated to a quiver with potential arising from a marked surface and show that mutations of quivers with potential induce isomorphisms of the associated braid groups. Then we show that such braid group is isomorphic to the braid twist group of the corresponding decorated marked surface. Thus, we get finite presentations of braid/spherical twist groups via quivers with potential.

%=========================================================
\subsection{Braid groups associated to quivers with potential}
%=========================================================

Let $(Q,W)$ be a quiver with potential arising from a triangulated marked surface, that is, $(Q,W)=(Q_\TT,W_\TT)$ for a triangulation $\TT$ of a decorated marked surface $\surfo$. By the construction (see Section~\ref{sec:MS}), the quiver with potential $(Q,W)$ has the following properties.
\begin{itemize}
	\item For each vertex of $Q$, there are	at most two arrows starting at it and at most two arrows ending at it. In particular, there are at most two arrows between two vertices.
	\item There are no 2-cycles in $Q$. So if there are two arrows between two vertices, then they have the same directions, which are called a double arrow.
	%\item There are at most two arrows between two vertices.	
	\item Any two 3-cycles in $W$ do not share an arrow.
	\item If there is a 3-cycle between vertices $a,b,c$, then there is at most one double arrow between them and there is exactly one 3-cycle between them contributing a term in $W$, that is, the full subquiver with potential between these vertices is the first one or the second one in Figure~\ref{fig:local}, where the gray triangle means that its three edges form a 3-cycle in $W$.
\end{itemize}

\begin{definition}\label{def:BrQP}
Let $(Q,W)$ be a quiver with potential arising from a triangulated marked surface. We define the associated braid group $\Br(Q,W)$ by the following presentation:
\begin{itemize}
	\item Generators: vertices of $Q$.
	\item Relations (see Section~\ref{sec:notation} for the notation):
	\numbers
	\item $\Co(a,b)$, if there is no arrow between $a$ and $b$.
	\item $\Br(a,b)$, if there is exactly one arrow between $a$ and $b$.
	\item $\Co(a^b,c)$, if there is one 3-cycle between $a$, $b$ and $c$ which contributes a term in $W$ and there are no double arrows between them, see the first full subquiver with potential in Figure~\ref{fig:local}.
	\item $\Br(a^b,c)$, if there is a 3-cycle between $a$, $b$ and $c$ which contributes a term in $W$ and there is a double arrow between $b$ and $c$, see the second full subquiver with potential in Figure~\ref{fig:local}.
	\item $\Co(c^{ae},b)$, if there is one 3-cycle between $a$, $b$ and $c$ and one 3-cycle between $e$, $b$ and $c$, which contribute terms in $W$, and there are no arrows between $a$ and $e$, see the third full subquiver with potential in Figure~\ref{fig:local}.
	%the full subquiver between $a,e,b,c$ is
	%the left quiver in \eqref{eq:mut3} and one 3-cycle
	%contribute terms in $W_\TT$.%, in such a way that the arrows $b\to c$ in these two 3-cycles are different.
	\item $\Br(c^{ae},b)$ and $\Br(c^{ea},b)$,
	if there is one 3-cycle between $a$, $b$ and $c$ and one 3-cycle between $e$, $b$ and $c$, which contribute terms in $W$, and there is an arrow between $a$ and $e$, see the fourth full subquiver with potential in Figure~\ref{fig:local}.
	\item $\Co(e,f^{abc})$, if in the previous case, additionally there is a 3-cycle between $a$, $e$ and $f$ which contributes a term in $W$, see the last full subquiver with potential in Figure~\ref{fig:local}.
	%as in the left quiver in \eqref{eq:mut5}, which
	%also contributes a term in $W_\TT$, then there is an additional relation $\Co(e,f^{abc})$.
	\ends
\end{itemize}
%the generators are identified with the vertices of $Q_\TT$, and thus with the open arcs in $\TT$, and the relations are

\end{definition}

\begin{figure}
\begin{tikzpicture}[scale=0.5,
arrow/.style={->,>=stealth},
equalto/.style={double,double distance=2pt},
mapto/.style={|->}]
\node (x1) at (0,2){};
\node (x2) at (2,-1){};
\node (x3) at (-2,-1){};
\draw[white, fill=gray!11] (2,-1)to(-2,-1)to(0,2)to(2,-1);
\node at (x1){$a$};
\node at (x2){$b$};
\node at (x3){$c$};
\foreach \n/\m in {1/2,2/3,3/1}
{\draw[arrow]
	(x\n) to (x\m);}
\end{tikzpicture}
\begin{tikzpicture}[scale=0.5,
arrow/.style={->,>=stealth},
equalto/.style={double,double distance=2pt},
mapto/.style={|->}]
\node (x2) at (0,2){};
\node (x1) at (2,-1){};
\node (x3) at (-2,-1){};
\draw[white, fill=gray!11] (2,-.8)to(-2,-.8)to(0,2)--cycle;
\node at (x1){$b$};
\node at (x2){$a$};
\node at (x3){$c$};
\foreach \n/\m in {3/2,2/1}
{\draw[arrow]
	(x\n) to (x\m);}
\draw[arrow] (x1.150) to (x3.30);
\draw[arrow] (x1.-150)to (x3.-30);
\end{tikzpicture}
\begin{tikzpicture}[xscale=0.5,yscale=.5,
arrow/.style={->,>=stealth},
equalto/.style={double,double distance=2pt},
mapto/.style={|->}]
\node (x4) at (2,2){};
\node (x1) at (2,-1){};
\node (x3) at (-2,-1){};
\node (x2) at (-2,2){};
\draw[white, fill=gray!11] (2,-1.2)to(-2.3,-1.2)to(2,2)--cycle;
\draw[white, fill=gray!11] (1.85,-.82)to(-2,-.82)to(-2,2)--cycle;
\node at (x1){$b$};
\node at (x2){$a$};
\node at (x3){$c$};
\node at (x4){$e$};
%\draw[white, fill=gray!11] (2+8,-.8)to(-2+8,-.8)to(2+8,2)--cycle;
\foreach \n/\m in {3/2,2/1,3/4,4/1}
{\draw[arrow]
	(x\n) to (x\m);}
\draw[arrow] (x1.150) to (x3.30);
\draw[arrow] (x1.-150)to (x3.-30);
\end{tikzpicture}
\begin{tikzpicture}[xscale=0.5,yscale=.5,
arrow/.style={->,>=stealth},
equalto/.style={double,double distance=2pt},
mapto/.style={|->}]
\node (x4) at (2,2){};
\node (x1) at (2,-1){};
\node (x3) at (-2,-1){};
\node (x2) at (-2,2){};
\draw[white, fill=gray!11] (2,-.8)to(-2,-.8)to(2,2)--cycle;
\draw[white, fill=gray!11] (2,-1.2)to(-2,-1.2)to(-2,2)--cycle;
\node at (x1){$b$};
\node at (x2){$a$};
\node at (x3){$c$};
\node at (x4){$e$};
\foreach \n/\m in {3/2,2/1,3/4,4/1,2/4}
{\draw[arrow]
	(x\n) to (x\m);}
\draw[arrow] (x1.150) to (x3.30);
\draw[arrow] (x1.-150)to (x3.-30);
\end{tikzpicture}
\begin{tikzpicture}[xscale=0.5,yscale=.5,
arrow/.style={->,>=stealth},
equalto/.style={double,double distance=2pt},
mapto/.style={|->}]
\node (x4) at (2,2){};
\node (x1) at (2,-1){};
\node (x3) at (-2,-1){};
\node (x2) at (-2,2){};
\node (x5) at (0,3){};
\draw[white, fill=gray!11] (2,2)to(-2,2)to(0,3)--cycle;
\draw[white, fill=gray!11] (2,-1)to(-2,-1)to(2,2)--cycle;
\draw[white, fill=gray!11] (2,-1)to(-2,-1)to(-2,2)--cycle;
\node at (x1){$b$};
\node at (x2){$a$};
\node at (x3){$c$};
\node at (x4){$e$};
\node at (x5){$f$};
\foreach \n/\m in {3/2,2/1,3/4,4/1,5/2,4/5,2/4}
{\draw[arrow]
	(x\n) to (x\m);}
\draw[arrow] (x1.150) to (x3.30);
\draw[arrow] (x1.-150)to (x3.-30);
\end{tikzpicture}
\caption{Cases of full subquiver of $(Q,W)$}
\label{fig:local}
\end{figure}

%\newpage

%The number of relations above is not minimal in general, see Lemma~\ref{lem:a}, Lemma~\ref{lem:mut3} and Lemma~\ref{lem:mut5} for minimal relations in some special cases.
The above presentation will become much simpler if there are no double arrows in the quiver $Q$. Note that excluding the case that $\surf$ is an annulus with one marked point on each of its boundary components or a torus with only one marked point, there always exists a triangulation $\TT$ such that there are no double arrows in the associated quiver $Q_\TT$ (see \cite[Lemma~3.12]{QQ}).

\begin{proposition}\label{prop:simple}
	Let $(Q,W)$ be a quiver with potential arising from a triangulated marked surface. If there are no double arrows in $Q$, then the associated braid group $\Br(Q,W)$ has the following presentation:
	\begin{itemize}
		\item Generators: vertices of $Q$.
		\item Relations:
		\numbers
		\item[$1^\circ$.] $\Co(a,b)$ if there is no arrow between $a$ and $b$.
		\item[$2^\circ$.] $\Br(a,b)$ if there is exactly one arrow between $a$ and $b$.
		\item[${3^\circ}'$.] $\Tr(a,b,c)$ if there is a 3-cycle between $a$, $b$ and $c$, which contributes a term in $W$.
		\ends
	\end{itemize}
\end{proposition}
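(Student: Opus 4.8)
The plan is to show that the no-double-arrows hypothesis forces every relation in Definition~\ref{def:BrQP} other than $1^\circ$, $2^\circ$, $3^\circ$ to be vacuous, and then to check that $3^\circ$ is equivalent to ${3^\circ}'$ modulo the braid relations $2^\circ$. Since the two presentations have the same generating set (the vertices of $Q$), establishing that the two relation sets are mutually derivable is enough to conclude that they define the same group.

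First I would go through the full subquivers of Figure~\ref{fig:local} and determine which can occur when $Q$ has no double arrows. Relation $4^\circ$ explicitly requires a double arrow between $b$ and $c$, so it drops out at once. The key observation for $5^\circ$, $6^\circ$ and $7^\circ$ is that each of these is premised on two \emph{distinct} $3$-cycles of $W$ that both involve the pair $b,c$ (the triangles $abc$ and $ebc$). By the structural property recalled just before Definition~\ref{def:BrQP}, namely that any two $3$-cycles in $W$ share no arrow, these two triangles must use different arrows between $b$ and $c$; hence there is necessarily a double arrow between $b$ and $c$. Therefore $5^\circ$, $6^\circ$ and $7^\circ$ are all vacuous under the hypothesis, and the presentation of $\Br(Q,W)$ reduces to $1^\circ$, $2^\circ$, $3^\circ$.

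It then remains to show that, for a $3$-cycle $a\to b\to c\to a$ (the first subquiver of Figure~\ref{fig:local}), for which all three braid relations $\Br(a,b)$, $\Br(b,c)$, $\Br(c,a)$ hold by $2^\circ$, the commutation relation $3^\circ$, that is $\Co(a^b,c)$, is equivalent to the triangle relation ${3^\circ}'$, that is $\Tr(a,b,c)$. I would do this by an elementary braid computation built on the rewriting $ab\iv{a}=\iv{b}ab$ of $\Br(a,b)$. Starting from $\Tr(a,b,c)$ one extracts $abca=bcab$, rewrites it as $\iv{b}abc=cab\iv{a}=c\iv{b}ab$, which is exactly $\Co(a^b,c)$. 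Conversely, $\Co(a^b,c)$ yields $abca=bcab$, and then using $\Br(b,c)$ in the form $\iv{c}bc=bc\iv{b}$ together with $\Br(a,b)$ and the already-derived equality $abca=bcab$ one recovers the remaining equality $bcab=cabc$ of $\Tr(a,b,c)$. Because all three pairwise braid relations hold for a $3$-cycle, this argument is symmetric in $a,b,c$, so the particular vertex singled out for conjugation in $3^\circ$ is immaterial.

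I expect the only genuine content to be this last equivalence, and even it is a short computation; the substance of the proposition is really the structural remark that the double-arrow relations $4^\circ$--$7^\circ$ cannot arise once $Q$ has no double arrows. The step most prone to error is bookkeeping with orientations: one must keep the directions of the arrows in Figure~\ref{fig:local} consistent so that the conjugation $a^b$ in $3^\circ$ is matched with the correct cyclic form of $\Tr(a,b,c)$. I would therefore fix the cyclic order $a\to b\to c\to a$ at the outset and verify both implications against that single convention.
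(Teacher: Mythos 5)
Your proposal is correct and follows essentially the same route as the paper: the hypothesis kills relations $4^\circ$--$7^\circ$ (each of which forces a double arrow between $b$ and $c$, for $5^\circ$--$7^\circ$ because two distinct $3$-cycles of $W$ through the pair $b,c$ cannot share an arrow), and then $3^\circ$ is identified with ${3^\circ}'$ by the rewriting $abca=bcab\Leftrightarrow a^bc=cb^{\iv{a}}\Leftrightarrow\Co(a^b,c)$. Your extra care in deriving the second equality $bcab=cabc$ of $\Tr(a,b,c)$ from $\Co(a^b,c)$ and the pairwise braid relations is a point the paper leaves implicit (it is the content of its Lemma~\ref{lem:a}), but it does not change the argument.
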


\begin{proof}
	Since there are no double arrows in $Q$, the relations $4^\circ$--$7^\circ$ in Definition~\ref{def:BrQP} does not occur. So we only need to show that the relation $3^\circ$ is equivalent to ${3^\circ}'$ under the relations $1^\circ$ and $2^\circ$. This equivalence follows from the following easy equivalences:
	\[%abca=bcab\Longleftrightarrow
	abca=bcab\Longleftrightarrow\iv{b}abc=cab\iv{a}
	\Longleftrightarrow a^bc=cb^{\iv{a}}\xLongleftrightarrow{}\Co(a^b,c)\]
	where the last equivalence follows from $b^{\iv{a}}=a^b$ by $\Br(a,b)$.
\end{proof}

\begin{remark}
Sometime we prefer this triangle relation $\Tr(a,b,c)$,
since it can be generalized to the cyclic relation, which may be used for the quivers with potential which contains cycles of lengths at least 4 in the potential (cf. \cite{GM} and \cite[Definition~10.1]{QQ}).
%Note that if there are no double arrows in the quiver $Q_\TT$, then the presentation of the braid group $\Br(Q_\TT,W_\TT)$ becomes:

%only the first three types of relations (commutation/braid/triangle) occur. %When the marked surface $\surf$ is a disk, the quiver $Q_\TT$ does not have double arrows for any triangulation $\TT$. But in general, this is not true.
\end{remark}

The relations in Definition~\ref{def:BrQP} are not minimal. See the following two simple cases, where less relations are enough.

\begin{lemma}\label{lem:a}
	Let $(Q,W)$ be the first quiver with potential in Figure~\ref{fig:local}. Then $\Br(Q,W)$ admits the following presentation.
	\begin{itemize}
		\item Generators: $a,b,c$.
		\item Relations:
		\[\Br(a,b),\ \Br(a,c),\ \Co(a^b,c),\]
	\end{itemize}
	where $\Co(a^b,c)$ can be replaced by $\Co(c^a,b)$.
\end{lemma}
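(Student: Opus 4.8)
The plan is to compare the proposed presentation with the defining one from Definition~\ref{def:BrQP}. For the first quiver with potential in Figure~\ref{fig:local}, that definition gives the relations $\Br(a,b)$, $\Br(b,c)$, $\Br(c,a)$ together with $\Co(a^b,c)$. The proposed relations $\Br(a,b)$, $\Br(a,c)$, $\Co(a^b,c)$ form a subset of these, so the identity on generators induces a surjection from the proposed group onto $\Br(Q,W)$; this will be an isomorphism as soon as the deleted relation $\Br(b,c)$ is shown to hold in the proposed group. Thus the whole task reduces to deriving $\Br(b,c)$ from $\Br(a,b)$, $\Br(a,c)$ and $\Co(a^b,c)$.

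The device I would use is the ``rotation'' element $\Delta:=abca$, which I expect to realize the cyclic shift $a\mapsto b\mapsto c$ under conjugation. First I would recall that, under $\Br(a,b)$, the relation $\Co(a^b,c)$ is equivalent to $abca=bcab$; this is exactly the chain of equivalences in the proof of Proposition~\ref{prop:simple}. Since $abca=bcab$ rewrites as $a\Delta=\Delta b$, it says precisely $a^\Delta=b$. Next, using only the two braid relations $\Br(a,b)$ and $\Br(a,c)$, I would verify the short identity
\[ abcac = ab(cac) = ab(aca) = (aba)ca = (bab)ca = babca, \]
which rewrites as $b\Delta=\Delta c$, i.e. $b^\Delta=c$. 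Conjugating the relation $aba=bab$ (that is, $\Br(a,b)$) by $\Delta$ and substituting $a^\Delta=b$ and $b^\Delta=c$ then produces $bcb=cbc$, which is exactly $\Br(b,c)$. This establishes the redundancy and hence the isomorphism of the two presentations.

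For the final clause, that $\Co(a^b,c)$ may be replaced by $\Co(c^a,b)$, I would argue that these two relations are interchangeable in the presence of $\Br(a,b)$: conjugating $\Co(c^a,b)$ by $a^{-1}$ turns it into $\Co(c,aba^{-1})$, and $aba^{-1}=a^b$ by $\Br(a,b)$, so it becomes $\Co(a^b,c)$. Hence running the argument of the previous paragraph with either cubic relation yields the same group, and the replacement is legitimate.

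The hard part is really just spotting the correct rotation element $\Delta=abca$ and checking the two conjugation identities $a^\Delta=b$ and $b^\Delta=c$; of these, the identity $b^\Delta=c$ (the one-line computation above) is the only genuinely new calculation, while $a^\Delta=b$ is merely a restatement of $\Co(a^b,c)$ and is already available from Proposition~\ref{prop:simple}. Once both conjugation identities are in place, the deletion of $\Br(b,c)$ follows formally, since applying the inner automorphism given by $\Delta$ to $\Br(a,b)$ sends it to $\Br(b,c)$. I anticipate no further obstacle, as the remaining steps are routine rewriting inside the group.
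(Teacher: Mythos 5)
Your derivation of $\Br(b,c)$ is correct and takes a genuinely different route from the paper's. You conjugate $\Br(a,b)$ by the rotation element $\Delta=abca$, after checking $a^{\Delta}=b$ (a restatement of $\Co(a^b,c)$ via the equivalences in Proposition~\ref{prop:simple}) and $b^{\Delta}=c$ (your short computation, which indeed uses only $\Br(a,b)$ and $\Br(a,c)$ and is correct). The paper instead conjugates $\Br(a,c)$ by $\iv{a}^b$, using $\Co(a^b,c)$ to fix $c$ and $\Br(a,b)$ to identify $a^{\iv{(a^b)}}$ with $b$. Both are two-line arguments; yours has the advantage of making the cyclic symmetry of the triangle visible. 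Your treatment of the replacement of $\Co(a^b,c)$ by $\Co(c^a,b)$ coincides with the paper's.

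There is, however, one omission in your reduction step. The relation of type $3^\circ$ in Definition~\ref{def:BrQP} is attached to the oriented triangle of Figure~\ref{fig:local}, which is only specified up to cyclic permutation, so the defining presentation of $\Br(Q,W)$ contains all three relations $\Co(a^b,c)$, $\Co(b^c,a)$, $\Co(c^a,b)$, not just the first; this is also what makes $3^\circ$ equivalent to the two-equation triangle relation $\Tr(a,b,c)$ in Proposition~\ref{prop:simple}, and the paper's own proof explicitly lists $\Co(a,b^c)$ among the relations that must be recovered. You recover $\Co(c^a,b)$ in your final paragraph but never derive $\Co(b^c,a)$, so as written your surjection is not yet shown to be an isomorphism. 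The repair is immediate with your own machinery: $\Co(c^a,b)$ gives $cabc=abca$, i.e. $c^{\Delta}=a$, and conjugating $\Co(a^b,c)$ by $\Delta$ then yields $\Co(b^c,a)$; alternatively, conjugate $\Co(a^b,c)$ by $\iv{b}$ and use the now-available $\Br(b,c)$, which is exactly the paper's last step.
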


\begin{proof}
	By the braid relation $\Br(a,b)$, we have $a^b=b^{\iv{a}}$. Then $\Co(a^b,c)$ is equivalent to $\Co(b^{\iv{a}},c)$. The latter is equivalent to $\Co(b,c^a)$ via conjugation by $a$. Thus, $\Co(a^b,c)$ can be replaced by $\Co(c^a,b)$.
	
	We need to show the relations $\Br(b,c)$ and $\Co(a,b^c)$. Starting from $\Br(a,c)$, conjugated by $\iv{a}^b$ and using the commutation relation $\Co(a^b,c)$, we have $\Br(a^{\left(\iv{a}^b\right)},c)$. Then using the braid relation $\Br(a,b)$, we get $\Br(b,c)$. To show the other one, starting from $\Co(a^b,c)$, conjugated by $\iv{b}$ and using the braid relation $\Br(b,c)$, we are done.
	
	%the implication $\Co(a^b,c)\Longrightarrow$
	%First, we have
	%\[\Co(a^b,c)
	%\xLongleftrightarrow{\Br(a,b)}  \Co(b^{\iv{a}},c)
	%\Longleftrightarrow  \Co(b,c^a).\]
	%Moreover,
	%\[\Br(a,c)\xLongrightarrow[\Co(a^b,c)]{?^{\iv{a}^b}} \Br(a^{\left(\iv{a}^b\right)},c)\xLongrightarrow{\Br(a,b)}\Br(b,c) \]
	%and
	%\[\Co(a^b,c)
	%\Longrightarrow \Co(a,c^{\iv{b}})
	%\xLongrightarrow{\Br(b,c)} \Co(a,b^c)\]
	%which complete the proof.
\end{proof}

\begin{lemma}\label{lem:mut3}
	Let $(Q,W)$ be the third quiver with potential in Figure~\ref{fig:local}. Then $\Br(Q,W)$ admits the following presentation.
	\begin{itemize}
		\item Generators: $a,b,c,e$.
		\item Relations:
		\[\Br(a,b),\ \Br(a,c),\ \Br(e,b),\ \Br(e,c), \ \Co(a,e), \ \Co(c^{ae},b).\]
	\end{itemize}
\end{lemma}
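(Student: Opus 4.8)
The group $\Br(Q,W)$ for the third quiver is, by Definition~\ref{def:BrQP}, presented on $a,b,c,e$ by $\Co(a,e)$, the four single-arrow relations $\Br(a,b),\Br(a,c),\Br(e,b),\Br(e,c)$, the double-arrow relations $\Br(a^b,c)$ and $\Br(e^b,c)$ (relation $4^\circ$ for the two $3$-cycles sharing the double arrow $b\to c$), and $\Co(c^{ae},b)$ (relation $5^\circ$). The asserted presentation is obtained by deleting $\Br(a^b,c)$ and $\Br(e^b,c)$, so the plan is to show that these two relations are consequences of the remaining six. Since all six listed relations are invariant under the involution $a\leftrightarrow e$ — here one uses that $\Co(a,e)$ forces $ae=ea$, whence $\Co(c^{ae},b)=\Co(c^{ea},b)$ — and this involution swaps the two targets, I only need to derive $\Br(a^b,c)$.

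My strategy is to put myself back in the situation of Lemma~\ref{lem:a}, whose proof shows that $\Co(x^y,z)$, $\Br(x,y)$ and $\Br(x,z)$ together imply $\Br(y,z)$. I will apply this with $x=e$, $y=a^b$, $z=c$, so I must first supply the three inputs $\Br(e,a^b)$, $\Br(e,c)$ and $\Co(e^{a^b},c)$. The relation $\Br(e,c)$ is given. For $\Br(e,a^b)$ I would note that $a,b,e$ obey $\Br(a,b)$, $\Br(e,b)$ and $\Co(a,e)$, i.e. the defining relations of the braid group $B_4$ with $a,b,e$ in the roles of $\sigma_1,\sigma_2,\sigma_3$; the desired relation is then the standard fact $\Br(\sigma_3,\sigma_1^{\sigma_2})$ in $B_4$, which is also a quick direct check, since moving $e$ past $a$ by $\Co(a,e)$ and applying $\Br(e,b)$ reduces both $e\,a^b\,e$ and $a^b\,e\,a^b$ to $a(beb)\iv{a}$.

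For the last input I will massage relation $5^\circ$. Conjugating $\Co(c^{ae},b)$ by $\iv{a}$ and simplifying with $\Co(a,e)$ gives $(c^{ae})^{\iv{a}}=c^e$, while $\Br(a,b)$ gives $b^{\iv{a}}=a^b$; hence $\Co(c^e,a^b)$. Conjugating this by $\iv{e}$ produces $\Co(c,(a^b)^{\iv{e}})$, and then $\Br(e,a^b)$ lets me rewrite $(a^b)^{\iv{e}}=e^{a^b}$, giving $\Co(e^{a^b},c)$. With the three inputs assembled, the Lemma~\ref{lem:a} computation (conjugate $\Br(e,c)$ by $\iv{e}^{a^b}$, keeping $c$ fixed via $\Co(e^{a^b},c)$ and using $e^{\iv{e}^{a^b}}=a^b$) yields $\Br(a^b,c)$; the symmetry $a\leftrightarrow e$ then gives $\Br(e^b,c)$, completing the argument.

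The step I expect to be the real obstacle is establishing $\Br(e,a^b)$: it is the only place where one must go beyond the formal ``triangle'' manipulations and genuinely exploit the $B_4$-structure carried by $\langle a,b,e\rangle$. Everything afterwards is routine conjugation bookkeeping together with the reuse of Lemma~\ref{lem:a}.
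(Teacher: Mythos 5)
Your argument is correct --- every step checks: $\Br(e,a^b)$ does follow from $\Br(a,b)$, $\Br(e,b)$ and $\Co(a,e)$ by the computation you sketch; conjugating $\Co(c^{ae},b)$ by $\iv{a}$ and then by $\iv{e}$ does give $\Co(e^{a^b},c)$; the Lemma~\ref{lem:a} mechanism then yields $\Br(a^b,c)$; and the $a\leftrightarrow e$ involution (a genuine automorphism of the presented group, thanks to $\Co(a,e)$) legitimately disposes of $\Br(e^b,c)$. The paper takes a different and slightly more economical route: it conjugates the braid relation $\Br(e,b)$ by the element $c^{ae}$ itself, using $\Co(c^{ae},b)$ to keep $b$ fixed; the conjugate $e^{(c^{ae})}$ simplifies to $e^{cea}=c^{a}$ via $\Co(a,e)$ and the identity $e^{ce}=c$ coming from $\Br(e,c)$, giving $\Br(c^a,b)$, and one final conjugation by $\iv{a}$ together with $b^{\iv{a}}=a^b$ produces $\Br(a^b,c)$ at once --- no auxiliary relation is needed. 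What your version buys is structure: the symmetry reduction is made explicit (the paper only says the second relation ``can be showed similarly''), and the Lemma~\ref{lem:a} trick is reused as a general-purpose tool; the price is the extra input $\Br(e,a^b)$, which, contrary to your worry, is the easiest step of all, since $\langle a,b,e\rangle$ visibly carries the $B_4$ relations.
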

\begin{proof}
	We shall prove that the relations $\Br(a^b,c)$ and $\Br(e^b,c)$ hold. Indeed, %we have the following implications, where the relations used for each implication are labeled.
	starting from $\Br(e,b)$, conjugated by $c^{ae}$ and using the commutation relation $\Co(c^{ae},b)$, we get $\Br(e^{\left(c^{ae}\right)},b)$. By the commutation relation $\Co(a,e)$, we have $\Br(e^{cea},b)$, which implies $\Br(c^a,b)$ by using the braid relation $\Br(e,c)$. Conjugated by $\iv{a}$ and using the braid relation $\Br(a,b)$, we deduce $\Br(c,a^b)$ as required. The relation $\Br(c,e^b)$ can be showed similarly.
	%\[\Br(e,b)
	%\xLongrightarrow{\Co(c^{ae},b)} \Br(e^{\left(c^{ae}\right)},b)
	%\xLongrightarrow{\Co(a,e)} \Br(e^{cea},b)
	%\xLongrightarrow{\Br(e,c)} \Br(c^a,b)
	%\xLongrightarrow{\Br(a,b)} \Br(c,a^b).
	%\]
	%and
	%\[\Br(a,b)
	%\xLongrightarrow{\Co(c^{ae},b)} \Br(a^{\left(c^{ae}\right)},b)
	%\xLongrightarrow{\Co(a,e)} \Br(a^{cae},b)
	%\xLongrightarrow{\Br(a,c)} \Br(c^e,b)
	%\xLongrightarrow{\Br(e,b)} \Br(c,e^b).
	%\]
\end{proof}

Another case for which less relations are enough is the last case in Figure~\ref{fig:local}; see Lemma~\ref{lem:mut5}, whose proof is a little complicated.

\subsection{Mutations and isomorphisms}

In this subsection, we show that the presentations in Definition~\ref{def:BrQP} are compatible with mutation of quivers with potential. Recall from Section~\ref{sec:MS} the notion of mutation of quivers with potential.

%Let $\TT$ be a triangulation of $\surfo$ and $\TT'$ the backward flip of $\TT$ at $\gamma\in\TT$, i.e. $\TT'=\mu_\gamma^\flat(\TT)$ (cf. Figure~\ref{fig:ex0}). By definition, $\TT$ is the forward flip of $\TT'$ at $\gamma$, i.e. $\TT=\mu_\gamma(\TT')$. For any open arc $\alpha\in\TT$, denote by $\alpha'$ the corresponding arc in $\TT'$.

%Each closed arc $\alpha\in\TT^\ast$ corresponds to a closed arc $\alpha'\in\TT'^\ast$ by a Whitehead move, cf. \cite{QQ}. Denote by $\gamma\in\TT^\ast$ the dual of $\eta$. Then the quiver with potential $(Q_{\TT'},W_{\TT'})$ is the mutation of $(Q_\TT,W_\TT)$ at the vertex $\gamma$.
%Note that $\mu_\gamma^\flat(\TT)$ and $\mu_\gamma^\sharp(\TT)$ give the same quiver with potential, although they are different triangulation.
%The generators of $\Br(Q_\TT,W_\TT)$ will be denoted by $\alpha$ for $\alpha \in \TT$ and
%the generators of $\Br(Q_{\TT'},W_{\TT'})$ will be denoted by $\alpha'$ for $\alpha \in {\TT'}$.

\begin{proposition}\label{pp:invariant}
Let $(Q,W)$ be a quiver with potential arising from a triangulated marked surface and $(Q',W')=\mu_\gamma(Q,W)$ the mutation of $(Q,W)$ at a vertex $\gamma$. Denote by $\alpha'$ the vertex of $Q'$ corresponding to a vertex $\alpha$ of $Q$. There are mutually inverse canonical isomorphisms of groups
\[\begin{array}{cl}
\theta_{\gamma}^\flat\colon&\Br(Q,W)\xrightarrow{\cong}\Br(Q',W')\\
\theta_{\gamma'}^\sharp\colon&\Br(Q',W')\xrightarrow{\cong}\Br(Q,W)
\end{array}
\]
%between the braid groups associated to the quivers with potential $(Q_\TT,W_\TT)$ and $(Q_{\TT'},W_{\TT'})$,
satisfying
\begin{gather}
\begin{array}{ccl}\label{eq:5.1}
\theta_{\gamma}^\flat(\alpha)&=&\begin{cases}
(\alpha')^{\gamma'} & \text{if there are arrows from $\gamma$ to $\alpha$ in $Q$,}\\
\alpha' & \text{otherwise,}
\end{cases}\end{array}\\
\begin{array}{ccl}\label{eq:5.2}
\theta_{\gamma'}^\sharp(\alpha')&=&\begin{cases}
(\alpha)^{\underline{\gamma}} & \text{if there are arrows from $\alpha'$ to $\gamma'$ in $Q'$,}\\
\alpha & \text{otherwise.}
\end{cases}\end{array}
\end{gather}
\end{proposition}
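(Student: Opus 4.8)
The plan is to define $\theta_\gamma^\flat$ and $\theta_{\gamma'}^\sharp$ on generators by the formulas \eqref{eq:5.1} and \eqref{eq:5.2}, and then to establish three things: that $\theta_\gamma^\flat$ respects the defining relations of $\Br(Q,W)$, so that it descends to a well-defined homomorphism into $\Br(Q',W')$; that $\theta_{\gamma'}^\sharp$ likewise respects the relations of $\Br(Q',W')$; and finally that the composites $\theta_{\gamma'}^\sharp\circ\theta_\gamma^\flat$ and $\theta_\gamma^\flat\circ\theta_{\gamma'}^\sharp$ act as the identity on the respective generating sets. The last point will be a direct computation on generators once well-definedness is known, using that mutation of quivers with potential is an involution (so that $\mu_{\gamma'}(Q',W')=(Q,W)$) and that the conjugating factors $\gamma'$ and $\iv\gamma$ appearing in \eqref{eq:5.1} and \eqref{eq:5.2} cancel against one another.

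The core of the argument is the well-definedness of $\theta_\gamma^\flat$, and here I would exploit the locality of both the relations and the mutation. Any defining relation of $\Br(Q,W)$ involving neither $\gamma$ nor a vertex adjacent to $\gamma$ is supported on a full subquiver left untouched by $\mu_\gamma$, and on such a subquiver $\theta_\gamma^\flat$ acts simply as $\alpha\mapsto\alpha'$; hence these relations are transported verbatim. It therefore suffices to treat the relations supported on $\gamma$ together with its in- and out-neighbours. Since $(Q,W)$ arises from a triangulated marked surface, the local picture at $\gamma$ is tightly constrained: each vertex carries at most two incoming and two outgoing arrows, and the only admissible $3$-cycle configurations are those drawn in Figure~\ref{fig:local}. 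I would enumerate the possible stars of $\gamma$ (corresponding to the at most two triangles incident to the flipped arc), compute $\mu_\gamma$ explicitly in each case by the recipe of Section~\ref{sec:MS}, and verify that $\theta_\gamma^\flat$ sends each relation of $\Br(Q,W)$ to a consequence of the relations of $\Br(Q',W')$.

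To keep these case checks manageable I would rely on the reduced presentations of Lemmas~\ref{lem:a}, \ref{lem:mut3} and~\ref{lem:mut5}, which show that in the relevant local quivers only a small subset of the relations in Definition~\ref{def:BrQP} needs to be verified; checking the image of that subset then suffices. The identity $a^b=b^{\iv a}$, valid whenever $\Br(a,b)$ holds and used repeatedly in those lemmas, is the main algebraic device for rewriting the transported relations into the required form.

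I expect the main obstacle to be the configurations containing double arrows and the two-triangle cases (the fourth, fifth and sixth subquivers of Figure~\ref{fig:local}), where a mutation can create or destroy a double arrow and thereby interchange braid-type and commutation-type relations. There the bookkeeping of which $3$-cycles survive in $W'$ and which arrows are cancelled as constituents of $2$-cycles is the delicate part, and it is exactly here that Lemma~\ref{lem:mut5} is needed. Once all local cases are dispatched, Proposition~\ref{prop:LF} ensures that the two flips producing $(Q',W')$ are honest flips of the triangulation, so the canonical vertex identification $\alpha\mapsto\alpha'$ is unambiguous and the resulting $\theta_\gamma^\flat$, $\theta_{\gamma'}^\sharp$ are the asserted mutually inverse isomorphisms.
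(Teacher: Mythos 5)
Your overall strategy coincides with the paper's: define $\theta_\gamma^\flat$ and $\theta_{\gamma'}^\sharp$ on generators by \eqref{eq:5.1} and \eqref{eq:5.2}, observe they are mutually inverse already at the level of free groups (so only well-definedness of one of them needs checking), and verify preservation of relations by a local case analysis of the mutation configurations in Figure~\ref{fig:local}, using the reduced presentations of Lemmas~\ref{lem:a}, \ref{lem:mut3} and~\ref{lem:mut5} to cut down the number of relations to transport. That is exactly the first half of the paper's proof.

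There is, however, a genuine gap in your reduction step. You claim that after discarding relations whose support avoids $\gamma$ and all its neighbours, ``it suffices to treat the relations supported on $\gamma$ together with its in- and out-neighbours,'' and you propose to handle these by enumerating the stars of $\gamma$. But the complement of the first class is not the second: a relation can involve a vertex adjacent to $\gamma$ together with vertices far from $\gamma$, without involving $\gamma$ itself, and such a relation is not supported on the star of $\gamma$. For these relations $\theta_\gamma^\flat$ sends some generators to $\alpha'$ and others to $(\alpha')^{\gamma'}$, and one must check that the asymmetrically conjugated relation still holds in $\Br(Q',W')$ --- e.g.\ that $\Co(c',(d')^{\gamma'})$ follows when only $d$ is adjacent to $\gamma$, which requires first establishing $\Co(c',\gamma')$ from the arrow pattern. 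The paper's proof devotes its entire second half (the ``general (global) cases'') to precisely this, and the most delicate situation is a type $4^\circ$--$7^\circ$ relation whose support does not contain $\gamma$ but to which $\gamma$ attaches at two vertices; there one must argue from the surface constraints that the support is forced to be the configuration of \eqref{eq:mut3} and that the two conjugations are compatible. Your plan as written would simply never examine these relations, so the well-definedness of $\theta_\gamma^\flat$ would not be established.
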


%\newpage

\begin{proof}
The formulas \eqref{eq:5.1} and \eqref{eq:5.2} defines two homomorphisms $\theta_{\gamma}^\sharp$ and $\theta_{\gamma'}^\flat$ between the groups freely generated by the vertices of $Q$ and $Q'$, respectively. Moreover, $\theta_{\gamma'}^\sharp$ is the composition of $\theta_{\gamma'}^\flat$ with the conjugation by $\iv{\gamma}$, and $\theta_{\gamma}^\flat$ and $\theta_{\gamma'}^\sharp$ are mutually inverse.
%Note that if there is exactly one arrow from $\gamma$ to $\alpha$,
%we have $\Br(\gamma,\alpha)$ and $\theta^{-1}(\alpha')=\alpha^{\iv{\gamma}}=\gamma^{\alpha}$.
So we only need to show $\theta=\theta_{\gamma}^\flat$ preserves the relations in the presentation of $\Br(Q,W)$ in Definition~\ref{def:BrQP}.

First, we consider several special (local) cases, where we show that the images of the relations in $\Br(Q',W')$ under $\theta^{-1}$ are equivalent to the relations in $\Br(Q,W)$. In each case, the left quiver with potential is (a full sub-quiver of) $(Q,W)$ and the right quiver with potential is (the corresponding full sub-quiver of) $(Q',W')$, but whose vertices are indexed by their images under $\theta^{-1}$. The relations common to both sides are listed on the top and an equivalence may use common relations and equivalences above it. We only give proofs in Appendix~\ref{app:cal} for non-easily checked equivalences.

\begin{description}
\item[Case (I)] Consider the mutation in \eqref{eq:mut1}. Lemma~\ref{lem:a} ensures that the relations on the left side are enough.
\end{description}
%In the right quiver, $a=\theta^{-1}(a')$, $c=\theta^{-1}(c')$ and $d=\theta^{-1}(b')$ (and similar for the later cases).
\begin{gather}\label{eq:mut1}
\begin{tikzpicture}[scale=0.5,
  arrow/.style={->,>=stealth},
  equalto/.style={double,double distance=2pt},
  mapto/.style={|->}]
\node (x1) at (0,2){};
\node (x2) at (2,-1){};
\node (x3) at (-2,-1){};
\draw[white, fill=gray!11] (2,-1)to(-2,-1)to(0,2)to(2,-1);
  \node at (x1){$a$};
  \node at (x2){$b$};
  \node at (x3){$c$};
  \foreach \n/\m in {1/2,2/3,3/1}
    {\draw[arrow]
     (x\n) to (x\m);}
\draw(4,0)node[above]{$\mu_a$}node[below]{\tiny{$d=b^{\iv{a}}$}};
\draw[arrow,decorate, decoration={snake}](3,0)to(5,0);
\node (x1) at (0+8,2){};
\node (x2) at (2+8,-1){};
\node (x3) at (-2+8,-1){};
  \node at (x1){$a$};
  \node at (x2){$d$};
  \node at (x3){$c$};
  \foreach \n/\m in {2/1,1/3}
    {\draw[arrow]
     (x\n) to (x\m);}
\draw(16,1)node{
$\begin{array}{ccl}
&\Br(a,c)&\\
\Br(a,b)&\Longleftrightarrow&\Br(a,d)\\
\Co(a^b,c)&\Longleftrightarrow&\Co(c,d)\\
\end{array}$
};
\end{tikzpicture}
\end{gather}
%Note that in \textbf{Case (I)}, we list a minimal set of relations. Lemma~\ref{lem:a} ensures that they are enough.
%One also see that the relations in Definition~\ref{def:BrQP} is clearly not minimal by all means.

\begin{description}
\item[Case (II.1/2)] Consider the mutations in \eqref{eq:mut2} and \eqref{eq:mut2'} respectively.
\end{description}
%It is straightforward to check that Proposition~\ref{pp:invariant} holds.
%Note Lemma~\ref{lem:a} ensures that these relations are enough.
\begin{gather}\label{eq:mut2}
\begin{tikzpicture}[scale=0.5,
  arrow/.style={->,>=stealth},
  equalto/.style={double,double distance=2pt},
  mapto/.style={|->}]
\node (x2) at (0,2){};
\node (x1) at (2,-1){};
\node (x3) at (-2,-1){};
\draw[white, fill=gray!11] (2,-.8)to(-2,-.8)to(0,2)--cycle;
  \node at (x1){$b$};
  \node at (x2){$a$};
  \node at (x3){$c$};
  \foreach \n/\m in {3/2,2/1}
    {\draw[arrow]
     (x\n) to (x\m);}
\draw[arrow] (x1.150) to (x3.30);
\draw[arrow] (x1.-150)to (x3.-30);
\draw(4,0)node[above]{$\mu_a$}node[below]{\tiny{$d=b^{\iv{a}}$}};
\draw[arrow,decorate, decoration={snake}](3,0)to(5,0);
\node (x1) at (0+8,2){};
\node (x2) at (2+8,-1){};
\node (x3) at (-2+8,-1){};
  \node at (x1){$a$};
  \node at (x2){$d$};
  \node at (x3){$c$};
  \foreach \n/\m in {2/1,1/3,2/3}
    {\draw[arrow]
     (x\n) to (x\m);}
\draw(16,1)node{
$\begin{array}{ccl}
&\Br(a,c)&\\
\Br(a,b)&\Longleftrightarrow&\Br(a,d)\\
\Br(a^b,c)&\Longleftrightarrow&\Br(d,c)\\
\end{array}$
};
\end{tikzpicture}\\
\label{eq:mut2'}
\begin{tikzpicture}[scale=0.5,
  arrow/.style={->,>=stealth},
  equalto/.style={double,double distance=2pt},
  mapto/.style={|->}]
\node (x2) at (0,2){};
\node (x1) at (2,-1){};
\node (x3) at (-2,-1){};
\draw[white, fill=gray!11] (2,-.8)to(-2,-.8)to(0,2)--cycle;
\draw[white, fill=gray!11] (2+8,-.8)to(-2+8,-.8)to(0+8,2)--cycle;
  \node at (x1){$b$};
  \node at (x2){$a$};
  \node at (x3){$c$};
  \foreach \n/\m in {3/2,2/1}
    {\draw[arrow]
     (x\n) to (x\m);}
\draw[arrow] (x1.150) to (x3.30);
\draw[arrow] (x1.-150)to (x3.-30);
\draw(4,0)node[above]{$\mu_b$}node[below]{\tiny{$l=c^{\iv{b}}$}};
\draw[arrow,decorate, decoration={snake}](3,0)to(5,0);
\node (x1) at (0+8,2){};
\node (x2) at (2+8,-1){};
\node (x3) at (-2+8,-1){};
  \node at (x1){$a$};
  \node at (x2){$b$};
  \node at (x3){$l$};
  \foreach \n/\m in {2/1,1/3}
    {\draw[arrow]
     (x\n) to (x\m);}
\draw[arrow] (x3.30)to(x2.150);
\draw[arrow] (x3.-30)to(x2.-150);
\draw(16,1)node{
$\begin{array}{ccl}
&\Br(a,b)&\\
\Br(a^b,c)&\Longleftrightarrow&\Br(a,l)\\
\Br(a,c)&\Longleftrightarrow&\Br(a^l,b)\\
\end{array}$
};
\end{tikzpicture}
\end{gather}

\begin{description}
\item[Case (III.1/2)] Consider the mutations in \eqref{eq:mut3} and \eqref{eq:mut3'} respectively.
\end{description}
For the mutation in \eqref{eq:mut3},
Lemma~\ref{lem:mut3} and Lemma~\ref{lem:a} ensure the relations on the left side and on the right side are enough, respectively.
\begin{gather}\label{eq:mut3}
\begin{tikzpicture}[xscale=0.5,yscale=.5,
  arrow/.style={->,>=stealth},
  equalto/.style={double,double distance=2pt},
  mapto/.style={|->}]
\node (x4) at (2,2){};
\node (x1) at (2,-1){};
\node (x3) at (-2,-1){};
\node (x2) at (-2,2){};
\draw[white, fill=gray!11] (2,-.8)to(-2,-.8)to(2,2)--cycle;
\draw[white, fill=gray!11] (2,-1.2)to(-2,-1.2)to(-2,2)--cycle;
  \node at (x1){$b$};
  \node at (x2){$a$};
  \node at (x3){$c$};
  \node at (x4){$e$};
\draw[white, fill=gray!11] (2+8,-.8)to(-2+8,-.8)to(2+8,2)--cycle;
  \foreach \n/\m in {3/2,2/1,3/4,4/1}
    {\draw[arrow]
     (x\n) to (x\m);}
\draw[arrow] (x1.150) to (x3.30);
\draw[arrow] (x1.-150)to (x3.-30);
\draw(4,0)node[above]{$\mu_a^\flat$}node[below]{\tiny{$d=b^{\iv{a}}$}};
\draw[arrow,decorate, decoration={snake}](3,0)to(5,0);
\node (x4) at (2+8,2){};
\node (x2) at (2+8,-1){};
\node (x3) at (-2+8,-1){};
\node (x1) at (-2+8,2){};
  \node at (x1){$a$};
  \node at (x2){$d$};
  \node at (x3){$c$};
  \node at (x4){$e$};
  \foreach \n/\m in {2/1,1/3,2/3,3/4,4/2}
    {\draw[arrow]
     (x\n) to (x\m);}
\draw(17,.5)node{
$\begin{array}{ccl}
&\Co(a,e)\\&\Br(a,c)\\&\Br(e,c)\\
\Br(e,b)&\Longleftrightarrow&\Br(e,d)\\
\Br(a,b)&\Longleftrightarrow&\Br(a,d)\\
\Co(c^{ae},b)&\Longleftrightarrow&\Co(e^d,c)\\
\end{array}$};
\end{tikzpicture}
\end{gather}
For the mutation in \eqref{eq:mut3'}, we show the last three equivalences in Lemma~\ref{lem:mut6}.
\begin{gather}\label{eq:mut3'}
\begin{tikzpicture}[xscale=0.5,yscale=.5,
  arrow/.style={->,>=stealth},
  equalto/.style={double,double distance=2pt},
  mapto/.style={|->}]
\node (x4) at (2,2){};
\node (x1) at (2,-1){};
\node (x3) at (-2,-1){};
\node (x2) at (-2,2){};
\draw[white, fill=gray!11] (2,-.8)to(-2,-.8)to(2,2)--cycle;
\draw[white, fill=gray!11] (2,-1.2)to(-2,-1.2)to(-2,2)--cycle;
  \node at (x1){$b$};
  \node at (x2){$a$};
  \node at (x3){$c$};
  \node at (x4){$e$};
  \foreach \n/\m in {3/2,2/1,3/4,4/1}
    {\draw[arrow]
     (x\n) to (x\m);}
\draw[arrow] (x1.150) to (x3.30);
\draw[arrow] (x1.-150)to (x3.-30);
\draw(4,0)node[above]{$\mu_b$}node[below]{\tiny{$l=c^{\iv{b}}$}};
\draw[arrow,decorate, decoration={snake}](3,0)to(5,0);
\node (x4) at (2+8,2){};
\node (x1) at (2+8,-1){};
\node (x3) at (-2+8,-1){};
\node (x2) at (-2+8,2){};
\node (x5) at (0+8,3){};
\draw[white, fill=gray!11] (2+8,-1)to(-2+8,-1)to(2+8,2)--cycle;
\draw[white, fill=gray!11] (2+8,-1)to(-2+8,-1)to(-2+8,2)--cycle;
  \node at (x1){$b$};
  \node at (x2){$a$};
  \node at (x3){$l$};
  \node at (x4){$e$};
  \foreach \n/\m in {3/2,2/1,3/4,4/1}
    {\draw[<-,>=stealth]
     (x\n) to (x\m);}
\draw[<-,>=stealth] (x1.150) to (x3.30);
\draw[<-,>=stealth] (x1.-150)to (x3.-30);
\draw(17,1)node{
$\begin{array}{ccl}
%\Co(f,b)&&\Co(f,c)\\
%\Br(a,f)&&\Br(e,f)\\
&\Br(a,b)\\&\Co(a,e)\\&\Br(b,e)\\
\Br(a^b,c)&\Longleftrightarrow&\Br(a,l)\\
\Br(e^b,c)&\Longleftrightarrow&\Br(e,l)\\
\Br(a,c)&\Longleftrightarrow&\Br(a^l,b)\\
\Br(e,c)&\Longleftrightarrow&\Br(e^l,b)\\
\Co(c^{ae},b)&\Longleftrightarrow&\Co(b^{ae},l)
\end{array}$};
\end{tikzpicture}
\end{gather}

%\begin{lemma}
%If $\Co(a,e),\Br(a,b)$ and $\Br(e,b)$ hold,
%then, $\Co(c^a,e^b)\Longleftrightarrow\Co(c^e,a^b)$.
%\end{lemma}
%\begin{proof}
%Since, $\Br(a,b)\Leftrightarrow a^b=b^{\iv{a}}$, $\Br(e,b)\Leftrightarrow e^b=b^{\iv{e}}$, we have
%\[\Co(c^a,e^b)\Leftrightarrow
%\Co(c^a,b^{\iv{e}})\Longleftrightarrow
%\Co(c^{ae},b)\xLongleftrightarrow{\Co(a,e)}
%\Co(c^{ea},b)
%\Longleftrightarrow\Co(c^e,b^{\iv{a}})
%\Leftrightarrow\Co(c^e,a^b).\]
%\end{proof}

\begin{description}
\item[Case (IV.1/2)] Consider the mutations in \eqref{eq:mut4} and \eqref{eq:mut4'} respectively.
\end{description}
For the mutation in \eqref{eq:mut4}, we show the last three equivalences in Lemma~\ref{lem:mut4}.
\begin{gather}\label{eq:mut4}
\begin{tikzpicture}[xscale=0.5,yscale=.5,
  arrow/.style={->,>=stealth},
  equalto/.style={double,double distance=2pt},
  mapto/.style={|->}]
\node (x4) at (2,2){};
\node (x1) at (2,-1){};
\node (x3) at (-2,-1){};
\node (x2) at (-2,2){};
\draw[white, fill=gray!11] (2,-1)to(-2,-1)to(2,2)--cycle;
\draw[white, fill=gray!11] (2,-1)to(-2,-1)to(-2,2)--cycle;
  \node at (x1){$b$};
  \node at (x2){$a$};
  \node at (x3){$c$};
  \node at (x4){$e$};
  \foreach \n/\m in {3/2,2/1,3/4,4/1,2/4}
    {\draw[arrow]
     (x\n) to (x\m);}
\draw[arrow] (x1.150) to (x3.30);
\draw[arrow] (x1.-150)to (x3.-30);
\draw(4,0)node[above]{$\mu_e$}node[below]{\tiny{$h=b^{\iv{e}}$}};
\draw[arrow,decorate, decoration={snake}](3,0)to(5,0);
\node (x4) at (2+8,2){};
\node (x1) at (2+8,-1){};
\node (x3) at (-2+8,-1){};
\node (x2) at (-2+8,2){};
\draw[white, fill=gray!11] (2+8,-1+.2)to(-2+8,2+.2)to(2+8,2)--cycle;
\draw[white, fill=gray!11] (2+8-.2,-1)to(-2+8,-1)to(-2+8,2-.2)--cycle;
  \node at (x1){$h$};
  \node at (x2){$a$};
  \node at (x3){$c$};
  \node at (x4){$e$};
  \foreach \n/\m in {1/3,1/4,3/2,4/2,4/3}
    {\draw[arrow]
     (x\n) to (x\m);}
\draw[arrow] (x2.-15) to (x1.105);
\draw[arrow] (x2.-60)to (x1.165);
\draw(17,1)node{
$\begin{array}{ccl}
&\Br(a,e)\\&\Br(a,c)\\&\Br(e,c)\\
\Br(e,b)&\Longleftrightarrow&\Br(e,h)\\
\Br(a,b)&\Longleftrightarrow&\Br(e^a,h)\\
\Br(e^b,c)&\Longleftrightarrow&\Br(h,c)\\
\Br(a^b,c)&\Longleftrightarrow&\Br(h^{ec},a)\\
\Br(c^{ae},b)&\Longleftrightarrow&\Br(c^a,h)\\
\Br(c^{ea},b)&\Longleftrightarrow&\Br(h^{ce},a)\\
\end{array}$};
%\draw(17,-1)node[below,gray!80]{$\Ss(a,b,c)\Longleftrightarrow\Br(c,d)\;\;$
\end{tikzpicture}
\end{gather}
For the mutation in \eqref{eq:mut4'}, we show the last two equivalences in Lemma~\ref{lem:mut4'}.
\begin{gather}\label{eq:mut4'}
\begin{tikzpicture}[xscale=0.5,yscale=.5,
  arrow/.style={->,>=stealth},
  equalto/.style={double,double distance=2pt},
  mapto/.style={|->}]
\node (x4) at (2,2){};
\node (x1) at (2,-1){};
\node (x3) at (-2,-1){};
\node (x2) at (-2,2){};
\draw[white, fill=gray!11] (2,-1)to(-2,-1)to(2,2)--cycle;
\draw[white, fill=gray!11] (2,-1)to(-2,-1)to(-2,2)--cycle;
  \node at (x1){$b$};
  \node at (x2){$a$};
  \node at (x3){$c$};
  \node at (x4){$e$};
  \foreach \n/\m in {3/2,2/1,3/4,4/1,2/4}
    {\draw[arrow]
     (x\n) to (x\m);}
\draw[arrow] (x1.150) to (x3.30);
\draw[arrow] (x1.-150)to (x3.-30);
\draw(4,0)node[above]{$\mu_b$}node[below]{\tiny{$l=c^{\iv{b}}$}};
\draw[arrow,decorate, decoration={snake}](3,0)to(5,0);
\node (x4) at (2+8,2){};
\node (x1) at (2+8,-1){};
\node (x3) at (-2+8,-1){};
\node (x2) at (-2+8,2){};
\node (x5) at (0+8,3){};
\draw[white, fill=gray!11] (2+8,-1)to(-2+8,-1)to(2+8,2)--cycle;
\draw[white, fill=gray!11] (2+8,-1)to(-2+8,-1)to(-2+8,2)--cycle;
  \node at (x1){$b$};
  \node at (x2){$a$};
  \node at (x3){$l$};
  \node at (x4){$e$};
  \foreach \n/\m in {3/2,2/1,3/4,4/1}
    {\draw[<-,>=stealth]
     (x\n) to (x\m);}
\draw[<-,>=stealth] (x1.150) to (x3.30);
\draw[<-,>=stealth] (x1.-150)to (x3.-30);
\foreach \n/\m in {4/2}
    {\draw[arrow] (x\m)--(x\n);}
\draw(17,1)node{
$\begin{array}{ccl}
&\Br(a,b)\\&\Br(a,e)\\&\Br(e,b)\\
\Br(a^b,c)&\Longleftrightarrow&\Br(a,l)\\
\Br(a,c)&\Longleftrightarrow&\Br(a^l,b)\\
\Br(e^b,c)&\Longleftrightarrow&\Br(e,l)\\
\Br(e,c)&\Longleftrightarrow&\Br(e^l,b)\\
\Br(c^{ae},b)&\Longleftrightarrow&\Br(b^{ae},l)\\
\Br(c^{ea},b)&\Longleftrightarrow&\Br(b^{ea},l)\\
\end{array}$};
\end{tikzpicture}
\end{gather}

\begin{description}
\item[Case (V.1$\sim$4)]
Consider the mutations in \eqref{eq:mut5}, \eqref{eq:mut7}
\eqref{eq:mut8} and \eqref{eq:mut9} respectively.
\end{description}
For the mutation in \eqref{eq:mut5}, Lemma~\ref{lem:mut5}, together with Lemma~\ref{lem:a}, ensures that the relations on both sides are enough. We show the last equivalence in Lemma~\ref{lem:mut5'}.
\begin{gather}\label{eq:mut5}
\begin{tikzpicture}[xscale=0.5,yscale=.5,
  arrow/.style={->,>=stealth},
  equalto/.style={double,double distance=2pt},
  mapto/.style={|->}]
\node (x4) at (2,2){};
\node (x1) at (2,-1){};
\node (x3) at (-2,-1){};
\node (x2) at (-2,2){};
\node (x5) at (0,3){};
\draw[white, fill=gray!11] (2,2)to(-2,2)to(0,3)--cycle;
\draw[white, fill=gray!11] (2,-1)to(-2,-1)to(2,2)--cycle;
\draw[white, fill=gray!11] (2,-1)to(-2,-1)to(-2,2)--cycle;
  \node at (x1){$b$};
  \node at (x2){$a$};
  \node at (x3){$c$};
  \node at (x4){$e$};
  \node at (x5){$f$};
  \foreach \n/\m in {3/2,2/1,3/4,4/1,5/2,4/5,2/4}
    {\draw[arrow]
     (x\n) to (x\m);}
\draw[arrow] (x1.150) to (x3.30);
\draw[arrow] (x1.-150)to (x3.-30);
\draw(4,0)node[above]{$\mu_f$}node[below]{\tiny{$g=a^{\iv{f}}$}};
\draw[arrow,decorate, decoration={snake}](3,0)to(5,0);
\node (x4) at (2+8,2){};
\node (x1) at (2+8,-1){};
\node (x3) at (-2+8,-1){};
\node (x2) at (-2+8,2){};
\node (x5) at (0+8,3){};
\draw[white, fill=gray!11] (2+8,-1)to(-2+8,-1)to(2+8,2)--cycle;
\draw[white, fill=gray!11] (2+8,-1)to(-2+8,-1)to(-2+8,2)--cycle;
  \node at (x1){$b$};
  \node at (x2){$g$};
  \node at (x3){$c$};
  \node at (x4){$e$};
  \node at (x5){$f$};
  \foreach \n/\m in {3/2,2/1,3/4,4/1,5/4,2/5}
    {\draw[arrow]
     (x\n) to (x\m);}
\draw[arrow] (x1.150) to (x3.30);
\draw[arrow] (x1.-150)to (x3.-30);
\draw(17,1)node{
$\begin{array}{ccl}
&\Br(f,e)\\&\Br(e,b)\\&\Br(e,c)\\
&\Co(f,b)\\&\Co(f,c)\\
%\Br(e,b)&\Longleftrightarrow&\Br(e,h)\\
\Br(f,a)&\Longleftrightarrow&\Br(f,g)\\
\Br(a,b)&\Longleftrightarrow&\Br(g,b)\\
\Br(a,c)&\Longleftrightarrow&\Br(g,c)\\
\Co(f^a,e)&\Longleftrightarrow&\Co(g,e)\\
\Co(e,f^{abc})&\Longleftrightarrow&\Co(c^{ge},b)\\
\end{array}$};
\end{tikzpicture}\end{gather}
For the mutation in \eqref{eq:mut7},
the equivalences are from \eqref{eq:mut4'}.
\begin{gather}\label{eq:mut7}
\begin{tikzpicture}[xscale=0.5,yscale=.5,
  arrow/.style={->,>=stealth},
  equalto/.style={double,double distance=2pt},
  mapto/.style={|->}]
\node (x4) at (2,2){};
\node (x1) at (2,-1){};
\node (x3) at (-2,-1){};
\node (x2) at (-2,2){};
\draw[white, fill=gray!11] (2,2)to(-2,2)to(0,3)--cycle;
\draw[white, fill=gray!11] (2+8,2)to(-2+8,2)to(0+8,3)--cycle;
\draw[white, fill=gray!11] (2,-.8)to(-2,-.8)to(2,2)--cycle;
\draw[white, fill=gray!11] (2,-1.2)to(-2,-1.2)to(-2,2)--cycle;
  \node at (x1){$b$};
  \node at (x2){$a$};
  \node at (x3){$c$};
  \node at (x4){$e$};
  \foreach \n/\m in {3/2,2/1,3/4,4/1}
    {\draw[arrow]
     (x\n) to (x\m);}
\draw[arrow] (x1.150) to (x3.30);
\draw[arrow] (x1.-150)to (x3.-30);
\draw(0,3)node(x5){$f$};
\foreach \n/\m in {4/2,5/4,2/5}
    {\draw[arrow] (x\m)--(x\n);}
\draw(4,0)node[above]{$\mu_b$}node[below]{\tiny{$l=c^{\iv{b}}$}};
\draw[arrow,decorate, decoration={snake}](3,0)to(5,0);
\node (x4) at (2+8,2){};
\node (x1) at (2+8,-1){};
\node (x3) at (-2+8,-1){};
\node (x2) at (-2+8,2){};
\node (x5) at (0+8,3){};
\draw[white, fill=gray!11] (2+8,-1)to(-2+8,-1)to(2+8,2)--cycle;
\draw[white, fill=gray!11] (2+8,-1)to(-2+8,-1)to(-2+8,2)--cycle;
  \node at (x1){$b$};
  \node at (x2){$a$};
  \node at (x3){$l$};
  \node at (x4){$e$};
  \foreach \n/\m in {3/2,2/1,3/4,4/1}
    {\draw[<-,>=stealth]
     (x\n) to (x\m);}
\draw[<-,>=stealth] (x1.150) to (x3.30);
\draw[<-,>=stealth] (x1.-150)to (x3.-30);
\draw(0+8,3)node(x5){$f$};
\foreach \n/\m in {4/2,5/4,2/5}
    {\draw[arrow] (x\m)--(x\n);}
%\draw(17,1)node{
%$\begin{array}{ccl}
%\Br(a,f)&\Co(f^a,e)&\Br(e,f)\\
%\Br(b,a)&\Br(a,e)&\Br(b,e)\\
%\Co(f,b)&&\Co(f,c)\\
%\Br(a^b,c)&\Longleftrightarrow&\Br(a,l)\\
%\Br(e^b,c)&\Longleftrightarrow&\Br(e,l)\\
%\Br(a,c)&\Longleftrightarrow&\Br(a^l,b)\\
%\Br(e,c)&\Longleftrightarrow&\Br(e^l,b)\\
%\Co(e,f^{abc})&\Longleftrightarrow&\Co(e,f^{alb})
%\end{array}$};
\end{tikzpicture}
\end{gather}
For the mutation in \eqref{eq:mut8}, we show the last equivalence in Lemma~\ref{lem:mut8}.
\begin{gather}\label{eq:mut8}
\begin{tikzpicture}[xscale=0.5,yscale=.5,
  arrow/.style={->,>=stealth},
  equalto/.style={double,double distance=2pt},
  mapto/.style={|->}]
\node (x4) at (2,2){};
\node (x1) at (2,-1){};
\node (x3) at (-2,-1){};
\node (x2) at (-2,2){};
\node (x5) at (0,3){};
\draw[white, fill=gray!11] (2,2)to(-2,2)to(0,3)--cycle;a
\draw[white, fill=gray!11] (2,-.8)to(-2,-.8)to(2,2)--cycle;
\draw[white, fill=gray!11] (2,-1.2)to(-2,-1.2)to(-2,2)--cycle;
  \node at (x1){$b$};
  \node at (x2){$a$};
  \node at (x3){$c$};
  \node at (x4){$e$};
  \node at (x5){$f$};
  \foreach \n/\m in {3/2,2/1,3/4,4/1,4/5,5/2}
    {\draw[arrow]
     (x\n) to (x\m);}
  \foreach \n/\m in {3/2,2/1,3/4,4/1,2/4}
    {\draw[arrow]
     (x\n) to (x\m);}
\draw[arrow] (x1.150) to (x3.30);
\draw[arrow] (x1.-150)to (x3.-30);
\draw(4,0)node[above]{$\mu_e$}
node[below]{\tiny{$\begin{array}{ccl}&h=b^{\iv{e}}&\\&k=f^{\iv{e}}&\end{array}$}};
\draw[arrow,decorate, decoration={snake}](3,0)to(5,0);
\node (x4) at (2+8,2){};
\node (x1) at (2+8,-1){};
\node (x3) at (-2+8,-1){};
\node (x2) at (-2+8,2){};
\draw[white, fill=gray!11] (2+8,-1+.2)to(-2+8,2+.2)to(2+8,2)--cycle;
\draw[white, fill=gray!11] (2+8-.2,-1)to(-2+8,-1)to(-2+8,2-.2)--cycle;
\draw[white, fill=gray!11] (8,3)to(-2+8,-1)to(10,2)--cycle;
  \node at (x1){$h$};
  \node at (x2){$a$};
  \node at (x3){$c$};
  \node at (x4){$e$};
  \foreach \n/\m in {1/3,1/4,3/2,4/2,4/3}
    {\draw[arrow]
     (x\n) to (x\m);}
\draw[arrow] (x2.-15) to (x1.105);
\draw[arrow] (x2.-60)to (x1.165);
\draw(17,1)node{
$\begin{array}{ccl}
&\Br(a,e)\\&\Br(a,c)\\&\Br(e,c)\\
\Br(e,b)&\Longleftrightarrow&\Br(e,h)\\
\Br(a,b)&\Longleftrightarrow&\Br(e^a,h)\\
\Br(e^b,c)&\Longleftrightarrow&\Br(h,c)\\
\Br(a^b,c)&\Longleftrightarrow&\Br(h^{ec},a)\\
\Br(e,f)&\Longleftrightarrow&\Br(e,k)\\
\Co(b,f)&\Longleftrightarrow&\Co(h,k)\\
\Co(c,f)&\Longleftrightarrow&\Co(e^c,k)\\
\Co(e^f,a)&\Longleftrightarrow&\Co(k,a)\\
\Co(e,f^{abc})&\Longleftrightarrow&\Co(c,k^{eah})\\
\end{array}$};
\node (x5) at (8,3){};
  \node at (x5){$k$};
  \foreach \n/\m in {1/3,1/4,3/2,4/2,3/5,5/4}
    {\draw[arrow]
     (x\n) to (x\m);}
%\draw(17,-1)node[below,gray!80]{$\Ss(a,b,c)\Longleftrightarrow\Br(c,d)\;\;$
\end{tikzpicture}
\end{gather}
For the mutation in \eqref{eq:mut9},
this is a combination of \eqref{eq:mut3} for $(a,b,c,e)$
and \eqref{eq:mut2} for $(f,b,e)$.
\begin{gather}\label{eq:mut9}
\begin{tikzpicture}[xscale=0.5,yscale=.5,
  arrow/.style={->,>=stealth},
  equalto/.style={double,double distance=2pt},
  mapto/.style={|->}]
\node (x4) at (2+8,2){};
\node (x1) at (2+8,-1){};
\node (x3) at (-2+8,-1){};
\node (x2) at (-2+8,2){};
\node (x5) at (0+8,3){};
\draw[white, fill=gray!11] (2+8,-1)to(-2+8,-1)to(2+8,2)--cycle;
\draw[white, fill=gray!11] (2+8,-1)to(-2+8,-1)to(-2+8,2)--cycle;
  \node at (x1){$b$};
  \node at (x2){$a$};
  \node at (x3){$c$};
  \node at (x4){$e$};
  \node at (x5){$f$};
  \foreach \n/\m in {3/2,2/1,3/4,4/1,5/4,2/5}
    {\draw[arrow]
     (x\n) to (x\m);}
\draw[arrow] (x1.150) to (x3.30);
\draw[arrow] (x1.-150)to (x3.-30);
\draw(4+8,0)node[above]{$\mu_e$}node[below]{\tiny{$h=b^{\iv{e}}$}};
\draw[arrow,decorate, decoration={snake}](3+8,0)to(5+8,0);
\node (x4) at (2+8+8,2){};
\node (x1) at (2+8+8,-1){};
\node (x3) at (-2+8+8,-1){};
\node (x2) at (-2+8+8,2){};
\node (x5) at (0+8+8,3){};
\draw[white, fill=gray!11] (2+8+8,-1)to(16,3)to(2+8+8,2)--cycle;
\draw[white, fill=gray!11] (2+8+8,-1)to(-2+8+8,-1)to(-2+8+8,2)--cycle;
  \node at (x1){$h$};
  \node at (x2){$a$};
  \node at (x3){$c$};
  \node at (x4){$e$};
  \node at (x5){$f$};
  \foreach \n/\m in {3/2,2/1,1/4,4/3,4/5,2/5,1/3,5/1}
    {\draw[arrow]
     (x\n) to (x\m);}
\end{tikzpicture}
\end{gather}

Next, we consider general (global) cases. We claim that for any relation $R$ in $\Br(Q,W)$,
$\theta(R)$ holds in $\Br(Q',W')$.
This will complete the proof. There are the following cases depending on the type of $R$ in Definition~\ref{def:BrQP}. Note that the numbers of arrows between two fixed vertices in $(Q,W)$ and $(Q',W')$ differ at most one.
\numbers
\item $R=\Co(c,d)$ is of type $1^\circ$.
If there is exactly one arrow between $c',d'$,
then locally the mutation is the composition of the inverse of $\mu_a$ in \eqref{eq:mut1} with the conjugation by $a$ and hence $\theta(R)$ holds. If there is no arrow between $c',d'$, then we have $\Co(c',d')$ and there are the following cases:
    \begin{itemize}
      \item Both $\theta(t)=t'$ for $t=c,d$ and then $\Co(\theta(c),\theta(d))$ holds.
      \item Both $\theta(t)=(t')^{\gamma'}$ for $t=c,d$ and then $\Co(\theta(c),\theta(d))$ holds.
      \item $\theta(c)=c'$ but $\theta(d)=(d')^{\gamma'}$.
      So there are arrows from $\gamma$ to $d$ but no arrows from $\gamma$ to $c$ in $Q$.
        As there no arrows from $c'$ to $d'$ in $Q'$, there is no arrow from $c$ to $\gamma$
        in $Q$. So we have $\Co(c,\gamma)$ and then $\Co(c',d')$ implies $\Co(c',(d')^{\gamma'})$, which is $\Co(\theta(c),\theta(d))$.
        %\[\Co(c',d')\xLongrightarrow{\Co(c,\gamma)} \Co(c',(d')^{\gamma'})\Longleftrightarrow\Co(\theta(c),\theta(d)).\]
    \end{itemize}
\item $R=\Br(b,c)$ is of type $2^\circ$.
Then there is exactly one arrow between $b$ and $c$ in $Q$.
If the number of arrows between $b'$ and $c'$ is zero or two in $Q'$, then there is exactly one path between $b$ and $c$ of length 2 and through $\gamma$. Hence the mutation is locally $\mu_a$ in \eqref{eq:mut1} or the composition of the inverse of $\mu_a$ in \eqref{eq:mut2} (via $\{b,c\}$ equaling $\{d,c\}$ there) with the conjugation by $a$. So $\theta(R)$ holds. If there is exactly one arrow between $b'$ and $c'$ in $Q'$, then we have $\Br(b',c')$ and there are the following cases:
    \begin{itemize}
      %\item If $\gamma$ is $c$ or $b$, it follows directly that $\theta(R)$ holds.
      \item Both $\theta(t)=t'$ for $t=b,c$ and then $\Co(\theta(c),\theta(d))$ holds.
      \item Both $\theta(t)=(t')^{\gamma'}$ for $t=b,c$ and then $\Co(\theta(c),\theta(d))$ holds.
      \item $\theta(c)=c'$ but $\theta(b)=(b')^{\gamma'}$.
      So there are arrows from $\gamma$ to $b$ but no arrows from $\gamma$ to $c$ in $Q$. If there are no arrows from $c$ to $\gamma$ in $Q'$, similarly as above, we deduce that  $\theta(R)$ holds. If there are arrows from $c$ to $\gamma$, then the arrow between $b$ and $c$ is from $b$ to $c$ and exactly one of the following occurs
      \begin{itemize}
      \item there are two arrows from $\gamma$ to $b$;
      \item there are two arrows from $c$ to $\gamma$.
      \end{itemize}
      Then the mutation is locally $\mu_a$ in \eqref{eq:mut2'} (via $(b,c)$ equaling $(c,a)$ there) or the composition of the inverse of $\mu_a$ in \eqref{eq:mut2'} (via $(b,c)$ equaling $(a,l)$ there) with the conjugation by $b$. Hence $\theta(R)$ holds.
    \end{itemize}
\item $R=\Co(a^b,c)$ is of type $3^\circ$.
Then the full subquiver between $a,b,c$ in $Q$ is a triangle $L$ which contributes a term in the potential $W$.
If $\gamma$ is one of $a,b,c$, without loss of generality, assuming $\gamma=a$,
then the mutation locally is $\mu_a$ in \eqref{eq:mut1} and so $\theta(R)$ holds.
If $\gamma$ is different from any of $a,b,c$, then there are the following cases.
\begin{itemize}
\item There are no arrows from any of $a,b,c$ to $\gamma$, or there are no arrows from $\gamma$ to any of $a,b,c$. Then the full subquiver with potential between $a',b',c'$ is the same as $a,b,c$. It is straightforward to check that $\theta(R)$ holds.
\item There is an arrow from $\gamma$ to $a$ and an arrow from $b$ or $c$ to $\gamma$. Note that if there is an arrow from $b$ to $\gamma$, then there is a 3-cycle between $a,b,\gamma$. So the arrow from $a$ to $b$ has to contribute two terms in $W$, a contradiction. Hence the arrow to $\gamma$ is from $c$. Then the mutation is the composition of the inverse of $\mu_a$ in \eqref{eq:mut3} (via $(a,b,c,\gamma)$ equaling $(c,e,d,a)$ there) with the conjugation by $\gamma$. Hence $\theta(R)$ holds.
\end{itemize}

%If the number of arrows between any two of $a',b',c'$ in $Q_{\TT'}$ is one, then for each of $a,b,c$, either there is an arrow from $\gamma$ to it, or there are no arrows between $\gamma$ and it. It is straightforward to check that $\theta(R)$ holds.
%Otherwise, say the number of arrows between $a,b$ changes. Since the arrow between $a,b$ in $Q_\TT$ can only contribute once (i.e. $L$) in the potential $W_\TT$ and $\gamma\neq c$, this arrow remains in $W_\TT$. Then the mutation must be $\mu_a^\sharp$ in \textbf{Case III}, with $(a,b,c)$ equaling $(d,c,e)$ there, and $\theta(R)$ holds.

\item $R$ is of type $4^\circ-7^\circ$,
then locally, the full subquiver of $R$ in $Q$, denoted by $Q_R$,
is a subquiver of the left quiver $\widetilde{Q}$ in
\eqref{eq:mut4},\eqref{eq:mut5} or \eqref{eq:mut9}.

If $\gamma$ is one of the vertices in $\widetilde{Q}$,
then the possible mutations, up to reversing all arrows,
are the mutations in \textbf{Case (III)-(V)} or their inverses composited by some conjugations. Hence $\theta(R)$ holds in this case.

If $\gamma$ is not one of the vertices in $\widetilde{Q}$, then the full subquiver between the corresponding vertices in $Q'$ is the same as $Q_R$.
There are two cases. First, there is no arrow between $\gamma$
and a vertex in $Q_R$ except for at most one vertex.
As $\Co(t',\gamma')\Leftrightarrow(t')^{\gamma'}=t'$,
we have either $\theta(t)=t'$ or $\theta(t)=(t')^{\gamma'}$ for all $t\in Q_R$,
which implies that $\theta(R)$ holds.
Second, $\gamma$ has arrows from/to at least two vertices in $Q_R$.
Then $Q_R$ is isomorphic to one of the quivers in \eqref{eq:mut3'} or \eqref{eq:mut4'} and $\gamma$ must have arrows both to or both from vertices $a,e$.
This forces that there is no arrow between $a$ and $e$, i.e.
$Q_R$ is a quiver in \eqref{eq:mut3}, say the left one, and $R=\Co(c^{ae},b)$.
Then either $\theta(t)=t'$ or $\theta(t)=(t')^{\gamma'}$ for both $t=a,e$.
Moreover, $\theta(t)=t'$ for $t=b,c$.
With $\Co(\gamma',b')$ and $\Co(\gamma',c')$, it is straightforward to check $\theta(R)$ holds.
\ends
\end{proof}

%=========================================================
\subsection{The main result}
%=========================================================

We shall state and prove our main result in the paper: finite presentations of braid/spherical twist groups via quivers with potential. Recall (Lemma~\ref{lem:4.2}) that for a decorated marked surface $\surfo$, its braid twist group $\BT(\surfo)=\BT(\TT)$ admits a set $\TT^*$ of generators for any triangulation $\TT$ of $\surfo$, where $\TT^\ast$ is the set of duals of arcs in $\TT$ (see Section~\ref{sec:MS}).
% i.e. $\BT(\surfo)$ admits a set $\TT^*$ of generators. %, w.r.t. $\TT$.
We proceed to prove that the braid twist group $\BT(\surfo)$ is isomorphic to the braid group $\Br(Q_\TT,W_\TT)$, and thus has the presentation in Definition~\ref{def:BrQP}, i.e. a finite presentation via quiver with potential.
We shall need the following two lemmas.

\begin{lemma}\label{lem:N1}
Suppose that we have a group generated by $a$, $b$ and $\eta_i$, $l\leq i\leq r$ where $l,r$ are integers with $l<-1$ and $r>1$, subject to the relations given by the following  configuration of arcs
\[\begin{tikzpicture}[xscale=1,yscale=1]
\draw[red](3,0)to[bend left=45](8,0)
(0,0)to[bend left=45](11,0)
(5.5,1.2)node{$b$}(5.5,2.5)node{$a$};
\draw[green, thick]plot [smooth,tension=3] coordinates {(5,0)(5.5,.5)(6,0)};
\draw[green, thick]plot [smooth,tension=10] coordinates {(5,0)(5.5,1.5)(6,0)};
\draw[green, thick](5.5,.5)node[above]{$\tau_b$}(5.5,1.5)node[above]{$\tau_a$};;
\draw[]
(3,0)to[bend left=30](5,0)
(6,0)to[bend left=30](8,0)
(4,.5)node{$u$}
(7,.5)node{$v$};
\foreach \j/\k in {0/1,2/3,4/5,5/6,6/7,8/9,10/11}
{\draw[red](\j,0)--(\k,0);}
\foreach \j in {0,...,11}
{\draw (\j,0) node[white](x\j){$\bullet$}node[Emerald](x\j){$\circ$};}
\foreach \j in {1.5,3.5,9.5,7.5}
{\draw[red](\j,0)node{$\cdots$};}
\draw(5.5,0)node[red,below]{$\eta_0$}
(6.5,0)node[red,below]{$\eta_1$}
(4.5,0)node[red,below]{$\eta_{-1}$}
(2.5,0)node[red,below]{$\eta_k$}
(.5,0)node[red,below]{$\eta_l$}
(8.5,0)node[red,below]{$\eta_s$}
(10.5,0)node[red,below]{$\eta_r$};
\end{tikzpicture}\]
where
\begin{itemize}
	\item $\Co(\beta,\gamma)$ holds if the arcs (labeled by) $\beta$ and $\gamma$ are disjoint.
	\item $\Br(\beta,\gamma)$ holds if the arcs $\beta$ and $\gamma$ are disjoint except sharing an endpoint.
	\item $\Tr(b,\eta_{k+1},\eta_k)$ and $\Tr(b,\eta_s,\eta_{s-1})$ hold for some fixed integers $k$ and $s$ satisfying $l\leq k<-1$ and $1<s\leq r$.
\end{itemize}
%we have the following configuration of closed arcs with corresponding variables for some integers $l\leq k<-1,1<s\leq r$.

%Suppose that the following relations hold (for $\beta,\gamma\in\{\eta_j,a,b\mid l\leq j\leq r\}$):

Let $\tau_a=a^{(\eta_l \cdots \eta_{-1})(\iv{\eta_r}\cdots \iv{\eta_1})}$,
$\tau_b=b^{(\eta_{k+1} \cdots \eta_{-1})(\iv{\eta_{s-1}}\cdots \iv{\eta_1})}$
(cf. the green arcs in the figure above).
Then we have the relation
\begin{gather}\label{eq:n1}
    \Co(\tau_a^{\eta_1},\tau_b^{\eta_{-1}})
\end{gather}
\end{lemma}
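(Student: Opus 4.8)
The plan is to read \eqref{eq:n1} geometrically and then reproduce it algebraically from the listed relations. By the conjugation formula (Lemma~\ref{rmk:conj}), $\tau_a^{\eta_1}$ and $\tau_b^{\eta_{-1}}$ are the braid twists along the arcs obtained from the two green arcs by dragging them one further step along $\eta_1$ and $\eta_{-1}$ respectively; these two dragged arcs are disjoint, so \eqref{eq:n1} is morally an instance of the commutation relation in Lemma~\ref{lem:btrel}. The content of the lemma is precisely to derive this single commutation from the commutation, braid and (two) triangle relations read off the configuration, with no appeal to the ambient surface.

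First I would put the two sides into a normal form. On the $a$-side the trailing factor cancels: since $w_a=(\eta_l\cdots\eta_{-1})(\iv{\eta_r}\cdots\iv{\eta_1})$ ends in $\iv{\eta_1}$, we get $\tau_a^{\eta_1}=a^{(\eta_l\cdots\eta_{-1})(\iv{\eta_r}\cdots\iv{\eta_2})}$. On the $b$-side, $\eta_{-1}$ commutes with every $\eta_j$ of positive index (disjoint arcs), so it can be moved next to the negative block of the conjugating word. Using conjugation-invariance of $\Co$, I would then conjugate \eqref{eq:n1} so that the $a$-side becomes the bare generator $a$, reducing the statement to a single relation of the form $\Co(a,b^{v})$, equivalently $\Co(a^{\iv{v}},b)$, for an explicit word $v$ in the $\eta_i$.

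The core of the argument is then to move $b$ across the part of the $\eta$-chain occupied by $a$, and for this the two triangle relations are essential: $\Tr(b,\eta_{k+1},\eta_k)$ and $\Tr(b,\eta_s,\eta_{s-1})$ are exactly the relations that let $b$ cross the junctions where it meets the chain (at $\eta_k,\eta_{k+1}$ and at $\eta_{s-1},\eta_s$), since there the relevant $\eta$'s share endpoints and plain commutation or braid moves are unavailable. I expect the cleanest organization is an induction on $(k-l)+(r-s)$, the number of outer arcs of the chain lying beyond the feet of $b$. In the inductive step, the outermost arc $\eta_l$ (resp.\ $\eta_r$) is disjoint from $b$, giving $\Co(b,\eta_l)$, while it shares an endpoint with $a$, giving $\Br(a,\eta_l)$; peeling $\eta_l$ off the conjugating word $w_a$ replaces $a$ by the shorter arc $a^{\eta_l}$ and leaves $b$ untouched, reducing to the same statement for the configuration with $l$ replaced by $l+1$ (symmetrically for $\eta_r$ and $r-1$). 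The base case $l=k$, $r=s$ is a direct computation in which the two triangle relations do all the work of commuting $\iv{\eta_r}$ past $\iv{\eta_{r-1}}$ and $\eta_{k+1}$ past $\eta_k$.

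The main obstacle is this base-case computation together with the orientation bookkeeping: the triangle relation is sensitive to the clockwise order of the three arcs at the shared endpoint, so one must apply $\Tr(b,\eta_{k+1},\eta_k)$ and $\Tr(b,\eta_s,\eta_{s-1})$ in the correct handedness, and in the correct one of their three cyclic forms $abca=bcab=cabc$, so that $b$ slides to the side of the arc of $a$ rather than onto it. Keeping the long conjugating words under control while repeatedly invoking $\Co(\eta_i,\eta_j)$ for $|i-j|\ge 2$ and $\Br(\eta_i,\eta_{i\pm1})$ is routine but lengthy; the genuinely nontrivial input is the pair of triangle relations, without which $b$ could not be transported past the support of $a$.
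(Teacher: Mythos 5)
Your reduction coincides with the paper's: cancel the trailing $\eta_1$ in $\tau_a^{\eta_1}$ and then conjugate \eqref{eq:n1} by $\eta_2\cdots\eta_r\iv{\eta_{-1}}\cdots\iv{\eta_l}$ so that the $a$-side becomes the bare generator $a$. Where you diverge is in the finish: the paper runs no induction on outer arcs. It observes that under this conjugation the $b$-side becomes $b^{u\iv{v}}$ with $u=\eta_{k+1}^{\eta_{k+2}\cdots\eta_{-1}}$ and $v=\eta_1^{\eta_2\cdots\eta_{r-1}}$ (the two short arcs labelled $u,v$ in the figure), and since $b$, $u$ and $v$ are words in generators disjoint from $a$, each commutes with $a$ by the listed commutation relations, so $\Co(a,b^{u\iv{v}})$ is immediate. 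The triangle relations are consumed entirely in the implicit rewriting of the conjugated $b$-side into the normal form $b^{u\iv{v}}$ --- consistent with your intuition that they are what lets $b$ cross its own feet --- but no transport of $b$ across the support of $a$ is ever needed afterwards. Your induction on $(k-l)+(r-s)$ would probably also close the argument, but two details in your sketch need repair: once the $a$-side has been conjugated down to $a$ there is no word $w_a$ left from which to peel $\eta_l$, so the induction must be set up on the $b$-side conjugating word instead; and the triangle relations do not serve to commute $\iv{\eta_r}$ past $\iv{\eta_{r-1}}$ (those satisfy only a braid relation) --- their exact algebraic content here is $\Co(b^{\eta_{k+1}},\eta_k)$ and $\Co(b^{\eta_s},\eta_{s-1})$. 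The paper's closed form buys a one-line conclusion in place of your base-case computation.
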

\begin{proof}
Let
\[  u=\eta_{k+1}^{\eta_{k+2} \cdots \eta_{-1}},\quad
    v=\eta_{1}^{\eta_{2} \cdots \eta_{r-1}}\]
which would correspond to the violet arcs in the configuration above.
It is straightforward to see that
$\Co(a,u),\Co(a,v)$ holds.
Conjugating \eqref{eq:n1} with
$\eta_2\cdots\eta_r\iv{\eta_{-1}}\cdots\iv{\eta_l}$, we obtain $\Co(a,b^{u\iv{v}})$,
which holds as $a$ commutes with all of $b,u,v$.
\end{proof}

Similarly, we have the following.
\begin{lemma}\label{lem:N0}
Suppose that we have a group generated by $a$, $b$ and $\eta_i$, $l\leq i\leq r$ where $l,r$ are integers with $l<-1$ and $r>1$, subject to the relations given by the following configuration of arcs
%Suppose that we have the following configuration of closed arcs
%with corresponding variables for some integers $l\leq k<-1,1<s\leq r$.
\[\begin{tikzpicture}[xscale=1,yscale=1]
\draw[green,thick]plot [smooth,tension=6] coordinates {(5,0)(7,1.2)(6,0)};
\draw[white, fill=white](5.5,.75)circle(.15);
\draw[green,thick]plot [smooth,tension=6] coordinates {(5,0)(4,1.2)(6,0)};
\draw[green,thick](4,1.2)node[above]{$\tau_a$}(7,1.2)node[above]{$\tau_b$};;
\draw[red](3,0)to[bend left=45](11,0)
    (4,2)node{$a$}(7,2)node{$b$};
\draw[white](5.5,1.45)node{\Large{$\bullet$}};
\draw[red](0,0)to[bend left=45](8,0);
\foreach \j/\k in {0/1,2/3,4/5,5/6,6/7,8/9,10/11}
     {\draw[red](\j,0)--(\k,0);}
\foreach \j in {0,...,11}
    {\draw (\j,0) node[white](x\j){$\bullet$}node[Emerald](x\j){$\circ$};}
\foreach \j in {1.5,3.5,9.5,7.5}
     {\draw[red](\j,0)node{$\cdots$};}
\draw(5.5,0)node[red,below]{$\eta_0$}
    (6.5,0)node[red,below]{$\eta_1$}
    (4.5,0)node[red,below]{$\eta_{-1}$}
    (2.5,0)node[red,below]{$\eta_k$}
    (.5,0)node[red,below]{$\eta_l$}
    (8.5,0)node[red,below]{$\eta_s$}
    (10.5,0)node[red,below]{$\eta_r$};
\end{tikzpicture}\]
%Suppose that the following relations hold (for $\beta,\gamma\in\{\eta_j,a,b\mid l\leq j\leq r\}$):
where
\begin{itemize}
	\item $\Co(\beta,\gamma)$ holds if the arcs (labeled by) $\beta$ and $\gamma$ are disjoint (note that $a$ and $b$ are disjoint here).
	\item $\Br(\beta,\gamma)$ holds if the arcs $\beta$ and $\gamma$ are disjoint except sharing an endpoint.
    \item $\Tr(b,\eta_{k+1},\eta_k),\Tr(a,\eta_s,\eta_{s-1})$ hold for some fixed integers $k$ and $s$ satisfying $l\leq k<-1$ and $1<s\leq r$.
\end{itemize}
Let $\tau_b=b^{(\eta_{k+1} \cdots \eta_{-1})(\iv{\eta_s}\cdots \iv{\eta_1})}$,
$\tau_a=a^{(\eta_{l} \cdots \eta_{-1})(\iv{\eta_{s-1}}\cdots \iv{\eta_1})}$ and
then we have the relation
\begin{gather}\label{eq:n0}
    \Co(\tau_a^{\iv{\eta_1}},\tau_b^{\eta_{-1}})
\end{gather}
\end{lemma}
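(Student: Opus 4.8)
The plan is to prove Lemma~\ref{lem:N0} in exact parallel with Lemma~\ref{lem:N1}: reduce the target relation \eqref{eq:n0} by a single conjugation to a commutation that is forced by the defining relations. As there, I would introduce two auxiliary ``violet'' elements, one produced by each triangle relation,
\[
u=\eta_{k+1}^{\eta_{k+2}\cdots\eta_{-1}},\qquad v=\eta_{s-1}^{\eta_{s-2}\cdots\eta_{1}},
\]
which represent the arcs obtained by pushing $\eta_{k+1}$ and $\eta_{s-1}$ inward toward $\eta_0$ along the chain of $\eta_i$. The point of these elements is that each is disjoint from one of $a,b$, so that $\Co(a,u)$ and $\Co(b,v)$ hold. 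The genuinely new feature compared with Lemma~\ref{lem:N1} is that here $a$ and $b$ are themselves disjoint, so we also have $\Co(a,b)$ at our disposal; this is what replaces the outer arc $a$ commuting with everything in the previous lemma.

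First I would record the elementary relations read off from the configuration: $\Co(\eta_i,\eta_j)$ whenever $|i-j|\ge 2$, $\Br(\eta_i,\eta_{i+1})$ for adjacent arcs, together with $\Co(a,b)$, $\Co(a,u)$, $\Co(b,v)$, and the braid relations of $a$ and $b$ with the $\eta_i$ sharing an endpoint with them. Then I would conjugate \eqref{eq:n0} by a telescoping word $W$ chosen so that the conjugator $(\eta_l\cdots\eta_{-1})(\iv{\eta_{s-1}}\cdots\iv{\eta_1})\iv{\eta_1}$ of $\tau_a^{\iv{\eta_1}}$ collapses to the identity; this strips $\tau_a^{\iv{\eta_1}}$ down to $a$ and simultaneously carries $\tau_b^{\eta_{-1}}$ to a conjugate $b^{g}$, so that the relation to be proved becomes $\Co(a,b^{g})$. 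The remaining task is to rewrite $g$, using only the braid relations among the $\eta_i$ and the two triangle relations $\Tr(b,\eta_{k+1},\eta_k)$ and $\Tr(a,\eta_s,\eta_{s-1})$, as a word in $u$, $v$ and $b$. Once $g$ is so expressed, $\Co(a,b^{g})$ follows cleanly: each factor $v$ fixes $b$ under conjugation because $\Co(b,v)$, so the $v$-factors may be deleted, and the surviving conjugator involves only $u$ and $b$, each of which commutes with $a$ by $\Co(a,u)$ and $\Co(a,b)$.

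The main obstacle I anticipate is the asymmetry: here one triangle relation is anchored at $b$ (the $\eta_k$-end) and the other at $a$ (the $\eta_s$-end), whereas both were anchored at $b$ in Lemma~\ref{lem:N1}. Consequently the simplification of $g$ is no longer symmetric, and the delicate step is the bookkeeping of how the conjugating word passes the endpoint shared by $a$ and $\eta_{s-1},\eta_s$, where exactly the triangle relation $\Tr(a,\eta_s,\eta_{s-1})$ must be invoked to produce the factor $v$; the corresponding step on the left, near $b$, is governed by $\Tr(b,\eta_{k+1},\eta_k)$ just as before. Modulo tracking these two conversions, the cancellations are the same telescoping identities used in Lemma~\ref{lem:N1}, so the argument closes in the same way and yields \eqref{eq:n0}.
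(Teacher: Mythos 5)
Your proposal is correct and is essentially the proof the paper intends: the paper gives no separate argument for Lemma~\ref{lem:N0} beyond the word ``similarly'' pointing back to Lemma~\ref{lem:N1}, and your plan --- the same telescoping conjugation reducing the claim to $\Co(a,b^{g})$, the same auxiliary elements $u,v$, with the disjointness pattern correctly exchanged to $\Co(a,u)$, $\Co(b,v)$ and $\Co(a,b)$, and one triangle relation spent at each of the two shared endpoints --- is exactly that adaptation. The only loose point is your endgame: the reduced conjugator $g$ is not literally a word in $u,v,b$ but still carries the letters $\iv{\eta_k}$ and $\iv{\eta_s}$, which must be removed using the two triangle relations in the equivalent forms $\Co(b^{\eta_{k+1}},\eta_k)$ (conjugated so as to act on the $b$-side) and $\Co(a,\eta_s^{\eta_{s-1}})$ (acting on the $a$-side, which is genuinely needed when $s=r$ since $b$ then braids with $\eta_s$); once these are gone, commuting $v$ next to $b$ and deleting it via $\Co(b,v)$, then concluding from $\Co(a,u)$ and $\Co(a,b)$, closes the argument exactly as you describe.
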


Now we have the following main result.

\begin{theorem}[Presentations for braid twist groups]\label{thm:1}
Let $\TT$ be a triangulation of $\surfo$ and
$(Q_\TT,W_\TT)$ the associated quiver with potential.
Then there is a canonical isomorphism
\begin{gather}\label{eq:main}
    \kappa_\TT\colon\Br(Q_\TT,W_\TT) \to \BT(\surfo)%\xlongequal{\text{Lemma~\ref{lem:4.2}}}\BT(\surfo),
\end{gather}
sending the generators $\gamma$ to the braid twists $\gamma^*\in\TT^*$. Thus, $\BT(\surfo)$ admits the finite presentation in Definition~\ref{def:BrQP} via $(Q_\TT,W_\TT)$.
Moreover, it is compatible with the canonical isomorphisms
in Proposition~\ref{pp:invariant}, in the sense that the diagram
\begin{gather}\label{eq:diag}\xymatrix@C=3pc{
    \Br(Q_\TT,W_\TT) \ar[rd]^{\kappa_\TT}\ar[dd]^{\theta_{\gamma}^{?}} \\
    &\BT(\surfo)\\
    \Br(Q_{\TT'},W_{\TT'}) \ar[ru]^{\kappa_{\TT'}}
}\end{gather}
commutes, where $\TT'$ is the backward and forward flip of $\TT$ at $\gamma$, for $?=\flat$ and $\sharp$, respectively.
\end{theorem}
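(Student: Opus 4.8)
The plan is to build the map $\kappa_\TT$ on generators, check it is a well-defined surjection, establish the commutativity of \eqref{eq:diag} at the level of homomorphisms, and then deduce the isomorphism by transporting a single base case along flips. First I would define $\kappa_\TT$ by sending each vertex $\gamma$ of $Q_\TT$ to the braid twist $\Bt{\gamma^*}$ of its dual closed arc $\gamma^*\in\TT^*$. To see this extends to a homomorphism I must verify that the defining relations $1^\circ$--$7^\circ$ of Definition~\ref{def:BrQP} hold among the $\Bt{\gamma^*}$. The dictionary is forced by the intersection pattern of dual arcs: arcs of $\TT$ sharing no triangle have disjoint duals, arcs sharing one triangle have duals meeting at a single decorating point, and the three arcs of a triangle have duals emanating clockwise from one decorating point. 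Thus $1^\circ$, $2^\circ$ and $3^\circ$ are immediate from Lemma~\ref{lem:btrel}, while the double-arrow relations $4^\circ$--$7^\circ$ realise the configurations of Figure~\ref{fig:local}, which are exactly the arc configurations treated in Lemmas~\ref{lem:N1} and \ref{lem:N0}; their conclusions \eqref{eq:n1} and \eqref{eq:n0} supply the skewed relations. Surjectivity is then Lemma~\ref{lem:4.2}, since the $\Bt{\gamma^*}$ generate $\BT(\surfo)$.

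Next I would prove that \eqref{eq:diag} commutes, working at the homomorphism level so that it is available before injectivity is known. Both composites are homomorphisms, so it suffices to compare them on a vertex $\alpha$. The topological input is the behaviour of duals under a flip: for the backward flip $\TT'=\mu_\gamma^\flat(\TT)$ one has $(\alpha')^*=\Bt{(\gamma')^*}(\alpha^*)$ when there are arrows from $\gamma$ to $\alpha$, and $(\alpha')^*=\alpha^*$ otherwise (cf.\ \cite{QZ2}). Combining this with the conjugation formula \eqref{eq:formulaB} gives $\Bt{(\alpha')^*}=(\Bt{\alpha^*})^{\Bt{(\gamma')^*}^{-1}}$, whence $\kappa_{\TT'}(\theta_\gamma^\flat(\alpha))=(\Bt{(\alpha')^*})^{\Bt{(\gamma')^*}}=\Bt{\alpha^*}=\kappa_\TT(\alpha)$, matching the defining formula \eqref{eq:5.1}. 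The forward-flip case $\theta_{\gamma'}^\sharp$ is identical, using \eqref{eq:5.2}.

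To finish, I would exploit that $\BT(\surfo)$ is manifestly independent of $\TT$, that the maps of Proposition~\ref{pp:invariant} are isomorphisms, and that the flip graph of $\surfo$ is connected; hence by the commuting diagram $\kappa_\TT$ is an isomorphism for every $\TT$ as soon as it is so for one convenient $\TT_0$. A dimension count gives $n=(\aleph-1)+(2g+b-1)$, which is exactly the number of generators in Theorem~\ref{thm:pre}, so I would take $\TT_0$ to be the triangulation whose dual family $\TT_0^*$ is the set $\{\sigma_i,\tau_r\}$ of Figure~\ref{fig:BT}. Then $\kappa_{\TT_0}$ identifies the generators of $\Br(Q_{\TT_0},W_{\TT_0})$ with those of the presentation in Theorem~\ref{thm:pre}, and the base case reduces to showing that the two finite presentations on this common generating set are Tietze-equivalent. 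The exceptional surfaces (the annulus with one marked point on each boundary component, and the once-punctured torus, where no double-arrow-free triangulation exists), as well as the trivial small cases $\aleph\leq 3$, would be handled by direct computation.

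The hard part will be this last matching of presentations: translating each quiver relation $1^\circ$--$7^\circ$ for $\TT_0$ into the relations \eqref{nr:01}--\eqref{nr:07} and back. The commutation and braid relations are routine, but the skewed relations \eqref{nr:06} and \eqref{nr:07}, corresponding to the double-arrow cases $5^\circ$--$6^\circ$, require the full strength of Lemmas~\ref{lem:N1} and \ref{lem:N0}; the most delicate bookkeeping is verifying that $\TT_0^*$ really is $\{\sigma_i,\tau_r\}$ and that the resulting local quiver configurations match Figure~\ref{fig:local} with the correct double arrows and $3$-cycles.
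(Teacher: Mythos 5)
Your overall architecture (well-defined surjection, base case, propagation along flips via Proposition~\ref{pp:invariant} and connectedness of the exchange graph) matches the paper's, but the base case as you set it up does not exist, and this is where the real work of the proof lives. You propose to choose $\TT_0$ so that $\TT_0^*=\{\sigma_i,\tau_r\}$ and then compare two finite presentations on a common generating set. However, every closed arc of a dual family $\TT^*$ emanating from a fixed decorating point $Z$ corresponds to a side of the unique triangle of $\TT$ containing $Z$, so at most three arcs of $\TT^*$ can share the endpoint $Z$. In Figure~\ref{fig:BT} the point $Z_1$ is an endpoint of $\sigma_1$ and of all $2g+b-1$ of the $\tau_r$, so $\{\sigma_i,\tau_r\}$ is not the dual of any triangulation once $2g+b-1>2$ (the matching cardinality count is necessary but not sufficient). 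Consequently the ``Tietze equivalence on a common generating set'' step is vacuous: in the paper's base triangulation $\TT_0$ (Figure~\ref{fig:The triangulation}) the $\tau_r$ are \emph{not} duals but conjugates $a^{(\eta_l\cdots\eta_{-1})(\iv{\eta_r}\cdots\iv{\eta_1})}$ of duals by long words in the chain $\eta_i$, and the entire content of Lemmas~\ref{lem:N1} and \ref{lem:N0} is to deduce the relations \eqref{nr:06} and \eqref{nr:07} of Theorem~\ref{thm:pre} for these conjugates from the quiver relations of $\Br(Q_{\TT_0},W_{\TT_0})$. That derivation is the hard part of the injectivity argument and your proposal omits it.

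A second, related misstep: you invoke Lemmas~\ref{lem:N1} and \ref{lem:N0} to verify that the double-arrow relations $4^\circ$--$7^\circ$ of Definition~\ref{def:BrQP} hold among the braid twists $\Bt{\gamma^*}$ for an arbitrary $\TT$. Those lemmas are purely group-theoretic statements about a linear polygon configuration and conclude commutations of the form $\Co(\tau_a^{\eta_1},\tau_b^{\eta_{-1}})$; they neither concern the local configurations of Figure~\ref{fig:local} nor produce relations of the shape $\Br(a^b,c)$, $\Co(c^{ae},b)$ or $\Co(e,f^{abc})$. The paper deliberately avoids verifying $4^\circ$--$7^\circ$ topologically: it chooses $\TT_0$ without double arrows (so only $1^\circ$, $2^\circ$, $3^{\circ\prime}$ occur, all immediate from Lemma~\ref{lem:btrel}), and then \emph{defines} $\kappa_{\TT'}$ for flipped triangulations by the commutativity of \eqref{eq:diag}, so well-definedness for triangulations with double arrows is inherited from Proposition~\ref{pp:invariant} rather than checked directly. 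Your plan of first proving well-definedness for all $\TT$ and then checking commutativity of \eqref{eq:diag} is workable in principle, but it would require a genuine topological verification of $4^\circ$--$7^\circ$ that you have not supplied and that the cited lemmas do not provide.
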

%Note that under \eqref{eq:main}, we obtain a finite presentation (as in Definition~\ref{def:BrQP}) of $\BT(\surfo)$ for each triangulation $\TT$.
\begin{figure}
\begin{tikzpicture}[xscale=.35,yscale=.35]
\foreach \j in {-4,...,3}
    {\draw[thick,Cyan]plot[smooth,tension=.7] coordinates
    {(0,-14)(-\j,-7)(90+30*\j:14)};}
\draw[thick,Cyan]plot[smooth,tension=.7] coordinates
    {(0,-14)(4,-9)(-60:14)};
\draw[thick,fill=gray!14]
    (0,-14).. controls +(55:4) and +(5:4) ..(0,-14);
\draw[thick,fill=gray!14]
    (-120:14).. controls +(0:2.5) and +(50:2.5) ..(-120:14);
\draw[thick,fill=gray!14]
    (-150:14).. controls +(30-50:2.5) and +(90-50:2.5) ..(-150:14);

\draw[thick,Cyan]
    (-150:14).. controls +(60:5) and +(105:5) ..(-90:14);
\draw[thick,Cyan]
    (-150:14).. controls +(-30:3) and +(110:7) ..(-90:14);
\draw[thick,Cyan]
    (-120:14).. controls +(90:3) and +(125:5) ..(-90:14);

\draw[thick,Cyan] (75:15.5) node {$\overrightarrow{A_1}$};
\draw[thick,Cyan] (15:15.5) node {$\overrightarrow{A_g}$};
\draw[thick,Cyan] (-45:15.5) node {$\overrightarrow{B_g}$};
\draw[thick,Cyan] (-75:15.5) node {$\overrightarrow{B_1}$};
\draw[thick,Cyan] (-135:15.5) node {$\overleftarrow{A_g}$};
\draw[thick,Cyan] (165:15.5) node {$\overleftarrow{B_g}$};
\draw[thick,Cyan] (135:15.5) node {$\overleftarrow{A_1}$};
\draw[thick,Cyan] (105:15.5) node {$\overleftarrow{B_1}$};
\draw[thick,Cyan](0,0)circle(14);
\draw[ultra thick, blue] (-90:14) arc(-90:-180:14);
\draw[ultra thick, blue] (-30:14) arc(-30:60:14);
\foreach \j/\x/\y in {2,...,11}
    {\draw (-105-30*\j:12.5) coordinate (t\j);}
\foreach \j/\x/\y in {2,...,10}
    {\draw[red] (t\j)edge(-135-30*\j:12.5)edge(-105-30*\j:14);}
    {\draw[red] (t11)--(-105-330:14);}
\draw[red] (-105:12.5) coordinate (t12)--(-105:14);
\draw[red] (-130:12.25) coordinate (t0)--(-130:14);
\draw (-140:11) coordinate (t1);
\foreach \j/\i in {2/1,1/0,0/12}
    {\draw[red](t\i)--(t\j);}

\foreach \j in {0,...,12} {\draw(t\j)node[white]{$\bullet$}node[Emerald]{$\circ$};}
\draw(t5)node[Emerald,below]{\tiny{$Z_1$}}
    (t6)node[Emerald,below]{\tiny{$Z_2$}}
    (t7)node[Emerald,below]{\tiny{$Z_4\cdots Z_{g+1}$}}
    (t8)node[Emerald,left]{\tiny{$Z_{g+2}$}}
%    (t9)node[Emerald,left]{\tiny{$Z_6$}}
    (t4)node[Emerald,below right]{\tiny{$Z_3$}}
    (t3)node[Emerald,right]{\tiny{$Z_{\aleph}$}}
    (t2)node[Emerald,right]{\tiny{$Z_{\aleph-1}$}};
\draw (90:11.5)node[red,right]{$\sigma_1$}
(120:11.9)node[red,right]{$x$}
(65:11.2)node[red,right]{$y$}; % mark generators
\foreach \j in {1,...,12}
    {\draw[Cyan](30*\j:14)node{$\bullet$};}

\draw[white,fill=white](-140:14)circle(.7)
    node[Emerald,rotate=-40]{{$\cdots$}};
\draw[white,fill=white](-10:14)circle(.7)
    node[Emerald,rotate=80]{{$\cdots$}};
 \foreach \j in {-1,2,-5}
    {\draw[white,fill=gray!0] (90+30*\j:14) circle (1);
    \draw[Cyan] (90+30*\j:14)node[rotate=30*\j]{\Large{$\cdots$}};}

\foreach \j in{1,12,11}
    {\draw(t\j)node[Emerald]{$\bullet$};}%split!
\draw(-78:12)node{\small{$\partial_1$}};
\draw(-112:13)node{\small{$\partial_{\mathrm{b}}$}};
\draw(-145:12)node{\small{$\partial_2$}};
\draw(-90:14)node[below]{$^M$};
\end{tikzpicture}\qquad
\begin{tikzpicture}[rotate=-90,xscale=.6,yscale=.35]
\draw[thick,Cyan,dashed](0,0)node{$\bullet$}(0,5)node{$\bullet$}
    edge[bend left=10](.2,2)edge[bend left=-10](-.2,2)node[black,right]{$_M$};
\draw[thick,Cyan](0,0)edge[bend left=90](0,5)(0,0)edge[bend left=-90](0,5);
\draw[thick,Cyan](0,5)edge[bend right=-20](0.5,1.5)edge[bend right=20](-0.5,1.5);
\draw[thick,fill=gray!14]plot[smooth,tension=1] coordinates
    {(0,0)(.5,1.5)(0,2)(-.5,1.5)(0,0)};
\draw[red,thick](2,4)to(.9,3)(-.9,3)to(-2,4);\draw[red,dashed,thick](.9,3)to(-.9,3);
\draw%(0,3.2)node[white]{$\bullet$}node[Emerald,rotate=90]{$\cdots$}
    (0.5,1.5)node[Cyan]{$\bullet$}
    (.9,3)node[white]{$\bullet$}node[Emerald]{$\circ$}
    (-0.5,1.5)node[Cyan]{$\bullet$}
    (-.9,3)node[white]{$\bullet$}node[Emerald]{$\circ$};

\draw[thick,Cyan](0-6,0)node{$\bullet$}(0-6,5)node{$\bullet$}node[black,right]{$_M$};
\draw[thick,Cyan](0-6,0)edge[bend left=90](0-6,5)edge[bend right=90](0-6,5);
\draw[thick,fill=gray!14]plot[smooth,tension=1] coordinates
    {(0-6,0)(.5-6,1.5)(0-6,2)(-.5-6,1.5)(0-6,0)};
\draw[red,thick](2-6,4)to(0-6,3)to(-2-6,4);
\draw(0-6,3)node[Emerald]{$\bullet$}
    (5,0)node{}(-3,2.5)node[rotate=90]{\Huge{$=$}};
\end{tikzpicture}
  \caption{A triangulation of $\surfo$}
  \label{fig:The triangulation}
\end{figure}
\begin{proof}
First, choose the triangulation $\TT_0$ of $\surfo$ as shown in Figure~\ref{fig:The triangulation}. Assume that $\aleph\geq 4$. Then there are no double arrows in the quiver $Q_{\TT_0}$. So the braid group $\Br(Q_{\TT_0},W_{\TT_0})$ admits the presentation in Proposition~\ref{prop:simple}.
%, where the surface $\surfo$ is the same as Figure~\ref{fig:poly}.
%Then only the first three types of relations (in Definition~\ref{def:BrQP}) occur in $\Br(Q_{\TT_0},W_{\TT_0})$.
Hence, by Lemma~\ref{lem:btrel}, the group homomorphism $\kappa_{\TT_0}$ is well-defined.
By Lemma~\ref{lem:4.2}, $\kappa_{\TT_0}$ is surjective.
To show that it is injective, we only need to show that
\[\{\kappa_{\TT_0}(R)\mid \text{$R$ a relation in $\Br(Q_{\TT_0},W_{\TT_0})$}\}\]
generates all the relations of $\BT(\surfo)$.
By Corollary~\ref{rmk:n=4}, we just need to consider the relations in Theorem~\ref{thm:pre}, cf. Figure~\ref{fig:BT}.
%, where $\sigma_1=Z_1Z_2$, $x=Z_1Z_3$ and $y=Z_2Z_4$.
It is easy to check the first five relations, while the last two
follows from Lemma~\ref{lem:N1} and Lemma~\ref{lem:N0}, respectively,
taking $\eta_0=\kappa_{\TT_0}(\sigma_1)$, $\eta_{-1}=\kappa_{\TT_0}(x)$, $\eta_1=\kappa_{\TT_0}(y)$, $\tau_a=\tau_r$ and $\tau_b=\tau_t$ as in Figure~\ref{fig: }. Note that the two cases are determined by the relative position of $\tau_r$ and $\tau_s$. For the case $\aleph\leq 3$, we have that either $g=1, b=1$, $|\M|=1$, or $g=0, b\leq 2$. A direct calculation can give the bijection $\kappa_{\TT_0}$.

Next, using commutativity in diagram \eqref{eq:diag} as definition,
we obtain an isomorphism $\kappa_{\TT_1}$ for any (forward/backward) flip $\TT_1$ of $\TT_0$.
It is straightforward to check that this isomorphism satisfies the condition that sending
$\gamma\in\TT_1$ to the corresponding braid twist $\eta=\gamma^*\in\TT_1^\ast$, comparing \eqref{eq:5.1}, \eqref{eq:5.2} and Figure~\ref{fig:ex0}.
Hence, any mutate sequence $p$ from $\TT_0$ to $\TT$ induces
an isomorphism $\kappa_\TT$ satisfying the condition above.
This implies that $\kappa_\TT$ does not depend on $p$.
Hence, the theorem holds for
any triangulation in the connected component of the exchange graph $\EG(\surfo)$
(of triangulations with flips, cf. \cite[Definition~3.4]{QQ}) containing $\TT_0$.
By \cite[Remark~3.10]{QQ}, all components of the exchange graph $\EG(\surfo)$
are identical, hence the theorem holds in general.
\end{proof}

\begin{figure}
\begin{tikzpicture}[scale=.4]
\draw[thick,green,font=\scriptsize](140:14)to(-2,3)(2,3)to(60:14)
    (3,7)node{$\tau_a$};
\draw[thick,red,font=\scriptsize]
    (142:14)to[bend left=30](-6,-1)(6,-1)to[bend left=30](58:14)
    (5.5,6)node{$a$};

\draw[thick,dashed,Cyan](-20:14) arc(-20:200:14);;
\draw[ultra thick] (50:14) arc(50:70:14);
\draw[ultra thick] (130:14) arc(130:150:14);

\draw[red](6,0-1)to(2,4-1)to(-2,4-1)to(-6,0-1);
\draw[dashed,white](6,0-1)to(3,3-1) (-6,0-1)to(-3,3-1);
\foreach \a/\b in {2/4,3/3,-3/3,-2/4,-6/0,6/0}{
\draw[red,font=\scriptsize]
    (\a,\b-1)node[white]{$\bullet$}node{$\circ$};
}
\draw[red,font=\scriptsize]
    (-3.5,4-1)node[below]{$_{\eta_{-1}=x\;}$}
    (3.5,4-1)node[below]{$_{\eta_{1}=y}$}
    (0,4-1)node[above]{$_{\eta_0=\sigma_1}$};
\draw[red!20,font=\scriptsize]
    (2,3)node[below]{$_{Z_1}$}
    (-2,3)node[below]{$\;_{Z_0}$}
    (-3,2)node[below]{$\;_{Z_{3}}$}
    (3,2)node[below]{$_{Z_{4}}$};
\end{tikzpicture}
  \caption{$\tau_a=\tau_r$ (similarly for $\tau_b=\tau_s$)}
  \label{fig: }
\end{figure}

%Recall that we have the following result from the prequel.

By Theorem~\ref{thmQQ}, we obtain the following corollary, which provides finite presentations for spherical twist groups.

\begin{corollary}[Presentations for spherical twist groups]\label{Cor:1}
Let $(Q,W)$ be a quiver with potential arising from a triangulated marked surface and $\Gamma$ the associated Ginzburg dg algebra. % associated to a triangulation $\TT$ of a decorated marked surface $\surfo$.
Then the spherical twist group $\ST(\Gamma)$ admits the finite presentation given in Definition~\ref{def:BrQP} via the following isomorphism of groups
\begin{gather}\label{2}
\begin{array}{ccc}
    \ST(\Gamma)&\xlongrightarrow{\cong}&\Br(Q,W)\\
    \phi_{S_\gamma}&\mapsto&\gamma
\end{array}\end{gather}
%admits a finite presentation given in Definition~\ref{def:BrQP},
where $S_\gamma$ is the simple $\Gamma(Q,W)$-module corresponds to a vertex $\gamma$ of $Q$.
\end{corollary}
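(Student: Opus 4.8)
The plan is to realize the stated isomorphism as the composite of the two isomorphisms already established, namely the topological--categorical identification of Theorem~\ref{thmQQ} and the combinatorial identification of Theorem~\ref{thm:1}.

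First I would fix a triangulation $\TT$ of $\surfo$ for which $(Q,W)=(Q_\TT,W_\TT)$, so that the associated Ginzburg dg algebra is $\Gamma=\Gamma_\TT$ and hence $\ST(\Gamma)=\ST(\Gamma_\TT)$; such a $\TT$ exists precisely because $(Q,W)$ is assumed to arise from a triangulated marked surface. Theorem~\ref{thm:1} then supplies a canonical isomorphism $\kappa_\TT\colon\Br(Q_\TT,W_\TT)\to\BT(\surfo)$ carrying each generating vertex $\gamma$ to the braid twist $B_{\gamma^*}$ of its dual closed arc $\gamma^*\in\TT^*$. On the other hand, Theorem~\ref{thmQQ}, combined with Lemma~\ref{lem:4.2} (which identifies $\BT(\TT)$ with $\BT(\surfo)$), supplies an isomorphism $\iota\colon\BT(\surfo)\to\ST(\Gamma_\TT)$ carrying $B_{\gamma^*}$ to the spherical twist $\phi_{S_\gamma}$ of the simple module $S_\gamma$.

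Composing, $\iota\circ\kappa_\TT\colon\Br(Q,W)\to\ST(\Gamma)$ is an isomorphism sending $\gamma\mapsto B_{\gamma^*}\mapsto\phi_{S_\gamma}$, and its inverse is exactly the map $\phi_{S_\gamma}\mapsto\gamma$ asserted in the statement. Since $\Br(Q,W)$ carries, by its very definition, the presentation of Definition~\ref{def:BrQP}, transporting that presentation across this isomorphism equips $\ST(\Gamma)$ with the same finite presentation, with the generator labelled $\gamma$ corresponding to $\phi_{S_\gamma}$.

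The argument is essentially formal, so there is no serious obstacle; the only point requiring care is that the two isomorphisms be compatible on generators, i.e.\ that the braid twist $B_{\gamma^*}$ which is the $\kappa_\TT$-image of $\gamma$ is precisely the one sent by $\iota$ to $\phi_{S_\gamma}$. This compatibility is built into both theorems, as each phrases its correspondence through the dual arcs $\TT^*$ of the same triangulation $\TT$, so matching these labellings is the one bookkeeping step to check. I would also remark that the resulting presentation is a priori attached to the chosen $\TT$, but since Theorem~\ref{thm:1} shows $\kappa_\TT$ is compatible with flips and the mutation isomorphisms $\theta_\gamma^{?}$ of Proposition~\ref{pp:invariant}, the identification is independent of this choice up to canonical isomorphism.
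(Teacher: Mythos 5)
Your proposal is correct and is exactly the paper's argument: the corollary is obtained by composing the isomorphism $\kappa_\TT$ of Theorem~\ref{thm:1} with the isomorphism $\iota$ of Theorem~\ref{thmQQ} (via Lemma~\ref{lem:4.2}), both of which are indexed by the dual arcs $\TT^*$ of the same triangulation, so the generators match as you describe. Nothing further is needed.
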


We also have the following infinite presentations for these twist groups.
\begin{theorem}[Infinite presentations]\label{thm:2}
The braid twist group $\BT(\surfo)$ admits the following presentation:
\begin{itemize}
  \item Generators: $B_\eta$, $\eta\in\cA(\surfo)$;
  \item Relations: all
  conjugation relations, i.e. %for any simple closed arcs $\alpha,\beta$,
   \[B_{B_\alpha(\beta)}=B_\beta^{\iv{B_\alpha}},\ \forall\ \alpha,\beta\in\cA(\surfo).\]
\end{itemize}
Let $(Q,W)$ be a quiver with potential arising from a triangulated marked surface and $\Gamma$ the associated Ginzburg dg algebra. Then the spherical twist group $\ST(\Gamma)$ admits the following presentation
\begin{itemize}
  \item Generators: $\phi_X$, $X\in\Sph(\Gamma)$;
  \item Relations: all conjugation relations, %i.e. for any (reachable) spherical objects $S,X$,
  %all commutation/braid relations, i.e.
    %\begin{itemize}
    %  \item $\Co(\phi_S,\phi_X)$ if $\dim\Hom^\bullet(S,X)=0$;
    %  \item $\Br(\phi_S,\phi_X)$ if $\dim\Hom^\bullet(S,X)=1$.
    %  \item $\phi_{\phi_S(X)}=\phi_X^{\iv{\phi_S}}$
    %\end{itemize}
\[\phi_{\phi_X(Y)}={\phi_Y}^{\iv{\phi_X}},\ \forall\ X,Y\in\Sph(\Gamma).\]
\end{itemize}
\end{theorem}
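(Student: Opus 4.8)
The plan is to deduce this infinite presentation from the finite one already established in Theorem~\ref{thm:1}. Let $G$ denote the abstract group presented by generators $X_\eta$, $\eta\in\cA(\surfo)$, together with the conjugation relations $X_{B_\alpha(\beta)}=X_\beta^{\iv{X_\alpha}}$. By Lemma~\ref{rmk:conj} these relations are satisfied by the genuine braid twists, so there is a homomorphism $\pi\colon G\to\BT(\surfo)$, $X_\eta\mapsto B_\eta$, which is surjective because the $B_\eta$ generate $\BT(\surfo)$ by definition. Everything reduces to proving $\pi$ injective. Fixing a triangulation $\TT$, Theorem~\ref{thm:1} gives $\BT(\surfo)\cong\Br(Q_\TT,W_\TT)$ with the finite presentation of Definition~\ref{def:BrQP} on the generators $B_{\gamma^*}$, $\gamma^*\in\TT^*$. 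I would construct a homomorphism $\psi$ in the opposite direction, from this finite presentation into $G$, sending $\gamma^*\mapsto X_{\gamma^*}$, and show it is a surjective right inverse of $\pi$; this forces both maps to be isomorphisms.

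To see that $\psi$ is well defined I would first choose $\TT$ so that $Q_\TT$ has no double arrows, so that by Proposition~\ref{prop:simple} the defining relations are only $\Co$, $\Br$ and $\Tr$ among braid twists of dual arcs that are respectively disjoint, share exactly one endpoint, or form a clockwise triangle at a common endpoint (the sole excluded surface, the annulus with one marked point on each boundary component, I would check directly). Each of these is a consequence of conjugation relations through the local action of a braid twist on a neighbouring arc. If $a,b$ are disjoint then $B_a(b)=b$, so $X_{B_a(b)}=X_b^{\iv{X_a}}$ collapses to $\Co(X_a,X_b)$. If $a,b$ share one endpoint then $B_a(b)=\iv{B_b}(a)$, and combining $X_{B_a(b)}=X_b^{\iv{X_a}}$ with $X_{\iv{B_b}(a)}=X_a^{X_b}$ gives $\Br(X_a,X_b)$. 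The relation $\Tr(X_a,X_b,X_c)$ follows in the same manner from the analogous braid-twist identities relating three arcs at their common endpoint (cf. Figure~\ref{fig:ex0}); in the double-arrow case the relations $4^\circ$--$7^\circ$ of Definition~\ref{def:BrQP} are handled identically. Hence $\psi$ is a homomorphism.

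Surjectivity of $\psi$ amounts to showing $\langle X_{\gamma^*}:\gamma^*\in\TT^*\rangle=G$, and here I would reuse the inductive argument in the proofs of Lemma~\ref{lem:4.2} and Lemma~\ref{lem:gen}: every closed arc $\eta$ decomposes as $\eta=B_\alpha(\beta)$ with $\Int(\alpha,\TT),\Int(\beta,\TT)<\Int(\eta,\TT)$, and each such geometric step lifts, via the conjugation relation $X_\eta=X_\beta^{\iv{X_\alpha}}$, to an expression of $X_\eta$ through generators of strictly smaller intersection number. Inducting down to the dual arcs shows $X_\eta\in\langle X_{\gamma^*}\rangle$ for all $\eta$, so $\psi$ is surjective. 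Since $\pi\circ\psi$ fixes every $\gamma^*$ it equals $\id_{\BT(\surfo)}$, so $\psi$ is injective as well, hence an isomorphism, and $\pi=\psi^{-1}$; this proves the presentation for $\BT(\surfo)$. For $\ST(\Gamma)$ I would transport the result through the isomorphism $\iota\colon\BT(\surfo)\cong\ST(\Gamma)$ of Theorem~\ref{thmQQ}, which carries the braid twists $B_\eta$, $\eta\in\cA(\surfo)$, bijectively onto the spherical twists $\phi_X$, $X\in\Sph(\Gamma)$, and identifies the geometric conjugation formula with the categorical one $\phi_{\phi_X(Y)}=\phi_X\phi_Y\iv{\phi_X}=\phi_Y^{\iv{\phi_X}}$; the presentation above then maps isomorphically onto the asserted presentation of $\ST(\Gamma)$.

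The main obstacle is the second step: establishing the local geometric identities, in particular $B_a(b)=\iv{B_b}(a)$ for arcs sharing an endpoint together with its triangle analogue, which are exactly what turn each braid and triangle relation into a short consequence of one or two conjugation relations. Once these identities are in hand the remaining steps are comparatively routine, the surjectivity being essentially a reformulation of Lemma~\ref{lem:4.2} at the level of $G$.
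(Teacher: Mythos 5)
Your proposal is correct and follows essentially the same route as the paper: a surjection from the conjugation-presented group onto $\BT(\surfo)$, a well-defined surjective map from the finite presentation of Theorem~\ref{thm:1} into that group (well-defined because commutation/braid relations of disjoint or endpoint-sharing closed arcs, and hence all relations of Definition~\ref{def:BrQP} viewed as relations among braid twists of conjugated arcs, are consequences of the conjugation relations; surjective because every closed arc is reached from $\TT^*$ by finitely many conjugations), and transport to $\ST(\Gamma)$ via the isomorphism of Theorem~\ref{thmQQ} together with the compatibility of spherical and braid twist actions. The only cosmetic difference is that you route the triangle relation through Proposition~\ref{prop:simple}, whereas the paper treats the relations of Definition~\ref{def:BrQP} directly as commutations/braids of conjugated closed arcs.
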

\begin{proof}
The presentation for $\BT(\surfo)$ follows from the following three facts.
\begin{enumerate}
	\item The relations in Definition~\ref{def:BrQP}
	are either commutation or braid between braid twists of conjugations of simple closed arcs,
    which are still closed arcs.
	\item Commutation/braid relations of two simple closed arcs are given by conjugation relations. More precisely, if two closed arcs $\alpha$ and $\beta$ are disjoint, then $B_{\alpha}(\beta)=\beta$, which implies the following implications:
	%in the commutation case, we have
	\[B_{B_\alpha(\beta)}=B_\beta^{\iv{B_\alpha}} \Longrightarrow B_\beta=B_\beta^{\iv{B_\alpha}} \Longrightarrow \Co(B_\alpha,B_\beta);\]
	and if two closed arcs $\alpha$ and $\beta$ are disjoint except share a common endpoint, then ${B_\alpha(\beta)}={\iv{B_{\beta}}(\alpha)}$, which implies the following implications:
	\[\left. \begin{matrix}
	B_{B_\alpha(\beta)}=B_\beta^{\iv{B_\alpha}}\\
	B_{\iv{B_{\beta}}(\alpha)}=B_\alpha^{B_\beta}
	\end{matrix}\right\} \Longrightarrow B_\beta^{\iv{B_\alpha}}=B_\alpha^{B_\beta} \Longrightarrow \Br(B_\alpha,B_\beta).\]
	\item Any simple closed arcs can be obtained from the generators in Definition~\ref{def:BrQP} by a finite sequence of conjugations of arcs, cf. \cite[Proposition~4.3]{QQ} and \cite[Proposition~2.3]{QZ2}.
\end{enumerate}
The presentation for $\ST(\Gamma)$ follows from the isomorphism \eqref{1}
and \cite[Proposition~4.6]{QZ2} (cf. also \cite[Corollary~6.5]{QQ}) that the action of a spherical twist is compatible with the action of  the corresponding braid twist.
\end{proof}
%=========================================================
%\subsection{Positive presentations}
%=========================================================
%\[\Cc(a,e;b,c)\colon eabceab=beabcea\]
%\[\CC(a,e;b,c)\colon eabceab=beabcea\]
%\[\NY(a,e;b,c;f)\colon eabceab=beabcea\]

%\begin{lemma}
%If $\Co(a,e),\Br(a,b),\Br(a,c),\Br(e,b)$ and $\Br(e,c)$ hold,
%then,
%\begin{gather}\label{eq:c1}
%    \Co(c^a,e^b)\Longleftrightarrow\Cc(a,e;b,c) ,\\
%\label{eq:c2}
%    \Co(c^e,a^b)\Longleftrightarrow\Cc(e,a;b,c) .
%\end{gather}
%\end{lemma}
%\begin{proof}
%First,
%\begin{gather}\label{eq:a1}aeb\iv{ea}
%\xlongequal{\Br(a,e)}a\iv{b}eb\iv{a}
%\xlongequal{\Co(a,e)}a\iv{ba}eab\iv{a}
%\xlongequal{\Br(a,b)}\iv{ba}be\iv{b}ab.
%\end{gather}
%Then
%$$\iv{a}caeb\iv{e}=eb\iv{ea}ca
%\xLongleftrightarrow{\Co(a,e)} caeb\iv{aec}=aeb\iv{ea}
%\xLongleftrightarrow{\eqref{eq:a1}} caeb\iv{aec}=\iv{bae}beab
%\Longleftrightarrow \Cc(a,e;b,c)$$
%i.e. \eqref{eq:c1}.
%Similarly we have \eqref{eq:c2}.
%\end{proof}

%\newpage

%=========================================================
\appendix

%=========================================================
\section{Proof of the relations \eqref{ac:01}--\eqref{iac:07} in Section~\ref{sec:bt}}\label{app:lemmas}
%=========================================================

Set
\begin{align}\label{eq:v}
v_{t,r}=\begin{cases}
\tx&\text{if $r=t$}\\
{\tt_r}^{\tx\tt_t}&\text{if $r<t$ and $r\notin\dgen$}\\
{\tt_r}^{\tx\iv{\tt_t}}&\text{if $r<t$ and $r\in\dgen$}\\
{\tt_r}^{\iv{\tx}\iv{\tt_t}}&\text{if $r>t$ and $t\notin\dgen$}\\
{\tt_r}^{\iv{\tx}\tt_t}&\text{if $r>t$ and $t\in\dgen$}
\end{cases}
\end{align}
and
\begin{align}\label{eq:v'}
v'_{t,r}=\begin{cases}
\ty&\text{if $r=t$}\\
{\tt_r}^{\iv{\ty}\iv{\tt_t}}&\text{if $r<t$ and $r\notin\dgen$}\\
{\tt_r}^{\ty\iv{\tt_t}}&\text{if $r<t$ and $r\in\dgen$}\\
{\tt_r}^{\ty\tt_t}&\text{if $r>t$ and $t\notin\dgen$}\\
{\tt_r}^{\iv{\ty}\tt_t}&\text{if $r>t$ and $t\in\dgen$}
\end{cases}
\end{align}
Using these notation, the equalities \eqref{eq:ss03} and \eqref{eq:ss06} become
\begin{align}
\rho(\varepsilon_t).\tt_r={v_{t,r}}^{\ts_2\tt_t}\label{eq:ss03new}
\end{align}
and
\begin{align}
\rho(\iv{\varepsilon_t}).\tt_r={v'_{t,r}}^{\iv{\ts_3}\iv{\ts_1}\iv{\ts_2}},\label{eq:ss06new}
\end{align}
respectively. Here, checking the equality \eqref{eq:ss03new} is straightforward. To show the equality \eqref{eq:ss06new}, for $r=t$, we have \[\rho(\iv{\varepsilon_t}).\tt_t=\ty^{\iv{\ts_3}\iv{\ts_1}\iv{\ts_2}}\xlongequal{\eqref{eq:tx ty}}\ts_2^{\iv{\ts_1}\iv{\ts_2}}\xlongequal{\eqref{def:02}}\ts_1;\]
for $r<t$ and $r\notin\dgen$, we have \[\rho(\iv{\varepsilon_t}).\tt_r={\tt_r}^{\iv{\ty}\iv{\tt_t}\iv{\ts_3}\iv{\ts_1}\iv{\ts_2}}\xlongequal{\eqref{def:03}}{\tt_r}^{\iv{\ty}\iv{\ts_3}\iv{\tt_t}\iv{\ts_1}\iv{\ts_2}}\xlongequal{
		\eqref{eq:tx ty}
	}{\tt_r}^{\iv{\ts_3}\iv{\ts_2}\iv{\tt_t}\iv{\ts_1}\iv{\ts_2}}\xlongequal{\eqref{def:03}}{\tt_r}^{\iv{\ts_2}\iv{\tt_t}\iv{\ts_1}\iv{\ts_2}};\]
	the other cases can be proved similarly.
	%For $r<t$ and $r\in\dgen$, we have
	%\[{\tt_r}^{\ty\iv{\tt_t}\iv{\ts_3}\iv{\ts_1}\iv{\ts_2}}\xlongequal{\eqref{def:03}}{\tt_r}^{\ty\iv{\ts_3}\iv{\tt_t}\iv{\ts_1}\iv{\ts_2}}\xlongequal{
	%\eqref{eq:tx ty}
	%}{\tt_r}^{\iv{\ts_3}\ts_2\iv{\tt_t}\iv{\ts_1}\iv{\ts_2}}\xlongequal{\eqref{def:03}}{\tt_r}^{\ts_2\iv{\tt_t}\iv{\ts_1}\iv{\ts_2}}.\]
	%For $r>t$ and $t\notin\dgen$, we have
	%\[{\tt_r}^{\ty\tt_t\iv{\ts_3}\iv{\ts_1}\iv{\ts_2}}\xlongequal{\eqref{def:03}}{\tt_r}^{\ty\iv{\ts_3}\tt_t\iv{\ts_1}\iv{\ts_2}}\xlongequal{
	%\eqref{eq:tx ty}
	%}{\tt_r}^{\iv{\ts_3}\ts_2\tt_t\iv{\ts_1}\iv{\ts_2}}\xlongequal{\eqref{def:03}}{\tt_r}^{\ts_2\tt_t\iv{\ts_1}\iv{\ts_2}};\]
	%For $r>t$ and $t\in\dgen$, we have
	%\[{\tt_r}^{\iv{\ty}\tt_t\iv{\ts_3}\iv{\ts_1}\iv{\ts_2}}\xlongequal{\eqref{def:03}}{\tt_r}^{\iv{\ty}\iv{\ts_3}\tt_t\iv{\ts_1}\iv{\ts_2}}\xlongequal{
	%\eqref{eq:tx ty}
	%}{\tt_r}^{\iv{\ts_3}\iv{\ts_2}\tt_t\iv{\ts_1}\iv{\ts_2}}\xlongequal{\eqref{def:03}}{\tt_r}^{\iv{\ts_2}\tt_t\iv{\ts_1}\iv{\ts_2}}.\]

The following two (classes of) relations will be useful.

\begin{lemma}
The following hold:
\begin{align}
\Co(v_{t,r},\ty)\label{pre:01}\\
\Co(v'_{t,r},\tx)\label{pre:02}
\end{align}
for any $1\leq r,t\leq 2g+b-1$.
\end{lemma}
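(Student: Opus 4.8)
The plan is to establish both \eqref{pre:01} and \eqref{pre:02} by a single uniform case analysis, splitting according to whether $r=t$, $r<t$ or $r>t$ and according to the parity recorded by $\dgen$, and reducing each case by conjugation to one of the already-proved relations \eqref{c1:02}, \eqref{def:06} or \eqref{def:07}. The only tools needed are: the elementary equivalence $\Co(a^g,b)\Leftrightarrow\Co(a,b^{\iv g})$ and the fact that commutation is insensitive to inverses; the braid relations \eqref{def:04} and \eqref{def:05} specialised to $t$, rewritten in the conjugation form $\ty^{\tt_t}=\tt_t^{\iv\ty}$ and $\ty^{\iv{\tt_t}}=\tt_t^{\ty}$ together with the analogues $\tx^{\tt_t}=\tt_t^{\iv\tx}$, $\tx^{\iv{\tt_t}}=\tt_t^{\tx}$; and the commutation $\Co(\tx,\ty)$ of \eqref{c1:02}, which lets me slide $\tx^{\pm1}$ past $\ty^{\pm1}$ inside a conjugating word.

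For \eqref{pre:01} the case $r=t$ is immediate, since $v_{t,r}=\tx$ and the claim is $\Co(\tx,\ty)$, which is \eqref{c1:02}. In each of the four remaining cases $v_{t,r}$ has the form $\tt_r^{\,w}$ with $w$ a length-two word in $\tx^{\pm1}$ and $\tt_t^{\pm1}$, so I would first use $\Co(a^g,b)\Leftrightarrow\Co(a,b^{\iv g})$ to transfer the conjugation onto $\ty$, then rewrite the resulting $\ty^{\tt_t}$ (or $\ty^{\iv{\tt_t}}$) as $\tt_t^{\iv\ty}$ (or $\tt_t^{\ty}$) by the braid relation, and finally use $\Co(\tx,\ty)$ together with one more conjugation by $\tx$ and $\ty$. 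Each case then collapses to $\Co(\tt_r^{\ty},\tt_t^{\tx})$ or $\Co(\tt_r^{\iv\ty},\tt_t^{\tx})$, up to swapping the names $r,t$ so that the smaller index carries the $\tx$-exponent: the sign patterns in the definition of $v_{t,r}$ are arranged exactly so that the smaller of $r,t$ always acquires the $\tx$-conjugation, and its parity then selects \eqref{def:06} when it lies outside $\dgen$ and \eqref{def:07} when it lies in $\dgen$.

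The proof of \eqref{pre:02} is strictly parallel with the roles of $\tx$ and $\ty$ exchanged: the case $r=t$ is again $\Co(\ty,\tx)$, i.e. \eqref{c1:02}, and for the four non-trivial cases I would push $\tx$ through the $\tt_t$-part of the conjugating word using \eqref{def:04}, rearrange with $\Co(\tx,\ty)$, and conjugate so as to land once more on \eqref{def:06} or \eqref{def:07} according to the parity of the smaller of $r$ and $t$. Since every move is a reversible equivalence, no extraneous relations are created, and it suffices to record, case by case, the explicit chain of conjugations terminating at \eqref{def:06} or \eqref{def:07}. I expect the only genuine difficulty to be bookkeeping: faithfully tracking the three signs (on $\tx$, on $\ty$ and on $\tt_t$) through each conjugation, and checking in all the cases that the parity hypothesis $r\in\dgen$ or $t\in\dgen$ matches the defining relation one lands on.
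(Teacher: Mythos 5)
Your proposal is correct and is essentially the paper's own argument read in reverse: the paper starts from \eqref{def:06}/\eqref{def:07}, applies $\Br(\tt_t,\ty)$ (resp.\ $\Br(\tt_t,\tx)$) in the form $\tt_t^{\pm\ty}=\ty^{\mp\tt_t}$, uses $\Co(\tx,\ty)$ and conjugates to isolate $\ty$ (resp.\ $\tx$), which is exactly your reduction run forwards, and your structural observation that the smaller of $r,t$ always carries the $\tx$-exponent with its membership in $\dgen$ selecting between \eqref{def:06} and \eqref{def:07} checks out in all four cases.
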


\begin{proof}
We only prove \eqref{pre:01}; the proof of \eqref{pre:02} is similar. For $r=t$, \eqref{pre:01} is $\Co(\tx,\ty)$ \eqref{c1:02}. For $r<t$ and $r\notin\dgen$, by \eqref{def:06}, we have the commutative relation $\Co({\tt_r}^{\tx},{\tt_t}^{\ty})$. Then we have the following implications.
\[\Co({\tt_r}^{\tx},{\tt_t}^{\ty})\Longrightarrow\Co({\tt_r}^{\tx},{\ty}^{\iv{\tt_t}})\Longrightarrow\Co({\tt_r}^{\tx\tt_t},\ty)\Longrightarrow\Co(v_{t,r},\ty)\]
where the first implication uses the braid relation $\Br(\tt_t,\ty)$ by \eqref{def:05}, the second implication is taking the conjugation by $\tt_t$, the last implication uses $v_{t,r}={\tt_r}^{\tx\tt_t}$ \eqref{eq:v}. The case $r<t$ and $r\in\dgen$ can be proved similarly.
%we have $v_{t,r}={\tt_r}^{\tx\tt_t}$. By
%\[
%\eqref{def:06}\Longrightarrow\Co({\tt_r}^{\tx},{\tt_t}^{\ty})\xLongrightarrow{\eqref{def:05}}\Co({\tt_r}^{\tx},{\ty}^{\iv{\tt_t}})\Longrightarrow\Co({\tt_r}^{\tx\tt_t},\ty)\Longrightarrow\eqref{pre:01};
%\]
%, we have $v_{t,r}={\tt_r}^{\tx\iv{\tt_t}}$ and
%\[\eqref{def:07}\Longrightarrow\Co({\tt_r}^{\tx},{\tt_t}^{\iv{\ty}})\xLongrightarrow{\eqref{def:05}
%}\Co({\tt_r}^{\tx},{\ty}^{\tt_t})\Longrightarrow\Co({\tt_r}^{\tx\iv{\tt_t}},\ty)\Longrightarrow\eqref{pre:01};\]

For $r>t$ and $t\notin\dgen$, %we have $v_{t,r}={\tt_r}^{\iv{\tx}\iv{\tt_t}}$ and
we have the commutation relation $\Co({\tt_r}^{\ty},{\tt_t}^{\tx})$ by \eqref{def:06}. Then we have the following implications.
\[\Co({\tt_r}^{\ty},{\tt_t}^{\tx})\Longrightarrow\Co({\tt_r}^{\iv{\tx}},{\tt_t}^{\iv{\ty}})\Longrightarrow\Co({\tt_r}^{\tx},{\ty}^{{\tt_t}})\Longrightarrow\Co({\tt_r}^{\iv{\tx}\iv{\tt_t}},\ty)\Longrightarrow\Co(v_{t,r},\ty)\]
where the first implication uses the commutation relation $\Co(\tx,\ty)$ \eqref{c1:02}, the second implication uses the braid relation $\Br(\tt_t,\ty)$ by \eqref{def:05}, the third implication is taking the conjugated by $\iv{\tt_t}$, and the last equality uses $v_{t,r}={\tt_r}^{\iv{\tx}\iv{\tt_t}}$ \eqref{eq:v}.
%\[
%\eqref{def:06}\Longrightarrow\Co({\tt_r}^{\ty},{\tt_t}^{\tx})\xLongrightarrow{\eqref{c1:02}}\Co({\tt_r}^{\iv{\tx}},{\tt_t}^{\iv{\ty}})\xLongrightarrow{\eqref{def:05}}\Co({\tt_r}^{\iv{\tx}},{\ty}^{\tt_t})\Longrightarrow\Co({\tt_r}^{\iv{\tx}\iv{\tt_t}},\ty)\Longrightarrow\eqref{pre:01};
%\]
The case $r>t$ and $t\in\dgen$ can be proved similarly.%, we have $v_{t,r}={\tt_r}^{\iv{\tx}\tt_t}$ and

\end{proof}

\begin{lemma}\label{lem:b2}
The relations \eqref{ac:01}, \eqref{ac:02}, \eqref{ac:03}, \eqref{iac:01}, \eqref{iac:02} and \eqref{iac:03} hold.
\end{lemma}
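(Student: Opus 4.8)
The plan is to read Lemma~\ref{lem:b2} as the verification that the images under $\rho(\varepsilon_t)$ and $\rho(\iv{\varepsilon_t})$ of the three \emph{easy} defining relations \eqref{def:01}, \eqref{def:02}, \eqref{def:03} of $\widetilde N$ again hold in $\widetilde N$. Since $\rho(\varepsilon_t).\ts_i=\ts_i$ and $\rho(\iv{\varepsilon_t}).\ts_i=\ts_i$ for every $i\neq 1$ by \eqref{eq:ss02} and \eqref{eq:ss05}, any of the six relations all of whose indices are $\geq 2$ coincides verbatim with \eqref{def:01} or \eqref{def:02}, so nothing is needed there. I would therefore split the argument into (a) the index-$1$ cases of \eqref{ac:01}, \eqref{ac:02}, \eqref{iac:01}, \eqref{iac:02}, and (b) the mixed commutation relations \eqref{ac:03}, \eqref{iac:03} between the images of $\tt_r$ and of $\ts_i$.

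For (a) I first record $\rho(\varepsilon_t).\ts_1=\tt_t$ and $\rho(\iv{\varepsilon_t}).\ts_1=\tt_t^{\iv{\ts_1}}$. Then \eqref{ac:01} with $i=1$, $j\geq 3$ is exactly $\Co(\tt_t,\ts_j)$, i.e. \eqref{def:03}; and \eqref{iac:01} with $i=1$, $j\geq3$ holds because $\ts_j$ commutes with both $\ts_1$ (as $|j-1|>1$) and $\tt_t$ (by \eqref{def:03}), hence with the word $\tt_t^{\iv{\ts_1}}$. The two braid cases are the substantive ones: for \eqref{ac:02} with $\{i,j\}=\{1,2\}$ I would conjugate $\Br(\tt_t,\ty)$ \eqref{def:05} by $\iv{\ts_3}$, and since $\ty^{\iv{\ts_3}}=\ts_2$ by \eqref{eq:tx ty} while $\tt_t^{\iv{\ts_3}}=\tt_t$ by \eqref{def:03}, this produces $\Br(\tt_t,\ts_2)$; symmetrically, for \eqref{iac:02} I would conjugate $\Br(\tt_t,\tx)$ \eqref{def:04} by $\iv{\ts_1}$ and use $\tx^{\iv{\ts_1}}=\ts_2$ from \eqref{eq:tx} to obtain $\Br(\tt_t^{\iv{\ts_1}},\ts_2)$.

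For (b) I would use the rewrites \eqref{eq:ss03new}, \eqref{eq:ss06new} together with the elementary equivalence $\Co(A^g,B)\Leftrightarrow\Co(A,B^{\iv g})$. This turns \eqref{ac:03} into $\Co(v_{t,r},\ts_i^{\iv{(\ts_2\tt_t)}})$ for $i>2$. For $i\geq 4$ the generator $\ts_i$ commutes with $\ts_2$, with $\tt_t$, and with every factor ($\tt$'s and $\tx$) of $v_{t,r}$, so the relation is immediate; the only genuine case is $i=3$, where $\ts_3^{\iv{(\ts_2\tt_t)}}=\ts_3^{\iv{\ts_2}}=\ty$ (using $\Co(\tt_t,\ts_3)$ \eqref{def:03} and \eqref{eq:tx ty}), so that \eqref{ac:03} reduces precisely to $\Co(v_{t,r},\ty)$, namely \eqref{pre:01}. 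Dually, \eqref{iac:03} becomes $\Co(v'_{t,r},\ts_i^{\iv g})$ with $g=\iv{\ts_3}\iv{\ts_1}\iv{\ts_2}$: for $i\geq 5$ it is immediate, for $i=4$ one computes $\ts_4^{\iv g}=\ts_4^{\ts_3}$ and checks it commutes with every factor of $v'_{t,r}$ after reducing the one nontrivial commutation to $\Co(\ts_2,\ts_4)=$\eqref{def:01}, and the decisive case $i=3$ gives $\ts_3^{\iv g}=\tx$, whence \eqref{iac:03} reduces to $\Co(v'_{t,r},\tx)=$\eqref{pre:02}.

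The routine content throughout is the bookkeeping of which generators commute; the real point, and the step I would write out in full, is the pair of boundary identities $\ts_3^{\iv{(\ts_2\tt_t)}}=\ty$ and especially $\ts_3^{\iv g}=\tx$. The latter is a short but genuine computation using $\Co(\ts_1,\ts_3)$ \eqref{def:01} together with the braid relations $\Br(\ts_2,\ts_3)$ and $\Br(\ts_1,\ts_2)$ \eqref{def:02}, and it is exactly this identity that makes the preliminary relations \eqref{pre:01} and \eqref{pre:02} applicable; once it is in place, all six relations follow from \eqref{def:01}--\eqref{def:05}.
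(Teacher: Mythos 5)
Your proposal is correct and follows essentially the same route as the paper: the only substantive cases are \eqref{ac:03} and \eqref{iac:03} at $i=3$, which you reduce, via conjugation by $\iv{\tt_t}\iv{\ts_2}$ and by $\ts_2\ts_1\ts_3$ respectively, to the preliminary relations \eqref{pre:01} and \eqref{pre:02} — exactly as in the paper, which dismisses the remaining cases as routine consequences of \eqref{eq:ss02}, \eqref{eq:ss03} and \eqref{def:01}–\eqref{def:05}. Your extra detail (the identities $\ty^{\iv{\ts_3}}=\ts_2$, $\tx^{\iv{\ts_1}}=\ts_2$ and $\ts_3^{\ts_2\ts_1\ts_3}=\tx$) is accurate and merely spells out what the paper leaves implicit.
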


\begin{proof}
Using \eqref{eq:ss02},\eqref{eq:ss03}, \eqref{def:01}, \eqref{def:02}, \eqref{def:03}, \eqref{def:04} and \eqref{def:05}, it is easy to check the relations \eqref{ac:01}, \eqref{ac:02}, \eqref{ac:03} for $i>3$, \eqref{iac:01}, \eqref{iac:02}, and \eqref{iac:03} for $i>3$. %The relation \eqref{iac:03} for $i=4$ follows from $\rho(\iv{\varepsilon}).\ts_4=\ts_4$ and Lemma~\ref{lem:ac}.
The relation \eqref{ac:03} for $i=3$, conjugated by $\iv{\tt_t}\iv{\ts_2}$, becomes \eqref{pre:01}. The relation \eqref{iac:03} for $i=3$, conjugated by $\ts_2\ts_1\ts_3$, becomes \eqref{pre:02}.

\end{proof}

\begin{lemma}\label{lem:b3}
The relations \eqref{ac:04}, \eqref{ac:05}, \eqref{ac:06} and \eqref{ac:07} hold if the following relations hold, respectively.
\begin{align}
&\Br(v_{t,r},\ts_3)\label{acc:04}&&\\
&\Br(v_{t,r},\tt_t)\label{acc:05}&&\\
&\Co({v_{t,r}}^{\tt_t},{v_{t,s}}^{\ts_3})\label{acc:06}
&&\text{if $s<r$ and $s\notin\dgen$}\\
&\Co({v_{t,r}}^{\iv{\tt_t}},{v_{t,s}}^{\ts_3})\label{acc:07}
&&\text{if $s<r$ and $s\in\dgen$}
\end{align}
The relations \eqref{iac:04}, \eqref{iac:05}, \eqref{iac:06} and \eqref{iac:07} hold if the following relations hold, respectively.
\begin{align}
&\Br(v'_{t,r},\tt_t)\label{iacc:04}&&\\
&\Br(v'_{t,r},\ts_3)\label{iacc:05}&&\\
&\Co({v'_{t,r}}^{\ts_3},{v'_{t,s}}^{\tt_t})\label{iacc:06}
&&\text{if $s<r$ and $s\notin\dgen$}\\
&\Co({v'_{t,r}}^{\iv{\ts_3}},{v'_{t,s}}^{\tt_t})\label{iacc:07}
&&\text{if $s<r$ and $s\in\dgen$}
\end{align}
\end{lemma}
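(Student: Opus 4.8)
The plan is to prove Lemma~\ref{lem:b3} by a case-by-case reduction that substitutes the rewritten action formulas and then peels off a common conjugating factor. Writing out \eqref{ac:04}--\eqref{ac:07} with \eqref{eq:ss03new}, \eqref{atx} and \eqref{aty}, every image $\rho(\varepsilon_t).\tt_r$ appears as the single conjugate $v_{t,r}^{\ts_2\tt_t}$, while $\rho(\varepsilon_t).\tx=\ts_2^{\tt_t}$ and $\rho(\varepsilon_t).\ty=\ty$; likewise \eqref{iac:04}--\eqref{iac:07} become relations in $v'_{t,r}{}^{\iv{\ts_3}\iv{\ts_1}\iv{\ts_2}}$, $\rho(\iv{\varepsilon_t}).\tx=\tt_t^{\iv{\ts_1}\iv{\ts_2}}$ and $\rho(\iv{\varepsilon_t}).\ty=\ty$ via \eqref{eq:ss06new}, \eqref{atx2}, \eqref{aty2}. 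Since commutation and braid relations are invariant under conjugation, I would conjugate each target relation by a suitable word in $\ts_2,\tt_t$ (forward case) or in $\ts_1,\ts_2,\ts_3$ (backward case) so as to express its arguments directly in terms of $v_{t,r}$ (resp.\ $v'_{t,r}$), $\ts_2$, $\ts_3$ and $\tt_t$. The content of the lemma is that the resulting peeled relations are exactly \eqref{acc:04}--\eqref{acc:07} and \eqref{iacc:04}--\eqref{iacc:07} after a normalisation that trades $\tx,\ty$ for $\ts_2$.

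That normalisation rests on the identities $\tx=\ts_2^{\ts_1}$ \eqref{eq:tx} and $\ty=\ts_2^{\ts_3}$ \eqref{eq:tx ty} together with $\Co(\ts_1,\ts_3)$ \eqref{def:01} and $\Co(\tt_t,\ts_3)$ \eqref{def:03}: whenever an element $E$ commutes with $\ts_3$ one has $\Br(E,\ty)\Leftrightarrow\Br(E,\ts_2)$ by conjugating with $\ts_3$ (and symmetrically for $\tx$ with $\ts_1$), and more simply $\ty^{\ts_2}=\ts_3$. Three cases close with only these moves: \eqref{iac:04} peels to $\Br(v'_{t,r},\tt_t)$ because $\tt_t$ commutes with $\ts_3$, \eqref{iac:05} peels to $\Br(v'_{t,r},\ts_3)$ via $\ty^{\ts_2}=\ts_3$, and \eqref{ac:04} peels to $\Br(v_{t,r},\ts_2)$, which I would then deduce from \eqref{acc:04}, i.e.\ $\Br(v_{t,r},\ts_3)$, using $\Co(v_{t,r},\ty)$ \eqref{pre:01} together with $\Br(\ts_2,\ts_3)$.

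The genuinely laborious cases are the $\ty$-relation \eqref{ac:05} and the four two-index commutations \eqref{ac:06}, \eqref{ac:07}, \eqref{iac:06}, \eqref{iac:07}. In \eqref{ac:05} the factor $\ts_2\tt_t$ neither commutes nor braids simply with $\ty$, so peeling leaves a residual conjugate of $\tt_t$ once $\tt_t\ty\iv{\tt_t}=\iv{\ty}\tt_t\ty$ (from $\Br(\tt_t,\ty)$ \eqref{def:05}) is applied; the commutation $\Co(v_{t,r},\ty)$ \eqref{pre:01} is then used to absorb the stray $\ty$ and collapse the relation to $\Br(v_{t,r},\tt_t)$ \eqref{acc:05}. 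One checks the model subcase $r=t$ first, where $\ts_2^{\ts_1\ts_2}=\ts_1$ turns the computation into the conjugate $\Br(\ts_2,\tt_t^{\ts_1})$ of \eqref{acc:05}. For the two-index relations I would split along the parity alternatives $s\in\dgen$ and $s\notin\dgen$ dictated by \eqref{eq:v}, \eqref{eq:v'}, peel the common factor, and then reposition the two independent inner conjugations (by $\tt_t$ on the $\ty$-side, by $\ts_2^2\tt_t$ on the $\tx$-side) into the standard form $\Co(v_{t,r}^{\tt_t},v_{t,s}^{\ts_3})$ using the derived braid relations \eqref{c1:05}--\eqref{c1:07} together with \eqref{pre:01} and \eqref{pre:02}.

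The main obstacle is precisely this last repositioning: in the two-index relations the arguments carry two independent conjugating strings, and bringing them simultaneously into the reduced form forces one to interleave the braid relations $\Br(\tt_t,\tx)$ \eqref{def:04} and $\Br(\tt_t,\ty)$ \eqref{def:05} with the commutations \eqref{pre:01}, \eqref{pre:02} so that the surplus $\ts_2$ and $\tt_t$ factors cancel in the correct order. The single-index case \eqref{ac:05} is the prototype for this cancellation, and once it is handled the two-index cases follow by applying the same bookkeeping to each argument in turn; the backward relations \eqref{iac:06}, \eqref{iac:07} are entirely parallel, with the roles of $\tx$ and $\ty$ (hence of $\ts_1$ and $\ts_3$) interchanged.
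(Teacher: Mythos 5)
Your overall strategy coincides with the paper's: both proofs reduce \eqref{ac:04}--\eqref{ac:07} (resp.\ \eqref{iac:04}--\eqref{iac:07}) to \eqref{acc:04}--\eqref{acc:07} (resp.\ \eqref{iacc:04}--\eqref{iacc:07}) by conjugating with a word in $\ts_2,\tt_t,\ty$ (resp.\ $\ts_1,\ts_2,\ts_3$) and using $\Co(v_{t,r},\ty)$ \eqref{pre:01} (resp.\ $\Co(v'_{t,r},\tx)$ \eqref{pre:02}) as the essential input, so the single-index cases you treat are correct and agree with the paper up to the order in which factors are peeled. Where you diverge is the two-index relations: the paper conjugates \emph{every} relation by the one word $\iv{\tt_t}\iv{\ts_2}\iv{\ty}$ and computes once that this sends $\rho(\varepsilon_t).\tt_r\mapsto v_{t,r}$, $\rho(\varepsilon_t).\tx\mapsto\ts_3$, $\rho(\varepsilon_t).\ty\mapsto\tt_t$ and $\rho(\varepsilon_t).\ts_3\mapsto\ty$; since $\left(A^{B}\right)^{w}=\left(A^{w}\right)^{\left(B^{w}\right)}$, the relations \eqref{ac:06} and \eqref{ac:07} then collapse to \eqref{acc:06} and \eqref{acc:07} with no further manipulation. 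Your anticipated difficulty with ``repositioning two independent conjugating strings,'' and the planned appeal to \eqref{c1:05}--\eqref{c1:07}, are artifacts of peeling only the factor $\ts_2\tt_t$ instead of the full word ending in $\iv{\ty}$; with the uniform conjugator those derived relations are not needed in this lemma (they enter only later, in the proof that \eqref{acc:05} itself holds). So the proposal is sound, but carrying out the two-index cases your way would cost you a page of bookkeeping that the paper's choice of conjugator avoids entirely.
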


\begin{proof}
We only prove the first assertion, since the second can be proved similarly. Conjugated by ${\iv{\tt_t}\iv{\ts_2}\iv{\ty}}$, we have the following easy calculations:
\[\left(\rho(\varepsilon_t).\tx\right)^{\iv{\tt_t}\iv{\ts_2}\iv{\ty}}=\ts_2^{\tt_t \iv{\tt_t}\iv{\ts_2}\iv{\ty}}=\ts_2^{\iv{\ty}}=\ts_2^{\left(\iv{\ts_2}^{\ts_3}\right)}=\ts_3,\]
where the first equality uses $\rho(\varepsilon_t).\tx=\ts_2^{\tt_t}$ by \eqref{atx}, the third equality uses $\ty={\ts_2}^{\ts_3}$ \eqref{eq:tx ty}, and the last equality uses the braid relation $\Br(\ts_2,\ts_3)$ by \eqref{def:02};
\[\left(\rho(\varepsilon_t).\ty\right)^{\iv{\tt_t}\iv{\ts_2}\iv{\ty}}={\ty}^{\iv{\tt_t}\iv{\ts_2}\iv{\ty}}={\tt_t}^{y\iv{\ts_2}\iv{\ty}}={\tt_t}^{\ts_2\ts_3\iv{\ts_2}\iv{\ts_2}\ts_2\iv{\ts_3}\iv{\ts_2}}={\tt_t}^{\iv{\ts_3}}\tt_t,\]
where the first equality uses $\rho(\varepsilon_t).\ty=\ty$ by \eqref{aty}, the second equality uses the braid relation $\Br(\tt_t,\ty)$ by \eqref{def:05}, the third equality uses $\ty=\ts_3^{\iv{\ts_2}}$ \eqref{eq:tx ty}, the fourth equality uses the braid relation $\Br(\ts_2,\ts_3)$ by \eqref{def:02}, and the last equality uses the commutation relation $\Co(\tt_t,\ts_3)$ by \eqref{def:03};
\[\left(\rho(\varepsilon_t).\ts_3\right)^{\iv{\tt_t}\iv{\ts_2}\iv{\ty}}=\ts_3^{\iv{\tt_t}\iv{\ts_2}\iv{\ty}}=\ts_3^{\iv{\ts_2}\iv{\ty}}\ty,\]
where the first equality uses $\rho(\varepsilon_t).\ts_3=\ts_3$ \eqref{eq:ss02}, the second equality uses the commutation relation $\Co(\tt_t,\ts_3)$ \eqref{def:03}, and the last equality uses $\ty=\ts_3^{\iv{\ts_2}}$ \eqref{eq:tx ty};
\[\left(\rho(\varepsilon_t).\widetilde{\tau}_r\right)^{\iv{\tt_t}\iv{\ts_2}\iv{\ty}}=v_{t,r}^{\ts_2\tt_t\iv{\tt_t}\iv{\ts_2}\iv{\ty}}=v_{t,r}^{\ty}=v_{t,r}\]
where the first equality uses $\rho(\varepsilon_t).\widetilde{\tau}_r=v_{t,r}^{\ts_2\tt_t}$ \eqref{eq:ss03new}, and the last equality uses the commutation relation $\Co(v_{t,r},\ty)$.

Then the relations \eqref{ac:04}, \eqref{ac:05}, \eqref{ac:06} and \eqref{ac:07}, after conjugated by ${\iv{\tt_t}\iv{\ts_2}\iv{\ty}}$, become \eqref{acc:04}, \eqref{acc:05}, \eqref{acc:06} and \eqref{acc:07}, respectively.
%Using conjugation by $h:=\iv{\tt_t}\iv{\ts_2}\iv{\ty}$, we have

%Using conjugation by $h':=\ts_2\ts_1\ts_3$, we have
%\[\left(\rho(\iv{\varepsilon_t}).\tx\right)^{h'}\xlongequal{\eqref{atx2}}\tt_t^{\iv{\ts_1}\iv{\ts_2}h'}=\tt_t^{\ts_3}\xlongequal{\eqref{def:03}}\tt_t\]
%\[\left(\rho(\iv{\varepsilon_t}).\ty\right)^{h'}\xlongequal{\eqref{aty2}}\ty^{\ts_2\ts_1\ts_3}\xlongequal{\eqref{eq:tx ty}}\ts_3^{\ts_1\ts_3}\xlongequal{\eqref{def:01}}\ts_3\]
%\[\left(\rho(\iv{\varepsilon_t}).\ts_3\right)^{h'}\xlongequal{\eqref{eq:ss05}}\ts_3^{\ts_2\ts_1\ts_3}\xlongequal{\eqref{def:01}}\ts_3^{\ts_2\ts_3\ts_1}\xlongequal{\eqref{def:02}}\ts_2^{\ts_1}\xlongequal{\eqref{eq:tx}}\tx\]
%\[\left(\rho(\iv{\varepsilon_t}).\tt_r\right)^{h'}\xlongequal{\eqref{eq:ss06}}v'_{t,r}\]
%Then the relations \eqref{iac:04}, \eqref{iac:05}, \eqref{iac:06} and \eqref{iac:07} become \eqref{iacc:04}, \eqref{iacc:05}, \eqref{iacc:06} and \eqref{iacc:07}, respectively.
\end{proof}

\begin{lemma}\label{lem:b4}
The relations \eqref{acc:04}, \eqref{acc:05}, \eqref{iacc:04} and \eqref{iacc:05} hold.
\end{lemma}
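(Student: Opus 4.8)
The plan is to verify each of the four relations by a case analysis over the five cases in the definitions \eqref{eq:v} of $v_{t,r}$ and \eqref{eq:v'} of $v'_{t,r}$, namely the diagonal case $r=t$ together with the four off-diagonal cases indexed by the sign of $r-t$ and the parity of the relevant index. The diagonal cases are immediate: since $v_{t,t}=\tx$ and $v'_{t,t}=\ty$, the relations \eqref{acc:04}, \eqref{acc:05}, \eqref{iacc:04}, \eqref{iacc:05} become respectively $\Br(\tx,\ts_3)$, $\Br(\tx,\tt_t)$, $\Br(\ty,\tt_t)$, $\Br(\ty,\ts_3)$, which are exactly \eqref{c1:03}, \eqref{def:04}, \eqref{def:05}, \eqref{c1:04}.

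For the off-diagonal cases I will exploit that conjugation preserves braid relations, stripping from the conjugating word $\tx^{\pm1}\tt_t^{\pm1}$ (resp. $\ty^{\pm1}\tt_t^{\pm1}$) any factor that commutes with the second entry. For the two relations against $\ts_3$, namely \eqref{acc:04} and \eqref{iacc:05}, both $\tt_t$ and $\tt_r$ commute with $\ts_3$ by \eqref{def:03}; hence, after conjugating away the $\tt_t^{\pm1}$ factor and using $\tt_r^{\tx}=\tx^{\iv{\tt_r}}$, $\tt_r^{\iv\tx}=\tx^{\tt_r}$ (consequences of \eqref{def:04}, and the analogues for $\ty$ from \eqref{def:05}), every case collapses to the base relation $\Br(\tx,\ts_3)$ \eqref{c1:03} or $\Br(\ty,\ts_3)$ \eqref{c1:04}. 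This disposes of \eqref{acc:04} and \eqref{iacc:05} uniformly.

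The relation \eqref{acc:05} against $\tt_t$ is handled similarly: conjugating away $\tt_t^{\pm1}$ reduces each case to $\Br(\tt_r^{\tx},\tt_t)$ when $r<t$, or, after conjugating by $\tx$, to $\Br(\tt_r,\tt_t^{\tx})$ when $r>t$; both are instances of \eqref{c1:05}. The remaining relation \eqref{iacc:04} is the delicate one. After the same reduction it requires $\Br(\tt_r^{\ty},\tt_t)$ or $\Br(\tt_r^{\iv\ty},\tt_t)$, and here the sub-cases $r>t$ follow directly from \eqref{c1:06} and \eqref{c1:07}, the parity condition falling on the smaller index $t$ exactly as required there. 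The sub-cases $r<t$, however, are not covered by \eqref{c1:05}--\eqref{c1:07}, because there the $\ty$-conjugation sits on the smaller index $r$; this is the main obstacle.

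To overcome it I will first establish the two ``reversed'' braid relations $\Br(\tt_r,\tt_t^{\ty})$ for $r<t$ with $r\notin\dgen$, and $\Br(\tt_r,\tt_t^{\iv\ty})$ for $r<t$ with $r\in\dgen$ (to which the two $r<t$ sub-cases of \eqref{iacc:04} reduce upon conjugation by $\ty^{\pm1}$). These are proved by the very argument already used for \eqref{c1:05}, but with the roles of $\tx$ and $\ty$ interchanged: starting from $\Co(\tt_t^{\ty},\tt_r^{\tx})$ of \eqref{def:06} (resp. $\Co(\tt_t^{\iv\ty},\tt_r^{\tx})$ of \eqref{def:07}), one first notes $\Br(\tx,\tt_t^{\ty})$ (from $\Br(\tx,\tt_t)$ \eqref{def:04} conjugated by $\ty$, using $\Co(\tx,\ty)$ \eqref{c1:02}), then uses the identity $\tt_r^{\iv{\tt_r}^{\tx}}=\tx$ coming from $\Br(\tt_r,\tx)$ and finally conjugates by $\tt_r^{\tx}$, invoking the commutation relation to convert it into the desired braid relation. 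Feeding these back completes the $r<t$ sub-cases of \eqref{iacc:04}, and hence the lemma.
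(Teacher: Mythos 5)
Your proof is correct and follows essentially the same route as the paper: the relations against $\ts_3$ collapse by conjugation (using $\Co(\tt_r,\ts_3)$, $\Co(\tt_t,\ts_3)$ and $\tt_r^{\tx}=\tx^{\iv{\tt_r}}$, resp.\ the $\ty$-analogue) to the base relations $\Br(\tx,\ts_3)$ and $\Br(\ty,\ts_3)$, while the relations against $\tt_t$ reduce by conjugation to \eqref{c1:05}--\eqref{c1:07}. The one remark is that the ``main obstacle'' you see in the $r<t$ sub-cases of \eqref{iacc:04} is illusory: after conjugating by $\ty^{\pm1}$ the required relations $\Br(\tt_r,\tt_t^{\ty})$ (for $r\notin\dgen$) and $\Br(\tt_r,\tt_t^{\iv{\ty}})$ (for $r\in\dgen$) are exactly \eqref{c1:06} and \eqref{c1:07} read with the pair $(t,r)$ in place of $(r,s)$ there --- the $\ty$-conjugation in those relations sits on the \emph{larger} of the two indices, which here is $t$ --- so your re-derivation of these ``reversed'' relations from \eqref{def:06} and \eqref{def:07} is a correct but redundant repetition of the argument that already established \eqref{c1:06} and \eqref{c1:07}.
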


\begin{proof}

We only show the relations \eqref{acc:04} and \eqref{acc:05}; the other two relations can be shown similarly. To show \eqref{acc:04}, note that by the braid relation $\Br(\tt_r,\tx)$ \eqref{def:04}, the equality \eqref{eq:v} becomes
\[
v_{t,r}=\begin{cases}
\tx&\text{if $r=t$}\\
{\tx}^{\iv{\tt_r}\tt_t}&\text{if $r<t$ and $r\notin\dgen$}\\
{\tx}^{\iv{\tt_r}\iv{\tt_t}}&\text{if $r<t$ and $r\in\dgen$}\\
{\tx}^{\tt_r\iv{\tt_t}}&\text{if $r>t$ and $t\notin\dgen$}\\
{\tx}^{\tt_r\tt_t}&\text{if $r>t$ and $t\in\dgen$}
\end{cases}
\]
So $v_{t,r}=\tx^{A}$, where $A$ satisfies the commutation relation $\Co(A,\ts_3)$ because of the commutation relations $\Co(\tt_t,\ts_3)$ and $\Co(\tt_r,\ts_3)$ by \eqref{def:03}. Then we have
\[{v_{t,r}}^{\ts_3}=\tx^{A\ts_3}=\tx^{\ts_3 A}={\ts_3}^{\iv{\tx}A}={\ts_3}^{\iv{A}\iv{\tx}A}={\ts_3}^{\iv{v_{t,r}}}
\]
which implies \eqref{acc:04}, where the second and the fourth equalities uses the commutation relation $\Co(A,\ts_3)$, and the third equality uses the braid relation $\Br(\tx,\ts_3)$ \eqref{c1:03}.

To show \eqref{acc:05}, for $t=r$, \eqref{acc:05} is $\Br(\tx,\tt_r)$ \eqref{def:04} by $v_{t,r}=\tx$ \eqref{eq:v}. For $r<t$ and $r\notin\dgen$, by \eqref{c1:05}, we have $\Br(\tt_r^{\tx},\tt_t)$. Then we have the following easy implications:
\[\Br(\tt_r^{\tx},\tt_t)\Longrightarrow\tt_r^{\tx}=\tt_t^{\left(\tt_r^{\tx}\right)\tt_t}\Longrightarrow{\left(\tt_r^{\tx\tt_t}\right)}^{\iv{\tt_t}}=\tt_t^{\left(\tt_r^{\tx\tt_t}\right)}\Longrightarrow\Br(\tt_r^{\tx\tt_t},\tt_t)\]
which implies \eqref{acc:05} because $v_{t,r}=\tt_r^{\tx\tt_t}$ in this case by \eqref{eq:v}. The other cases can be proved similarly.

\end{proof}

\begin{lemma}\label{lem:2cases}
The relations \eqref{acc:06}, \eqref{acc:07}, \eqref{iacc:06} and \eqref{iacc:07} hold if $t=r$ or $t=s$.
\end{lemma}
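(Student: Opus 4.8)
The plan is to treat the two boundary cases $t=r$ and $t=s$ by direct substitution into the definitions \eqref{eq:v} and \eqref{eq:v'}, exploiting the fact that in each of them one of the two conjugating data $v_{t,r}$, $v_{t,s}$ collapses to $\tx$ (or to $\ty$ in the primed case). After this collapse the stray factor $\tt_t$ cancels, and every case reduces to an instance of $\Co(\ts_3,\tt_s)$ or $\Co(\tt_r,\ts_3)$ coming from \eqref{def:03}. I expect no genuine obstacle; the only real input beyond bookkeeping is the braid relation between $\tx$ (resp.\ $\ty$) and $\ts_3$ recorded in \eqref{c1:03} (resp.\ \eqref{c1:04}), and the only subtlety is keeping track of which $\dgen$-alternative and which exponent occurs.

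First I would handle $t=r$ for the unprimed relations \eqref{acc:06}, \eqref{acc:07}. Since $s<r=t$, we have $v_{t,r}=\tx$ by \eqref{eq:v}, while $v_{t,s}=\tt_s^{\tx\tt_t}$ if $s\notin\dgen$ and $v_{t,s}=\tt_s^{\tx\iv{\tt_t}}$ if $s\in\dgen$. Conjugating the desired relation by $\iv{\ts_3}\iv{\tt_t}$ (for \eqref{acc:06}) or by $\iv{\ts_3}\tt_t$ (for \eqref{acc:07}), and using $\Co(\tt_t,\ts_3)$ from \eqref{def:03} to cancel the $\tt_t$ factors, leaves in both sub-cases the relation $\Co(\tx^{\iv{\ts_3}},\tt_s^{\tx})$. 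Now $\Br(\tx,\ts_3)$ from \eqref{c1:03} gives $\tx^{\iv{\ts_3}}=\ts_3^{\tx}$, so this becomes $\Co(\ts_3^{\tx},\tt_s^{\tx})$, which is $\Co(\ts_3,\tt_s)$ after conjugating by $\iv{\tx}$; the latter is \eqref{def:03}.

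Next I would handle $t=s$, where now $r>s=t$, so $v_{t,s}=\tx$ while $v_{t,r}=\tt_r^{\iv{\tx}\iv{\tt_t}}$ if $t\notin\dgen$ and $v_{t,r}=\tt_r^{\iv{\tx}\tt_t}$ if $t\in\dgen$. Correspondingly the relation to prove is \eqref{acc:06} (with exponent $\tt_t$ on $v_{t,r}$) when $s=t\notin\dgen$, and \eqref{acc:07} (with exponent $\iv{\tt_t}$) when $s=t\in\dgen$. In either sub-case the exponent exactly cancels the $\tt_t$ inside $v_{t,r}$, so the relevant conjugate of $v_{t,r}$ equals $\tt_r^{\iv{\tx}}$, and the target reduces to $\Co(\tt_r^{\iv{\tx}},\tx^{\ts_3})$. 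Conjugating by $\tx$ and simplifying $(\tx^{\ts_3})^{\tx}=\ts_3$ via $\Br(\tx,\ts_3)$ \eqref{c1:03} leaves $\Co(\tt_r,\ts_3)$, again an instance of \eqref{def:03}.

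Finally, the primed relations \eqref{iacc:06} and \eqref{iacc:07} are entirely parallel: one replaces $\tx$ by $\ty$ throughout, uses $v'_{t,t}=\ty$ from \eqref{eq:v'} and the braid relation $\Br(\ty,\ts_3)$ from \eqref{c1:04} in place of \eqref{c1:03}, and interchanges the roles of the conjugating factors $\ts_3$ and $\tt_t$ as dictated by the different shape of \eqref{iacc:06}, \eqref{iacc:07}. The same cancellation reduces each case to $\Co(\ts_3,\tt_s)$ or $\Co(\tt_r,\ts_3)$ from \eqref{def:03}. Since every sub-case collapses to \eqref{def:03}, no new relation is needed, and the whole argument is a finite, routine case-check; the essential point is simply the braid relation of $\tx$ and $\ty$ with $\ts_3$.
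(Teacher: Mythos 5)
Your proof is correct and follows essentially the same route as the paper: in each boundary case one of $v_{t,r}$, $v_{t,s}$ collapses to $\tx$ (resp.\ $\ty$), and after conjugating away the $\tt_t$ and $\ts_3$ factors using $\Co(\tt_t,\ts_3)$ from \eqref{def:03} and the braid relation $\Br(\tx,\ts_3)$ \eqref{c1:03} (resp.\ $\Br(\ty,\ts_3)$ \eqref{c1:04}), everything reduces to an instance of \eqref{def:03}. The paper merely runs the same chain of conjugations in the forward direction, starting from $\Co(\ts_3,\tt_s)$ and deriving \eqref{acc:06}, which is equivalent since every step is invertible.
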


\begin{proof}
We only show the relation \eqref{acc:06}; the other relations can be shown similarly.

To show \eqref{acc:06}, if $t=r$, we have $v_{t,r}=\tx$ and $v_{t,s}={\tt_s}^{\tx\tt_t}$. By \eqref{def:03}, we have the commutation relations $\Co({\ts_3},{\tt_s})$ and $\Co({\ts_3},{\tt_t})$. So we have the following implications:
\[
\Co({\ts_3},{\tt_s})\Longrightarrow\Co({\ts_3}^{\tx},{\tt_s}^{\tx})\Longrightarrow\Co({\tx}^{\iv{\ts_3}},{\tt_s}^{\tx})\Longrightarrow\Co({\tx}^{\tt_t},{\tt_s}^{\tx\ts_3\tt_t})\\
\Longrightarrow\Co({\tx}^{\tt_t},\left({\tt_s}^{\tx\tt_t}\right)^{\ts_3})
\]
which is \eqref{acc:06}, where the first implication is taking the conjugation by $\tx$, the second implication uses the braid relation $\Br(\tx,\ts_3)$ \eqref{c1:03}, the third implication is taking the conjugation by ${\ts_3\tt_t}$, the fourth implication uses the commutation relation $\Co({\ts_3},{\tt_t})$. The case $t=s$ can be proved similarly.

\end{proof}

Note that in the case when $2g+b-1\leq2$, the assumption in Lemma~\ref{lem:2cases} holds automatically. It follows that all the relations \eqref{ac:01}--\eqref{iac:07} hold in this case. Hence by Lemma~\ref{lem:key}, we have the following result.

\begin{lemma}\label{lem:precase}
If %$\aleph\geq 4$ and
$2g+b-1\leq2$, then the braid twist group $\BT(\surfO)$ has the presentation in Theorem~\ref{thm:pre}.
%there is an group isomorphism from $\widetilde{N}$ to $\BT(\surfO)$, sending $\ts_i$ to $\sigma_i$ and sending $\tt_r$ to $\tau_r$.
\end{lemma}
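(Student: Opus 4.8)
The plan is to assemble the relation-checking lemmas of this appendix and feed the result into Lemma~\ref{lem:key}. Recall that, by Lemma~\ref{lem:key}, it suffices to verify that $\rho(\varepsilon_t)$ and $\rho(\iv{\varepsilon_t})$ preserve the defining relations \eqref{def:01}--\eqref{def:07} of $\widetilde{N}$ for every $t$; equivalently, that the relations \eqref{ac:01}--\eqref{iac:07} hold in $\widetilde{N}$ for all admissible indices. So the whole task reduces to checking that the four lemmas already proved cover every one of these relations under the standing hypothesis.

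First I would record the combinatorial input coming from the hypothesis. When $2g+b-1\leq 2$, every index $r,s,t$ lies in $\{1,2\}$. Among \eqref{ac:01}--\eqref{iac:07}, the only relations carrying a strict inequality $s<r$ are \eqref{ac:06}, \eqref{ac:07}, \eqref{iac:06} and \eqref{iac:07}; for these, $s<r$ forces $s=1$ and $r=2$, and since $t\in\{1,2\}$ we necessarily have $t=s$ or $t=r$. Thus the hypothesis of Lemma~\ref{lem:2cases} is automatically met, which is exactly the observation that collapses the otherwise open general-index cases into the degenerate ones already handled.

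Next I would gather the established relations. Lemma~\ref{lem:b2} yields \eqref{ac:01}--\eqref{ac:03} and \eqref{iac:01}--\eqref{iac:03}. By Lemma~\ref{lem:b3}, the remaining relations \eqref{ac:04}--\eqref{ac:07} and \eqref{iac:04}--\eqref{iac:07} reduce to the auxiliary relations \eqref{acc:04}--\eqref{acc:07} and \eqref{iacc:04}--\eqref{iacc:07}. Of these, \eqref{acc:04}, \eqref{acc:05}, \eqref{iacc:04} and \eqref{iacc:05} are supplied by Lemma~\ref{lem:b4}, while \eqref{acc:06}, \eqref{acc:07}, \eqref{iacc:06} and \eqref{iacc:07} are supplied by Lemma~\ref{lem:2cases}, precisely because the index constraint of the previous paragraph places us in its range of validity. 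Consequently all of \eqref{ac:01}--\eqref{iac:07} hold in $\widetilde{N}$.

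Finally, having verified that $\rho(\varepsilon_t)$ and $\rho(\iv{\varepsilon_t})$ induce endomorphisms of $\widetilde{N}$ for each $t$, I would invoke Lemma~\ref{lem:key} to conclude that the natural map $\widetilde{N}\to\BT(\surfO)$, $\ts_i\mapsto\sigma_i$, $\tt_r\mapsto\tau_r$, is a group isomorphism, exhibiting the presentation of Theorem~\ref{thm:pre} for $\BT(\surfO)$. The argument is essentially bookkeeping, so I do not expect a genuine obstacle here; the only point requiring care is confirming that Lemmas~\ref{lem:b2}, \ref{lem:b3}, \ref{lem:b4} and \ref{lem:2cases} jointly account for \emph{every} relation \eqref{ac:01}--\eqref{iac:07}, and it is exactly the low-rank hypothesis $2g+b-1\leq 2$ that guarantees this by forcing $t\in\{s,r\}$ and thereby eliminating the general-index cases of \eqref{acc:06}--\eqref{acc:07} and \eqref{iacc:06}--\eqref{iacc:07} not treated elsewhere.
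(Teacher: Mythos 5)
Your proposal is correct and follows exactly the paper's argument: the paper likewise notes that $2g+b-1\leq 2$ forces the hypothesis of Lemma~\ref{lem:2cases} to hold automatically, so that together with Lemmas~\ref{lem:b2}, \ref{lem:b3} and \ref{lem:b4} all of the relations \eqref{ac:01}--\eqref{iac:07} hold, and then concludes via Lemma~\ref{lem:key}. Your write-up merely makes the index bookkeeping ($s=1$, $r=2$, hence $t\in\{s,r\}$) explicit.
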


We denote by $\surfO^{g,b}$ a decorated surface $\surfO$ with genus $g$ and with $b$ boundary components. So both $\surfO^{0,3}$ and $\surfO^{1,1}$ satisfy $2g+b-1=2$, i.e. they have generators $\sigma_i$, $1\leq  i\leq \aleph-1$, $\tau_1$ and $\tau_2$, where $\tau_1\notin\dgen$ for $\surfO^{0,3}$ while $\tau_1\in\dgen$ for $\surfO^{1,1}$. They can hence give a model for the relations between at most two $\tau_r$'s in the following sense.

%There are two cases for a decorated surface $\surfO$ satisfying $2g+b-1=2$ has two possible cases: either $g=1$ and $b=1$, or $g=0$ and $b=3$. We denote by the surface

%\begin{proof}
%This follows directly from Lemma~\ref{lem:b2}$\sim$\ref{lem:2cases}
%and Lemma~\ref{lem:key}.
%\end{proof}

\begin{lemma}\label{lem:emb}
For any $1\leq s<r\leq 2g+b-1$, there is a group homomorphism
\[\begin{array}{rl}
\psi_{\{s,r\}}:\BT(\surfO^{0,3})\to\widetilde{N}&\text{if $s\notin\dgen$}\\
\psi_{\{s,r\}}:\BT(\surfO^{1,1})\to\widetilde{N}&\text{if $s\in\dgen$}
\end{array}\]
sending $\sigma_i$ to $\ts_i$, sending $\tau_1$ to $\tt_s$ and sending $\tau_2$ to $\tt_r$.
\end{lemma}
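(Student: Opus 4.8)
The plan is to construct $\psi_{\{s,r\}}$ directly from a presentation of the source group by invoking von Dyck's theorem: it suffices to write down a presentation of $\BT(\surfO^{0,3})$ (resp. $\BT(\surfO^{1,1})$) and to check that the proposed assignment on generators carries every defining relation to a relation that already holds in $\widetilde{N}$. First I would take both model surfaces to carry the same number $\aleph$ of decorating points as the ambient $\surfO$, so that their generating sets $\sigma_1,\dots,\sigma_{\aleph-1},\tau_1,\tau_2$ match the targets $\ts_1,\dots,\ts_{\aleph-1},\tt_s,\tt_r$ on the nose. Since $2\cdot 0+3-1=2=2\cdot 1+1-1$, both model surfaces satisfy $2g+b-1=2$, so Lemma~\ref{lem:precase} supplies exactly the presentation of Theorem~\ref{thm:pre}, in which the only twist generators are $\tau_1$ and $\tau_2$.

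The decisive input is the $\dgen$-membership of the index $1$ in each model. For $\surfO^{0,3}$ one has $g=0$, so $\dgen=\emptyset$ and $1\notin\dgen$, whereas for $\surfO^{1,1}$ one has $g=1$, so $\dgen=\{1\}$ and $1\in\dgen$; consequently the unique cross-relation between $\tau_1$ and $\tau_2$ reads $\Co(\tau_2^y,\tau_1^x)$ for $\surfO^{0,3}$ and $\Co(\tau_2^{\iv{y}},\tau_1^x)$ for $\surfO^{1,1}$. The two cases of the lemma are set up precisely so that this membership of $1$ matches that of $s$. Under the assignment $\sigma_i\mapsto\ts_i$, $\tau_1\mapsto\tt_s$, $\tau_2\mapsto\tt_r$ the words $x=\sigma_1^{\iv{\sigma_2}}$ and $y=\sigma_3^{\iv{\sigma_2}}$ go to $\tx$ and $\ty$ by \eqref{eq:tx} and \eqref{eq:tx ty}. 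I would then run through the relations: \eqref{nr:01}--\eqref{nr:02} among the $\sigma$'s map to \eqref{def:01}--\eqref{def:02}; \eqref{nr:03}, \eqref{nr:04}, \eqref{nr:05} map to \eqref{def:03}, \eqref{def:04}, \eqref{def:05}; and the cross-relation maps to $\Co(\tt_r^{\ty},\tt_s^{\tx})$, which is \eqref{def:06} and holds because $s<r$ with $s\notin\dgen$, in the $\surfO^{0,3}$ case, and to $\Co(\tt_r^{\iv{\ty}},\tt_s^{\tx})$, which is \eqref{def:07} and holds because $s<r$ with $s\in\dgen$, in the $\surfO^{1,1}$ case. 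Every defining relation thus maps to a valid relation of $\widetilde{N}$, so $\psi_{\{s,r\}}$ extends to a group homomorphism.

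The argument is almost entirely bookkeeping, and the only genuine point of care is the parity matching just described, which is exactly what forces the two-case split in the statement. The form of the single nontrivial relation between a pair of twist generators is governed solely by whether the smaller index lies in $\dgen$, and the two model surfaces $\surfO^{0,3}$ and $\surfO^{1,1}$ are chosen precisely to reproduce the two possible forms; once this is lined up, no computation beyond reading off \eqref{def:01}--\eqref{def:07} is required.
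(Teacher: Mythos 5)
Your proposal is correct and is essentially the paper's argument: the paper's proof consists of the single sentence that the lemma ``follows directly from Lemma~\ref{lem:precase} and the presentation of $\widetilde{N}$,'' with the parity observation ($1\notin\dgen$ for $\surfO^{0,3}$ since $g=0$, while $1\in\dgen$ for $\surfO^{1,1}$ since $g=1$) recorded in the sentence preceding the lemma. You have simply spelled out the von Dyck verification that the paper leaves implicit.
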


\begin{proof}
This follows directly from Lemma~\ref{lem:precase} and the presentation of $\widetilde{N}$.
\end{proof}

Now we can prove the relations in Lemma~\ref{lem:2cases} for the general case.

\begin{lemma}\label{lem:last4}
The relations \eqref{acc:06}, \eqref{acc:07}, \eqref{iacc:06} and \eqref{iacc:07} hold.
\end{lemma}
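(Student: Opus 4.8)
The plan is to dispose of the index configurations not covered by Lemma~\ref{lem:2cases}, namely those with $t\notin\{s,r\}$, and to reduce each of \eqref{acc:06}, \eqref{acc:07}, \eqref{iacc:06}, \eqref{iacc:07} to relations already available. The inputs I would use are: the explicit expressions \eqref{eq:v} and \eqref{eq:v'} for $v_{t,r},v_{t,s},v'_{t,r},v'_{t,s}$; the commutation relations $\Co(\tt_t,\ts_3)$, $\Co(\tt_r,\ts_3)$, $\Co(\tt_s,\ts_3)$ from \eqref{def:03}; the braid relations \eqref{c1:05}, \eqref{c1:06}, \eqref{c1:07} between $\tt_r$ and the $\tx$-conjugates of $\tt_s$, together with their analogues for the pairs $\{t,r\}$ and $\{t,s\}$; and, crucially, all \emph{two-generator} relations among $\ts_i,\tt_s,\tt_r$, which are transported into $\widetilde{N}$ by the homomorphism $\psi_{\{s,r\}}$ of Lemma~\ref{lem:emb} applied to the presentation of the model group $\BT(\surfO^{0,3})$ or $\BT(\surfO^{1,1})$ supplied by Lemma~\ref{lem:precase}. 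Since such a $\psi$ exists for every pair of indices, I likewise obtain all two-generator relations for the pairs $\{t,r\}$ and $\{t,s\}$.

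It suffices to treat \eqref{acc:06}, the arguments for \eqref{acc:07}, \eqref{iacc:06}, \eqref{iacc:07} being formally identical after replacing $v$ by $v'$, exchanging the roles of $\tt_t$ and $\ts_3$, and adjusting the sign of the conjugator according to whether the relevant index lies in $\dgen$. With $t\notin\{s,r\}$ there are three subcases, $t<s<r$, $s<t<r$, and $s<r<t$, each split further by the membership of $t$ and $r$ in $\dgen$. In every subcase I would first substitute \eqref{eq:v} and simplify the outer conjugations: the factor $v_{t,r}^{\tt_t}$ collapses (the trailing $\iv{\tt_t}$ or $\tt_t$ cancelling against the conjugating $\tt_t$) precisely in the cases $r<t,\ r\in\dgen$, giving $\tt_r^{\tx}$, and $r>t,\ t\notin\dgen$, giving $\tt_r^{\iv{\tx}}$, while in $v_{t,s}^{\ts_3}$ I would slide $\ts_3$ inward past $\tt_t$ using $\Co(\tt_t,\ts_3)$. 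The outcome is a commutation relation between a conjugate of $\tt_r$ and a conjugate of $\tt_s$ in which $\tt_t$ still occurs; the heart of the argument is then to push $\tt_t$ out using the braid relations for the pairs $\{t,r\}$ and $\{t,s\}$ (which rewrite $\tt_r^{\pm\tx}$ and $\tt_s^{\pm\tx}$ against $\tt_t$ via the identity $a^b=b^{\iv{a}}$), so as to land on a relation in $\tt_r,\tt_s$ and the $\ts_i$ only. That final relation holds in $\widetilde{N}$ because it is the $\psi_{\{s,r\}}$-image of a relation of the two-generator model group.

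I expect the main obstacle to be the middle case $s<t<r$. There $\tt_t$ interacts with $\tt_r$ through the branch $r>t$ of \eqref{eq:v} and simultaneously with $\tt_s$ through the branch $s<t$, so the two occurrences of $\tt_t$ cannot be cleared by a single cancellation; disentangling them requires applying the braid relations of \emph{both} pairs $\{t,r\}$ and $\{t,s\}$ in the correct order and tracking the parity-dependent signs carefully. Once this case is settled, assembling the subcases and invoking Lemma~\ref{lem:2cases} for $t\in\{s,r\}$ establishes \eqref{acc:06}, \eqref{acc:07}, \eqref{iacc:06}, \eqref{iacc:07} in general, and hence, through Lemma~\ref{lem:b3} and Lemma~\ref{lem:key}, completes the proof of Theorem~\ref{thm:pre}.
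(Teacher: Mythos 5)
Your overall skeleton is the right one and matches the paper's: reduce to $t\notin\{s,r\}$ via Lemma~\ref{lem:2cases}, split into the six index/parity configurations, and import two-generator relations into $\widetilde{N}$ through the homomorphisms $\psi$ of Lemma~\ref{lem:emb} applied to each of the pairs $\{s,r\}$, $\{t,r\}$, $\{t,s\}$; your observation about when $v_{t,r}^{\tt_t}$ collapses is also correct. But the step you call the heart of the argument --- pushing $\tt_t$ out so as to ``land on a relation in $\tt_r,\tt_s$ and the $\ts_i$ only'', which would then be the $\psi_{\{s,r\}}$-image of a model relation --- does not work. Take case $t<s<r$ with $t\notin\dgen$: after the collapse and after sliding $\ts_3$ past $\tt_t$, the target is $\Co(\tt_r^{\iv{\tx}\iv{\ts_3}},\tt_s^{\iv{\tx}\iv{\tt_t}})$, in which $\tt_t$ occurs exactly once. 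The braid relations of type \eqref{c1:05}--\eqref{c1:07} only give identities of the form $a^{\iv{b}}=b^{a}$, which exchange which of $\tt_t,\tt_s$ sits in the base and which in the conjugator; they never cancel that single occurrence of $\tt_t$. The relation is irreducibly a three-index relation and cannot be the image of any single $\psi_{\{s,r\}}$.

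What is actually needed --- and what the paper supplies --- is a decomposition identity, verified topologically in the two-$\tau$ model surface for the pair $\{t,s\}$ and transported by $\psi_{\{t,s\}}$, of the shape
\[
{\tt_s}^{\iv{\tx}\iv{\tt_t}}=\left(\tt_t^{\iv{\ty}\ts_4}\right)^{\iv{\left({\tt_s}^{\iv{\tx}\iv{\ty}\ts_4}\right)}},
\]
which rewrites the mixed conjugate as a conjugate of a pure $\tt_t$-term by a pure $\tt_s$-term. One then checks, via $\psi_{\{t,r\}}$ and $\psi_{\{s,r\}}$ respectively, that $\tt_r^{\iv{\tx}\iv{\ts_3}}$ commutes with each constituent $\tt_t^{\iv{\ty}\ts_4}$ and $\tt_s^{\iv{\tx}\iv{\ty}\ts_4}$; commuting with a conjugate of a commuting element by a commuting element then yields the target, and a final conjugation by $\ts_3$ gives \eqref{acc:06}. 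Note that the auxiliary conjugators involve $\ts_4$, which is why the argument first reduces to $\aleph\geq 5$ (the remaining cases being already covered by Lemma~\ref{lem:precase}). Your plan omits both this reduction and, more importantly, the decomposition step, without which none of the six subcases with $t\notin\{s,r\}$ closes.
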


\begin{proof}
By Lemma~\ref{lem:precase}, we may assume $\aleph\geq 5$. We only show the relation \eqref{acc:06}; the other relations can be showed similarly.
Due to Lemma~\ref{lem:2cases}, the case when $t= r$ or $t= s$ has been proved. Since the assumption for \eqref{acc:06} is $s<r$ and $s\notin\dgen$, we still need to consider the following cases: (1) $t<s<r$ and $t\notin \dgen$, (2) $s<t<r$ and $t\notin\dgen$, (3) $s<r<t$ and $r\notin\dgen$, (4) $t<s<r$ and $t\in\dgen$, (5) $s<t<r$ with $t\in\dgen$, (6) $s<r<t$ with $r\in\dgen$. We only prove the case (1), since the other cases can be proved similarly.

For case (1), we have $t<s<r$, $s\notin\dgen$ and $t\notin \dgen$. In the surface $\surfO^{0,3}$, using Lemma~\ref{rmk:conj} and Lemma~\ref{lem:btrel}, one can easily deduce the relations ${\tau_2}^{\iv{x}\iv{\tau_1}}={\left(\tau_1^{\iv{y}\sigma_4}\right)}^{\iv{\left({\tau_2}^{\iv{x}\iv{y}\sigma_4}\right)}}$, $\Co({\tau_2}^{\iv{x}\iv{\sigma_3}},{\tau_1}^{\iv{x}\iv{y}\sigma_4})$ and $\Co({\tau_2}^{\iv{x}\iv{\sigma_3}},{\tau_1}^{\iv{y}\sigma_4})$. By Lemma~\ref{lem:emb}, these three relations can be transfered to the following relations in $\widetilde{N}$: ${\tt_s}^{\iv{\tx}\iv{\tt_t}}={\left(\tt_t^{\iv{\ty}\ts_4}\right)}^{\iv{\left({\tt_s}^{\iv{\tx}\iv{\ty}\ts_4}\right)}}$, $\Co({\tt_r}^{\iv{\tx}\iv{\ts_3}},{\tt_s}^{\iv{\tx}\iv{\ty}\ts_4})$ and $\Co({\tt_r}^{\iv{\tx}\iv{\ts_3}},{\tt_t}^{\iv{\ty}\ts_4})$. So we have the relation $\Co({\tt_r}^{\iv{\tx}\iv{\ts_3}},{\tt_s}^{\iv{\tx}\iv{\tt_t}})$. Conjugated by $\ts_3$, we get $\Co({\tt_r}^{\iv{\tx}},{\tt_s}^{\iv{\tx}\iv{\tt_t}\ts_3})$, which implies $\Co({v_{t,r}}^{\tt_t},{v_{t,s}}^{\ts_3})$ \eqref{acc:06} because in this case  $v_{t,r}={\tt_r}^{\iv{\tx}\iv{\tt_t}}$ and $v_{t,s}={\tt_s}^{\iv{\tx}\iv{\tt_t}}$ by \eqref{eq:v}.

\end{proof}

%=========================================================
\section{Calculations for the proof of Proposition~\ref{pp:invariant}}\label{app:cal}
%=========================================================

In the following proofs, the relations and conjugations used for an equivalence, implication or equality will be labeled there.

\begin{lemma}\label{lem:mut6}
If $l=c^{\iv{b}}$, $\Co(a,e), \Br(a,b)$ and $\Br(b,e)$ hold,
then \[\Co(c^{ae},b)\Longleftrightarrow\Co(b^{ae},l).\]
\end{lemma}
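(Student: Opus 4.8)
The plan is to show that both sides of the claimed equivalence reduce to one and the same commutation relation, namely $\Co(l,b^{ae})$, so that the asserted $\Longleftrightarrow$ becomes a tautology (and both directions are handled simultaneously). Throughout I will use two elementary properties of $\Co$: it is symmetric, $\Co(x,y)\Leftrightarrow\Co(y,x)$, and it is invariant under simultaneous conjugation, $\Co(x,y)\Leftrightarrow\Co(x^{g},y^{g})$ for every $g$ (immediate from $x^{g}y^{g}=(xy)^{g}$). I will also use the standard consequences of a braid relation $\Br(p,q)$, namely $p^{q}=q^{\iv p}$ and $q^{p}=p^{\iv q}$, which follow from the identity $\iv{(p^{q})}=\iv{p}^{q}$ recorded in Section~\ref{sec:notation} and are used in the same style in the proof of Lemma~\ref{lem:a}.

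First I would dispose of the right-hand side, which is trivial: by symmetry $\Co(b^{ae},l)\Leftrightarrow\Co(l,b^{ae})$. For the left-hand side I would rewrite the hypothesis $l=c^{\iv b}$ as $c=l^{b}$, so that $c^{ae}=(l^{b})^{ae}=l^{bae}$. Conjugating by $\iv{(bae)}$ and using conjugation-invariance of $\Co$ gives
\[
\Co(c^{ae},b)\Longleftrightarrow\Co(l^{bae},b)\Longleftrightarrow\Co\bigl(l,\,b^{\iv{(bae)}}\bigr).
\]
Comparing the two reductions, the lemma will follow once I establish the single word identity
\[
b^{\iv{(bae)}}=b^{ae}
\]
in the group generated by $a,b,c,e$ modulo the hypotheses $\Co(a,e)$, $\Br(a,b)$, $\Br(b,e)$.

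This remaining identity is where all three hypotheses get consumed, and it is the main (though entirely routine) computational step. I would expand the left side from the inside out as $b^{\iv{(bae)}}=((b^{\iv e})^{\iv a})^{\iv b}$: the relation $\Br(b,e)$ turns $b^{\iv e}$ into $e b\iv e$, then $\Co(a,e)$ lets me slide $a$ past $e$ and $\Br(a,b)$ rewrites $ab\iv a=\iv b a b$, giving $(b^{\iv e})^{\iv a}=e\,\iv b a b\,\iv e$; a final conjugation by $\iv b$, again using $\Br(b,e)$ (in the form $b e\iv b=\iv e b e$ and $b\iv e\iv b=\iv e\iv b e$) and then $\Co(a,e)$ to collapse $e a\iv e=a$, yields $\iv e b a\iv b e$. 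On the other hand $b^{ae}=\iv e\,\iv a b a\,e$, and $\Br(a,b)$ rewrites $\iv a b a=b a\iv b$, so $b^{ae}=\iv e b a\iv b e$ as well. Thus both conjugates equal $\iv e b a\iv b e$, proving $b^{\iv{(bae)}}=b^{ae}$ and closing the argument. I expect no genuine obstacle beyond bookkeeping; the only point to watch is keeping the conjugation conventions ($x^{g}=\iv g x g$ and $x^{gh}=(x^{g})^{h}$) consistent, so that the two chains of rewrites really land on the same reduced word.
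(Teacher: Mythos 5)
Your proof is correct and follows essentially the same route as the paper: your key word identity $b^{\iv{(bae)}}=b^{ae}$ is, after conjugating by $b$ and commuting $a$ past $e$, exactly the paper's identity $b^{\iv{ea}}=b^{aeb}$, and both arguments then finish by conjugating the commutation relation and substituting $l=c^{\iv{b}}$. The only (harmless) quibble is that rewriting $b^{\iv{e}}$ as $eb\iv{e}$ is just the definition of conjugation, not a use of $\Br(b,e)$.
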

\begin{proof}
We have
\[b^{\iv{ea}}\xlongequal{\Co(a,e)}b^{\iv{ae}}\xlongequal{\Br(a,b)}a^{b\iv{e}}\xlongequal{\Co(a,e)}a^{eb\iv{e}}\xlongequal{\Br(b,e)}a^{\iv{b}eb}\xlongequal{\Br(a,b)}b^{aeb}\]
and hence
\[\Co(c^{ae},b)\xLongleftrightarrow{?^{\iv{e}\iv{a}}}
\Co(c,b^{\iv{ea}})\xLongleftrightarrow{\text{above}}\Co(c,b^{aeb})
\xLongleftrightarrow{?^{\iv{b}}}\Co(c^{\iv{b}},b^{ae})
\xLongleftrightarrow{l=c^{\iv{b}}}\Co(l,b^{ae}).
\]
\end{proof}

\begin{lemma}\label{lem:mut4}
If $h=b^{\iv{e}},\Br(a,b),\Br(b,e),\Br(a,c),\Br(a,e),\Br(e,c),\Br(e^b,c)$ hold,
then
\[\Br(a^b,c)\Longleftrightarrow\Br(h^{ec},a),\quad
\Br(c^{ae},b)\Longleftrightarrow\Br(c^a,h),\quad
\Br(c^{ea},b)\Longleftrightarrow\Br(h^{ce},a)\\
.\]
\end{lemma}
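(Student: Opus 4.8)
The plan is to prove the three equivalences separately, and in each case to reduce one side to the other by a short chain of conjugations together with the single identity $x^y=y^{\iv{x}}$, which holds whenever $\Br(x,y)$ does (this is the identity used at the end of the proof of Proposition~\ref{prop:simple}). The only real computations are the simplifications of the conjugated words $h^{ec}$ and $h^{ce}$, which I would carry out first using the two presentations $h=b^{\iv{e}}=eb\iv{e}$ of the mutated generator.

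For the first equivalence $\Br(a^b,c)\Longleftrightarrow\Br(h^{ec},a)$, the key observation is that $h^{ec}=\iv{c}\iv{e}(eb\iv{e})ec=\iv{c}bc=b^c$, so the right-hand side is just $\Br(b^c,a)$. I would then chain: by $\Br(a,b)$ write $a^b=b^{\iv{a}}$; conjugate by $a$ to reach $\Br(b,c^a)$; apply $\Br(a,c)$ in the form $c^a=a^{\iv{c}}$; and finally conjugate by $c$ to land on $\Br(b^c,a)$. The second equivalence $\Br(c^{ae},b)\Longleftrightarrow\Br(c^a,h)$ is immediate: since $c^{ae}=(c^a)^e$, conjugating $\Br((c^a)^e,b)$ by $\iv{e}$ yields $\Br(c^a,b^{\iv{e}})=\Br(c^a,h)$, using only the definition $h=b^{\iv{e}}$.

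The third equivalence $\Br(c^{ea},b)\Longleftrightarrow\Br(h^{ce},a)$ is where the genuine difficulty lies, since here both the arguments and the exponents have to be reconciled. The two facts that make it work are that $\iv{h}e=e\iv{b}$ (a one-line consequence of $h=eb\iv{e}$) and that the hypothesis $\Br(e^b,c)$ is literally the relation $\Br(h,c)$, because $e^b=b^{\iv{e}}=h$ by $\Br(b,e)$. With these in hand I would treat the right-hand side as $h^{ce}=(h^c)^e=(c^{\iv{h}})^e=c^{\iv{h}e}=c^{e\iv{b}}$, using $\Br(h,c)$ for the second equality; conjugating $\Br(c^{e\iv{b}},a)$ by $b\iv{e}$ then gives $\Br(c,a^{b\iv{e}})$, and $\Br(a,b)$ rewrites $a^{b\iv{e}}$ as $b^{\iv{a}\iv{e}}$. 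For the left-hand side, conjugating $\Br(c^{ea},b)$ by $\iv{a}\iv{e}$ gives $\Br(c,b^{\iv{a}\iv{e}})$ directly. Thus both sides collapse to the single common relation $\Br(c,b^{\iv{a}\iv{e}})$, which establishes the equivalence.

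I expect the main obstacle to be exactly this last step: the bookkeeping of tracking which braid relation legitimizes each conjugation, and verifying that the two exponent words really do simplify to the same element $b^{\iv{a}\iv{e}}$. The first two equivalences are essentially pure conjugation arguments, whereas the third requires recognizing the nonobvious intermediate normal form $h^{ce}=c^{e\iv{b}}$ before the two sides can be matched; once that form is found, the remaining manipulations are routine.
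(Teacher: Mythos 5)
Your proof is correct and follows essentially the same route as the paper's: the same simplifications $h^{ec}=b^c$ and $c^{ae}=(c^a)^e$, and the same use of the flip identity $x^y=y^{\iv{x}}$ together with conjugation. For the third equivalence you reduce both sides to the common relation $\Br(c,b^{\iv{a}\iv{e}})$ rather than chaining directly from one side to the other, but your intermediate relations are just conjugates of the paper's (which passes through $\Br(c^e,a^b)$), so the argument is the same in substance.
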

\begin{proof}
For the first one, we have
\[
\Br(h^{ec},a)\xLongleftrightarrow{h=b^{\iv{e}}}\Br(b^c,a)\xLongleftrightarrow[\Br(a,c)]{?^{\iv{c}}}\Br(b,c^a)\xLongleftrightarrow[\Br(a,b)]{?^{\iv{a}}}\Br(a^b,c).
\]
For the second one, we have
$
\Br(c^a,h)\xLongleftrightarrow[?^e]{h=b^{\iv{e}}}\Br(c^{ae},b).
$
For the third one,
we have $\Br(e^b,c)\xLongrightarrow[?^e]{\Br(b,e)}\Br(b,c^e)$ and hence we have
\[
\Br(h^{ce},a)\xLongleftrightarrow{h=b^{\iv{e}}}\Br(b^{\left(c^e\right)},a)\xLongleftrightarrow[?^{b}]{\Br(b,c^e)}\Br(c^e,a^b)\xLongleftrightarrow[?^a]{\Br(a,b)}\Br(c^{ea},b).
\]
\end{proof}

\begin{lemma}\label{lem:mut4'}
In \eqref{eq:mut4'}, the last two equivalences
follow from the relations above them.
\end{lemma}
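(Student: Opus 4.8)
The plan is to reduce both target equivalences to a single elementary comparison and then supply the missing step with the triangle relations. First I would record that a braid relation is invariant under simultaneous conjugation and that $l=c^{\iv b}$ means $c=l^b$. Conjugating $\Br(c^{ae},b)$ by $\iv{e}\,\iv{a}$ collapses $c^{ae}$ to $c$ (since $ae\iv e\iv a=1$), giving $\Br(c,b^{\iv e\iv a})$; conjugating $\Br(b^{ae},l)$ by $b$ turns $l$ into $c$, giving $\Br(c,b^{aeb})$. Thus the first claimed equivalence is exactly
\[\Br(c,b^{\iv e\iv a})\Longleftrightarrow\Br(c,b^{aeb}),\]
and, running the same normalisation with $ae$ replaced by $ea$, the second becomes $\Br(c,b^{\iv a\iv e})\Longleftrightarrow\Br(c,b^{eab})$.

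Next I would put the type-$4^\circ$ relations into a usable shape. Since $\Br(a,b)$ gives $a^b=b^{\iv a}$, conjugating $\Br(a^b,c)$ by $a$ yields $\Br(c^a,b)$, and likewise $\Br(e^b,c)$ becomes $\Br(c^e,b)$; together with $\Br(a,c)$, $\Br(e,c)$, $\Br(a,e)$ these are the only tools allowed. The strategy is then to exhibit an explicit chain of conjugations carrying $\Br(c,b^{\iv e\iv a})$ to $\Br(c,b^{aeb})$: peel $b^{\iv e\iv a}$ apart using $b^{\iv e}=e^b$ (from $\Br(e,b)$) and $a^b=b^{\iv a}$, move the resulting conjugates $c^a$, $c^e$ across $b$ by $\Br(c^a,b)$, $\Br(c^e,b)$, and slide $c$ past $a,e$ with $\Br(a,c)$, $\Br(e,c)$. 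The identity $aea=eae$ is what interchanges the two orders, which ties the $ae$-case to the $ea$-case and suggests proving the two equivalences in tandem, in the labelled-chain style of Lemma~\ref{lem:mut4}.

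The hard part will be this middle step. In the commutative model of Lemma~\ref{lem:mut6} the analogous reduction succeeds for free because $b^{\iv{ea}}=b^{aeb}$ holds \emph{identically} in $\langle a,b,e\rangle$; here, with $\Br(a,e)$ in place of $\Co(a,e)$, this pure identity fails. Indeed, in the affine Weyl group of type $\widetilde{A}_2$, where $a,b,e$ are three pairwise-braiding reflections, one checks directly that $(aeb)^2\neq(bae)^2$, i.e. $b^{\iv e\iv a}\neq b^{aeb}$. Consequently the equivalence genuinely cannot be obtained inside $\langle a,b,e\rangle$ and must be routed through $c$ via $\Br(c^a,b)$ and $\Br(c^e,b)$: the delicate point is that the discrepancy between $b^{aeb}$ and $b^{\iv e\iv a}$ is absorbed \emph{precisely} by these two triangle relations, and arranging the conjugations so that every intermediate expression is a legal application of an available relation is where the bookkeeping becomes intricate. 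I expect the cleanest write-up to fix the order $ae$ throughout, prove $\Br(c,b^{\iv e\iv a})\Leftrightarrow\Br(c,b^{aeb})$ by one such chain, and then deduce the $ea$-statement either by the symmetric chain or by transporting the first through $aea=eae$.
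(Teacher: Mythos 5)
Your setup is correct but the proof is not there. The two normalisations you perform are sound: conjugating $\Br(c^{ae},b)$ by $\iv{e}\,\iv{a}$ and $\Br(b^{ae},l)$ by $b$ does reduce the first target equivalence to $\Br(c,b^{\iv{e}\iv{a}})\Longleftrightarrow\Br(c,b^{aeb})$, and your derivation of $\Br(c^a,b)$ and $\Br(c^e,b)$ from the listed relations is valid. But this reduction is itself only a conjugate restatement of the claim (conjugating the first statement by $ae$ and the second by $\iv{b}$ returns you exactly to $\Br(c^{ae},b)$ and $\Br(b^{ae},l)$), so no ground has been gained. The entire content of the lemma is the chain of conjugations you describe as ``the hard part'' and then defer with ``I expect the cleanest write-up to\dots''; your observation that the identity $b^{\iv{ea}}=b^{aeb}$ fails once $\Co(a,e)$ is replaced by $\Br(a,e)$ correctly explains why the argument of Lemma~\ref{lem:mut6} does not transfer, but it is a diagnosis of the obstacle, not a way past it. As written, the proposal is a plan with its central step missing.

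For comparison, the actual argument is a short explicit chain that pivots on a statement involving $e^{bc}$ rather than on a comparison of two conjugates of $b$:
\[\Br(b^{ae},l)\xLeftrightarrow{?^{\iv{e}}}\Br(b^a,l^{\iv{e}})
\xLongleftrightarrow[\Br(a,b)]{\Br(e,l)}\Br(a^{\iv{b}},e^l)
\xLongleftrightarrow[?^{b}]{l=c^{\iv{b}}}\Br(a,e^{bc})
\xLongleftrightarrow[\Br(a,c)]{\Br(e,b)}\Br(c^a,b^{\iv{e}})
\xLeftrightarrow{?^{e}}\Br(c^{ae},b),\]
where the second step rewrites $b^a=a^{\iv{b}}$ and $l^{\iv{e}}=e^l$, the third uses $lb=bc$, and the fourth uses $e^{bc}=(b^{\iv{e}})^c$ and $a^{\iv{c}}=c^a$. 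Note that this route needs only $\Br(e,l)$ (equivalently $\Br(e^b,c)$) and $\Br(a,c)$ from the rows above, not both of $\Br(c^a,b)$ and $\Br(c^e,b)$ as your sketch anticipates. If you want to salvage your framing, you must supply an analogous explicit chain from $\Br(c,b^{\iv{e}\iv{a}})$ to $\Br(c,b^{aeb})$; until then the lemma is unproved.
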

\begin{proof}
We only show the first one, since the second one is similar.
\[\Br(b^{ae},l)\xLeftrightarrow{?^{\iv{e}}}\Br(b^a,l^{\iv{e}})
\xLongleftrightarrow[\Br(a,b)]{\Br(e,l)}\Br(a^{\iv{b}},e^l)
\xLongleftrightarrow[?^{b}]{l=c^{\iv{b}}}\Br(a,e^{bc})
\xLongleftrightarrow[\Br(a,c)]{\Br(e,b)}\Br(c^a,b^{\iv{e}})
\xLeftrightarrow{?^{e}}\Br(c^{ae},b)
\]
%and
%\[\Br(b^{ea},l)\Leftrightarrow\Br(b^e,l^{\iv{a}})
%\xLongleftrightarrow[\Br(e,b)]{\Br(a,l)}\Br(e^{\iv{b}},a^l)
%\xLongleftrightarrow{l=c^{\iv{b}}}\Br(e,a^{bc})
%\xLongleftrightarrow[\Br(e,c)]{\Br(a,b)}\Br(c^e,b^{\iv{a}})
%\Leftrightarrow\Br(c^{ea},b).
%\]
\end{proof}

\begin{lemma}\label{lem:mut5'}
If $g=a^{\iv{f}},\Br(e,c), \Br(g,b),\Co(g,e)$ and $\Br(f,a)$ hold,
then we have
\[\Co(e,f^{abc})\Longleftrightarrow\Co(c^{ge},b).\]
\end{lemma}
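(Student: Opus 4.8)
The plan is to rewrite both sides of the claimed equivalence so that they collapse to one common commutation relation. The key observation is that the braid relation $\Br(f,a)$ gives the identity $a^{\iv f}=f^a$; this is the standard fact $y^{\iv x}=x^y\Leftrightarrow\Br(x,y)$ already invoked in the proof of Proposition~\ref{prop:simple}. Hence the hypothesis $g=a^{\iv f}$ can be rewritten as $g=f^a$, and therefore
\[ f^{abc}=(f^a)^{bc}=g^{bc}. \]
This replaces the length-three conjugator in $f^{abc}$ by the much shorter $g^{bc}$ and immediately turns the left-hand relation $\Co(e,f^{abc})$ into $\Co(e,g^{bc})$.

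Next I would reduce each of the two relations to $\Co(c^e,g^b)$. For the left-hand side, conjugating $\Co(e,g^{bc})$ by $\iv c$ yields $\Co(e^{\iv c},g^b)$; since $\Br(e,c)$ gives $e^{\iv c}=c^e$, this is exactly $\Co(c^e,g^b)$. For the right-hand side, the commutation $\Co(g,e)$ lets me write $c^{ge}=c^{eg}=(c^e)^g$, so that $\Co(c^{ge},b)$ becomes $\Co((c^e)^g,b)$; conjugating by $\iv g$ and using $\Br(g,b)$ in the form $b^{\iv g}=g^b$ turns it into $\Co(c^e,b^{\iv g})=\Co(c^e,g^b)$ as well. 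Since each of the two relations is equivalent to the single relation $\Co(c^e,g^b)$, they are equivalent to one another, which is the assertion.

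The only step requiring care is the bookkeeping of the conjugation/braid identities $y^{\iv x}=x^y$ under $\Br(x,y)$ and ensuring each conjugation is applied to \emph{both} arguments of $\Co$; there is no genuine obstacle here. It is worth noting that all five hypotheses are used and nothing more: $\Br(f,a)$ to get $g=f^a$, $\Br(e,c)$ on the left reduction, and $\Co(g,e)$ together with $\Br(g,b)$ on the right reduction, with $\Co(g,e)$ entering exactly once to commute the conjugator $ge$ into $eg$.
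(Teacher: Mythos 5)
Your proof is correct and follows essentially the same route as the paper's: rewrite $g=a^{\iv f}$ as $g=f^a$ via $\Br(f,a)$ to turn $f^{abc}$ into $g^{bc}$, then conjugate by $\iv c$ using $\Br(e,c)$, handle the $b$-side with $\Br(g,b)$, and swap $ge$ with $eg$ via $\Co(g,e)$. The only cosmetic difference is that you reduce both sides to the common relation $\Co(c^e,g^b)$, whereas the paper writes the same steps as a single left-to-right chain of equivalences.
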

\begin{proof}
Note that $g=a^{\iv{f}}$ and $\Br(f,a)$ imply $g=f^a$. Then we have
\[
\Co(e,f^{abc})\xLongleftrightarrow{g=f^a}\Co(e,g^{bc})\xLongleftrightarrow[\Br(e,c)]{?^{\iv{c}}}\Co(c^e,g^{b})\xLongleftrightarrow[?^g]{\Br(g,b)}\Co(c^{eg},b)\xLongleftrightarrow{\Co(g,e)}\Co(c^{ge},b).
\]
\end{proof}

\begin{lemma}\label{lem:mut5}
	Let $(Q,W)$ be the last quiver with potential in Figure~\ref{fig:local}. Then $\Br(Q,W)$ admits the following presentation.
	\begin{itemize}
		\item Generators: $a,b,c,e,f$.
		\item Relations: the relations in Lemma~\ref{lem:a} for the 3-cycle between $\{f,a,e\}$ and
		\[\Br(a,b),\ \Br(a,c),\ \Br(e,b),\ \Br(e,c),\ \Co(f,b),\ \Co(f,c),\ \Co(e,f^{abc}	).\]
	\end{itemize}
\end{lemma}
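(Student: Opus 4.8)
The plan is to prove Lemma~\ref{lem:mut5} by a Tietze reduction: I will show that the short list in the statement presents the same group as the full presentation of $\Br(Q,W)$ dictated by Definition~\ref{def:BrQP} for the last quiver of Figure~\ref{fig:local}. That full presentation consists of the braid/commutation relations among all pairs coming from $1^\circ$ and $2^\circ$, the $3^\circ$-relation for the $3$-cycle $\{a,e,f\}$, the two double-arrow relations $\Br(a^b,c)$ and $\Br(e^b,c)$ from $4^\circ$, the two relations $\Br(c^{ae},b)$ and $\Br(c^{ea},b)$ from $6^\circ$, and the relation $\Co(e,f^{abc})$ from $7^\circ$. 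Every relation in the statement occurs among these (the relations for $\{f,a,e\}$ are precisely those furnished by Lemma~\ref{lem:a}), so one inclusion is free; the content is to recover the four relations $\Br(c^{ae},b)$, $\Br(c^{ea},b)$, $\Br(a^b,c)$, $\Br(e^b,c)$ from the reduced list, all computations taking place in the group presented by that list.

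The first move is to normalise $7^\circ$. Using $\Co(f,b)$ and $\Co(f,c)$ to commute $f$ past $b$ and $c$, together with the braid relation $\Br(a,f)$ from the $\{a,e,f\}$-relations to rewrite $\iv a f a$, I expect $f^{abc}=(a^{bc})^{\iv f}$, so that $\Co(e,f^{abc})$ is equivalent to $\Co(e^f,a^{bc})$. The crux is then to trade the auxiliary vertex $f$ for the $f$-free braid relations of $6^\circ$. For this I would introduce $g:=a^{\iv f}=f^a$ and apply Lemma~\ref{lem:mut5'}: its hypotheses hold in the reduced list, since $\Br(f,a)$ and $\Co(g,e)=\Co(f^a,e)$ are among the $\{a,e,f\}$-relations, $\Br(e,c)$ is listed, and $\Br(g,b)=\Br(a^{\iv f},b)$ comes from conjugating $\Br(a,b)$ by $\iv f$ and using $b^{\iv f}=b$; hence Lemma~\ref{lem:mut5'} rewrites $7^\circ$ as $\Co(c^{ge},b)$. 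It then remains to eliminate $f$ hidden inside $g=f^a$: using $\Co(f,c)$, the braids $\Br(a,c)$ and $\Br(e,c)$, and $\Co(f^a,e)$, I would rewrite $c^{ge}$ and collapse $\Co(c^{ge},b)$ into the two relations $\Br(c^{ae},b)$ and $\Br(c^{ea},b)$. I expect this $f$-elimination --- in which a single three-vertex relation splits into two braid relations because of the two braids $\Br(a,f)$ and $\Br(e,f)$ --- to be the main obstacle and by far the most delicate step.

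Finally, granting $\Br(c^{ae},b)$ and $\Br(c^{ea},b)$, I would obtain the two double-arrow relations $\Br(a^b,c)$ and $\Br(e^b,c)$ by conjugation arguments in the spirit of Lemma~\ref{lem:mut3} and Lemma~\ref{lem:a}: one conjugates a suitable braid relation (e.g.\ $\Br(e,b)$) by $c^{ae}$, uses $\Br(c^{ae},b)$ to rearrange, then applies $\Br(a,e)$ and $\Br(e,c)$ and conjugates by $\iv a$, invoking $\Br(a,b)$ to read off $\Br(a^b,c)$; the relation $\Br(e^b,c)$ follows symmetrically from $\Br(c^{ea},b)$. Combined with the trivial inclusion this identifies the two presentations, proving the lemma.
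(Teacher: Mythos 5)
Your overall framing (a Tietze reduction) and the first half of your argument coincide with the paper's proof: the paper likewise sets $g=a^{\iv{f}}=f^a$, checks $\Co(g,e)$ and $\Br(g,b)$ from the $\{f,a,e\}$-relations and $\Co(f,b)$, and invokes Lemma~\ref{lem:mut5'} to turn $\Co(e,f^{abc})$ into $\Co(c^{ge},b)$. Your recovery of $\Br(c^{ae},b)$ and $\Br(c^{ea},b)$ is left as a black box, but the mechanism you point to is the right one: the paper unconjugates $\Co(c^{ge},b)$ to $\Co(f^a,e^{bc})$ and combines it with $\Br(f,e^{bc})$ and $\Br(f,a)$ in the style of Lemma~\ref{lem:a} to obtain $\Br(a,e^{bc})$, hence $\Br(c^{ae},b)$, with the fourth relation treated similarly.

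The genuine gap is your last step. You propose to deduce the double-arrow relations $\Br(a^b,c)$ and $\Br(e^b,c)$ from $\Br(c^{ae},b)$ and $\Br(c^{ea},b)$ by imitating the proof of Lemma~\ref{lem:mut3}: conjugate $\Br(e,b)$ by $c^{ae}$ and ``use $\Br(c^{ae},b)$ to rearrange''. That computation does not survive the change from commutations to braid relations. In Lemma~\ref{lem:mut3} the whole point of conjugating by $c^{ae}$ is that $\Co(c^{ae},b)$ gives $b^{c^{ae}}=b$, and $\Co(a,e)$ then collapses $e^{c^{ae}}$ to $c^a$; in the present quiver both are braid relations, so $b^{c^{ae}}=(c^{ae})^{\iv{b}}\neq b$ and the simplification of $e^{c^{ae}}$ also fails. (Lemma~\ref{lem:mut4} translates $\Br(c^{ae},b)$ and $\Br(a^b,c)$ into two different relations of the mutated quiver, which indicates that, modulo the pairwise relations, one is not simply a rearrangement of the other.) The paper sidesteps this entirely: it applies Lemma~\ref{lem:mut3} to the quadruple $(g,b,c,e)$, where $\Co(g,e)$ and $\Co(c^{ge},b)$ genuinely are commutations, to get $\Br(g^b,c)$ and $\Br(e^b,c)$ directly from $\Co(c^{ge},b)$, and then converts $\Br(g^b,c)=\Br(a^{\iv{f}b},c)$ into $\Br(a^b,c)$ using $\Co(f,b)$ and $\Co(f,c)$. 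You should derive the $4^\circ$ relations from $\Co(c^{ge},b)$ in this way rather than from the $6^\circ$ relations.
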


\begin{proof}
	We need to prove that the relations $\Br(a^b,c)$, $\Br(e^b,c)$, $\Br(c^{ae},b)$ and $\Br(c^{ea},b)$ hold.
	
	To show the first two relations, set $g=a^{\iv{f}}\xlongequal{\Br(f,a)}f^a$. We have $\Co(f^a,e)\xLongrightarrow{g=f^a}\Co(g,e)$ and $\Br(a,b)\xLongrightarrow{\Co(f,b)}\Br(a,b^f)\xLongrightarrow[g=a^{\iv{f}}]{?^{\iv{f}}}\Br(g,b).$
	So by Lemma~\ref{lem:mut5'}, $\Co(c^{ge},b)$ holds. Then by Lemma~\ref{lem:mut3}, we have $\Br(g^b,c)$ and $\Br(e^b,c)$. Hence we have
	\[\Br(g^b,c)\xLongrightarrow{g=a^{\iv{f}}}\Br(a^{\iv{f}b},c)\xLongrightarrow[\Co(f,c)]{\Co(f,b)} \Br(a^b,c).\]
	
	To show the third relation, we have \[\Br(a,c)\xLongrightarrow{\Co(f,c)}\Br(a,c^f)\xLongrightarrow[g=a^{\iv{f}}]{?^{\iv{f}}}\Br(g,c).\]
	So we have \[\Co(c^{ge},b)\xLongrightarrow[\Br(b,e)]{?^{\iv{e}}}\Co(c^g,e^{b})\xLongrightarrow[?^{c}]{\Br(g,c)}\Co(g,e^{bc})\xLongrightarrow{g=f^a}\Co(f^a,e^{bc})\] and then
	\begin{align*}\Br(f,e)\xLongrightarrow[\Co(f,b)]{\Co(f,c)}\Br(f^{\iv{c}\iv{b}},e)\xLongrightarrow{?^{bc}} \Br(f,e^{bc})\xLongrightarrow{\Co(f^a,e^{bc})}\Br(f^{\left(\iv{f}^a\right)},e^{bc})\xLongrightarrow{\Br(f,a)} \Br(a,e^{bc})\\\xLongrightarrow[\Br(a,c)]{?^{\iv{c}}}\Br(c^a,e^{b})\xLongrightarrow[?^{e}]{\Br(e,b)}\Br(c^{ae},b).\end{align*}
	The fourth relation can be proved similarly.
	%, we have $\Co(e,f^{abc})\xLongrightarrow{?^{\iv{c}\iv{b}}}\Co(e^{\iv{c}\iv{b}},f^a)$ and then
	%\[\Br(f,e)\xLongrightarrow[\Co(f,c)]{\Co(f,b)} \Br(f,e^{\iv{c}\iv{b}})\xLongrightarrow{\Co(e^{\iv{c}\iv{b}},f^a)}\Br(f^{\left(\iv{f}^a\right)},e^{\iv{c}\iv{b}})\xLongrightarrow{\Br(f,a)} \Br(a,e^{\iv{c}\iv{b}})\xLongrightarrow[\Br(e,c)]{\Br(a,b)}\Br(c^{ea},b).\]
\end{proof}

\begin{lemma}\label{lem:mut8}
If $f=e^{\iv{k}}, b=e^{\iv{h}}$, $\Br(e,c)$, $\Br(e,h)$, $\Co(k,a)$, $\Br(e,k)$, $\Br(a,c)$, and $\Br(e,a)$ hold,
then we have $\Co(e,f^{abc})\Longleftrightarrow\Co(c,k^{eah})$.
\end{lemma}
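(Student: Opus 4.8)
The plan is to reduce the claimed equivalence to a short chain of labelled equivalences, exactly in the style of Lemma~\ref{lem:mut4} and Lemma~\ref{lem:mut5'}, where each arrow is either a conjugation of the whole commutation relation or the substitution of an identity that already holds under the hypotheses. The essential preliminary step is to convert the two defining equalities into conjugation normal form: since $\Br(e,k)$ gives $e^{\iv{k}}=k^{e}$ and $\Br(e,h)$ gives $e^{\iv{h}}=h^{e}$, I would first record $f=k^{e}$ and $b=h^{e}$. Throughout I would use the basic fact $\Br(x,y)\Leftrightarrow x^{y}=y^{\iv{x}}$ together with $\iv{(a^{b})}=\iv{a}^{\,b}$.

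Next I would peel the letters off the exponent $abc$ of $f^{abc}$ one conjugation at a time. Conjugating $\Co(e,f^{abc})$ by $\iv{c}$ and using $\Br(e,c)$ (which rewrites $e^{\iv{c}}=c^{e}$) turns the goal into $\Co(c^{e},f^{ab})$. Substituting $f=k^{e}$ gives $\Co(c^{e},k^{eab})$, so the heart of the argument is to simplify the word $k^{eab}$. Here I would substitute $b=h^{e}$, use $\Br(e,a)$ to rewrite $ea\iv{e}=\iv{a}ea$, and then invoke $\Co(k,a)$ (equivalently $k^{\iv{a}}=k$) to absorb the leading $\iv{a}$, arriving at $k^{eab}=k^{eahe}$. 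A final conjugation by $\iv{e}$ clears the trailing $e$ and restores $c$ in the first slot, producing $\Co(c,k^{eah})$. Schematically:
\[
\Co(e,f^{abc})
\;\Longleftrightarrow\;\Co(c^{e},k^{eab})
\;\Longleftrightarrow\;\Co(c^{e},k^{eahe})
\;\Longleftrightarrow\;\Co(c,k^{eah}).
\]

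Since every step is either a conjugation of a relation (hence reversible) or the substitution of an identity valid under the hypotheses, the chain yields the desired two-sided equivalence at once, and no separate converse argument is needed. The main obstacle I anticipate is purely bookkeeping: keeping the conjugating words in the correct left-to-right order under the convention $a^{b}=\iv{b}ab$, and in particular making sure the rewriting $ea\iv{e}=\iv{a}ea$ is applied \emph{before} the cancellation by $\Co(k,a)$, since the $\iv{a}$ needed for that cancellation is produced precisely by $\Br(e,a)$. I also expect to verify that $\Br(a,c)$ is not actually used in this reduction, the relevant relations being $\Br(e,c)$, $\Br(e,k)$, $\Br(e,h)$ and $\Br(e,a)$ together with $\Co(k,a)$.
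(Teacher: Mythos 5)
Your proof is correct and follows essentially the same route as the paper's: both conjugate by $\iv{c}$ via $\Br(e,c)$, manipulate the middle of the exponent using $\Br(e,a)$, $\Co(k,a)$, $\Br(e,k)$ and $\Br(e,h)$, and finish by conjugating by $\iv{e}$. The only cosmetic difference is that the paper packages the middle step as the auxiliary relation $\Co(f^a,e)$ (derived from $\Co(k,a)$, $\Br(e,k)$, $\Br(e,a)$) and uses it to insert an $e$ into the exponent of $f^{ab}$, whereas you rewrite the exponent word $ea\iv{e}he$ directly; and, as you observe, $\Br(a,c)$ is not needed in either version.
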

\begin{proof}
First, we have \[\Co(k,a)\xLongrightarrow{?^e}\Co(k^e,a^e)\xLongrightarrow[\Br(e,a)]{\Br(e,k)}\Co(e^{\iv{k}},e^{\iv{a}})\xLongrightarrow{?^{a}}\Co(e^{\iv{k}a},e)\xLongrightarrow{f=e^{\iv{k}}}\Co(f^a,e).\]
So we have
\[
\Co(e,f^{abc})\xLongleftrightarrow[\Br(e,c)]{?^{\iv{c}}}\Co(c^e,f^{ab})
\xLongleftrightarrow{\Co(f^a,e)}\Co(c^e,f^{aeb})
\xLongleftrightarrow[f=e^{\iv{k}}]{b=e^{\iv{h}}}\Co(c,e^{\iv{k}aee^{\iv{h}}\iv{e}})
\xLongleftrightarrow[\Br(e,k)]{\Br(e,h)}\Co(c,k^{eah}).
\]
\end{proof}

%=========================================================
%\section{Two decorating points case}\label{sec:1}
%=========================================================

%=========================================================
%\clearpage

%=========================================================
\end{document}